
\documentclass[11pt,twoside,letter]{article}
\pagestyle{myheadings}



\usepackage{amsthm, amsmath, amssymb, amsfonts, graphicx, epsfig}
\usepackage{accents}

\usepackage{bbm}
\usepackage{mathrsfs}

\usepackage{enumerate}

\usepackage[colorlinks=true, pdfstartview=FitV, linkcolor=blue,
            citecolor=blue, urlcolor=blue]{hyperref}
\usepackage[usenames]{color}

\usepackage{url}
\makeatletter\def\url@leostyle{%
 \@ifundefined{selectfont}{\def\UrlFont{\sf}}{\def\UrlFont{\scriptsize\ttfamily}}} \makeatother\urlstyle{leo}


\usepackage[margin=1.0in, letterpaper]{geometry}

\newtheorem{theorem}{Theorem}
\newtheorem{proposition}[theorem]{Proposition}
\newtheorem{lemma}[theorem]{Lemma}
\newtheorem{corollary}[theorem]{Corollary}
\newtheorem{assumption}[theorem]{Assumption}
\theoremstyle{definition}

\theoremstyle{remark}
\newtheorem{remark}[theorem]{Remark}

\numberwithin{equation}{section}
\numberwithin{theorem}{section}

\definecolor{Red}{rgb}{1.0,0,0.0}
\definecolor{Blue}{rgb}{0,0.0,1.0}

\def\cA{\mathcal{A}}
\def\cB{\mathcal{B}}

\def\cD{\mathcal{D}}

\def\cG{\mathcal{G}}
\def\cH{\mathcal{H}}
\def\cI{\mathcal{I}}
\def\cJ{\mathcal{J}}

\def\cM{\mathcal{M}}

\def\cP{\mathcal{P}}
\def\cQ{\mathcal{Q}}


\def\bE{\mathbb{E}}
\def\bF{\mathbb{F}}

\def\bN{\mathbb{N}}

\def\bP{\mathbb{P}}

\def\bR{\mathbb{R}}


\def\sD{\mathscr{D}}

\def\sF{\mathscr{F}}
\def\sG{\mathscr{G}}

\def\sL{\mathscr{L}}

\def\sX{\mathscr{X}}
\def\sY{\mathscr{Y}}
\def\sZ{\mathscr{Z}}

\def\mA{\mathsf{A}}
\def\mB{\mathsf{B}}
\def\mC{\mathsf{C}}
\def\mD{\mathsf{D}}

\def\mI{\mathsf{I}}

\def\mQ{\mathsf{Q}}

\def\mS{\mathsf{S}}
\def\mT{\mathsf{T}}
\def\mU{\mathsf{U}}
\def\mV{\mathsf{V}}

\newcommand{\wt}{\widetilde}
\newcommand{\wh}{\widehat}

\newcommand{\1}{\mathbbm{1}}            
\newcommand{\set}[1]{\{#1\}}            

\DeclareMathOperator{\dif}{d \!}        

\DeclareMathOperator{\supp}{supp}          

\linespread{1.1}

\title{Wiener-Hopf Factorization for Time-Inhomogeneous Markov Chains}


\author{
    Tomasz R. Bielecki \\[-0.3ex]
    \url{tbielecki@iit.edu} \\[-0.9ex]
    \url{http://math.iit.edu/\~bielecki}
 \and
    Ziteng Cheng \\[-0.3ex]
    \url{zcheng7@hawk.iit.edu} \\[-0.9ex]
 \and
    Igor Cialenco \\[-0.3ex]
    \url{cialenco@iit.edu}  \\[-0.9ex]
    \url{http://math.iit.edu/\~igor}
 \and
     Ruoting Gong \\[-0.3ex]
    \url{rgong2@iit.edu}  \\[-0.9ex]
    \url{http://mypages.iit.edu/\~rgong2}\\[-0.9ex]
 \and \\
        {\footnotesize Department of Applied Mathematics, Illinois Institute of Technology} \\
        {\footnotesize W 32nd Str, John T. Rettaliata Engineering Center, Room 208, Chicago, IL 60616, USA}\\
        }

\date{
First Circulated: February 24, 2019 }

\markboth{\small \sc  Bielecki, Cheng, Cialenco, Gong}{\small \sc Wiener-Hopf Factorization for Time-Inhomogeneous Markov Chains}

\begin{document}

\maketitle




\smallskip

{\footnotesize
\begin{tabular}{l@{} p{350pt}}
  \hline \\[-.2em]
  \textsc{Abstract}: \ & This work contributes to the theory of Wiener-Hopf type factorization for finite Markov chains. This theory originated in the seminal paper \cite{BarlowRogersWilliams1980}, which treated the case of finite time-homogeneous Markov chains. Since then, several works extended the results of \cite{BarlowRogersWilliams1980} in many directions. However, all these extensions were dealing with  time-homogeneous Markov case. The first work dealing with the  time-inhomogeneous situation was \cite{BieCiaGonHua2018}, where Wiener-Hopf type factorization for time-inhomogeneous finite Markov chain with piecewise constant generator matrix function was derived. In the present paper we go further: we derive and study Wiener-Hopf type factorization for time-inhomogeneous finite Markov chain with the generator matrix function being a fairly general matrix valued function of time.
    \\[0.5em]
\textsc{Keywords:} \ &  Time-inhomogeneous finite Markov chain, Markov family, Feller semigroup, time homogenization, Wiener-Hopf factorization. \\
\textsc{MSC2010:} \ & 60J27, 60J28, 60K25 \\[1em]
  \hline
\end{tabular}
}


\section{Introduction}\label{sec:Intro}
The main goal of this paper is to develop a Wiener-Hopf type factorization for finite time-inhomo\-geneous Markov chains. In order to motivate this goal, we first provide a brief account of the Wiener-Hopf factorization for time-homogeneous Markov chains based on \cite{BarlowRogersWilliams1980}.

Towards this end, consider a finite state space $\mathbf{E}$ with cardinality $m$, and let $\mathsf{\Lambda}$ be a sub-Markovian generator matrix of dimension $m\times m$, that is, $\mathsf{\Lambda}(i,j)\geq 0$, $i\neq j$, and $\sum_{j\in\mathbf{E}}\mathsf{\Lambda}(i,j)\leq 0$. Next, let $v$ be a real valued function on $\mathbf{E}$, such that $v(i)\neq 0$ for all $i\in\mathbf{E}$, and define
\begin{align*}
\mathbf{E}_{\pm}:=\left\{i\in\mathbf{E}:\,\pm\,v(i)>0\right\}.
\end{align*}
We also denote by $m_{\pm}$ cardinality of $\mathbf{E}_{\pm}$, and we let $\mV:=\textrm{diag}\{v(i)\,:\,i\in\mathbf{E}\}$ be the diagonal matrix of dimension $m\times m$. Finally, let $\mI$ and $\mI^{\pm}$ denote the identity matrices of dimensions $m\times m$ and $m_{\pm}\times m_{\pm}$, respectively. Using probabilistic methods, the following result was proved in \cite{BarlowRogersWilliams1980}.
\begin{theorem}[{\cite[Theorem I]{BarlowRogersWilliams1980}}]\label{thm:BRW80WH1}
For any $c>0$, there exists a unique pair of matrices $(\mathsf{\Pi}^{+}_{c},\mathsf{\Pi}^{-}_{c})$ of dimensions $m_{-}\times m_{+}$ and $m_{+}\times m_{-}$m respectively, such that the matrix
\begin{align*}
\mS=\begin{pmatrix} \mI^{+} & \mathsf{\Pi}^{-}_{c} \\ \mathsf{\Pi}^{+}_{c} & \mI^{-} \end{pmatrix}
\end{align*}
is invertible and the following factorization holds true
\begin{align}\label{eq:TimeHomoWH}
\mV^{-1}(\mathsf{\Lambda}-c\,\mI)=\mS\begin{pmatrix} \mQ^{+}_{c} & 0 \\ 0 & \mQ^{-}_{c} \end{pmatrix}\mS^{-1},
\end{align}
where $\mQ^{\pm}_{c}$ are $m_{\pm}\times m_{\pm}$ sub-Markovian generator matrices. Moreover, $\mathsf{\Pi}^{\pm}_{c}$ are strictly substochastic.
\end{theorem}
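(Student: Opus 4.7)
The natural strategy is the probabilistic one underlying \cite{BarlowRogersWilliams1980}: rather than tackling the matrix identity \eqref{eq:TimeHomoWH} directly, one builds the blocks $\mathsf{\Pi}^{\pm}_{c}$ as discounted first-passage expectations for a Markov chain with generator $\mathsf{\Lambda}$, and then reads the factorization off from the strong Markov property.

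First I would realize $\mathsf{\Lambda}$ as the generator of a (possibly killed) Markov chain $X$ on $\mathbf{E}$, made conservative on $\mathbf{E}\cup\{\partial\}$ by adjoining a cemetery state if necessary. The additive functional $\phi_t:=\int_{0}^{t} v(X_s)\,ds$ strictly decreases while $X\in\mathbf{E}_{-}$ and strictly increases while $X\in\mathbf{E}_{+}$. For $i\in\mathbf{E}_{-}$ I would set $\tau^{+}:=\inf\{t>0:\phi_t=0\}$ and
\[
\mathsf{\Pi}^{+}_{c}(i,j):=\mathbb{E}_{i}\!\left[e^{-c\tau^{+}}\mathbf{1}_{\{X_{\tau^{+}}=j\}}\right],\qquad j\in\mathbf{E}_{+},
\]
and define $\mathsf{\Pi}^{-}_{c}$ symmetrically via the downward first passage $\tau^{-}$. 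Continuity of $\phi$ together with its strictly decreasing behaviour under $\mathbb{P}_{i}$ for $i\in\mathbf{E}_{-}$ guarantees $\tau^{+}>0$ a.s.\ and $X_{\tau^{+}}\in\mathbf{E}_{+}$ on $\{\tau^{+}<\infty\}$, so $\mathsf{\Pi}^{+}_{c}$ is genuinely supported on $\mathbf{E}_{-}\times\mathbf{E}_{+}$. Since $c>0$ and $\tau^{\pm}>0$ a.s., the rows of $\mathsf{\Pi}^{\pm}_{c}$ sum to strictly less than one, which forces the spectral radius of $\mathsf{\Pi}^{-}_{c}\mathsf{\Pi}^{+}_{c}$ to lie strictly below one; the block-inverse formula then makes $\mS$ invertible.

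Next I would verify the factorization. Partition $\mathsf{\Lambda}$ and $\mV$ along $\mathbf{E}=\mathbf{E}_{+}\cup\mathbf{E}_{-}$ and rewrite \eqref{eq:TimeHomoWH} as $(\mathsf{\Lambda}-c\mI)\mS=\mV\mS\,\mathrm{diag}(\mQ^{+}_{c},\mQ^{-}_{c})$. Equating the four blocks decouples into the explicit definitions
\[
\mQ^{+}_{c}:=\mV_{+}^{-1}\bigl[(\mathsf{\Lambda}_{++}-c\mI^{+})+\mathsf{\Lambda}_{+-}\mathsf{\Pi}^{+}_{c}\bigr],\qquad \mQ^{-}_{c}:=\mV_{-}^{-1}\bigl[\mathsf{\Lambda}_{-+}\mathsf{\Pi}^{-}_{c}+(\mathsf{\Lambda}_{--}-c\mI^{-})\bigr],
\]
together with two algebraic Riccati-type identities for $\mathsf{\Pi}^{\pm}_{c}$. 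I would verify these Riccati identities by applying Dynkin's formula to $e^{-c(t\wedge\tau^{+})}\mathbf{1}_{\{j\}}(X_{t\wedge\tau^{+}})$, letting $t\to\infty$ under the discount $e^{-ct}$, and invoking the strong Markov property at $\tau^{+}$. The sub-Markovian property of $\mQ^{\pm}_{c}$ then follows either from the probabilistic reading of $\mQ^{+}_{c}$ as the ($c$-killed) generator of the time-changed chain obtained by observing $X$ only during its excursions inside $\mathbf{E}_{+}$, or algebraically by checking the row-sum inequality using substochasticity of $\mathsf{\Pi}^{+}_{c}$ and the sub-Markovian structure of $\mathsf{\Lambda}$.

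For uniqueness I would show that any pair $(\mathsf{\Pi}^{+}_{c},\mathsf{\Pi}^{-}_{c})$ for which $\mS$ is invertible and \eqref{eq:TimeHomoWH} holds with sub-Markovian $\mQ^{\pm}_{c}$ must satisfy the same Riccati equations, and then appeal to a spectral separation argument: $\mS$ is the unique block similarity splitting the invariant subspaces of $\mV^{-1}(\mathsf{\Lambda}-c\mI)$ into its ``ascending'' and ``descending'' parts, as dictated by the sign pattern of $\mV$. The main obstacle I anticipate is precisely the probabilistic verification of the Riccati identities: one must carefully handle the possibility that $\tau^{\pm}=\infty$ on a set of positive probability and justify the interchange of limit and expectation, which is exactly why the discount $c>0$ is indispensable in the statement.
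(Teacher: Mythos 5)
This theorem is quoted from \cite{BarlowRogersWilliams1980} as background and is not proved in the paper, so your proposal has to be measured against the classical argument and against the way the paper's own machinery recovers the statement (Remark \ref{rem:RealityCheck} and Sections \ref{sec:ExistProof}--\ref{sec:UniqProof} specialized to constant $\mathsf{\Lambda}$). Your existence half is essentially that argument and is sound: define $\mathsf{\Pi}^{\pm}_{c}$ as discounted first-passage expectations, read $\mQ^{\pm}_{c}$ off the diagonal blocks of the column identities, verify the remaining (Riccati) blocks by Dynkin's formula and the strong Markov property at $\tau^{\pm}$, and obtain invertibility of $\mS$ from strict substochasticity via the Schur complement $\mI^{-}-\mathsf{\Pi}^{+}_{c}\mathsf{\Pi}^{-}_{c}$.

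The gap is in your uniqueness step. First, a sign issue: as printed in \eqref{eq:TimeHomoWH} both diagonal blocks $\mQ^{\pm}_{c}$ are sub-Markovian generators, so both spectra lie in the closed left half-plane and there is no ``ascending/descending'' spectral splitting; your argument only makes sense for the form $\mathrm{diag}(\mQ^{+}_{c},-\mQ^{-}_{c})$ (which is what the paper's operator equation \eqref{eq:WH} and the original BRW statement use). Second, and more substantively, in the uniqueness direction you may not assume a competing pair $(\mathsf{\Pi}^{+},\mathsf{\Pi}^{-})$ is substochastic --- that is a conclusion of the theorem --- so spectral separation must be derived from sub-Markovianity of the competing $\mQ^{\pm}$ alone. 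The missing lemma is that $\mV^{-1}(\mathsf{\Lambda}-c\,\mI)$ has no purely imaginary eigenvalues when $c>0$ (this does hold: $\mathsf{\Lambda}-c\,\mI-i\theta\mV$ is strictly row diagonally dominant for every real $\theta$); only with it do the two invariant subspaces coincide with the stable and antistable subspaces, which then have dimensions exactly $m_{+}$ and $m_{-}$ and determine the graph matrices $\mathsf{\Pi}^{\pm}_{c}$ uniquely. Without this lemma the ``unique block similarity'' claim does not close. The route BRW actually take --- and the one this paper generalizes in Section \ref{sec:UniqProof} --- is a martingale/optional-stopping argument: for an arbitrary admissible solution one shows that $\wh{F}_{+}$-type functionals built from $\mS$ and $e^{\ell\mQ^{+}_{c}}$ are bounded martingales up to $\tau^{+}_{\ell}$, and identifies $\mathsf{\Pi}^{+}_{c}$ with the probabilistic matrix by stopping; that route needs no spectral information and is what you should make precise if you do not want to prove the eigenvalue lemma.
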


The right-hand side of \eqref{eq:TimeHomoWH} is said to constitute the Wiener-Hopf factorization of the matrix $\mV^{-1}(\mQ-c\mI)$. While the factorization \eqref{eq:TimeHomoWH} is algebraic in its nature, it admits a very important probabilistic interpretation, which leads to very efficient computation of some useful expectations. More precisely, let $X$ be a time-homogeneous Markov chain taking values in $\mathbf{E}\cup\partial$, where $\partial$ is a coffin state, with generator $\mathsf{\Lambda}$. For $t\geq 0$, we define the additive functional
\begin{align*}
\phi(t):=\int_{0}^{t}v(X_{u})\,du,
\end{align*}
and two stopping times
\begin{align*}
\tau^{\pm}_{t}:=\inf\left\{u\geq 0:\,\pm\,\phi(u)>t\right\}.
\end{align*}

\begin{theorem}[{\cite[Theorem II]{BarlowRogersWilliams1980}}]\label{thm:BRW80WH2}
For any $i\in\mathbf{E}_{\mp}$ and $j\in\mathbf{E}_{\pm}$,
\begin{align}\label{eq:PicPlusMinus}
\bE\bigg(e^{-c\,\tau_{0}^{\pm}}\1_{\big\{X_{\tau_{0}^{\pm}}=j\big\}}\,\Big|\,X_{0}=i\bigg)=\mathsf{\Pi}_{c}^{\pm}(i,j).
\end{align}
For any $i,j\in\mathbf{E}_{\pm}$ and $t\geq 0$,
\begin{align}\label{eq:QcPlusMinus}
\bE\bigg(e^{-c\,\tau_{t}^{\pm}}\1_{\big\{X_{\tau_{t}^{\pm}}=j\big\}}\,\Big|\,X_{0}=i\bigg)=e^{t\,\mQ_{c}^{\pm}}(i,j).
\end{align}
\end{theorem}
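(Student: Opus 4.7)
The plan is to \emph{define} matrices $\wh{\mathsf{\Pi}}_c^{\pm}$ and $\wh{\mG}_c^{\pm}(t)$ to be the left-hand sides of \eqref{eq:PicPlusMinus}--\eqref{eq:QcPlusMinus}, show that they fit into a valid factorization of the form \eqref{eq:TimeHomoWH}, and conclude by the uniqueness clause of Theorem~\ref{thm:BRW80WH1} that $\wh{\mathsf{\Pi}}_c^{\pm} = \mathsf{\Pi}_c^{\pm}$ and $\wh{\mG}_c^{\pm}(t) = e^{t\mQ_c^{\pm}}$.

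First I would record the path-level identity $\tau_{s+t}^{\pm} = \tau_s^{\pm} + \tau_t^{\pm} \circ \theta_{\tau_s^{\pm}}$ on $\{\tau_s^{\pm} < \infty\}$, together with the fact that $X_{\tau_s^{\pm}} \in \mathbf{E}_{\pm}$ on that event: continuity of $\phi$ forces it to cross the level $\pm s$ with a slope of the correct sign, so $v(X_{\tau_s^{\pm}})$ has the correct sign. Combined with the strong Markov property of $X$, this yields the semigroup relation $\wh{\mG}_c^{\pm}(s+t) = \wh{\mG}_c^{\pm}(s)\,\wh{\mG}_c^{\pm}(t)$ on $\mathbf{E}_{\pm}\times\mathbf{E}_{\pm}$, so $\wh{\mG}_c^{\pm}(t) = e^{t\wh{\mQ}_c^{\pm}}$ for some sub-Markovian generator $\wh{\mQ}_c^{\pm}$. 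An analogous conditioning at $\tau_0^{\pm}$ shows that for $i \in \mathbf{E}_{\mp}$, $j \in \mathbf{E}_{\pm}$ and $t \ge 0$,
\begin{align*}
\bE\bigl(e^{-c\tau_t^{\pm}}\1_{\{X_{\tau_t^{\pm}}=j\}} \,\big|\, X_0 = i\bigr) = \bigl(\wh{\mathsf{\Pi}}_c^{\pm}\, \wh{\mG}_c^{\pm}(t)\bigr)(i,j).
\end{align*}

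The main analytic step identifies $\wh{\mQ}_c^{\pm}$ and the Riccati relation with $\wh{\mathsf{\Pi}}_c^{\pm}$ through a first-jump argument. Condition on the first jump $T_1\sim\mathrm{Exp}(|\mathsf{\Lambda}(i,i)|)$ of $X$ and use that $\phi$ is linear with slope $v(i)$ before $T_1$. Starting from $i \in \mathbf{E}_{\pm}$, on $\{T_1 > h/|v(i)|\}$ one has $\tau_h^{\pm} = h/|v(i)|$, while on the complement the strong Markov property together with the pointwise limits $\wh{\mG}_c^{\pm}(h)(k,j) \to \delta_{kj}$ for $k \in \mathbf{E}_{\pm}$ and $\bE(e^{-c\tau_h^{\pm}}\1_{\{X_{\tau_h^{\pm}}=j\}} \mid X_0 = k) \to \wh{\mathsf{\Pi}}_c^{\pm}(k,j)$ for $k \in \mathbf{E}_{\mp}$ (justified by dominated convergence and right-continuity of $\tau_t^{\pm}$ in $t$) let one extract $\wh{\mQ}_c^{\pm} = \lim_{h\downarrow 0} h^{-1}(\wh{\mG}_c^{\pm}(h) - \mI^{\pm})$ in a form linear in $\wh{\mathsf{\Pi}}_c^{\pm}$. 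Written with respect to the block decomposition $\mathbf{E} = \mathbf{E}_+ \cup \mathbf{E}_-$, this produces the two ``diagonal'' blocks of $\mV^{-1}(\mathsf{\Lambda} - c\mI)\wh{\mS} = \wh{\mS}\,\mathrm{diag}(\wh{\mQ}_c^+, \wh{\mQ}_c^-)$, where $\wh{\mS}$ is built from $\wh{\mathsf{\Pi}}_c^{\pm}$ as in Theorem~\ref{thm:BRW80WH1}. A parallel first-jump computation starting from $i \in \mathbf{E}_{\mp}$---in which $\phi$ drifts \emph{away} from $0$ before $T_1$, giving $\tau_0^{\pm} = T_1 + \tau_{|v(i)|T_1}^{\pm}\circ\theta_{T_1}$ and introducing a Laplace-type integral against the exponential jump density---produces the remaining two off-diagonal blocks.

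Finally, $\wh{\mS}$ is invertible because $c>0$ together with $\tau_0^{\pm}>0$ almost surely whenever $X_0$ is on the ``wrong'' side forces both $\wh{\mathsf{\Pi}}_c^{\pm}$ to be strictly substochastic, so that the spectral radius of $\wh{\mathsf{\Pi}}_c^+\wh{\mathsf{\Pi}}_c^-$ is strictly less than $1$. The uniqueness clause of Theorem~\ref{thm:BRW80WH1} then forces $\wh{\mathsf{\Pi}}_c^{\pm} = \mathsf{\Pi}_c^{\pm}$ and $\wh{\mQ}_c^{\pm} = \mQ_c^{\pm}$, which is exactly \eqref{eq:PicPlusMinus}--\eqref{eq:QcPlusMinus}. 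I expect the technical bottleneck to be the second (off-diagonal) block identity: one must interchange the $h \downarrow 0$ limit with the exponential jump-time integral and handle $\{\tau_0^{\pm} = \infty\}$ through the discount $e^{-c\tau_0^{\pm}}$ with $c > 0$, both of which are doable by a careful dominated-convergence argument rather than a soft one.
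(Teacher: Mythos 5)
Be careful: Theorem~\ref{thm:BRW80WH2} is not proved anywhere in this paper. It is quoted verbatim, with the attribution ``[Theorem~II of~\cite{BarlowRogersWilliams1980}]'', as background in the introduction, right after Theorem~\ref{thm:BRW80WH1}. The paper's own contribution (Theorems~\ref{thm:WHExistUniq} and~\ref{thm:WHProbInterpr}, proved in Section~\ref{sec:Proofs}) is a time-inhomogeneous generalization; the classical statement itself is taken as a black box. The only place the paper touches its content is Remark~\ref{rem:RealityCheck}, and that argument runs in the \emph{opposite} direction to yours: it specializes the already-established time-inhomogeneous factorization \eqref{eq:WHPlus} and its probabilistic identification to $\mathsf{\Lambda}_s\equiv\mathsf{\Lambda}$ and to the exponential test functions $g_j^\pm(s,i)=e^{-cs}\1_{\{j\}}(i)$, so as to \emph{recover} \eqref{eq:TimeHomoWHReform}, rather than proving the classical theorem from first principles. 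So there is no in-paper proof to compare your sketch against.

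On its own terms your outline is a sound version of the standard argument for Theorem~II of~\cite{BarlowRogersWilliams1980}: strong Markov at $\tau_s^\pm$ gives the semigroup property of $\wh{\mG}_c^\pm$ on $\mathbf{E}_\pm\times\mathbf{E}_\pm$, hence $\wh{\mG}_c^\pm(t)=e^{t\wh{\mQ}_c^\pm}$ with $\wh{\mQ}_c^\pm$ sub-Markovian; strong Markov at $\tau_0^\pm$ gives the off-diagonal factorization through $\wh{\mathsf{\Pi}}_c^\pm\wh{\mG}_c^\pm(t)$; a first-jump expansion in $h$ yields the block identities of $\mV^{-1}(\mathsf{\Lambda}-c\mI)\wh{\mS}=\wh{\mS}\,\mathrm{diag}(\wh{\mQ}_c^+,\wh{\mQ}_c^-)$; strict substochasticity of $\wh{\mathsf{\Pi}}_c^\pm$ (from $\tau_0^\pm>0$ a.s.\ off $\mathbf{E}_\pm$ together with $c>0$) yields invertibility of $\wh{\mS}$; and the uniqueness clause of Theorem~\ref{thm:BRW80WH1} finishes. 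Two things are worth making explicit if you wrote this up. First, the logical dependency: your plan presupposes that Theorem~\ref{thm:BRW80WH1} --- in particular its uniqueness assertion --- is established by some independent means, since you invoke it rather than re-derive it; that should be stated rather than left implicit. Second, the step from ``each $\wh{\mathsf{\Pi}}_c^\pm$ is strictly substochastic'' to ``$\wh{\mS}$ is invertible'' is correct but deserves a line: the row sums of $\wh{\mathsf{\Pi}}_c^+\wh{\mathsf{\Pi}}_c^-$ are bounded by the product of the two maximal row sums, hence the $\ell^\infty$-operator norm, and therefore the spectral radius, is strictly below~$1$, so the Schur complement $\mI^- - \wh{\mathsf{\Pi}}_c^+\wh{\mathsf{\Pi}}_c^-$ is invertible.
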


Both Theorems \ref{thm:BRW80WH1} and \ref{thm:BRW80WH2} have been studied for more general classes of Markov process, as well as for various types of stopping times, that naturally occur in applications {(cf. \cite{KennedyWilliams1990}, \cite{AvramPistoriusUsabel2003}, \cite{Williams2008}, \cite{MijatovicPistorius2011}, and references therein)}. However, in all these studies the Markov processes have been assumed to be time-homogeneous.

As it turns out, the time-inhomogeneous case is more intricate, and direct (naive) generalizations or applications of the time-homogenous case to the non-homogenous case can not be done in principle. Specifically, let now $X$ be a finite state time-inhomogeneous Markov chain taking values in $\mathbf{E}\cup\partial$, with generator function $\mathsf{\Lambda}_{s}$, $s\geq 0$. The first observation that one needs to make is that the Wiener-Hopf factorization of the matrix $\mV^{-1}(\mathsf{\Lambda}_{s}-c\mI)$ can be done for each $s\geq 0$ separately, exactly as described in Theorem \ref{thm:BRW80WH1}. However, the resulting matrices $\mathsf{\Pi}^{\pm}_{c}(s)$ and $\mQ^{\pm}_{c}(s)$, $s\geq 0$, are not useful for computing the expectations of the form
\begin{align*}
\bE\bigg(e^{-c\,\tau^{\pm}_{t}(s)}\1_{\big\{X_{\tau^{\pm}_{t}(s)}=j\big\}}\,\Big|\,X_{s}=i\bigg),
\end{align*}
where
\begin{align*}
\tau^{\pm}_{t}(s):=\inf\left\{u\geq s:\,\pm\int_{s}^{s+u}v(X_{r})\,dr>t\right\}.
\end{align*}
This makes the study of the time-inhomogeneous case a highly nontrivial and novel enterprise. As it will be seen from the discussion presented below, an entirely new theory needs to be put forth for this purpose. The research effort in this direction has been originated in \cite{BieCiaGonHua2018}. This work contributes to the continuation of  the research endeavor in this direction.


\section{Setup and the main goal of the paper}\label{sec:Setup}

\subsection{Preliminaries}\label{subsec:Notations}

Throughout this paper we let $\mathbf{E}$ be a finite set, with $|\mathbf{E}|=m>1$. We define $\overline{\mathbf{E}}:=\mathbf{E}\cup\{\partial\}$, where $\partial$ denotes the coffin state isolated from $\mathbf{E}$. Let $(\mathsf{\Lambda}_{s})_{s\in\bR_{+}}$, where $\bR_{+}:=[0,\infty)$, be a family of $m\times m$ generator matrices, i.e., their off-diagonal elements are non-negative, and the entries in their rows sum to zero. We additionally define $\mathsf{\Lambda}_{\infty}:=\mathsf{0}$, the $m\times m$ matrix with all entries equal to zero.

\medskip
We make the following standing assumption:
\begin{assumption}\label{assump:GenLambda}\mbox{}
\vspace{-0.5em}
\begin{itemize}
\item [(i)] There exists a universal constant $K\in(0,\infty)$, such that $|\mathsf{\Lambda}_{s}(i,j)|\leq K$, for all $i,j\in\mathbf{E}$ and $s\in\bR_{+}$.
\item [(ii)] $(\mathsf{\Lambda}_{s})_{s\in\bR_{+}}$, considered as a mapping from $\bR_{+}$ to the set of $m\times m$ generator matrices,  is continuous with respect to $s$.
\end{itemize}
\end{assumption}
Let $v:\overline{\mathbf{E}}\rightarrow\bR$ with $v(i)\neq 0$ for any $i\in\mathbf{E}$ and $v(\partial)=0$, $\mV:=\text{diag}\{v(i):i\in\mathbf{E}\}$, $\overline{v}:=\max_{i\in\mathbf{E}}|v(i)|$, and $\underline{v}:=\min_{i\in\mathbf{E}}|v(i)|$. We will use the following partition of the set $\mathbf{E}$
\begin{align*}
\mathbf{E}_{+}:=\left\{i\in\mathbf{E}:\,v(i)>0\right\}\quad\text{and}\quad\mathbf{E}_{-}:=\left\{i\in\mathbf{E}:\,v(i)<0\right\}.
\end{align*}
We assume that both $\mathbf{E}_{+}$ and $\mathbf{E}_{-}$ are non-empty, and that the indices of the first $m_+=|\mathbf{E}_{+}|$ (respectively, last $m_-=|\mathbf{E}_{-}|$) rows and columns of any $m\times m$ matrix correspond to the elements in $\mathbf{E}_{+}$ (respectively, $\mathbf{E}_{-}$). Accordingly, we write $\mathsf{\Lambda}_{s}$ and $\mV$ in the block form
\begin{align}\label{eq:MatrixBlocks}
\mathsf{\Lambda}_{s}=\bordermatrix{~ & \mathbf{E}_{+} & \mathbf{E}_{-} \cr \mathbf{E}_{+} & \mA_{s} & \mB_{s} \cr \mathbf{E}_{-} & \mC_{s} & \mD_{s} \cr},\quad\mathsf{\mV}=\bordermatrix{~ & \mathbf{E}_{+} & \mathbf{E}_{-} \cr \mathbf{E}_{+} & \mV_{+} & \mathsf{0} \cr \mathbf{E}_{-} & \mathsf{0} & \mV_{-} \cr}.
\end{align}

In what follows we let $\sX:=\bR_{+}\times\mathbf{E}$, and  $\sX_{\pm}:=\bR_{+}\times\mathbf{E}_{\pm}$). The Borel $\sigma$-field on $\sX$ (respectively, $\sX_{\pm}$) is denoted by $\cB(\sX):=\cB(\bR_{+})\otimes 2^{\mathbf{E}}$ (respectively, $\cB(\sX_{\pm}):=\cB(\bR_{+})\otimes 2^{\mathbf{E}_{\pm}}$). Accordingly, we let $\overline{\sX}:=\sX\cup(+\infty,\partial)$ (respectively, $\overline{\sX_{\pm}}:=\sX_{\pm}\cup(+\infty,\partial)$) be the one-point completion of $\sX$ (respectively, $\sX_{\pm}$), and let $\cB(\overline{\sX}):=\sigma(\cB(\sX)\cup\{(\infty,\partial)\})$ (respectively, $\cB(\overline{\sX_{\pm}}):=\sigma(\cB(\sX_{\pm})\cup\{(\infty,\partial)\})$). A pair $(s,i)\in \sX$ consists of the time variable $t$ and the space variable $s$.

\medskip
We will also use the following notations for various spaces of real-valued functions:
\begin{itemize}
\item $L^{\infty}(\overline{\sX})$ is the space of $\cB(\overline{\sX})$-measurable, and bounded functions $f$ on $\overline{\sX}$, with $g(+\infty,\partial)=0$.
\item $C_{0}(\overline{\sX})$ is the space of functions $f\in L^{\infty}(\overline{\sX})$ such that $f(\cdot,i)\in C_{0}(\bR_{+})$ for all $i\in\mathbf{E}$, where $C_{0}(\bR_{+})$ is the space {of} functions vanishing at infinity.
\item $C_{c}(\overline{\sX})$ is the space of functions $f\in L^{\infty}(\overline{\sX})$ such that $f(\cdot,i)\in C_{c}(\bR_{+})$ for all $i\in\mathbf{E}$, where $C_{c}(\bR_{+})$ is the space of functions with compact support.
\item $C_{0}^{1}(\overline{\sX})$ is the space of functions $f\in C_{0}(\overline{\sX})$ such that, for any $i\in\mathbf{E}$, $\partial f(\cdot,i)/\partial s$ exists and belongs to $C_{0}(\bR_{+})$ ({for convenience,} we stipulate that $\partial f(\infty,\partial)/\partial s=0$).
\item $C_{c}^{1}(\overline{\sX})$ is the space of functions $f\in C_{c}(\overline{\sX})$ such that, for any $i\in\mathbf{E}$, $\partial f(\cdot,i)/\partial s$ exists ({for convenience,} we stipulate that $\partial f(\infty,\partial)/\partial s=0$).
\end{itemize}

Sometimes $\overline{\sX}$ will be replaced by $\overline{\sX_{+}}$ or $\overline{\sX_{-}}$ when the functions are defined on these spaces, in which case the set $\mathbf{E}$ will be replaced by $\mathbf{E}_{+}$ or $\mathbf{E}_{-}$, respectively, in the above definitions.  Note that each function on $\overline{\sX}$ can be viewed as a time-dependent vector of size $m$, which can be split into a time-dependent vector of size $m_{+}$ (a function on $\sX_{+}$) and a time-dependent vector of size $m_{-}$ (a function on $\sX_{-}$).

\medskip
We conclude this section by introducing some more notations, this time for operators:
\begin{itemize}
\item $\wt{\mathsf{\Lambda}}:L^{\infty}(\overline{\sX})\rightarrow L^{\infty}(\overline{\sX})$ is the multiplication operator associated with $(\mathsf{\Lambda}_{s})_{s\in\bR_{+}}$, defined by
    \begin{align}\label{eq:DefTildeLambda}
    (\wt{\mathsf{\Lambda}}\,g)(s,i):=(\mathsf{\Lambda}_{s}\,g(s,\cdot))(i),\quad (s,i)\in\overline{\sX}.
    \end{align}
\item Similarly, we define multiplication operators $\wt{\mA}:L^{\infty}(\overline{\sX_{+}})\rightarrow L^{\infty}(\overline{\sX_{+}})$, $\wt{\mB}:L^{\infty}(\overline{\sX_{-}})\rightarrow L^{\infty}(\overline{\sX_{+}})$, $\wt{\mC}:L^{\infty}(\overline{\sX_{+}})\rightarrow L^{\infty}(\overline{\sX_{-}})$, and $\wt{\mD}:L^{\infty}(\overline{\sX_{-}})\rightarrow L^{\infty}(\overline{\sX_{-}})$, associated with the blocks $(\mA_{s})_{s\in\bR_{+}}$, $(\mB_{s})_{s\in\bR_{+}}$, $(\mC_{s})_{s\in\bR_{+}}$, and $(\mD_{s})_{s\in\bR_{+}}$ given in \eqref{eq:MatrixBlocks}, respectively.
\end{itemize}
Given the above, for any\footnote{The superscript $T$ will be used to denote the transpose of a vector or matrix.} $g=(g^{+},g^{-})^{T}\in L^{\infty}(\overline{\sX})$, where $g^{\pm}\in L^{\infty}(\overline{\sX_{\pm}})$, we have
\begin{align}\label{eq:TildeLambdaBlocks}
\wt{\mathsf{\Lambda}}\,g=\begin{pmatrix} \wt{\mA}\,g^{+}+\wt{\mB}\,g^{-} \\ \wt{\mC}\,g^{+}+\wt{\mD}\,g^{-} \end{pmatrix}.
\end{align}

\subsection{A time-inhomogeneous Markov family corresponding to  $(\mathsf{\Lambda}_{s})_{s\in\bR_{+}}$ and related passage times}\label{subsec:Markov}

We start with introducing a time-inhomogeneous Markov Family corresponding to $(\mathsf{\Lambda}_{s})_{s\in\bR_{+}}$. Then, we proceed with a study of some passage times related to this family.

\subsubsection{A time-inhomogeneous Markov family $\cM^{*}$ corresponding to $(\mathsf{\Lambda}_{s})_{s\in\bR_{+}}$}\label{subsubsec:Markov}

We take $\Omega^{*}$ as the collection of $\mathbf{E}$-valued functions $\omega^{*}$ on $\bR_{+}$, and $\sF^{*}:=\sigma\{X^{*}_{t},\,t\in\bR_{+}\}$, where $X$ is the coordinate mapping $X^{*}_\cdot(\omega^{*}):=\omega^{*}(\cdot)$. Sometimes we may need the value of $\omega^{*}\in\Omega^{*}$ at infinity, and in such case we set $X_{\infty}^{*}(\omega^{*})=\omega^{*}(\infty)=\partial$, for any $\omega^{*}\in\Omega^{*}$. We endow the space $(\Omega^{*},\sF^{*})$ with a family of filtrations $\bF_{s}^{*}:=\{\sF^{s,*}_{t},\,t\in[s,\infty]\}$, $s\in\overline{\bR}_{+}$, where, for $s\in\bR_{+}$,
\begin{align*}
\sF^{s,*}_{t}:=\bigcap_{r>t}\sigma\left(X^{*}_{u},\,\,u\in[s,r]\right),\,\,\,t\in[s,\infty);\quad\sF^{s,*}_{\infty}:=\sigma\bigg(\bigcup_{t\geq s}\sF^{s,*}_{t}\bigg),
\end{align*}
and $\sF^{\infty,*}_{\infty}:=\{\emptyset,\Omega^{*}\}$. We denote by
\begin{align*}
\mathcal{M}^{*}:=\big\{\big(\Omega^{*},\sF^{*},\bF_{s}^{*},(X^{*}_t)_{t\in[s,\infty]},\bP_{s,i}^{*}\big),\,(s,i)\in\overline{\sX}\big\}
\end{align*}
a canonical {\it time-inhomogeneous} Markov family. That is,
\begin{itemize}
\item $\bP^{*}_{s,i}$ is a probability measure on $(\Omega^{*},\sF^{s,*}_{\infty})$ for $(s,i)\in\overline{\sX}$;
\item the function $P^{*}:\overline{\sX}\times\overline{\bR}_{+}\times 2^{\overline{\mathbf{E}}}\rightarrow [0,1]$ defined for $0\leq s\leq t\leq\infty$ as
    \begin{align*}
    P^{*}(s,i,t,B):=\bP^{*}_{s,i}\!\left(X^{*}_t\in B\right)
    \end{align*}
    is measurable with respect to $i$ for any fixed $s\leq t$ and $B\in 2^{\overline{\mathbf{E}}}$;
\item $\bP^{*}_{s,i}(X^{*}_{s}=i)=1$ for any $(s,i)\in\overline{\sX}$;
\item for any $(s,i)\in\overline{\sX}$, $s\leq t\leq r\leq\infty$, and $B\in 2^{\overline{\mathbf{E}}}$, it holds that
    \begin{align*}
    \bP^{*}_{s,i}\!\left(X^{*}_r\in B\,|\,\sF^{s,*}_{t}\right)=\bP^{*}_{t,X^{*}_{t}}\!\left(X^{*}_r\in B\right),\quad\bP^{*}_{s,i}-\text{a.s.}\,.
    \end{align*}
\end{itemize}

Let $\mU^{*}:=(\mU_{s,t}^{*})_{0\leq s\leq t<\infty}$ be the evolution system (cf. \cite{Bottcher2014}) corresponding to $\mathcal{M}^{*}$ defined by
\begin{align}\label{eq:DefEvolSytXStar}
\mU_{s,t}^{*}f(i):=\bE_{s,i}^{*}\left(f(X_{t}^{*})\right),\quad 0\leq s\leq t<\infty,\quad i\in\mathbf{E},
\end{align}
for all functions (column vectors) $f:\mathbf{E}\rightarrow\bR$.\footnote{Note that for $t\in\bR_{+}$, $X_{t}^{*}$ takes values in $\mathbf{E}$.} We assume that
\begin{align}\label{eq:DefGenXStar}
\lim_{h\downarrow 0}\frac{1}{h}\left(\mU_{s,s+h}^{*}f(i)-f(i)\right)=\mathsf{\Lambda}_{s}f(i),\quad\text{for any }\,(s,i)\in\sX,
\end{align}
for all $f:\mathbf{E}\rightarrow\bR$.

It is well known that a standard version of the Markov family $\mathcal{M}^{*}$ (cf. \cite[Definition I.6.6]{GikhmanSkorokhod2004}) can be constructed. This is done by first constructing via Peano-Baker series the evolution system $\mU^{*}=(\mU^{*}_{s,t})_{0\leq s\leq t<\infty}$ that solves
\begin{align}\label{eq:EvolDE}
\frac{\dif\mU^{*}_{s,t}}{\dif t}=\mathsf{\Lambda}_{t}\,\mU^{*}_{s,t},\quad\mU^{*}_{s,s}=\mI,\quad 0\leq s\leq t<\infty.
\end{align}
Since $\mathsf{\Lambda}_{t}$ is a generator matrix, $\mU^{*}_{s,t}$ is positive preserving and contracting with $\mU^{*}_{s,t}\mathsf{1}_{m}=\mathsf{1}_{m}$. In addition, due to Assumption \ref{assump:GenLambda}-(i) and \eqref{eq:EvolDE}, it holds for any $0\leq s<t$ and $r\in(0,t-s)$ that
\begin{align*}
\left\|\mU^{*}_{s,t+r}-\mU^{*}_{s,t}\right\|_{\infty}=\left\|\mU^{*}_{s,t}\left(\mU^{*}_{t,t+r}-\mI\right)\right\|_{\infty}\leq Cr,
\end{align*}
and
\begin{align*}
\left\|\mU^{*}_{s+r,t}-\mU^{*}_{s,t}\right\|_{\infty}=\left\|\left(\mI-\mU^{*}_{s,s+r}\right)\mU^{*}_{s+r,t}\right\|_{\infty}\leq Cr,
\end{align*}
for some positive constant $C$, so that $\mU^{*}_{s,t}$ is strongly continuous in $s$ and $t$. The above, together with the finiteness of the state space, implies that $\mU^*$ is a Feller evolution system. The corresponding standard version can then be constructed (cf. \cite[Theorem I.6.3]{GikhmanSkorokhod2004}).

In view of the above, we will consider the standard version of $\cM^{*}$ in what follows, and, for simplicity, we will preserve the notation $\cM^{*}=\set{(\Omega^{*},\sF^{*},\bF_{s}^{*},(X^{*}_t)_{t\ge s},\bP_{s,i}^{*})$, $(s,i)\in\overline{\sX}}$, in which $\Omega^{*}$ is restricted to the collection of $\mathbf{E}$-valued c\`{a}dl\`{a}g functions $\omega^{*}$ on $\bR_{+}$ with $\omega^{*}(\infty)=\partial$.

\subsubsection{Passage times related to $\cM^{*}$}\label{subsubsec:PassageTime}

For any $s\in\overline{\bR}_{+}$, we define an additive functional $\phi_{\cdot}^{*}(s)$ as
\begin{align*}
\phi_{t}^{*}(s):=\int_{s}^{t}v(X_{u}^{*})\,du,\quad t\in[s,\infty],
\end{align*}
and we stipulate $\phi_{\infty}^{*}(s,\omega^{*})=\infty$ for every $\omega^{*}\in\Omega^{*}$. In addition, for any $s\in\overline{\bR}_{+}$ and $\ell\in\bR_{+}$, we define associated passage times
\begin{align*}
\tau_{\ell}^{+,*}(s):=\inf\left\{t\in[s,\infty]:\,\phi_{t}^{*}(s)>\ell\right\}\quad\text{and}
\quad\tau_{\ell}^{-,*}(s):=\inf\left\{t\in[s,\infty]:\,\phi_{t}^{*}(s)<-\ell\right\}.
\end{align*}
Both $\tau_{\ell}^{+,*}(s)$ and $\tau_{\ell}^{-,*}(s)$ are $\bF_{s}^{*}$-stopping times since, $\phi_{\cdot}^{*}(s)$ is $\bF^{*}_{s}$-adapted, has continuous sample paths, and $\bF_{s}^{*}$ is right-continuous (cf. \cite[Proposition 1.28]{JacodShiryaev2003}). For notational convenience, if no confusion arises, we will omit the parameter $s$ in $\phi_{t}^{*}(s)$ and $\tau_{\ell}^{\pm,*}(s)$.

The following result will be used later in the paper.

\begin{lemma}\label{lem:RangeXTaupm}
For any $s\in\overline{\bR}_{+}$, $\ell\in\bR_{+}$, and $\omega^{*}\in\Omega^{*}$, $X_{\tau_{\ell}^{\pm,*}(s)}^{*}(\omega^{*})\in\mathbf{E}_{\pm}\cup\{\partial\}$. In particular, if $\tau_{\ell}^{\pm,*}(s,\omega^{*})<\infty$, then $X_{\tau_{\ell}^{\pm,*}(s)}^{*}(\omega^{*})\in\mathbf{E}_{\pm}$.
\end{lemma}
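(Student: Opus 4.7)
The strategy combines three simple facts: continuity of $\phi_{\cdot}^{*}(s)$, right-continuity of $X^{*}$ in the finite (hence discrete) state space $\mathbf{E}$, and the standing hypothesis that $v(i) \neq 0$ for every $i \in \mathbf{E}$. I will write out the argument for the $+$ case; the $-$ case is entirely symmetric. Fix $s \in \overline{\bR}_{+}$, $\ell \in \bR_{+}$, and $\omega^{*} \in \Omega^{*}$, and write $\tau := \tau_{\ell}^{+,*}(s,\omega^{*})$.

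\textbf{Step 1 (trivial cases).} If $s = \infty$, then $\tau = \infty$ by definition; if $\tau = \infty$, then by the stipulation $X_{\infty}^{*}(\omega^{*}) = \partial$, which already lies in $\mathbf{E}_{+} \cup \{\partial\}$. So in the sequel I may assume $s \in \bR_{+}$ and $\tau < \infty$; this is exactly the ``in particular'' clause, and my goal becomes $X_{\tau}^{*}(\omega^{*}) \in \mathbf{E}_{+}$.

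\textbf{Step 2 (pin down $\phi_\tau^*$).} Since $\tau < \infty$, $X_{\tau}^{*}(\omega^{*}) \in \mathbf{E}$, so $v(X_{\tau}^{*}(\omega^{*})) \neq 0$ by the hypothesis on $v$. The map $t \mapsto \phi_{t}^{*}(s,\omega^{*})$ is continuous (integral of a bounded function). From the definition of $\tau$ as an infimum, there exist $t_{n} \downarrow \tau$ with $\phi_{t_{n}}^{*}(s,\omega^{*}) > \ell$, so continuity gives $\phi_{\tau}^{*}(s,\omega^{*}) \geq \ell$. Conversely, if $\tau > s$, left-continuity at $\tau$ together with $\phi_{t}^{*}(s,\omega^{*}) \leq \ell$ for all $t \in [s,\tau)$ forces $\phi_{\tau}^{*}(s,\omega^{*}) \leq \ell$; and if $\tau = s$ then $\phi_{\tau}^{*}(s,\omega^{*}) = 0$, which combined with $\phi_{\tau}^{*}(s,\omega^{*}) \geq \ell \geq 0$ yields $\ell = 0 = \phi_{\tau}^{*}(s,\omega^{*})$. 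Either way, $\phi_{\tau}^{*}(s,\omega^{*}) = \ell$.

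\textbf{Step 3 (local constancy of $X^{*}$ and contradiction).} Because $\omega^{*}$ is c\`{a}dl\`{a}g and $\mathbf{E}$ is finite (hence discrete under the usual discrete metric), right-continuity at $\tau$ yields a $\delta > 0$ such that $X_{t}^{*}(\omega^{*}) = X_{\tau}^{*}(\omega^{*})$ for every $t \in [\tau, \tau+\delta)$. Consequently, for such $t$,
\begin{equation*}
\phi_{t}^{*}(s,\omega^{*}) = \phi_{\tau}^{*}(s,\omega^{*}) + \int_{\tau}^{t} v(X_{u}^{*}(\omega^{*}))\,du = \ell + (t-\tau)\,v(X_{\tau}^{*}(\omega^{*})).
\end{equation*}
Suppose for contradiction that $v(X_{\tau}^{*}(\omega^{*})) < 0$ (the value $0$ is excluded by Step 2). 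Then $\phi_{t}^{*}(s,\omega^{*}) < \ell$ for every $t \in (\tau, \tau+\delta)$, contradicting the existence of the sequence $t_{n} \downarrow \tau$ with $\phi_{t_{n}}^{*}(s,\omega^{*}) > \ell$. Hence $v(X_{\tau}^{*}(\omega^{*})) > 0$, i.e., $X_{\tau}^{*}(\omega^{*}) \in \mathbf{E}_{+}$, which completes the proof.

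\textbf{Main obstacle.} There is no deep obstacle: everything is pathwise and elementary. The only care needed is in Step 2 with the boundary case $\tau = s$ (handled by noting $\ell = 0$) and in making explicit use of the discreteness of $\mathbf{E}$ in Step 3, which is what turns right-continuity into the stronger local-constancy property needed to derive the contradiction.
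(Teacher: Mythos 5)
Your proof is correct and follows essentially the same route as the paper's: establish $\phi^{*}_{\tau}=\ell$ pathwise, use right-continuity of $X^{*}$ in the finite state space to control $\phi^{*}$ on a right neighborhood of $\tau$, and derive a contradiction with the infimum defining $\tau^{+,*}_{\ell}$ if $v(X^{*}_{\tau})<0$. Your write-up is somewhat more careful than the paper's (explicitly justifying $\phi^{*}_{\tau}=\ell$, including the boundary case $\tau=s$, and making the role of discreteness explicit), but the underlying argument is identical.
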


\begin{proof}
We will only prove the ``+" version of the lemma; a proof of the ``$-$" version proceeds in an analogous way.

To begin with, for any $\omega^{*}\in\Omega^{*}$ such that $\tau_{\ell}^{\pm,*}(s,\omega^{*})=\infty$, clearly we have $X_{\tau_{\ell}^{\pm,*}(s)}^{*}(\omega^{*})=X_{\infty}^{*}(\omega^{*})=\omega^{*}(\infty)=\partial$. Next, suppose that for some $\omega_{0}^{*}\in\Omega^{*}$ and $s_{0},\ell_{0}\in\bR_{+}$, $\tau_{\ell_{0}}^{+,*}(s_{0},\omega_{0}^{*})<\infty$ and $X_{\tau_{\ell_{0}}^{+,*}(s_{0})}^{*}(\omega_{0}^{*})\in\mathbf{E}_{-}$. By the definition of $\phi^{*}$, we have $\phi^{*}_{\tau^{+,*}_{\ell_{0}}(s_{0})}(\omega_{0}^{*})=\ell_{0}$. Moreover, since $X_{\cdot}(\omega_{0}^{*})$ is right-continuous, there exists $\varepsilon_{0}>0$, such that for any $t\in[\tau_{\ell_{0}}^{+,*}(s_{0},\omega_{0}^{*}),\tau_{\ell_{0}}^{+,*}(s_{0},\omega_{0}^{*})+\varepsilon]$, $X_{t}^{*}(\omega_{0}^{*})\in\mathbf{E}_{-}$. Hence, for any $t\in[\tau_{\ell_{0}}^{+,*}(s_{0},\omega_{0}^{*}),\tau_{\ell_{0}}^{+,*}(s_{0},\omega_{0}^{*})+\varepsilon]$, $\phi^{*}_{t}(\omega_{0}^{*})\leq\ell_{0}$, which contradicts the definition of $\tau_{\ell}^{+,*}$.
\end{proof}

\begin{remark}
Here is an example where $\{\tau^{+,*}_{\ell}=\infty\}$ has a positive probability. Consider $\mathbf{E}=\{1,-1\}$, $v(\pm 1)=\pm 1$, and
\begin{align*}
\mathsf{\Lambda}_{s}=\bordermatrix{~ & +1 & -1 \cr +1 & -1 & 1 \cr -1 & 0 & 0 \cr},\quad s\in\bR_{+}.
\end{align*}
Then, for any $s\in\bR_{+}$ and $\ell>0$,
\begin{align*}
\bP^{*}_{s,1}\left(\tau^{+,*}_{\ell}=\infty\right)=\bP^{*}_{s,1}\left(\left\{\text{the first jump of $X^{*}$ occurs before time $\ell$}\right\}\right)=1-e^{-\ell}>0.
\end{align*}
\end{remark}

\subsection{The main goal of the paper}\label{subsec:MainGoal}

Our main interest is to derive a Wiener-Hopf type method for computing expectations of the following form
\begin{align}\label{eq:ExpTauXTau}
\bE^{*}_{s,i}\Big(g^{\pm}\Big(\tau_{\ell}^{\pm,*},X_{\tau_{\ell}^{\pm,*}}^{*}\Big)\Big)
\end{align}
for $g^{\pm}\in L^{\infty}(\overline{\sX_{\pm}})$, $\ell\in\bR_{+}$, and $(s,i)\in\sX$. In view of Lemma \ref{lem:RangeXTaupm}, it is enough to compute the expectation in \eqref{eq:ExpTauXTau} for $g^{\pm}\in L^{\infty}(\overline{\sX_{\pm}})$ in order to compute the analogous expectation for $g\in L^{\infty}(\overline{\sX})$.

The Wiener-Hopf type method derived in this paper generalizes the Wiener-Hopf type method of \cite{BarlowRogersWilliams1980} that was developed for the time-homogeneous Markov chains.

\begin{remark}
The time-homogeneous version of the problem of computing the expectation of the type given in \eqref{eq:ExpTauXTau} appears frequently in time-homogeneous fluid models (see e.g. \cite{Rogers1994} and the references therein). Time inhomogeneous extensions of such models is important and natural due to temporal (seasonal) effect, for example. This is one practical motivation for the study presented in this paper.
\end{remark}

In order to proceed, we introduce the following operators:
\begin{itemize}
\item $J^{+}:L^{\infty}(\overline{\sX_{+}})\rightarrow L^{\infty}(\overline{\sX_{-}})$ is defined as
    \begin{align}\label{eq:DefJPlus}
    \big(J^{+}g^{+}\big)(s,i):=\bE_{s,i}^{*}\Big(g^{+}\Big(\tau_{0}^{+,*},X_{\tau_{0}^{+,*}}^{*}\Big)\Big),\quad (s,i)\in\overline{\sX_{-}}.
    \end{align}
    Clearly, for any $g^{+}\in L^{\infty}(\overline{\sX_{+}})$, $|(J^{+}g^{+})(s,i)|\leq\|g^{+}\|_{L^{\infty}(\overline{\sX_{+}})}<\infty$ for any $(s,i)\in\sX_{-}$, and $(J^{+}g^{+})(\infty,\partial)=0$, so that $J^{+}g^{+}\in L^{\infty}(\overline{\sX_{-}})$.
\item $J^{-}:L^{\infty}(\overline{\sX_{-}})\rightarrow L^{\infty}(\overline{\sX_{+}})$ is defined as,
    \begin{align}\label{eq:DefJMinus}
    \big(J^{-}g^{-}\big)(s,i):=\bE_{s,i}^{*}\Big(g^{-}\Big(\tau_{0}^{-,*},X_{\tau_{0}^{-,*}}^{*}\Big)\Big),\quad (s,i)\in\overline{\sX_{+}}.
    \end{align}
\item For any $\ell\in\bR_{+}$, $\cP_{\ell}^{+}:L^{\infty}(\overline{\sX_{+}})\rightarrow L^{\infty}(\overline{\sX_{+}})$ is defined as
    \begin{align}\label{eq:DefPPlus}
    \big(\cP^{+}_{\ell}g^{+}\big)(s,i):=\bE_{s,i}^{*}\Big(g^{+}\Big(\tau_{\ell}^{+,*},X_{\tau_{\ell}^{+,*}}^{*}\Big)\Big),\quad (s,i)\in\overline{\sX_{+}}.
    \end{align}
\item For any $\ell\in\bR_{+}$, $\cP_{\ell}^{-}:L^{\infty}(\overline{\sX_{-}})\rightarrow L^{\infty}(\overline{\sX_{-}})$ is defined as,
    \begin{align}\label{eq:DefPMinus}
    \big(\cP^{-}_{\ell}g^{-}\big)(s,i):=\bE_{s,i}^{*}\Big(g^{-}\Big(\tau_{\ell}^{-,*},X_{\tau_{\ell}^{-,*}}^{*}\Big)\Big),\quad (s,i)\in\overline{\sX_{-}}.
    \end{align}
\item For any $(s,i)\in\overline{\sX_{+}}$, we define
    \begin{align}\label{eq:DefGenGPlus}
    \big(G^{+}g^{+}\big)(s,i):=\lim_{\ell\rightarrow 0+}\frac{1}{\ell}\big(\cP^{+}_{\ell}g^{+}(s,i)-g^{+}(s,i)\big),
    \end{align}
    for any $g^{+}\in C_{0}(\overline{\sX_{+}})$ such that the limit in \eqref{eq:DefGenGPlus} exists and is finite.
\item For any $(s,i)\in\overline{\sX_{-}}$, we define
    \begin{align}\label{eq:DefGenGMinus}
    \big(G^{-}g^{-}\big)(s,i):=\lim_{\ell\rightarrow 0+}\frac{1}{\ell}\big(\cP^{-}_{\ell}g^{-}(s,i)-g^{-}(s,i)\big),
    \end{align}
    for all $g^{-}\in C_{0}(\overline{\sX_{+}})$ such that the above limit in \eqref{eq:DefGenGMinus} exists and is finite.
\end{itemize}

\begin{remark}\label{rem:Expgpmsimp}
For $g^{+}\in L^{\infty}(\overline{\sX_{+}})$, $\ell\in(0,\infty)$, and $(s,i)\in\sX_{-}$, it can be shown that
\begin{align}\label{eq:ExpgpsimJPPlus}
\bE^{*}_{s,i}\Big(g^{+}\Big(\tau_{\ell}^{+,*},X_{\tau_{\ell}^{+,*}}^{*}\Big)\Big)=\big(J^{+}\cP_{\ell}^{+}g^{+}\big)(s,i).
\end{align}
Similarly, for $g^{-}\in L^{\infty}(\overline{\sX_{-}})$, $\ell\in(0,\infty)$, and $(s,i)\in\sX_{+}$, we have
\begin{align}\label{eq:ExpgmsipJPMinus}
\bE^{*}_{s,i}\Big(g^{-}\Big(\tau_{\ell}^{-,*},X_{\tau_{\ell}^{-,*}}^{*}\Big)\Big)=\big(J^{-}\cP_{\ell}^{-}g^{-}\big)(s,i).
\end{align}
The identity \eqref{eq:ExpgpsimJPPlus} will be verified in Remark \ref{rem:ProofExpgpsimJPMinus} below, while \eqref{eq:ExpgmsipJPMinus} can be proved in an analogous way with $v$ replaced by $-v$.

In view of \eqref{eq:DefJPlus}$-$\eqref{eq:DefPMinus} and \eqref{eq:ExpgpsimJPPlus}$-$\eqref{eq:ExpgmsipJPMinus}, the expectation of the form \eqref{eq:ExpTauXTau} for any $g^{\pm}\in L^{\infty}(\overline{\sX_{\pm}})$, $\ell\in\bR_{+}$, and $(s,i)\in\sX$, can be represented in terms of the operators $J^{\pm}$ and $\cP_{\ell}^{\pm}$.
\end{remark}

\section{Main Results}\label{sec:MainResults}

We now state the main results of this paper, Theorem \ref{thm:WHExistUniq} and Theorem \ref{thm:WHProbInterpr}. Theorem \ref{thm:WHExistUniq} is analytical in nature, and it provides the {\it Wiener-Hopf factorization} for the generator $\mV^{-1}(\partial/\partial s+\wt{\mathsf{\Lambda}})$. This factorization is given in terms of operators $(S^{+},H^{+},S^{-},H^{-})$  showing in the statement of the theorem. Theorem \ref{thm:WHProbInterpr} is probabilistic in nature, and provides the probabilistic interpretation of the operators $(S^{+},H^{+},S^{-},H^{-})$, which is key for various applications of our  Wiener-Hopf factorization.

\begin{theorem}\label{thm:WHExistUniq}
Let $(\mathsf{\Lambda}_{s})_{s\in\bR_{+}}$ be a family of $m\times m$ generator matrices satisfying Assumption \ref{assump:GenLambda}, and let $\wt{\mathsf{\Lambda}}$ be the associated multiplication operator defined as in \eqref{eq:DefTildeLambda}. Let $v:\overline{\mathbf{E}}\rightarrow\bR$ with $v(i)\neq 0$ for any $i\in\mathbf{E}$, $v(\partial)=0$, and $\mV=\textnormal{diag}\,\{v(i):i\in\mathbf{E}\}$. Then, there exists a unique quadruple of operators $(S^{+},H^{+},S^{-},H^{-})$ which solves the following operator equation
\begin{align}\label{eq:WH}
\mV^{-1}\bigg(\frac{\partial}{\partial s}+\wt{\mathsf{\Lambda}}\bigg) \begin{pmatrix} I^{+} & S^{-} \\ S^{+} & I^{-} \end{pmatrix} \begin{pmatrix} g^{+} \\ g^{-} \end{pmatrix} = \begin{pmatrix} I^{+} & S^{-} \\ S^{+} & I^{-} \end{pmatrix} \begin{pmatrix} H^{+} & \mathsf{0} \\ \mathsf{0} & -H^{-} \end{pmatrix} \begin{pmatrix} g^{+} \\ g^{-} \end{pmatrix},\quad g^{\pm}\in C_{0}^{1}(\overline{\sX_{\pm}}),
\end{align}
subject to the conditions below:
\begin{itemize}
\item [$(\textnormal{a}^{\pm})$] $S^{\pm}:C_{0}(\overline{\sX_{\pm}})\rightarrow C_{0}(\overline{\sX_{\mp}})$ is a bounded operator such that
    \begin{itemize}
    \item[(i)] for any $g^{\pm}\in C_{c}(\overline{\sX_{\pm}})$ with $\supp g^{\pm}\subset[0,\eta_{g^{\pm}}]\times\mathbf{E}_{\pm}$ for some constant $\eta_{g^{\pm}}\in(0,\infty)$, we have $\supp S^{\pm}g^{\pm}\subset[0,\eta_{g^{\pm}}]\times\mathbf{E}_{\mp}$;
    \item[(ii)] for any $g^{\pm}\in C_{0}^{1}(\overline{\sX_{\pm}})$, we have $S^{\pm}g^{\pm}\in C_{0}^{1}(\overline{\sX_{\mp}})$.
    \end{itemize}
\item [$(\textnormal{b}^{\pm})$] $H^{\pm}$ is the strong generator of a strongly continuous positive contraction semigroup $(\cQ_{\ell}^{\pm})_{\ell\in\bR_{+}}$ on $C_{0}(\overline{\sX_{\pm}})$ with domain $\sD(H^{\pm})=C_{0}^{1}(\overline{\sX_{\pm}})$.
\end{itemize}
\end{theorem}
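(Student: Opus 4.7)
The plan is to construct the quadruple probabilistically, using the operators already introduced in Section~2.3, and then verify the factorization equation together with uniqueness via a martingale/optional-stopping argument. The guiding principle is \emph{time homogenization}: the space-time process $Y^{s}_{t}:=(s+t,X^{*}_{s+t})$ is a time-homogeneous Markov process on $\overline{\sX}$ whose (extended) generator on smooth functions is precisely $\partial/\partial s+\wt{\mathsf{\Lambda}}$. Under this identification, \eqref{eq:WH} becomes a Wiener-Hopf-type factorization for the time-homogeneous Feller generator $\mV^{-1}(\partial/\partial s+\wt{\mathsf{\Lambda}})$, in the spirit of \cite{BarlowRogersWilliams1980}, but now on the infinite state space $\overline{\sX}$.

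\textbf{Existence.} I would define $S^{\pm}:=J^{\pm}$ as in \eqref{eq:DefJPlus}--\eqref{eq:DefJMinus}, and take $H^{\pm}$ to be the strong generator of the family $(\cP^{\pm}_{\ell})_{\ell\in\bR_{+}}$ from \eqref{eq:DefPPlus}--\eqref{eq:DefPMinus}. The semigroup property of $(\cP^{\pm}_{\ell})_{\ell\in\bR_{+}}$ is a direct consequence of the strong Markov property applied at $\tau^{\pm,*}_{\ell}$, together with the additivity $\tau^{\pm,*}_{\ell+\ell'}(s)=\tau^{\pm,*}_{\ell'}\circ\theta_{\tau^{\pm,*}_{\ell}(s)}$ (where $\theta$ denotes the shift on $\Omega^{*}$), and positivity/contraction are immediate. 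Condition~(a$^{\pm}$)(i) follows because $\tau^{\pm,*}_{0}(s,\omega^{*})\geq s$, so if $g^{\pm}$ vanishes for $s>\eta_{g^{\pm}}$, so does $(J^{\pm}g^{\pm})(s,\cdot)$ for $s>\eta_{g^{\pm}}$; condition~(a$^{\pm}$)(ii) and the description of $\cD(H^{\pm})=C^{1}_{0}(\overline{\sX_{\pm}})$ follow from Feller-type regularity of $\cM^{*}$ (recall $\mU^{*}$ is strongly continuous). To verify \eqref{eq:WH} itself, I would split it into four scalar block equations by setting $g^{+}=0$ or $g^{-}=0$: e.g.\ the $(+,+)$ component is $\mV_{+}^{-1}[\partial_{s}g^{+}+\wt{\mA}g^{+}+\wt{\mB}(S^{+}g^{+})]=H^{+}g^{+}$, and the $(-,+)$ component is $\mV_{-}^{-1}[\partial_{s}(S^{+}g^{+})+\wt{\mC}g^{+}+\wt{\mD}(S^{+}g^{+})]=S^{+}H^{+}g^{+}$. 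Each such identity is a Dynkin/Feynman-Kac formula: write $M_{t}:=f(Y^{s}_{t\wedge\tau^{+,*}_{\ell}(s)})$ for the appropriate $f$ (built from $g^{+}$, $J^{+}g^{+}$, $\cP^{+}_{\ell}g^{+}$, etc.), show it is a martingale under $\bP^{*}_{s,i}$ using the generator of $Y^{s}$ and the definition of $\phi^{*}$ (so that $\mathrm{d}\phi^{*}_{t}=v(X^{*}_{t})\,\mathrm{d}t$ explains the $\mV^{-1}$), take expectations and differentiate at $\ell=0$.

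\textbf{Uniqueness.} Suppose $(\wt{S}^{\pm},\wt{H}^{\pm})$ is any solution of \eqref{eq:WH} satisfying (a$^{\pm}$), (b$^{\pm}$), and let $(\wt{\cQ}^{\pm}_{\ell})_{\ell\in\bR_{+}}$ be the semigroup generated by $\wt{H}^{\pm}$. For $g^{+}\in C^{1}_{0}(\overline{\sX_{+}})$ and $\ell>0$, define on $\overline{\sX_{+}}$ the function $f_{\ell}:=\wt{\cQ}^{+}_{\ell-\phi^{*}}g^{+}$ extended by $\wt{S}^{+}$ to $\overline{\sX_{-}}$ so that $f_{\ell}$ lives on all of $\overline{\sX}$ and matches the column $(I^{+},S^{+})^{T}(\wt{\cQ}^{+}_{\ell}g^{+})$ appearing on the right of \eqref{eq:WH}. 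The factorization equation, together with the time-change identity $\mathrm{d}\phi^{*}_{t}=v(X^{*}_{t})\,\mathrm{d}t$, forces $f_{\ell}(Y^{s}_{t\wedge\tau^{+,*}_{\ell}(s)})$ to be a $\bP^{*}_{s,i}$-martingale; optional stopping at $\tau^{+,*}_{\ell}(s)$, combined with Lemma~\ref{lem:RangeXTaupm} (so that the stopped process lies in $\mathbf{E}_{+}\cup\{\partial\}$, where $f_{\ell}$ reduces to $g^{+}$), then identifies $\wt{\cQ}^{+}_{\ell}g^{+}$ with $\bE^{*}_{s,i}[g^{+}(\tau^{+,*}_{\ell},X^{*}_{\tau^{+,*}_{\ell}})]=\cP^{+}_{\ell}g^{+}$. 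Sending $\ell\downarrow 0$ yields $\wt{H}^{+}=H^{+}$, while the analogous argument run on the $(-)$-block of the same martingale at the stopping time $\tau^{+,*}_{0}(s)$ identifies $\wt{S}^{+}=J^{+}$. The $(-)$-versions are symmetric.

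\textbf{Main obstacle.} The routine parts are the semigroup and support statements. The genuinely delicate step is the uniqueness argument above: it requires that the martingale $f_{\ell}(Y^{s}_{t\wedge\tau^{+,*}_{\ell}(s)})$ constructed from an \emph{arbitrary} solution is well-defined and bounded up to the (possibly infinite) stopping time $\tau^{+,*}_{\ell}(s)$, which is where the support condition~(a$^{\pm}$)(i) and the contractivity in~(b$^{\pm}$) are essential. A secondary technical hurdle is establishing the Feller/$C^{1}_{0}$ regularity of $J^{\pm}$ and $\cP^{\pm}_{\ell}$ needed in the existence step, because time inhomogeneity prevents a direct appeal to standard semigroup theory; I expect this to be handled by the space-time reformulation via $Y^{s}$ together with Assumption~\ref{assump:GenLambda}(ii).
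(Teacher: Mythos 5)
Your high-level plan---take $S^{\pm}:=J^{\pm}$, take $H^{\pm}$ to be the generator of the passage-time semigroup $(\cP^{\pm}_{\ell})$, verify the factorization via a Markov-property computation, and prove uniqueness by constructing a martingale from an arbitrary solution and optional-stopping it---is exactly the paper's strategy. However, several of the steps you flag as ``routine'' are in fact the hard core of the proof, and two of them would fail as you have sketched them.

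\textbf{The homogenization must carry the additive functional.} You work with the space-time process $Y^{s}_{t}=(s+t,X^{*}_{s+t})$ on $\overline{\sX}$. This is not enough: the stopping time $\tau^{+,*}_{\ell}(s)$ is a level-crossing time for the non-Markovian additive functional $\phi^{*}$, and the functions you want to turn into martingales (your $f_{\ell}=\widetilde{\cQ}^{+}_{\ell-\phi^{*}}g^{+}$) depend on $\phi^{*}_{t}$ and hence are not functions of $Y^{s}_{t}$. The paper resolves this by building the time-homogeneous Feller family $\widetilde{\cM}$ on the \emph{three-dimensional} state $\overline{\sZ}=\overline{\bR_{+}\times\mathbf{E}\times\bR}$, carrying $(s+t,X^{*}_{s+t},\varphi_{t})$; then $\widetilde{\tau}^{+}_{\ell}$ is a genuine first-passage time of a Feller process, the generator $\cA$ of $\widetilde{\cM}$ can be identified on $C^{1}_{0}(\overline{\sZ})$ (Lemma~\ref{lem:GenTildeM}), and Dynkin's formula applies. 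Without this enlargement your ``martingale'' is not a function of the current state of any Markov process and the Dynkin/optional-stopping machinery does not engage.

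\textbf{Support preservation for $\cQ^{+}_{\ell}$ is not routine.} In the uniqueness half you need to know that if $\supp g^{+}\subset[0,\eta_{g^{+}}]\times\mathbf{E}_{+}$ then $\supp\cQ^{+}_{\ell}g^{+}\subset[0,\eta_{g^{+}}]\times\mathbf{E}_{+}$ for the \emph{arbitrary} candidate semigroup $(\cQ^{+}_{\ell})$ generated by $H^{+}$---this is what kills the boundary terms when $T\to\infty$ in the optional-stopping identity. The paper proves this (Lemma~\ref{lem:QellCompgPlus}) not by any soft argument, but by passing to the resolvent $R_{\lambda}=(\lambda-H^{+})^{-1}$ and characterizing $R_{\lambda}g^{+}$ as the unique $C^{1}_{0}$ solution of a Riccati-type integral equation (Lemma~\ref{lem:ExistUniPhiIntEq}), using the factorization equation~\eqref{eq:WHPlus1} \emph{and} condition~$(a^{+})(\textnormal{i})$ for $S^{+}$. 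Calling this a ``routine support statement'' hides the argument on which uniqueness actually rests.

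\textbf{The domain identification $\sD(H^{+})=C^{1}_{0}(\overline{\sX_{+}})$ needs a direct generator computation.} You assert this follows from ``Feller-type regularity,'' but the paper must compute $\lim_{\ell\to 0+}\ell^{-1}(\cP^{+}_{\ell}g^{+}-g^{+})$ explicitly, conditioning on whether the first jump $\widetilde{\gamma}_{1}$ occurs before $\ell/v(i)$, to show that the pointwise limit exists \emph{iff} $g^{+}(\cdot,i)$ is right-differentiable, and then that $G^{+}g^{+}\in C_{0}$ \emph{iff} the right-derivative is a continuous derivative vanishing at infinity (Step~3 of Section~\ref{sec:ExistProof}, which also delivers the $(+,+)$-block equation). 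A Dynkin-formula argument verifies the generator on $C^{1}_{0}$ but does not, by itself, rule out extra elements of $\sD(H^{+})$; you would still need the explicit decomposition. Similarly, condition~$(a^{+})(\textnormal{ii})$ is proved in the paper (Step~4) by a second explicit differentiation in $s$ at a cost of roughly two pages of estimates ($\cJ_{1},\ldots,\cJ_{4}$) relying on the uniform-continuity Lemma~\ref{lem:UnifContf} and the jump-time estimates of Lemma~\ref{lem:TailDistJumpTildeZ2}; this is not a corollary of strong continuity of $\mU^{*}$.

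In summary, your skeleton is right and mirrors the paper's two-step existence/uniqueness structure, but three loads you have placed on ``routine regularity'' are exactly where the paper expends its technical effort: (1)~homogenization must include $\varphi$ as a state coordinate, (2)~the support-preservation lemma for $\cQ^{+}_{\ell}$ requires a contraction-mapping analysis of a Riccati-type integral equation built from the factorization itself, and (3)~the domain description and the $s$-derivative of $J^{+}g^{+}$ require explicit first- and second-jump decompositions, not abstract Feller theory.
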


\begin{theorem}\label{thm:WHProbInterpr}
For any $g^{\pm}\in C_{0}(\overline{\sX_{\pm}})$, we have
\begin{align*}
S^{\pm}g^{\pm}=J^{\pm}g^{\pm}\quad\text{and}\quad\cQ_{\ell}^{\pm}g^{\pm}=\cP_{\ell}^{\pm}g^{\pm},\quad\text{for any }\,\ell\in\bR_{+},
\end{align*}
where $J^{\pm}$ and $(\cP_{\ell}^{\pm})_{\ell\in\bR_{+}}$ are defined in \eqref{eq:DefJPlus}$-$\eqref{eq:DefPMinus}. Moreover, $G^+$ given in \eqref{eq:DefGenGPlus} is the (strong) generator of $(\cP^{+}_{\ell})_{\ell\geq 0}$ with $\cD(G^+) = C_0^1(\overline{\sX}_{+})$, and  $G^-$ given in \eqref{eq:DefGenGMinus} is the (strong) generator of $(\cP^{-}_{\ell})_{\ell\geq 0}$ with $\cD(G^-) = C_0^1(\overline{\sX}_{-})$.
\end{theorem}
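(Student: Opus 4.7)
The plan is to exhibit the probabilistic quadruple $(J^+, G^+, J^-, G^-)$ as a solution of the operator system of Theorem~\ref{thm:WHExistUniq} satisfying conditions $(\textnormal{a}^{\pm})$ and $(\textnormal{b}^{\pm})$, and then invoke the uniqueness assertion of that theorem. First I would verify that $J^{\pm}$ meets condition $(\textnormal{a}^{\pm})$: boundedness and the support property (i) follow immediately from $\tau_0^{\pm,*}(s) \ge s$ and $|(J^{\pm} g^{\pm})(s,i)| \le \|g^{\pm}\|_{L^\infty}$. Preservation of $C_0$ and of $C_0^1$ relies on the strong continuity of the evolution system $\mU^{*}_{s,t}$ in both $s$ and $t$ (already built into the Peano--Baker construction of Section~\ref{subsec:Markov} under Assumption~\ref{assump:GenLambda}), together with a sample-path continuity argument showing that $s \mapsto (\tau_0^{\pm,*}(s), X_{\tau_0^{\pm,*}(s)}^{*})$ is continuous on a set of full $\bP^{*}$-measure.

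Next I would show that $(\cP_{\ell}^{\pm})_{\ell \ge 0}$ forms a strongly continuous positive contraction semigroup on $C_0(\overline{\sX_{\pm}})$, with generator $G^{\pm}$ and domain exactly $C_0^1(\overline{\sX_{\pm}})$. The semigroup property follows from the strong Markov property of $\cM^{*}$ at $\tau_{\ell_1}^{\pm,*}(s)$, exploiting the pathwise identity $\tau_{\ell_1+\ell_2}^{\pm,*}(s) = \tau_{\ell_2}^{\pm,*}(\tau_{\ell_1}^{\pm,*}(s))$ together with Lemma~\ref{lem:RangeXTaupm}; positivity and contractivity are immediate; strong continuity at $\ell = 0$ follows from $\tau_{\ell}^{\pm,*}(s) \downarrow s$ as $\ell \downarrow 0$ and dominated convergence. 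To pin down the generator and its domain, I would compute the limit \eqref{eq:DefGenGPlus} for $g^{+} \in C_0^1(\overline{\sX_{+}})$ by conditioning on whether $X^{*}$ remains at $i$ throughout $[s, \tau_{\ell}^{+,*}(s)]$ or jumps; the dominant no-jump contribution yields the closed form
\begin{align*}
G^{+} g^{+} = \mV_{+}^{-1}\bigl[(\partial_{s} + \wt{\mA}) g^{+} + \wt{\mB}\, J^{+} g^{+}\bigr],
\end{align*}
and symmetrically $-G^{-} g^{-} = \mV_{-}^{-1}\bigl[(\partial_{s} + \wt{\mD}) g^{-} + \wt{\mC}\, J^{-} g^{-}\bigr]$; conversely, for $g^{\pm}$ not in $C_0^1$, this limit fails to exist uniformly, so the domain equals $C_0^1(\overline{\sX_{\pm}})$.

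Finally, I would verify the Wiener--Hopf equation~\eqref{eq:WH}. By linearity in $(g^{+},g^{-})$, it decouples into four scalar identities; two coincide with the generator formulas from the previous step, while the remaining two are the intertwining relations
\begin{align*}
-J^{-} G^{-} g^{-} &= \mV_{+}^{-1}\bigl[(\partial_{s} + \wt{\mA})(J^{-} g^{-}) + \wt{\mB}\, g^{-}\bigr], \\
J^{+} G^{+} g^{+} &= \mV_{-}^{-1}\bigl[\wt{\mC}\, g^{+} + (\partial_{s} + \wt{\mD})(J^{+} g^{+})\bigr].
\end{align*}
I would derive these via Dynkin's formula for $\cM^{*}$ applied to $J^{\mp} g^{\mp}$ and stopped at $\tau_{0}^{\pm,*}(s)$, combined with the strong Markov property and the explicit generator formulas for $G^{\mp}$ established above. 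Once all four identities hold, the probabilistic quadruple $(J^{+}, G^{+}, J^{-}, G^{-})$ satisfies every hypothesis of Theorem~\ref{thm:WHExistUniq}, so uniqueness yields $S^{\pm} = J^{\pm}$ and $H^{\pm} = G^{\pm}$ on $C_0^1(\overline{\sX_{\pm}})$; since both $(\cQ_{\ell}^{\pm})$ and $(\cP_{\ell}^{\pm})$ are strongly continuous with the same generator on the same domain, Hille--Yosida gives $\cQ_{\ell}^{\pm} = \cP_{\ell}^{\pm}$ for all $\ell \ge 0$. The main obstacle I anticipate is the rigorous derivation of the two intertwining identities, which encode the nontrivial coupling between the first-passage operators $J^{\pm}$ and the time-inhomogeneous infinitesimal dynamics $\partial_{s} + \wt{\mathsf{\Lambda}}$; the exact identification of the generator domain as $C_0^1$ (rather than a potentially larger set) is a secondary delicate point and may require a separate core-theoretic argument.
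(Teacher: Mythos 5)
Your proposal has a genuine gap: it is circular with respect to the logical architecture of the paper. You propose to exhibit $(J^{+},G^{+},J^{-},G^{-})$ as a solution of \eqref{eq:WH} subject to $(\textnormal{a}^{\pm})$, $(\textnormal{b}^{\pm})$ and then ``invoke the uniqueness assertion of Theorem \ref{thm:WHExistUniq}.'' But in this paper the uniqueness assertion of Theorem \ref{thm:WHExistUniq} has no proof independent of Theorem \ref{thm:WHProbInterpr}: Section \ref{sec:UniqProof} proves uniqueness precisely by showing that \emph{any} pair $(S^{+},H^{+})$ satisfying \eqref{eq:WHPlus}, $(\textnormal{a}^{+})$ and $(\textnormal{b}^{+})$ must reproduce the probabilistic operators, and the paper explicitly identifies this argument as simultaneously the proof of Theorem \ref{thm:WHProbInterpr} and of the uniqueness in Theorem \ref{thm:WHExistUniq}. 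Everything in your parts on verifying $(\textnormal{a}^{\pm})$, the Feller semigroup property of $(\cP_{\ell}^{\pm})$, the generator formula \eqref{eq:WHplusU}, and the intertwining relation \eqref{eq:WHplusD} reproduces the \emph{existence} half (the paper's Section \ref{sec:ExistProof}); it shows the probabilistic quadruple is \emph{a} solution, not the \emph{only} one. The actual content of Theorem \ref{thm:WHProbInterpr} — that an arbitrary abstract solution coincides with $(J^{\pm},G^{\pm})$ — is nowhere addressed in your proposal.

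What is missing is the paper's key construction: for an arbitrary solution $(S^{+},H^{+})$ one forms $\wh{F}_{+}(s,i,a,\ell)$ as in \eqref{eq:FhatPlus}, uses the operator equation \eqref{eq:WHPlus} together with Lemma \ref{lem:GenTildeM} to show that $\cA\wh{F}_{+}(\cdot,\cdot,\cdot,\ell)=0$ on $\sX\times(-\infty,\ell)$, where $\cA$ is the strong generator of the homogenized family $\wt{\cM}$, and then applies Dynkin's formula up to $\wt{\tau}^{+}_{\ell}\wedge T$. Closing the optional-stopping argument requires the support conditions in an essential way: condition $(\textnormal{a}^{+})(\textnormal{i})$ and Lemma \ref{lem:QellCompgPlus} (propagation of compact support under $\cQ^{+}_{\ell}$, itself proved via the resolvent equation \eqref{eq:LambdaHPlusEq} and a fixed-point argument) guarantee that the terms at the deterministic horizon $T$ vanish for $T\geq\eta_{g^{+}}-s$, yielding $\wh{F}_{+}=F_{+}$, hence $S^{+}g^{+}=J^{+}g^{+}$ and $\cQ^{+}_{\ell}g^{+}=\cP^{+}_{\ell}g^{+}$ first on $C_{c}^{1}(\overline{\sX_{+}})$ and then on $C_{0}(\overline{\sX_{+}})$ by density. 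Without an argument of this type (or some substitute proof of uniqueness), your proof establishes only the existence statement, which is not what Theorem \ref{thm:WHProbInterpr} asserts.
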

The proofs of these two theorems is deferred to Section~\ref{sec:Proofs}.

By Theorems \ref{thm:WHExistUniq} and \ref{thm:WHProbInterpr}, we are able to compute $J^{\pm}g^{\pm}$ and $\cP_{\ell}^{\pm}g^{\pm}$, for any $g^{\pm}\in C_{0}^{1}(\overline{\sX_{\pm}})$ and $\ell\in\bR_{+}$, by solving equation~\eqref{eq:WH} subject to the conditions $(a^{\pm})$ and $(b^{\pm})$. In view of Remark~\ref{rem:Expgpmsimp}, these functions lead to the expectation of the form \eqref{eq:ExpTauXTau} for any $g^{\pm}\in C_{0}^{1}(\overline{\sX_{\pm}})$. In particular, for any $c>0$ and $j\in\mathbf{E}_{\pm}$, by taking $g^{\pm}_{j}\in C_{0}^{1}(\overline{\sX_{\pm}})$ with
\begin{align}\label{eq:Funtgjpm}
g^{\pm}_{j}(s,i):=e^{-cs}\,\1_{\{j\}}(i),\quad (s,i)\in\sX_{\pm},
\end{align}
we obtain the following Laplace transform for $(\tau_{\ell}^{\pm,*},X_{\tau_{\ell}^{\pm,*}}^{*})$
\begin{align*}
\bE^{*}_{s,i}\bigg(e^{-c\tau_{\ell}^{\pm,*}}\1_{\big\{X_{\tau_{\ell}^{\pm,*}}^{*}=j\big\}}\bigg),
\end{align*}
for any $c\in(0,\infty)$, $\ell\in\bR_{+}$, and $(s,i)\in\sX$. We then perform the inverse Laplace transform with respect to $c$ to obtain the join distribution of $(\tau_{\ell}^{\pm,*},X_{\tau_{\ell}^{\pm,*}}^{*})$ under $\bP_{s,i}^{*}$, which enables us to compute the expectations \eqref{eq:ExpTauXTau} for any $g^{\pm}\in L^{\infty}(\overline{\sX_{\pm}})$.

Note that the equation \eqref{eq:WH} can be decomposed into the following two uncoupled equations
\begin{align}\label{eq:WHPlus}
\mV^{-1}\bigg(\frac{\partial}{\partial s}+\wt{\mathsf{\Lambda}}\bigg)\begin{pmatrix} I^{+} \\ S^{+} \end{pmatrix} g^{+}&=\begin{pmatrix} I^{+} \\ S^{+} \end{pmatrix} H^{+}g^{+},\quad g^{+}\in C_{0}^{1}(\overline{\sX_{+}}),\\
\label{eq:WHMinus} \mV^{-1}\bigg(\frac{\partial}{\partial s}+\wt{\mathsf{\Lambda}}\bigg)\begin{pmatrix} I^{-} \\ S^{-} \end{pmatrix} g^{+}&= -\begin{pmatrix} I^{-} \\ S^{-} \end{pmatrix} H^{-}g^{-},\quad g^{-}\in C_{0}^{1}(\overline{\sX_{-}}).
\end{align}
Hence, one can compute $J^{+}g^{+}$ and $G^{+}g^{+}$ (and thus $\cP_{\ell}^{+}g^{+}$) separately from $J^{-}g^{-}$ and $G^{-}g^{-}$ (and thus $\cP_{\ell}^{-}g^{-}$) by solving \eqref{eq:WHPlus} and \eqref{eq:WHMinus} subject to $(a^{+})$ and $(b^{+})$,  and $(a^{-})$ and $(b^{-})$, respectively.

\begin{remark}\label{rem:Riccati}
By \eqref{eq:MatrixBlocks}, \eqref{eq:TildeLambdaBlocks}, and Theorems \ref{thm:WHExistUniq} and \ref{thm:WHProbInterpr}, we see that $(J^{+},G^{+})$ is the unique solution, subject to $(a^{+})$ and $(b^{+})$, to the following two operator equations,
\begin{align}\label{eq:WHPlus1}
\big(\mV^{+}\big)^{-1}\bigg(\frac{\partial}{\partial s}+\wt{\mA}+\wt{\mB}\,S^{+}\bigg)g^{+}&=H^{+}g^{+},\\
\label{eq:WHPlus2} \big(\mV^{-}\big)^{-1}\bigg(\frac{\partial}{\partial s}+\wt{\mC}+\wt{\mD}\,S^{+}\bigg)g^{+}&=S^{+}H^{+}g^{+},
\end{align}
where $g^{+}\in C_{0}^{1}(\overline{\sX_{+}})$.
By plugging \eqref{eq:WHPlus1} into \eqref{eq:WHPlus2}, we obtain the operator Riccati equation of the form
\begin{align*}
\bigg(S^{+}\big(V^{+}\big)^{-1}\wt{\mB}\,S^{+}+S^{+}\big(\mV^{+}\big)^{-1}\bigg(\frac{\partial}{\partial s}+\wt{\mA}\bigg)-\big(\mV^{-}\big)^{-1}\bigg(\frac{\partial}{\partial s}+\wt{\mD}\bigg)S^{+}-\big(\mV^{-}\big)^{-1}\wt{\mC}\bigg)\,g^{+}=0.
\end{align*}
Hence, in order to compute $(J^{+},G^{+})$ from \eqref{eq:WHPlus}, one needs first to compute $J^{+}$ by solving the above operator equation subject to $(a^{+})$, and {then} $G^{+}$ is given in terms of $J^{+}$ by \eqref{eq:WHPlus1}. Similarly, one can compute $(J^{-},G^{-})$ from \eqref{eq:WHMinus} in an analogous way.
\end{remark}

\begin{remark}\label{rem:OperPsiInv}
The operator
\begin{align*}
\Psi:=\begin{pmatrix} I^{+} & J^{-} \\ J^{+} & I^{-} \end{pmatrix}:\,C_{0}(\overline{\sX})\rightarrow C_{0}(\overline{\sX})
\end{align*}
is {the} counterpart  of the matrix $\mS$ given in Theorem \ref{thm:BRW80WH1}. It can be shown that the operator $\Psi$ is injective. However, unlike  the matrix $\mS$ which is invertible, the operator $\Psi$ is not invertible in general. In fact, the surjectivity of $\Psi$ may fail, even when restricted to $C_{0}^{1}(\overline{\sX})$ (recall the condition $(a^{+})$(ii)). Nevertheless, the potential lack of invertibility of $\Psi$ does not affect the existence and uniqueness of our Wiener-Hopf factorization. It only affects the form of equality \eqref{eq:WH}, with $S^\pm$ replaced with $J^\pm$.
\end{remark}

\begin{remark}\label{rem:RealityCheck}
When the Markov family $\cM^{*}$ is time-homogeneous, namely, $\mathsf{\Lambda}_{s}=\mathsf{\Lambda}$ for all $s\in\bR_{+}$, where $\mathsf{\Lambda}$ is an $m\times m$ generator matrix, the equation \eqref{eq:WH} reduces to the time-homogeneous Wiener-Hopf factorization \eqref{eq:TimeHomoWH}, which, in light of the invertibility of $\mS$, can be rewritten as
\begin{align}\label{eq:TimeHomoWHReform}
\mV^{-1}(\mathsf{\Lambda}-c\,\mI) \begin{pmatrix} \mI^{+} & \mathsf{\Pi}^{-}_{c} \\ \mathsf{\Pi}^{+}_{c} & \mI^{-} \end{pmatrix} = \begin{pmatrix} \mI^{+} & \mathsf{\Pi}^{-}_{c} \\ \mathsf{\Pi}^{+}_{c} & \mI^{-} \end{pmatrix} \begin{pmatrix} \mQ^{+}_{c} & 0 \\ 0 & \mQ^{-}_{c} \end{pmatrix}.
\end{align}
In what follows, we will only check the ``+" part of the above equality.

Towards this end, for any $c\in(0,\infty)$ and $j\in\mathbf{E}_{+}$, take $g^{+}_{j}\in C_{0}^{1}(\overline{\sX_{+}})$ as in \eqref{eq:Funtgjpm}. Since $(J^{+},G^{+})$ is the unique solution to \eqref{eq:WHPlus} subject to $(\textnormal{a}^{+})$ and $(\textnormal{b}^{+})$, we have
\begin{align}\label{eq:WHPlusgj}
\mV^{-1}\bigg(\frac{\partial}{\partial s}+\mathsf{\Lambda}\bigg) \begin{pmatrix} I^{+} \\ J^{+} \end{pmatrix} g^{+}_{j}= \begin{pmatrix} I^{+} \\ J^{+} \end{pmatrix} G^{+}g^{+}_{j}.
\end{align}
Since $\cM^{*}$ is a time-homogeneous Markov family, for any $s,\ell\in\bR_{+}$ and $i\in\mathbf{E}$, the distribution of $(\tau^{+,*}_{\ell}(s)-s,X_{\tau^{+,*}_{\ell}(s)})$ under $\bP_{s,i}^{*}$ is the same as that of $(\tau^{+,*}_{\ell}(0),X_{\tau^{+,*}_{\ell}(0)})$ under $\bP_{0,i}^{*}$. Hence, for any $s\in\bR_{+}$ and $i\in\mathbf{E}_{+}$, we have
\begin{align}
\big(G^{+}\!g^{+}_{j}\big)(s,i)&=\lim_{\ell\rightarrow 0+}\frac{1}{\ell}\Big(\big(\cP^{+}_{\ell}g^{+}_{j}\big)(s,i)\!-\!g^{+}_{j}(s,i)\Big)\!=\!\lim_{\ell\rightarrow 0+}\frac{1}{\ell}\bigg(\bE^{*}_{s,i}\bigg(e^{-c\tau^{+,*}_{\ell}(s)}\1_{\big\{X^{*}_{\tau^{+,*}_{\ell}(s)}\!=j\!\big\}}\bigg)\!-\!\1_{\{j\}}(i)\bigg)\nonumber\\
\label{eq:GPlusgjPlus} &=\lim_{\ell\rightarrow 0+}\frac{e^{-cs}}{\ell}\bigg(\bE^{*}_{0,i}\bigg(e^{-c\tau^{+,*}_{\ell}(0)}\1_{\big\{X^{*}_{\tau^{+,*}_{\ell}(0)}=j\big\}}\bigg)-\1_{\{j\}}(i)\bigg)=e^{-cs}\,\mQ_{c}^{+}(i,j),
\end{align}
where we recall that the matrix $\mQ_{c}^{+}$ is defined in \eqref{eq:QcPlusMinus}. Similarly, for any $s\in\bR_{+}$ and $i\in\mathbf{E}_{-}$,
\begin{align}
\big(J^{+}g^{+}_{j}\big)(s,i)&=\bE^{*}_{s,i}\Big(g^{+}_{j}\Big(\tau^{+,*}_{0}(s),X^{*}_{\tau^{+,*}_{0}(s)}\Big)\Big)=\bE^{*}_{s,i}\bigg(e^{-c\tau^{+,*}_{0}(s)}\1_{\big\{X^{*}_{\tau^{+,*}_{0}(s)}=j\big\}}\bigg)\nonumber\\
\label{eq:JPlusgjPlus} &=e^{-cs}\,\bE^{*}_{0,i}\bigg(e^{-c\tau^{+,*}_{0}(0)}\1_{\big\{X^{*}_{\tau^{+,*}_{0}(0)}=j\big\}}\bigg)=e^{-cs}\,\mathsf{\Pi}^{+}_{c}(i,j),
\end{align}
where the matrix $\mathsf{\Pi}^{+}_{c}$ is defined by \eqref{eq:PicPlusMinus}, and
\begin{align}
\big(J^{+}G^{+}g^{+}_{j}\big)(s,i)&=\bE^{*}_{s,i}\Big(\big(G^{+}g^{+}_{j}\big)\Big(\tau^{+,*}_{0}(s),X^{*}_{\tau^{+,*}_{0}(s)}\Big)\Big)=\bE_{s,i}^{*}\Big(e^{-c\tau^{+,*}_{0}(s)}\,\mQ_{c}^{+}\Big(X^{*}_{\tau^{+,*}_{0}(s)},j\Big)\Big)\nonumber\\
&=e^{-cs}\,\bE_{0,i}^{*}\Big(\!e^{-c\tau^{+,*}_{0}(0)}\mQ_{c}^{+}\!\Big(\!X^{*}_{\tau^{+,*}_{0}(0)},j\!\Big)\!\Big)\!=\!e^{-cs}\!\!\!\sum_{k\in\mathbf{E}_{+}}\!\bE_{0,i}^{*}\bigg(\!e^{-c\tau^{+,*}_{0}(0)}\1_{\big\{\!X^{*}_{\tau_{0}^{+,*}}\!=k\!\big\}}\!\bigg)\mQ_{c}^{+}\!(k,j)\nonumber\\
\label{eq:JGPlusgjPlus} &=e^{-cs}\sum_{k\in\mathbf{E}_{+}}\mathsf{\Pi}^{+}_{c}(i,k)\,\mQ_{c}^{+}(k,j)=e^{-cs}\big(\mathsf{\Pi}^{+}_{c}\mathsf{Q}^{+}_{c}\big)(i,j).
\end{align}
By plugging \eqref{eq:GPlusgjPlus}$-$\eqref{eq:JGPlusgjPlus} into \eqref{eq:WHPlusgj}, we obtain
\begin{align*}
\mV^{-1}\bigg(\frac{\partial}{\partial s}+\mathsf{\Lambda}\bigg) \begin{pmatrix} \mI^{+} \\ \mathsf{\Pi}_{c}^{+} \end{pmatrix} e^{-cs}\,\mathbf{e}_{j}^{+} = \begin{pmatrix} \mI^{+} \\ \mathsf{\Pi}_{c}^{+} \end{pmatrix} \mQ_{c}^{+}\mathbf{e}_{j}^{+},
\end{align*}
where $\mathbf{e}_{j}^{+}$ is the $j$-th $m_{+}$-dimensional unit column vector. Finally, by evaluating the derivative and taking $s=0$ on the left-hand side above, we deduce that
\begin{align*}
\mV^{-1}\big(\mathsf{\Lambda}-c\,\mI\big) \begin{pmatrix} \mI^{+} \\ \mathsf{\Pi}_{c}^{+} \end{pmatrix} = \begin{pmatrix} \mI^{+} \\ \mathsf{\Pi}_{c}^{+} \end{pmatrix} \mQ_{c}^{+},
\end{align*}
which is the ``+" part of \eqref{eq:TimeHomoWHReform}.
\end{remark}

\begin{remark}\label{rem:Uniqueness}
From the discussion in Remark \ref{rem:RealityCheck}, for each $c>0$, solving the time-homogeneous Wiener-Hopf equation \eqref{eq:TimeHomoWH} for the matrices $(\mathsf{\Pi}_{c}^{\pm},\mQ_{c}^{\pm})$ is equivalent to solving the time-inhomogeneous Wiener-Hopf equation \eqref{eq:WH}, subject to the conditions $(a^{\pm})$ and $(b^{\pm})$, for the operators $(J^{\pm},G^{\pm})$ with $g^{\pm}\in C_{0}^{1}(\overline{\sX_{\pm}})$ of the form \eqref{eq:Funtgjpm}. Therefore, for each $c\in(0,\infty)$, the uniqueness of $(\mathsf{\Pi}_{c}^{\pm},\mQ_{c}^{\pm})$ as a solution to \eqref{eq:TimeHomoWH} corresponds to the uniqueness of $(J^{\pm},G^{\pm})$ as a solution to \eqref{eq:WH}, subject to $(a^{\pm})$ and $(b^{\pm})$, when $g^{\pm}$ is restricted to the subclasses of $C_{0}^{1}(\overline{\sX_{\pm}})$ of the form \eqref{eq:Funtgjpm}.

When $c=0$, the functions $g^{\pm}$ of the form \eqref{eq:Funtgjpm} do not belong to $C_{0}^{1}(\overline{\sX_{\pm}})$ anymore. Hence, our uniqueness result does not contradict the non-uniqueness of $(\mathsf{\Pi}_{0}^{\pm},\mQ_{0}^{\pm})$ that was shown in \cite{BarlowRogersWilliams1980}.
\end{remark}

\section{Proofs of the main results}\label{sec:Proofs}

In this section we prove Theorems \ref{thm:WHExistUniq} and \ref{thm:WHProbInterpr}. We will only give the proofs of the ``+" case in both theorems, as the ``$-$" case can be proved in an analogous way with $v$ replaced by $-v$.

\subsection{Auxiliary Markov families}\label{subsec:TimeHomogen}
In this subsection, we introduce an  auxiliary time-inhomogenous Markov family $\cM$ and an  auxiliary time-homogenous Markov family $\widetilde \cM$. We start by introducing some more notations of spaces and $\sigma$-fields. Let $\sY:=\mathbf{E}\times\bR$, and the Borel $\sigma$-field on $\sY$ is denoted by $\cB(\sY):=2^{\mathbf{E}}\otimes\cB(\bR)$. Accordingly, let $\overline{\sY}:=\sY\cup\{(\partial,\infty)\}$ be the one-point completion of $\sY$, and $\cB(\overline{\sY}):=\sigma(\cB(\sY)\cup\set{(\partial,\infty)})$. Moreover, we set $\sZ:=\bR_{+}\times\sY=\sX\times\bR$ and $\overline{\sZ}:=\sZ\cup\{(\infty,\partial,\infty)\}$.

Let $\Omega$ be the set of c\`{a}dl\`{a}g functions $\omega$ on $\bR_{+}$ taking values in $\sY$. We define $\omega(\infty):=(\partial,\infty)$ for every $\omega\in\Omega$. As shown in Appendix \ref{sec:AppendixA}, one can construct a {\it standard} canonical time-inhomogeneous Markov family (cf. \cite[Definition I.6.6]{GikhmanSkorokhod2004})
\begin{align*}
\cM:=\big\{\big(\Omega,\sF,\bF_{s},(X_{t},\varphi_{t})_{t\in[s,\infty]},\bP_{s,(i,a)}\big),\,(s,i,a)\in\overline{\sZ}\big\}
\end{align*}
with transition function $P$ given by
\begin{align}\label{eq:DefTranProbXvarphi}
P(s,(i,a),t,A):=\bP^{*}_{s,i}\bigg(\bigg(X^{*}_{t},\,a+\int_{s}^{t}v\big(X^{*}_{u}\big)\,du\bigg)\in A\bigg),
\end{align}
where $(s,i,a)\in\overline{\sZ}$, $t\in[s,\infty]$, and $A\in\cB(\overline{\sY})$. Furthermore, $\cM$ has the following properties:
\begin{itemize}
\item[(i)] for any $(s,i,a)\in\overline{\sZ}$,
    \begin{align}\label{eq:LawXXStar}
    \text{the law of }\,X\,\,\text{under }\,\bP_{s,(i,a)}\,\,=\,\,\text{the law of }\,X^{*}\,\,\text{under }\,\bP_{s,i}^{*};
    \end{align}
\item[(ii)] for any $(s,i,a)\in\sZ$,
    \begin{align}\label{eq:DistvarphiInt}
    \bP_{s,(i,a)}\bigg(\varphi_{t}=a+\int_{s}^{t}v(X_{u})\,du,\,\,\,\,\text{for all $t\in[s,\infty)$}\bigg)=1.
    \end{align}
\end{itemize}

Considering the standard Markov family $\cM$, for any $s,\ell\in\bR_{+}$, we define
\begin{align*}
\tau_{\ell}^{+}(s):=\inf\big\{t\in[s,\infty]:\,\varphi_{t}>\ell\big\},
\end{align*}
which is an $\bF_{s}$-stopping time in light of the continuity of $\varphi$ and the right-continuity of the filtration $\bF_{s}$. By similar arguments as in the proof of Lemma \ref{lem:RangeXTaupm}, for any $(s,i,a)\in\sZ$ and $\ell\in[a,\infty)$,
\begin{align}\label{eq:RangeXtauPlus}
\bP_{s,(i,a)}\Big(X_{\tau^{+}_{\ell}(s)}\in\mathbf{E}_{+}\cup\{\partial\}\Big)=1.
\end{align}
Moreover, it follows from \eqref{eq:DistvarphiInt} that, for any $(s,i,a)\in\sZ$,
\begin{align*}
\tau_{\ell}^{+}(s)&=\inf\bigg\{t\geq s:\,\,a+\!\int_{s}^{t}v(X_{u})\,du>\ell\bigg\},\quad\bP_{s,(i,a)}-\text{a.}\,\text{s.}\,.
\end{align*}
If no confusion arise, we will omit the $s$ in $\tau^{+}_{\ell}(s)$.

\begin{proposition}\label{prop:ExpPStarExpP}
For any $g^{+}\in L^{\infty}(\overline{\sX_{+}})$, $(s,i,a)\in\sZ$, and $\ell\in[a,\infty)$,
\begin{align*}
\bE_{s,(i,a)}\Big(g^{+}\Big(\tau_{\ell}^{+},X_{\tau_{\ell}^{+}}\Big)\Big)=\bE_{s,i}^{*}\Big(g^{+}\Big(\tau_{\ell-a}^{+,*},X_{\tau_{\ell-a}^{+,*}}^{*}\Big)\Big).
\end{align*}
\end{proposition}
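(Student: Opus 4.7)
\medskip

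The plan is to exploit the two structural properties of $\cM$ listed just before the proposition, namely the law-identity \eqref{eq:LawXXStar} and the path-identity \eqref{eq:DistvarphiInt}, to convert the left-hand expectation (involving $\varphi$ implicitly through $\tau_\ell^+$) into a purely $X$-pathwise functional, and then transport it to the canonical space of $\cM^*$.

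First I would use \eqref{eq:DistvarphiInt} to eliminate $\varphi$ from the definition of $\tau_\ell^+(s)$. On the $\bP_{s,(i,a)}$-full event where $\varphi_t = a + \int_s^t v(X_u)\,du$ for every $t \in [s,\infty)$, we have
\begin{equation*}
\tau_\ell^+(s) = \inf\Big\{t \in [s,\infty] : \int_s^t v(X_u)\,du > \ell - a\Big\},
\end{equation*}
with the usual convention at $\infty$ (and noting $\ell - a \geq 0$). Hence $\tau_\ell^+(s)$ equals, $\bP_{s,(i,a)}$-a.s., a functional $\Phi_{\ell-a}$ of the path of $X$ alone; in particular, $(\tau_\ell^+(s), X_{\tau_\ell^+(s)}) = F_{\ell-a}(X_{\cdot})$ for a single Borel measurable functional $F_{\ell-a}: D(\bR_+;\overline{\mathbf{E}}) \to [s,\infty] \times \overline{\mathbf{E}}$ (the coordinate path $X$ is c\`adl\`ag by our standing choice of the standard version).

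Second, I would observe that by the very definition of $\tau^{+,*}_{\ell-a}(s)$ in Subsection 2.2.2, the pair $(\tau^{+,*}_{\ell-a}(s), X^*_{\tau^{+,*}_{\ell-a}(s)})$ is given by the same functional $F_{\ell-a}$ applied to the path of $X^*$. Now applying the law identity \eqref{eq:LawXXStar}, which asserts that the law of $X$ under $\bP_{s,(i,a)}$ coincides with the law of $X^*$ under $\bP^*_{s,i}$ on the canonical path space, yields
\begin{equation*}
\bE_{s,(i,a)}\big(g^+ \circ F_{\ell-a}(X_{\cdot})\big) = \bE^*_{s,i}\big(g^+ \circ F_{\ell-a}(X^*_{\cdot})\big),
\end{equation*}
which is exactly the claimed identity once we recall that $g^+ \in L^\infty(\overline{\sX_+})$ is bounded and Borel, and by Lemma \ref{lem:RangeXTaupm} (and its analogue \eqref{eq:RangeXtauPlus} for $\cM$) the value $g^+(\tau_\ell^+, X_{\tau_\ell^+})$ is well defined on the relevant set $\mathbf{E}_+ \cup \{\partial\}$, with the extension $g^+(\infty,\partial) = 0$ handling the case $\tau_\ell^+ = \infty$.

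The only subtle step is measurability of the functional $F_{\ell-a}$ and the a.s.\ identification above; this is routine because $t \mapsto \int_s^t v(X_u)\,du$ is continuous in $t$ and measurable in $\omega$, so the hitting time of the open set $(\ell-a, \infty)$ is measurable, and the evaluation map $(\tau, \omega) \mapsto \omega(\tau)$ is measurable on the c\`adl\`ag path space. I would therefore present this proof as a two-line computation once the functional representation is set up, with no genuine analytic obstacle beyond properly invoking \eqref{eq:LawXXStar} and \eqref{eq:DistvarphiInt}.
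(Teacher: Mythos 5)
Your argument is correct and is essentially the paper's own proof: the paper likewise uses \eqref{eq:DistvarphiInt} to rewrite $\tau_{\ell}^{+}$ as the passage time of $a+\int_{s}^{\cdot}v(X_{r})\,dr$, i.e.\ a measurable functional of the $X$-path alone, and then transfers the expectation to $\cM^{*}$. The only cosmetic difference is that the paper licenses the transfer step by citing Lemma \ref{lem:XvarphiPathst} (the path-space extension of \eqref{eq:MarginDistXvarphi} proved in Appendix A), which plays exactly the role of your measurability remark about the functional $F_{\ell-a}$ together with \eqref{eq:LawXXStar}.
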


\begin{proof}
By \eqref{eq:DistvarphiInt} and Lemma \ref{lem:XvarphiPathst}, we have
\begin{align*}
\bE_{s,(i,a)}\Big(g^{+}\Big(\tau_{\ell}^{+},X_{\tau_{\ell}^{+}}\Big)\!\Big)&=\bE_{s,(i,a)}\bigg(g^{+}\bigg(\!\inf\bigg\{u\!\geq\!s:a+\!\!\int_{s}^{u}\!v(X_{r})dr\!>\!\ell\bigg\},X_{\inf\big\{u\geq s:\,a+\int_{s}^{u}v(X_{r})dr>\ell\big\}}\bigg)\!\bigg)\\
&=\bE^{*}_{s,i}\bigg(g^{+}\bigg(\!\inf\bigg\{u\geq s:\!\int_{s}^{u}\!v\big(X_{r}^{*}\big)dr>\ell-a\bigg\},X_{\inf\big\{u\geq s:\int_{s}^{u}v(X_{r}^{*})dr>\ell-a\big\}}^{*}\bigg)\!\bigg)\\
&=\bE^{*}_{s,i}\bigg(g^{+}\bigg(\tau_{\ell-a}^{+,*},\,X_{\tau_{\ell-a}^{+,*}}^{*}\bigg)\bigg),
\end{align*}
which completes the proof.
\end{proof}

Proposition \ref{prop:ExpPStarExpP} provides a useful representation of the expectation $\bE_{s,i}^{*}\Big(g^{+}\Big(\tau_{\ell-a}^{+,*},X_{\tau_{\ell-a}^{+,*}}^{*}\Big)\Big)$. We will need still another representation of this expectation. Towards this end, we will first transform the time-inhomogeneous Markov family $\cM$ into a {\it time-homogeneous} Markov family
\begin{align*}
\wt{\cM}=\big\{\big(\wt{\Omega},\wt{\sF},\wt{\bF},({Z}_t)_{t\in\overline{\bR}_{+}},(\theta_{r})_{r\in\bR_{+}},\wt{\bP}_{z}\big),z\in\overline{\sZ}\big\}
\end{align*}
following the setup in \cite{Bottcher2014}.  The construction of $\wt{\cM}$ proceeds as follows.
\begin{itemize}
\item We let $\wt{\Omega}:=\overline{\bR}_{+}\times\Omega$ to be the new sample space, with elements $\wt{\omega}=(s,\omega)$, where $s\in\overline{\bR}_{+}$ and $\omega\in\Omega$. On $\wt{\Omega}$ we consider the $\sigma$-field
    \begin{align*}
    \wt{\sF}:=\Big\{\wt{A}\subset\wt{\Omega}:\,\wt{A}_{s}\in\sF_{\infty}^{s}\,\,\text{for any }s\in\overline{\bR}_{+}\Big\},
    \end{align*}
    where $\wt{A}_{s}:=\{\omega\in\Omega:\,(s,\omega)\in\wt{A}\}$ and $\sF_{\infty}^{s}$ is the last element in $\bF_{s}$ (the filtration in $\cM$).
\item We let $\overline{\sZ}=\sZ\cup\{(\infty,\partial,\infty)\}$ to be the new state space, where $\sZ=\bR_{+}\times\sY=\sX\times\bR$, with elements $z=(s,i,a)$. On $\sZ$ we consider the $\sigma$-field
	\begin{align*}
    \wt{\cB}(\sZ):=\left\{\wt{B}\subset\sZ:\,\wt{B}_{s}\in\cB(\sY)\,\,\,\text{for any }s\in\bR_{+}\right\},
	\end{align*}
    where $\wt{B}_{s}:=\big\{(i,a)\in\sY:\,(s,i,a)\in\wt{B}\big\}$. Let $\wt{\cB}(\overline{\sZ}):=\sigma(\wt{\cB}(\sZ)\cup\{(\infty,\partial,\infty)\})$.
\item We consider a family of probability measures $(\wt{\bP}_{z})_{z\in\overline{\sZ}}$, where, for $z=(s,i,a)\in\overline{\sZ}$,
	\begin{align}\label{eq:Probz}
    \wt{\bP}_{z}\big(\wt{A}\big)=\wt{\bP}_{s,i,a}\big(\wt{A}\big):=\bP_{s,(i,a)}\big(\wt{A}_{s}\big),\quad\wt{A}\in\wt{\sF}.
	\end{align}
\item We consider the process $Z:=(Z_{t})_{t\in\overline{\bR}_{+}}$ on $(\wt{\Omega},\wt{\sF})$, where, for $t\in\overline{\bR}_{+}$,
	\begin{align}\label{eq:ProcZ}
    Z_{t}(\wt{\omega}):=\big(s+t,X_{s+t}(\omega),\varphi_{s+t}(\omega)\big),\quad\wt{\omega}=(s,\omega)\in\wt{\Omega}.
	\end{align}
	Hereafter, we denote the three components of $Z$ by $Z^{1}$, $Z^{2}$, and $Z^{3}$, respectively.
\item On $(\wt{\Omega},\wt{\sF})$, we define  $\wt{\bF}:=(\wt{\sF}_{t})_{t\in\overline{\bR}_{+}}$, where $\wt{\sF}_{t}:=\wt{\sG}_{t+}$ (with the convention $\wt{\sG}_{\infty+}=\wt{\sG}_{\infty}$), and $(\wt{\sG}_{t})_{t\in\overline{\bR}_{+}}$ is the completion of the natural filtration generated by $(Z_{t})_{t\in\overline{\bR}_{+}}$ with respect to the set of probability measures $\{\wt{\bP}_{z},z\in\overline{\sZ}\}$ (cf. \cite[Chapter I]{GikhmanSkorokhod2004}).
\item Finally, for any $r\in\bR_{+}$, we consider the shift operator ${\theta}_{r}:\wt{\Omega}\rightarrow\wt{\Omega}$ defined by
    \begin{align*}
    \theta_{r}\,\wt{\omega}=(u+r,\omega_{\cdot+r}),\quad\wt{\omega}=(u,\omega)\in\wt{\Omega}.
    \end{align*}
    It follows that $Z_{t}\circ{\theta}_{r}=Z_{t+r}$, for any $t,r\in\bR_{+}$.
\end{itemize}

For $z=(s,i,a)\in\overline{\sZ}$, $t\in\overline{\bR}_{+}$, and $\wt{B}\in\wt{\cB}(\overline{\sZ})$, we define the transition function $\wt{P}$ by
\begin{align*}
\wt{P}\big(z,t,\wt{B}\big):=\wt{\bP}_{z}\big(Z_{t}\in\wt{B}\big).
\end{align*}
In view of \eqref{eq:Probz}, we have
\begin{align}\label{eq:TranProbTildeX}
\wt{P}\big(z,t,\wt{B}\big)=\bP_{s,(i,a)}\Big((X_{t+s},\varphi_{t+s})\in\wt{B}_{s+t}\Big)=P\big(s,(i,a),s+t,\wt{B}_{s+t}\big).
\end{align}
By Lemma \ref{lem:FellerTranProb}, the transition function $P$, defined in \eqref{eq:DefTranProbXvarphi}, is associated with a Feller semigroup, so that $P$ is a Feller transition function. This and \cite[Theorem 3.2]{Bottcher2014} imply  that $\wt{P}$ is also a Feller transition function. In light of the right continuity of the sample paths, and invoking \cite[Theorem I.4.7]{GikhmanSkorokhod2004}, we conclude that $\wt{\cM}$ is a {\it time-homogeneous strong} Markov family. 

For any $\ell\in\bR$, we define
\begin{align*}
\wt{\tau}_{\ell}^{+}:=\inf\big\{t\in\overline{\bR}_{+}:\,Z^{3}_{t}>\ell\big\}.
\end{align*}
Note that $\wt{\tau}_{\ell}^{+}$ is an $\wt{\bF}$-stopping time since $Z^{3}$ has continuous sample paths and $\wt{\bF}$ is right-continuous. In light of \eqref{eq:DistvarphiInt}, \eqref{eq:Probz}, and \eqref{eq:ProcZ}, for any $(s,i,a)\in\sZ$, we have
\begin{align}\label{eq:DistTildeZ3IntTildeZ2}
\wt{\bP}_{s,i,a}\Big(Z^{3}_{t}=a+\int_{0}^{t}v\big(Z^{2}_{u}\big)\,du,\,\,\text{for all }t\in\bR_{+}\Big)=1.
\end{align}
Consequently, for any $(s,i)\in\sX_{+}$ and $\ell\in\bR$,
\begin{align}\label{eq:TildeTauPlus0}
\wt{\bP}_{s,i,\ell}\big(\wt{\tau}_{\ell}^{+}=0\big)=1.
\end{align}
Moreover, by \eqref{eq:RangeXtauPlus} and \eqref{eq:DistTildeZ3IntTildeZ2}, for any $(s,i,a)\in\overline{\sZ}$ and $\ell\in[a,\infty)$, we have
\begin{align}\label{eq:RangeZ2TildeTauPlus}
\wt{\bP}_{s,i,a}\Big(Z^{2}_{\wt{\tau}^{+}_{\ell}}\in\mathbf{E}_{+}\cup\{\partial\}\Big)=1.
\end{align}
By Proposition \ref{prop:ExpPStarExpP}, \eqref{eq:Probz} and \eqref{eq:ProcZ}, for any $g^{+}\in L^{\infty}(\overline{\sX_{+}})$, $(s,i,a)\in\sZ$, and $\ell\in[a,\infty)$,
\begin{align}\label{eq:ExpTildePPStarPlus}
\bE_{s,i}^{*}\bigg(g^{+}\bigg(\tau_{\ell-a}^{+,*},X_{\tau_{\ell-a}^{+,*}}^{*}\bigg)\bigg)=\wt{\bE}_{s,i,a}\Big(g^{+}\Big(Z^{1}_{\wt{\tau}_{\ell}^{+}},Z^{2}_{\wt{\tau}_{\ell}^{+}}\Big)\Big),
\end{align}
which, in particular, implies that
\begin{align}\label{eq:ExpTildePShift}
\wt{\bE}_{s,i,a}\Big(g^{+}\Big(Z^{1}_{\wt{\tau}_{\ell}^{+}},Z^{2}_{\wt{\tau}_{\ell}^{+}}\Big)\Big)=\wt{\bE}_{s,i,0}\Big(g^{+}\Big(Z^{1}_{\wt{\tau}_{\ell-a}^{+}},Z^{2}_{\wt{\tau}_{\ell-a}^{+}}\Big)\Big).
\end{align}
Consequently, the operators $J^{+}$ and $\cP_{\ell}^{+}$, $\ell\in\bR_{+}$, defined by \eqref{eq:DefJPlus} and \eqref{eq:DefPPlus}, can be written as
\begin{align}\label{eq:DefJPlusTildeP}
\big(J^{+}g^{+}\big)(s,i)&=\wt{\bE}_{s,i,0}\Big(g^{+}\Big(Z^{1}_{\wt{\tau}_{0}^{+}},Z^{2}_{\wt{\tau}_{0}^{+}}\Big)\Big),\quad g^{+}\in L^{\infty}(\overline{\sX_{+}}),\quad (s,i)\in\sX_{-},\\
\label{eq:DefPellPlusTildeP} \big(\cP_{\ell}^{+}g^{+}\big)(s,i)&=\wt{\bE}_{s,i,0}\Big(g^{+}\Big(Z^{1}_{\wt{\tau}_{\ell}^{+}},Z^{2}_{\wt{\tau}_{\ell}^{+}}\Big)\Big),\quad g^{+}\in L^{\infty}(\overline{\sX_{+}}),\quad (s,i)\in\sX_{+}.
\end{align}

We conclude this section with the following key lemma, which will be crucial in the proofs of {the} main results.

\begin{lemma}\label{lem:StrongMarkov}
Let $\wt{\tau}$ be any $\wt{\bF}$-stopping time, and $g^{+}\in L^{\infty}(\overline{\sX_{+}})$. Then, for any $(s,i,a)\in\overline{\sZ}$ and $\ell\in[a,\infty)$, we have
\begin{align}\label{eq:StrongMarkovCondPlus}
\1_{\{\wt{\tau}\leq\wt{\tau}^{+}_{\ell}\}}\,\wt{\bE}_{s,i,a}\Big(g^{+}\Big(Z^{1}_{\wt{\tau}^{+}_{\ell}},Z^{2}_{\wt{\tau}^{+}_{\ell}}\Big)\,\Big|\,\wt{\sF}_{\wt{\tau}}\Big)=\1_{\{\wt{\tau}\leq\wt{\tau}^{+}_{\ell}\}}\,\wt{\bE}_{Z^{1}_{\wt{\tau}},Z^{2}_{\wt{\tau}},Z^{3}_{\wt{\tau}}}\Big(g^{+}\Big(Z^{1}_{\wt{\tau}^{+}_{\ell}},Z^{2}_{\wt{\tau}^{+}_{\ell}}\Big)\Big),\quad\wt{\bP}_{s,i,a}-\text{a.}\,\text{s.}.
\end{align}
\end{lemma}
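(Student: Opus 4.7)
The plan is to apply the strong Markov property of the time-homogeneous Markov family $\wt{\cM}$ at the stopping time $\wt{\tau}$, after observing that on $\{\wt{\tau}\leq\wt{\tau}^{+}_{\ell}\}$ the integrand on the left-hand side is itself a $\theta_{\wt{\tau}}$-shift of the one on the right. The central step is the path identity
\[
\wt{\tau}^{+}_{\ell} \;=\; \wt{\tau} + \wt{\tau}^{+}_{\ell}\circ\theta_{\wt{\tau}} \qquad\text{on }\{\wt{\tau}\leq\wt{\tau}^{+}_{\ell}\}.
\]
To see this, note that on that event the continuity of the paths of $Z^{3}$ together with the strict-inequality defining $\wt{\tau}^{+}_{\ell}$ force $Z^{3}_{\wt{\tau}}\leq\ell$, so that the first passage of $Z^{3}$ above $\ell$ under the shifted path occurs exactly at time $\wt{\tau}^{+}_{\ell}-\wt{\tau}$. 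Combined with the basic shift identity $Z^{i}_{t}\circ\theta_{r}=Z^{i}_{t+r}$ for $i=1,2$, this yields, still on $\{\wt{\tau}\leq\wt{\tau}^{+}_{\ell}\}$,
\[
g^{+}\!\bigl(Z^{1}_{\wt{\tau}^{+}_{\ell}},\,Z^{2}_{\wt{\tau}^{+}_{\ell}}\bigr) \;=\; \Phi\circ\theta_{\wt{\tau}}, \qquad \Phi \;:=\; g^{+}\!\bigl(Z^{1}_{\wt{\tau}^{+}_{\ell}},\,Z^{2}_{\wt{\tau}^{+}_{\ell}}\bigr),
\]
where $\Phi$ is bounded and $\wt{\sF}$-measurable.

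Next, I would invoke the strong Markov property of $\wt{\cM}$, already established just before the lemma via \cite[Theorem I.4.7]{GikhmanSkorokhod2004} and the Feller property of $\wt{P}$: on $\{\wt{\tau}<\infty\}$,
\[
\wt{\bE}_{s,i,a}\bigl[\Phi\circ\theta_{\wt{\tau}}\,\bigl|\,\wt{\sF}_{\wt{\tau}}\bigr] \;=\; \wt{\bE}_{Z^{1}_{\wt{\tau}},Z^{2}_{\wt{\tau}},Z^{3}_{\wt{\tau}}}[\Phi].
\]
Using the standard fact that $\{\wt{\tau}\leq\wt{\tau}^{+}_{\ell}\}\in\wt{\sF}_{\wt{\tau}}$ (because for any two $\wt{\bF}$-stopping times $S,T$ one has $\{S\leq T\}\in\wt{\sF}_{S}\cap\wt{\sF}_{T}$), I would multiply by $\1_{\{\wt{\tau}\leq\wt{\tau}^{+}_{\ell}\}}$, move it inside the conditional expectation on the left, and substitute the identity from the first paragraph to reach \eqref{eq:StrongMarkovCondPlus} on $\{\wt{\tau}<\infty\}$. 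On $\{\wt{\tau}=\infty\}$ the coffin-state convention $Z_{\infty}=(\infty,\partial,\infty)$ forces $\wt{\tau}^{+}_{\ell}=\infty$ and $Z^{2}_{\wt{\tau}^{+}_{\ell}}=\partial$, so both sides reduce to $g^{+}(\infty,\partial)=0$.

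The principal obstacle will be making the path identity airtight: I need to verify $Z^{3}_{\wt{\tau}}\leq\ell$ on $\{\wt{\tau}\leq\wt{\tau}^{+}_{\ell}\}$ carefully (in particular handling the extremes $\wt{\tau}^{+}_{\ell}=\infty$, where the identity is trivial, and $\wt{\tau}=\wt{\tau}^{+}_{\ell}$, where both shifts collapse to $0$), and then to confirm that the infimum defining $\wt{\tau}^{+}_{\ell}\circ\theta_{\wt{\tau}}$ coincides with $\wt{\tau}^{+}_{\ell}-\wt{\tau}$. Once this is in hand, everything else is a direct application of the already available strong Markov property of $\wt{\cM}$, together with the $\wt{\sF}_{\wt{\tau}}$-measurability of the indicator $\1_{\{\wt{\tau}\leq\wt{\tau}^{+}_{\ell}\}}$.
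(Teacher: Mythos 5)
Your proposal is correct and follows essentially the same route as the paper's proof: the shift identity $\wt{\tau}^{+}_{\ell}\circ\theta_{\wt{\tau}}=\wt{\tau}^{+}_{\ell}-\wt{\tau}$ on $\{\wt{\tau}\leq\wt{\tau}^{+}_{\ell}\}$ (which the paper obtains by noting that restricting the infimum to $t\geq\wt{\tau}$ does not change it, since no upcrossing of $\ell$ occurs before $\wt{\tau}$ on that event), followed by the strong Markov property of $\wt{\cM}$ and the fact that $\{\wt{\tau}\leq\wt{\tau}^{+}_{\ell}\}\in\wt{\sF}_{\wt{\tau}}$.
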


\begin{proof}
Note that if $(s,i,a)=(\infty,\partial,\infty)$, then both sides of \eqref{eq:StrongMarkovCondPlus} are zero. Hence, without loss of generality, assume that $(s,i,a)\in\sZ$ and $\{\wt{\tau}\leq\wt{\tau}^{+}_{\ell}\}\neq\emptyset$. Note that for any $\ell\in\bR$ and $\wt{\omega}\in\{\wt{\tau}\leq\wt{\tau}^{+}_{\ell}\}$,
\begin{align*}
\wt{\tau}^{+}_{\ell}\big(\theta_{\wt{\tau}}(\wt{\omega})\big)&=\inf\big\{t\in\bR_{+}:Z^{3}_{t}\big(\theta_{\wt{\tau}}(\wt{\omega})\big)>\ell\big\}=\inf\big\{t\in\bR_{+}:Z^{3}_{t+\wt{\tau}(\wt{\omega})}(\wt{\omega})>\ell\big\}\\
&=\inf\big\{t\geq\wt{\tau}(\wt{\omega}):Z^{3}_{t}(\wt{\omega})>\ell\big\}-\wt{\tau}(\wt{\omega})=\wt{\tau}_{\ell}^{+}(\wt{\omega})-\wt{\tau}(\wt{\omega}),
\end{align*}
and thus
\begin{align*}
\Big(Z_{\wt{\tau}_{\ell}^{+}}\circ\theta_{\wt{\tau}}\Big)(\wt{\omega})=Z_{\wt{\tau}^{+}_{\ell}(\theta_{\wt{\tau}}(\wt{\omega}))}\big(\theta_{\wt{\tau}}(\wt{\omega})\big)=Z_{\wt{\tau}^{+}_{\ell}(\wt{\omega})-\wt{\tau}(\wt{\omega})}\big(\theta_{\wt{\tau}(\wt{\omega})}\wt{\omega}\big)=Z_{\wt{\tau}_{\ell}^{+}}(\wt{\omega}).
\end{align*}
Therefore, for any $(s,i,a)\in\sZ$ and $\ell\in[a,\infty)$,
\begin{align*}
\1_{\{\wt{\tau}\leq\wt{\tau}^{+}_{\ell}\}}\,\wt{\bE}_{s,i,a}\Big(g^{+}\Big(Z^{1}_{\wt{\tau}^{+}_{\ell}},Z^{2}_{\wt{\tau}^{+}_{\ell}}\Big)\Big|\wt{\sF}_{\wt{\tau}}\Big)&=\wt{\bE}_{s,i,a}\Big(\1_{\{\wt{\tau}\leq\wt{\tau}^{+}_{\ell}\}}\,g^{+}\Big(Z^{1}_{\wt{\tau}^{+}_{\ell}},Z^{2}_{\wt{\tau}^{+}_{\ell}}\Big)\Big|\wt{\sF}_{\wt{\tau}}\Big)\\
&=\wt{\bE}_{s,i,a}\Big(\1_{\{\wt{\tau}\leq\wt{\tau}^{+}_{\ell}\}}\,g^{+}\Big(Z^{1}_{\wt{\tau}^{+}_{\ell}}\circ\theta_{\wt{\tau}},Z^{2}_{\wt{\tau}^{+}_{\ell}}\circ\theta_{\wt{\tau}}\Big)\Big|\wt{\sF}_{\wt{\tau}}\Big)\\
&=\1_{\{\wt{\tau}\leq\wt{\tau}^{+}_{\ell}\}}\,\wt{\bE}_{s,i,a}\Big(g^{+}\Big(Z^{1}_{\wt{\tau}^{+}_{\ell}},Z^{2}_{\wt{\tau}^{+}_{\ell}}\Big)\circ\theta_{\wt{\tau}}\Big|\wt{\sF}_{\wt{\tau}}\Big)\\
&=\1_{\{\wt{\tau}\leq\wt{\tau}^{+}_{\ell}\}}\,\wt{\bE}_{Z^{1}_{\wt{\tau}},Z^{2}_{\wt{\tau}},Z^{3}_{\wt{\tau}}}\Big(g^{+}\Big(Z^{1}_{\wt{\tau}^{+}_{\ell}},Z^{2}_{\wt{\tau}^{+}_{\ell}}\Big)\Big),
\end{align*}
where we used the fact that $\{\wt{\tau}\leq\wt{\tau}^{+}_{\ell}\}\in\wt{\sF}_{\wt{\tau}}$ (cf. \cite[Lemma 1.2.16]{KaratzasShreve1998}) in the first and third equality, and the strong Markov property of $Z$ (cf. \cite[Theorem III.9.4]{RogersWilliams1994}) in the last equality.
\end{proof}

\begin{corollary}\label{cor:StrongMarkovExp}
Under the assumptions of Lemma \ref{lem:StrongMarkov},
\begin{align*}
\wt{\bE}_{s,i,a}\Big(\1_{\{\wt{\tau}\leq\wt{\tau}^{+}_{\ell}\}}\,g^{+}\Big(Z^{1}_{\wt{\tau}^{+}_{\ell}},Z^{2}_{\wt{\tau}^{+}_{\ell}}\Big)\Big)=\wt{\bE}_{s,i,a}\Big(\1_{\{\wt{\tau}\leq\wt{\tau}^{+}_{\ell}\}}\,\wt{\bE}_{Z^{1}_{\wt{\tau}},Z^{2}_{\wt{\tau}},Z^{3}_{\wt{\tau}}}\Big(g^{+}\Big(Z^{1}_{\wt{\tau}^{+}_{\ell}},Z^{2}_{\wt{\tau}^{+}_{\ell}}\Big)\Big)\Big).
\end{align*}
\end{corollary}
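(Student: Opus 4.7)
The plan is to deduce the corollary as an essentially immediate consequence of Lemma~\ref{lem:StrongMarkov} by taking (unconditional) expectations of both sides of the identity \eqref{eq:StrongMarkovCondPlus}. First I would note, as in the proof of the lemma, that $\{\wt{\tau}\leq\wt{\tau}^{+}_{\ell}\}\in\wt{\sF}_{\wt{\tau}}$ (by \cite[Lemma 1.2.16]{KaratzasShreve1998}). Consequently, the indicator $\1_{\{\wt{\tau}\leq\wt{\tau}^{+}_{\ell}\}}$ may be pulled inside a conditional expectation with respect to $\wt{\sF}_{\wt{\tau}}$.

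Concretely, by the tower property,
\begin{align*}
\wt{\bE}_{s,i,a}\Big(\1_{\{\wt{\tau}\leq\wt{\tau}^{+}_{\ell}\}}\,g^{+}\Big(Z^{1}_{\wt{\tau}^{+}_{\ell}},Z^{2}_{\wt{\tau}^{+}_{\ell}}\Big)\Big)
&=\wt{\bE}_{s,i,a}\Big(\wt{\bE}_{s,i,a}\Big(\1_{\{\wt{\tau}\leq\wt{\tau}^{+}_{\ell}\}}\,g^{+}\Big(Z^{1}_{\wt{\tau}^{+}_{\ell}},Z^{2}_{\wt{\tau}^{+}_{\ell}}\Big)\,\Big|\,\wt{\sF}_{\wt{\tau}}\Big)\Big)\\
&=\wt{\bE}_{s,i,a}\Big(\1_{\{\wt{\tau}\leq\wt{\tau}^{+}_{\ell}\}}\,\wt{\bE}_{s,i,a}\Big(g^{+}\Big(Z^{1}_{\wt{\tau}^{+}_{\ell}},Z^{2}_{\wt{\tau}^{+}_{\ell}}\Big)\,\Big|\,\wt{\sF}_{\wt{\tau}}\Big)\Big),
\end{align*}
where in the second equality I used $\wt{\sF}_{\wt{\tau}}$-measurability of the indicator. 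Substituting \eqref{eq:StrongMarkovCondPlus} from Lemma~\ref{lem:StrongMarkov} for the inner conditional expectation then yields the claimed identity.

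There is no real obstacle here: the statement is just an integrated form of the strong Markov identity already established. The only thing worth double-checking in passing is that on the set $\{\wt{\tau}\leq\wt{\tau}^{+}_{\ell}\}$ one indeed has $Z^{3}_{\wt{\tau}}\leq\ell$, so that the lemma's hypothesis $\ell\in[Z^{3}_{\wt{\tau}},\infty)$ is met on the support of the indicator (this is immediate from the monotonicity of $\varphi$ on $\{v>0\}$ combined with the definition of $\wt{\tau}^{+}_{\ell}$ via \eqref{eq:DistTildeZ3IntTildeZ2}); hence the right-hand side of \eqref{eq:StrongMarkovCondPlus}, applied at the random starting point $(Z^{1}_{\wt{\tau}},Z^{2}_{\wt{\tau}},Z^{3}_{\wt{\tau}})$, is well-defined a.s.\ on the relevant event. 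No further argument is needed.
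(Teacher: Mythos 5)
Your proof is correct and coincides with the paper's own one-line argument: condition on $\wt{\sF}_{\wt{\tau}}$, pull the $\wt{\sF}_{\wt{\tau}}$-measurable indicator $\1_{\{\wt{\tau}\leq\wt{\tau}^{+}_{\ell}\}}$ inside, and substitute \eqref{eq:StrongMarkovCondPlus}. (One harmless quibble with your side remark: the bound $Z^{3}_{\wt{\tau}}\leq\ell$ on $\{\wt{\tau}\leq\wt{\tau}^{+}_{\ell}\}$ follows from the definition of $\wt{\tau}^{+}_{\ell}$ as the first passage of $Z^{3}$ above level $\ell$ together with the continuity of the sample paths of $Z^{3}$, not from any monotonicity of $\varphi$, which fails since $v$ takes both signs.)
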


\begin{proof}
This is a direct consequence of \eqref{eq:StrongMarkovCondPlus} and the fact that $\{\wt{\tau}\leq\wt{\tau}^{+}_{\ell}\}\in\wt{\sF}_{\wt{\tau}}$.
\end{proof}

\begin{corollary}\label{cor:StrongMarkovExpTildeTau}
For any $g^{+}\in L^{\infty}(\overline{\sX_{+}})$, $(s,i,a)\in\overline{\sZ}$, $\ell\in[a,\infty)$, and $h\in(0,\infty)$,
\begin{align*}
\wt{\bE}_{s,i,a}\Big(g^{+}\Big(Z^{1}_{\wt{\tau}^{+}_{\ell+h}},Z^{2}_{\wt{\tau}^{+}_{\ell+h}}\Big)\Big)=\wt{\bE}_{s,i,a}\Big(\big(\cP_{h}^{+}g^{+}\big)\Big(Z^{1}_{\wt{\tau}_{\ell}^{+}},Z^{2}_{\wt{\tau}_{\ell}^{+}}\Big)\Big). 
\end{align*}
\end{corollary}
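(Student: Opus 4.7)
The plan is to apply the strong Markov property provided by Corollary~\ref{cor:StrongMarkovExp} at the stopping time $\wt{\tau}:=\wt{\tau}^{+}_{\ell}$, with ``target'' level $\ell+h$ in place of $\ell$. The key pathwise observation is that since $h>0$, we have $\wt{\tau}^{+}_{\ell}\leq\wt{\tau}^{+}_{\ell+h}$ everywhere (once $Z^{3}$ exceeds $\ell+h$, it has already exceeded $\ell$), and since $\ell\in[a,\infty)$ we also have $\ell+h\in[a,\infty)$, so the hypotheses of Corollary~\ref{cor:StrongMarkovExp} are met. The indicator $\1_{\{\wt{\tau}^{+}_{\ell}\leq\wt{\tau}^{+}_{\ell+h}\}}$ is identically $1$, hence
\begin{align*}
\wt{\bE}_{s,i,a}\Big(g^{+}\Big(Z^{1}_{\wt{\tau}^{+}_{\ell+h}},Z^{2}_{\wt{\tau}^{+}_{\ell+h}}\Big)\Big)=\wt{\bE}_{s,i,a}\Big(\wt{\bE}_{Z^{1}_{\wt{\tau}^{+}_{\ell}},Z^{2}_{\wt{\tau}^{+}_{\ell}},Z^{3}_{\wt{\tau}^{+}_{\ell}}}\Big(g^{+}\Big(Z^{1}_{\wt{\tau}^{+}_{\ell+h}},Z^{2}_{\wt{\tau}^{+}_{\ell+h}}\Big)\Big)\Big).
\end{align*}

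The next step is to identify the inner expectation as $(\cP^{+}_{h}g^{+})\big(Z^{1}_{\wt{\tau}^{+}_{\ell}},Z^{2}_{\wt{\tau}^{+}_{\ell}}\big)$, which will complete the proof. I would argue this by splitting on $\{\wt{\tau}^{+}_{\ell}<\infty\}$ and its complement. On $\{\wt{\tau}^{+}_{\ell}<\infty\}$, the continuity of the sample paths of $Z^{3}$ guaranteed by \eqref{eq:DistTildeZ3IntTildeZ2} together with the definition of $\wt{\tau}^{+}_{\ell}$ forces $Z^{3}_{\wt{\tau}^{+}_{\ell}}=\ell$, while \eqref{eq:RangeZ2TildeTauPlus} gives $Z^{2}_{\wt{\tau}^{+}_{\ell}}\in\mathbf{E}_{+}$. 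Consequently the inner expectation becomes $\wt{\bE}_{s',i',\ell}\big(g^{+}(Z^{1}_{\wt{\tau}^{+}_{\ell+h}},Z^{2}_{\wt{\tau}^{+}_{\ell+h}})\big)$ evaluated at $(s',i')=(Z^{1}_{\wt{\tau}^{+}_{\ell}},Z^{2}_{\wt{\tau}^{+}_{\ell}})\in\sX_{+}$, and the shift identity \eqref{eq:ExpTildePShift} (with its parameters $a\mapsto\ell$ and $\ell\mapsto\ell+h$, so that $\ell-a\mapsto h$) rewrites this as $\wt{\bE}_{s',i',0}\big(g^{+}(Z^{1}_{\wt{\tau}^{+}_{h}},Z^{2}_{\wt{\tau}^{+}_{h}})\big)$, which is exactly $(\cP^{+}_{h}g^{+})(s',i')$ by \eqref{eq:DefPellPlusTildeP}.

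It remains to handle the boundary event $\{\wt{\tau}^{+}_{\ell}=\infty\}$, on which $Z^{1}_{\wt{\tau}^{+}_{\ell}}=\infty$, $Z^{2}_{\wt{\tau}^{+}_{\ell}}=\partial$, and $\wt{\tau}^{+}_{\ell+h}=\infty$ as well. Here both integrands vanish: $g^{+}\big(Z^{1}_{\wt{\tau}^{+}_{\ell+h}},Z^{2}_{\wt{\tau}^{+}_{\ell+h}}\big)=g^{+}(\infty,\partial)=0$ by the convention built into $L^{\infty}(\overline{\sX_{+}})$, and similarly $(\cP^{+}_{h}g^{+})(\infty,\partial)=0$ because $\cP^{+}_{h}$ maps $L^{\infty}(\overline{\sX_{+}})$ into itself. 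Therefore the two sides agree trivially on this null-contributing set, and combining both cases yields the stated identity. The only real subtlety is this boundary bookkeeping at $(\infty,\partial,\infty)$; everything else is an immediate composition of Corollary~\ref{cor:StrongMarkovExp}, \eqref{eq:ExpTildePShift}, and \eqref{eq:DefPellPlusTildeP}.
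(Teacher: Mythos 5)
Your proof is correct and follows essentially the same path as the paper's: apply Corollary~\ref{cor:StrongMarkovExp} at $\wt{\tau}=\wt{\tau}^{+}_{\ell}$ (with $\1_{\{\wt{\tau}^{+}_{\ell}\leq\wt{\tau}^{+}_{\ell+h}\}}\equiv 1$), use $Z^{3}_{\wt{\tau}^{+}_{\ell}}=\ell$ together with the shift identity~\eqref{eq:ExpTildePShift} and~\eqref{eq:DefPellPlusTildeP} on $\{\wt{\tau}^{+}_{\ell}<\infty\}$, and observe that both integrands vanish at $(\infty,\partial)$ on the complementary event. The only cosmetic difference is that you cite~\eqref{eq:RangeZ2TildeTauPlus} where the paper cites the equivalent~\eqref{eq:RangeXtauPlus}.
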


\begin{proof}
Since $g^{+}\in L^{\infty}(\overline{\sX_{+}})$ and $\wt{\tau}_{\ell+h}^{+}\geq\wt{\tau}_{\ell}^{+}$, $g^{+}(Z^{1}_{\wt{\tau}^{+}_{\ell+h}},Z^{2}_{\wt{\tau}^{+}_{\ell+h}})=g^{+}(\infty,\partial)=0$ on $\{\wt{\tau}_{\ell}^{+}=\infty\}$, so that $\wt{\bE}_{Z^{1}_{\wt{\tau}_{\ell}^{+}},Z^{2}_{\wt{\tau}_{\ell}^{+}},Z^{3}_{\wt{\tau}_{\ell}^{+}}}(g^{+}(Z^{1}_{\wt{\tau}^{+}_{\ell+h}},Z^{2}_{\wt{\tau}^{+}_{\ell+h}}))=0$ on $\{\wt{\tau}_{\ell}^{+}=\infty\}$. Moreover, $Z^{3}_{\wt{\tau}^{+}_{\ell}}=\ell$ on $\{\wt{\tau}_{\ell}^{+}<\infty\}$. Thus, using Corollary \ref{cor:StrongMarkovExp}, \eqref{eq:ExpTildePShift}, \eqref{eq:DefPellPlusTildeP}, and \eqref{eq:RangeXtauPlus}, we obtain that
\begin{align*}
\wt{\bE}_{s,i,a}\Big(g^{+}\Big(Z^{1}_{\wt{\tau}^{+}_{\ell+h}},Z^{2}_{\wt{\tau}^{+}_{\ell+h}}\Big)\Big)&=\wt{\bE}_{s,i,a}\bigg(\wt{\bE}_{Z^{1}_{\wt{\tau}^{+}_{\ell}},Z^{2}_{\wt{\tau}^{+}_{\ell}},Z^{3}_{\wt{\tau}^{+}_{\ell}}}\Big(g^{+}\Big(Z^{1}_{\wt{\tau}^{+}_{\ell+h}},Z^{2}_{\wt{\tau}^{+}_{\ell+h}}\Big)\Big)\bigg)\\
&=\wt{\bE}_{s,i,a}\bigg(\1_{\{\wt{\tau}^{+}_{\ell}<\infty\}}\,\wt{\bE}_{Z^{1}_{\wt{\tau}^{+}_{\ell}},Z^{2}_{\wt{\tau}^{+}_{\ell}},\ell}\Big(g^{+}\Big(Z^{1}_{\wt{\tau}^{+}_{\ell+h}},Z^{2}_{\wt{\tau}^{+}_{\ell+h}}\Big)\Big)\bigg)\\
&=\wt{\bE}_{s,i,a}\bigg(\1_{\{\wt{\tau}^{+}_{\ell}<\infty\}}\,\wt{\bE}_{Z^{1}_{\wt{\tau}^{+}_{\ell}},Z^{2}_{\wt{\tau}^{+}_{\ell}},0}\Big(g^{+}\Big(Z^{1}_{\wt{\tau}^{+}_{h}},Z^{2}_{\wt{\tau}^{+}_{h}}\Big)\Big)\bigg)\\
&=\wt{\bE}_{s,i,a}\Big(\1_{\{\wt{\tau}^{+}_{\ell}<\infty\}}\big(\cP_{h}^{+}g^{+}\big)\Big(Z^{1}_{\wt{\tau}^{+}_{\ell}},Z^{2}_{\wt{\tau}^{+}_{\ell}}\Big)\Big)\\
&=\wt{\bE}_{s,i,a}\Big(\big(\cP_{h}^{+}g^{+}\big)\Big(Z^{1}_{\wt{\tau}^{+}_{\ell}},Z^{2}_{\wt{\tau}^{+}_{\ell}}\Big)\Big),
\end{align*}
where the last equality is due to the fact that $(\cP_{h}^{+}g^{+})(\infty,\partial)=0$.
\end{proof}

\begin{remark}\label{rem:ProofExpgpsimJPMinus}
We now verify \eqref{eq:ExpgpsimJPPlus} using the strong Markov family $\wt{\cM}$. Indeed, by \eqref{eq:ExpTildePPStarPlus} and Corollary \ref{cor:StrongMarkovExpTildeTau}, for any $g^{+}\in L^{\infty}(\overline{\sX_{+}})$, $(s,i)\in\sX_{-}$, and $\ell\in(0,\infty)$,
\begin{align*}
\bE_{s,i}^{*}\Big(g^{+}\Big(\tau_{\ell}^{+,*},X_{\tau_{\ell}^{+,*}}^{*}\Big)\Big)=\wt{\bE}_{s,i,0}\Big(g^{+}\Big(Z^{1}_{\wt{\tau}_{\ell}^{+}},Z^{2}_{\wt{\tau}_{\ell}^{+}}\Big)\Big)=\wt{\bE}_{s,i,0}\Big(\big(\cP_{\ell}^{+}g^{+}\big)\Big(Z^{1}_{\wt{\tau}_{0}^{+}},Z^{2}_{\wt{\tau}_{0}^{+}}\Big)\Big)=\big(J^{+}\cP_{\ell}^{+}g^{+}\big)(s,i).
\end{align*}
\end{remark}

\subsection{A regularity lemma}\label{subsec:RegLemma}
Fix $g^{+}\in C_{0}(\overline{\sX_{+}})$, and define $f_{+}:\sX\times\bR_{+}\rightarrow\bR$ by
\begin{align}\label{eq:FuntfPlus}
f_{+}(s,i,\ell):=\wt{\bE}_{s,i,0}\Big(g^{+}\Big(Z_{\wt{\tau}_{\ell}^{+}}^{1},Z_{\wt{\tau}_{\ell}^{+}}^{2}\Big)\Big).
\end{align}
In particular, in view of \eqref{eq:TildeTauPlus0}, we have
\begin{align}\label{eq:Trivfplus}
f_{+}(s,i,0)=g^{+}(s,i),\quad (s,i)\in\overline{\sX_{+}}.
\end{align}
Moreover, by \eqref{eq:DefJPlusTildeP}, \eqref{eq:DefPellPlusTildeP}, and \eqref{eq:FuntfPlus},
\begin{align}\label{eq:JPlusfPlus}
J^{+}g^{+}(s,i)&=f_{+}(s,i,0),\quad (s,i)\in\sX_{-},\\
\label{eq:PellPlusfPlus} \cP^{+}_{\ell}g^{+}(s,i)&=f_{+}(s,i,\ell),\quad (s,i)\in\sX_{+},\quad\ell\in\bR_{+}.
\end{align}
The following lemma addresses the continuity of $f_{+}$ with respect to different variables. In particular, due to \eqref{eq:JPlusfPlus} and \eqref{eq:PellPlusfPlus}, for any $g^{+}\in C_{0}(\overline{\sX_{+}})$, the continuity of $J^{+}g^{+}(\cdot,i)$, and $\cP_{\cdot}^{+}g^{+}(\cdot,i)$, with respect to each individual variable, is established as special cases of $f_{+}$.

Recall that, by Assumption~\ref{assump:GenLambda}, $K$ is a constant such that $\sup_{s\in\bR_{+},i,j\in\mathbf{E}}|\mathsf{\Lambda}_{s}(i,j)|\leq K$. Additionally, recall that $\underline{v}=\min_{i\in\mathbf{E}}|v(i)|$ and $\overline{v}=\max_{i\in\mathbf{E}}|v(i)|$.

\begin{lemma}\label{lem:UnifContf}
For any $g^{+}\in C_{0}(\overline{\sX_{+}})$, $f_{+}(\cdot,i,\cdot)$ is uniformly continuous on $\bR_{+}^{2}$, uniformly for all $i\in\mathbf{E}$. That is, for any $\varepsilon>0$, there exists $\delta=\delta(\varepsilon,K,\|g^{+}\|_{\infty},\underline{v},\overline{v})>0$ such that
\begin{align*}
\sup_{i\in\mathbf{E}}\,\sup_{\substack{(s_{1},\ell_{1}),(s_{2},\ell_{2})\in\bR_{+}^{2}: \\ |s_{2}-s_{1}|+|\ell_{2}-\ell_{1}|<\delta}}\big|f_{+}(s_{2},i,\ell_{2})-f_{+}(s_{1},i,\ell_{1})\big|<\varepsilon.
\end{align*}
Moreover, for any $i\in\mathbf{E}$ and $\ell\in\bR_{+}$, $f(\cdot,i,\ell)\in C_{0}(\bR_{+})$. In particular, $J^{+}g^{+}\in C_{0}(\overline{\sX_{-}})$ and $\cP_{\ell}^{+}g^{+}\in C_{0}(\overline{\sX_{+}})$.
\end{lemma}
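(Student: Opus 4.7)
Each $g^+(\cdot,j)\in C_0(\bR_+)$ extends continuously to the one-point compactification of $\bR_+$ and is therefore uniformly continuous; since $\mathbf{E}_+$ is finite, I fix a single modulus $\omega_g$ valid for every $j\in\mathbf{E}_+$, and set $\Lambda:=mK$. The plan is to establish (i) uniform continuity of $f_+$ in $\ell$, uniformly in $(s,i)$, and (ii) uniform continuity of $f_+$ in $s$, uniformly in $(i,\ell)$; these then combine via the triangle inequality, and uniformity in $i$ is automatic since $\mathbf{E}$ is finite.

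\textbf{Step 1 (continuity in $\ell$).} For $0\le\ell_1\le\ell_2$ with $h:=\ell_2-\ell_1$, Corollary~\ref{cor:StrongMarkovExpTildeTau} yields
\[
f_+(s,i,\ell_2)-f_+(s,i,\ell_1)=\wt\bE_{s,i,0}\bigl((\cP^+_h g^+-g^+)(Z^1_{\wt\tau^+_{\ell_1}},Z^2_{\wt\tau^+_{\ell_1}})\bigr),
\]
reducing the task to a sup-norm bound on $\cP^+_h g^+-g^+$. For any $(s',j)\in\sX_+$, Assumption~\ref{assump:GenLambda}(i) implies that the probability of $Z^2$ leaving $j$ before time $h/\underline v$ is at most $\Lambda h/\underline v$; on the complementary event $Z^2$ stays at $j$, forcing $\wt\tau^+_h=h/v(j)\le h/\underline v$. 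Splitting the expectation on these two events yields
\[
|\cP^+_h g^+(s',j)-g^+(s',j)|\le \omega_g(h/\underline v)+2\|g^+\|_\infty\Lambda h/\underline v,
\]
which is independent of $(s',j)$ and tends to $0$ as $h\downarrow 0$.

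\textbf{Step 2 (continuity in $s$ and $C_0$ property).} Given $\varepsilon>0$, choose $N$ with $|g^+(s,j)|<\varepsilon$ for all $s\ge N$ and $j\in\mathbf{E}_+$. For $s\ge N$, on $\{\wt\tau^+_\ell<\infty\}$ we have $Z^1_{\wt\tau^+_\ell}=s+\wt\tau^+_\ell\ge N$, while $g^+(\infty,\partial)=0$; hence $|f_+(s,i,\ell)|\le\varepsilon$, which already gives $f_+(\cdot,i,\ell)\in C_0(\bR_+)$. For continuity in $s$, I couple two time-inhomogeneous chains with generators $(\mathsf{\Lambda}_{s_j+t})_{t\ge 0}$, $j=1,2$, by uniformization at rate $\Lambda$: both chains are driven by a common Poisson clock of rate $\Lambda$ and common uniform marks, using jump kernel $I+\mathsf{\Lambda}_{s_j+t}/\Lambda$ at clock time $t$. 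By Assumption~\ref{assump:GenLambda}(ii) there is a modulus $\omega_\Lambda$ with $\sup_t\|\mathsf{\Lambda}_{s_1+t}-\mathsf{\Lambda}_{s_2+t}\|\le\omega_\Lambda(|s_2-s_1|)$, so the per-jump total-variation discrepancy between the two kernels is $O(\omega_\Lambda(|s_2-s_1|)/\Lambda)$, and a union bound over the jumps in $[0,N]$ bounds the coupling-failure probability by $cN\,\omega_\Lambda(|s_2-s_1|)$ for some constant $c$ depending only on $m$. On coupling-success the random variables $(\wt\tau^+_\ell,Z^2_{\wt\tau^+_\ell})$ agree on $\{\wt\tau^+_\ell\le N\}$, and the remaining $s$-shift in the first argument of $g^+$ contributes at most $\omega_g(|s_2-s_1|)$; on $\{\wt\tau^+_\ell>N\}$ the integrand for each chain is at most $\varepsilon$ in absolute value (since $Z^1_{\wt\tau^+_\ell}\ge s_j+N\ge N$ or $\wt\tau^+_\ell=\infty$). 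Collecting these estimates,
\[
|f_+(s_1,i,\ell)-f_+(s_2,i,\ell)|\le 2\varepsilon+\omega_g(|s_2-s_1|)+2\|g^+\|_\infty cN\,\omega_\Lambda(|s_2-s_1|),
\]
which can be made arbitrarily small by first choosing $\varepsilon$ small, then $|s_2-s_1|$ small.

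\textbf{Main obstacle and conclusion.} The principal difficulty is that $\wt\tau^+_\ell$ may equal $+\infty$ with positive probability and its distribution depends non-trivially on $(s,i,\ell)$, so no uniform tail bound is available. The $C_0$-decay of $g^+$ in its time variable is precisely what permits truncation at a deterministic horizon $N=N(\varepsilon)$; once this horizon is fixed, the coupling argument reduces the $s$-continuity problem to the continuity of $\mathsf{\Lambda}_\cdot$ on a bounded interval. Combining Steps~1 and~2 via the triangle inequality delivers the joint uniform continuity of $f_+(\cdot,i,\cdot)$ on $\bR_+^2$, uniformly in $i$. The final assertions $J^+g^+\in C_0(\overline{\sX_-})$ and $\cP^+_\ell g^+\in C_0(\overline{\sX_+})$ follow at once from \eqref{eq:JPlusfPlus}--\eqref{eq:PellPlusfPlus}.
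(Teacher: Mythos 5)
Your Step~1 (continuity in $\ell$, via Corollary~\ref{cor:StrongMarkovExpTildeTau} and a first--jump estimate) is correct and mirrors the paper's own first step. The difficulty is Step~2. The coupling-by-uniformization argument requires a modulus of continuity $\omega_{\mathsf{\Lambda}}$ for $s\mapsto\mathsf{\Lambda}_s$ that is uniform over all of $\bR_+$, namely $\sup_{t\geq 0}\|\mathsf{\Lambda}_{s_1+t}-\mathsf{\Lambda}_{s_2+t}\|\leq\omega_{\mathsf{\Lambda}}(|s_2-s_1|)$ for every $s_1,s_2\in\bR_+$. Assumption~\ref{assump:GenLambda}(ii) provides only pointwise continuity on the non-compact half-line, which does not give uniform continuity: a bounded continuous generator whose off-diagonal entries behave like $\sin(s^2)$ admits no such modulus. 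And even granting uniform continuity of $\mathsf{\Lambda}_\cdot$, the $\delta$ produced by your argument would depend on $\omega_{\mathsf{\Lambda}}$, whereas the lemma asserts $\delta=\delta(\varepsilon,K,\|g^{+}\|_{\infty},\underline{v},\overline{v})$; the whole point is that the uniform continuity of $f_{+}$ is controlled through the bound $K$ on $\mathsf{\Lambda}_\cdot$ alone, with no quantitative continuity hypothesis on $\mathsf{\Lambda}_\cdot$ entering at all.

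The paper achieves this because it never builds two chains with different generators and couples them. It exploits the time-homogeneity of the augmented process $Z$ instead: under $\wt{\bP}_{s_1,i,0}$, run $Z$ forward to time $r:=s_2-s_1$ and apply the strong Markov property. By the construction of $\wt{\cM}$, the conditional law of the future given $\wt{\sF}_r$ is $\wt{\bP}_{Z^1_r,Z^2_r,Z^3_r}=\wt{\bP}_{s_2,Z^2_r,Z^3_r}$, so $f_{+}(s_2,\cdot,\cdot)$ appears naturally along the \emph{same} trajectory rather than as an expectation over a second, differently-generated chain. The proof then splits on $\{\wt{\tau}^{+}_{\ell_1}\leq r\}$ versus $\{\wt{\tau}^{+}_{\ell_1}>r\}$, invoking the jump-time bounds of Lemma~\ref{lem:TailDistJumpTildeZ2} ($\wt{\bP}(\wt{\gamma}_1\leq r)\leq Kr$ and $\wt{\bP}(\wt{\gamma}_2\leq r)\leq K^2r^2$) together with your Step~1's $\ell$-continuity to absorb the perturbation $|Z^3_r|\leq\overline{v}\,r$. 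This is the idea your proposal is missing: $s_2$ should enter as a time shift internal to the one chain, not as the generator of a second process to be matched against the first. Finally, your $C_0$-decay estimate $|f_{+}(s,i,\ell)|\leq\varepsilon$ for $s\geq N(\varepsilon)$ is fine on its own, but concluding $f_{+}(\cdot,i,\ell)\in C_0(\bR_+)$ still requires the $s$-continuity that is precisely the gapped step.
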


The proof of this lemma is deferred to Appendix \ref{sec:AppendixB}.

\subsection{Existence of the Wiener-Hopf factorization}\label{sec:ExistProof}

This section is devoted to the proof of the $``+"$ portion of Theorem \ref{thm:WHExistUniq}. We do this by demonstrating the existence of solution to \eqref{eq:WHPlus} subject to conditions ($a^{+}$) and ($b^{+}$). Recall that $J^{+}$ and $(\cP_{\ell}^{+})_{\ell\in\bR_{+}}$ are defined as in \eqref{eq:DefJPlus} and \eqref{eq:DefPPlus}, and have the respective representations \eqref{eq:DefJPlusTildeP} and \eqref{eq:DefPellPlusTildeP} in terms of the time-homogeneous Markov family $\wt{\cM}$; $G^{+}$ is defined as in \eqref{eq:DefGenGPlus} with respect to $(\cP_{\ell}^{+})_{\ell\in\bR_{+}}$. We will show that $(J^{+},G^{+})$ is a solution to \eqref{eq:WHPlus} (which is equivalent to \eqref{eq:WHPlus1}$-$\eqref{eq:WHPlus2}) subject to ($a^{+}$) and ($b^{+}$). The proof is divided into four steps.

\medskip
\noindent
\textbf{Step 1.}  In this step show that $J^{+}$ satisfies the condition $(a^{+})$(i).

Let $g^{+}\in C_{0}(\overline{\sX_{+}})$. By Lemma \ref{lem:UnifContf}, we have $J^{+}g^{+}\in C_{0}(\overline{\sX_{-}})$. Moreover, if $\supp g^{+}\subset[0,\eta_{g^{+}}]\times\mathbf{E}_{+}$ for some $\eta_{g^{+}}\in(0,\infty)$, we have $(J^{+}g^{+})(s,i)=\wt{\bE}_{s,i,0}(g^{+}(s+\wt{\tau}_{0}^{+},Z^{2}_{\wt{\tau}_{0}^{+}}))=0$, for any $(s,i)\in[\eta_{g^{+}},\infty)\times\mathbf{E}_{-}$, which completes the proof in Step 1.

\medskip
\noindent
\textbf{Step 2.} Here we will show that $(\cP^{+}_{\ell})_{\ell\in\bR_{+}}$ is a strongly continuous positive contraction semigroup on $C_{0}(\overline{\sX_{+}})$, and thus a Feller semigroup.

Let $g^{+}\in C_{0}(\overline{\sX_{+}})$ and $\ell\in\bR_{+}$. By Lemma \ref{lem:UnifContf}, we have $\cP^{+}_{\ell}g^{+}\in C_{0}(\overline{\sX_{+}})$. The positivity and contraction property of $\cP_{\ell}^{+}$ follow immediately from its definition. Hence, it remains to show that $(\cP^{+}_{\ell})_{\ell\in\bR_{+}}$ is a strongly continuous semigroup.

To this end, we fix any $(s,i)\in\sX_{+}$. By \eqref{eq:Trivfplus} and \eqref{eq:PellPlusfPlus}, we first have
\begin{align}\label{eq:PellPlusSemiGroup1}
\big(\cP_{0}^{+}g^{+}\big)(s,i)=f_{+}(s,i,0)=g^{+}(s,i).
\end{align}
Moreover, for any $\ell\in\bR_{+}$ and $h>0$, by \eqref{eq:DefPellPlusTildeP} and Corollary \ref{cor:StrongMarkovExpTildeTau}, we have
\begin{align}\label{eq:PellPlusSemiGroup2}
\big(\cP^{+}_{\ell+h}g^{+}\big)(s,i)&=\wt{\bE}_{s,i,0}\Big(g^{+}\Big(\!Z^{1}_{\wt{\tau}^{+}_{\ell+h}},Z^{2}_{\wt{\tau}^{+}_{\ell+h}}\Big)\!\Big)=\wt{\bE}_{s,i,0}\Big(\!\big(\cP^{+}_{h}g^{+}\big)\!\Big(Z^{1}_{\wt{\tau}^{+}_{\ell}},Z^{2}_{\wt{\tau}^{+}_{\ell}}\Big)\!\Big)=\big(\cP^{+}_{\ell}\cP^{+}_{h}g^{+}\big)(s,i),
\end{align}
Hence, $(\cP_{\ell}^{+})_{\ell\in\bR_{+}}$ is a semigroup on $C_{0}(\overline{\sX_{+}})$.

Finally, for any $\ell\in\bR_{+}$ and $g^{+}\in C_{0}(\overline{\sX_{+}})$, by \eqref{eq:PellPlusfPlus} and Lemma \ref{lem:UnifContf}, we have
\begin{align*}
\lim_{\ell\rightarrow 0+}\sup_{(s,i)\in\overline{\sX_{+}}}\big|\big(\cP^{+}_{\ell}g^{+}\big)(s,i)-g^{+}(s,i)\big|&=\lim_{\ell\rightarrow 0+}\sup_{(s,i)\in\sX_{+}}\big|\big(\cP^{+}_{\ell}g^{+}\big)(s,i)-g^{+}(s,i)\big|\\
&=\lim_{\ell\rightarrow 0+}\sup_{(s,i)\in\sX_{+}}\big|f_{+}(s,i,\ell)-f_{+}(s,i,0)\big|=0,
\end{align*}
{which} shows the strong continuity of $(\cP_{\ell}^{+})_{\ell\in\bR_{+}}$, and thus completes the proof in Step 2.

\medskip
\noindent
\textbf{Step 3.}  We will show here that $G^{+}$ is the strong generator of $(\cP^{+}_{\ell})_{\ell\in\bR_{+}}$ with domain $C_{0}^{1}(\overline{\sX_{+}})$, and that
    \begin{align}\label{eq:WHplusU}
    G^{+}g^{+}=\big(\mV^{+}\big)^{-1}\bigg(\frac{\partial}{\partial s}+\wt{\mA}+\wt{\mB}J^{+}\bigg)g^{+},\quad g^{+}\in C_{0}^{1}(\overline{\sX_{+}}).
    \end{align}
     The argument proceeds in two sub-steps: \textbf{(i)} and \textbf{(ii)}.

\medskip
\noindent
\textbf{(i)} We first show that, for any $g^{+}\in C_{0}(\overline{\sX_{+}})$, the pointwise limit in \eqref{eq:DefGenGPlus} exists for every $(s,i)\in\overline{\sX_{+}}$ if and only if $g(\cdot,i)$ is right-differentiable on $\bR_{+}$ for each $i\in\mathbf{E}$. Moreover, for such $g^{+}$, we have
\begin{align}
&\lim_{\ell\rightarrow 0+}\frac{1}{\ell}\big(\cP^{+}_{\ell}g^{+}(s,i)-g^{+}(s,i)\big)\nonumber\\
\label{eq:LimitGPlus} &\quad =\frac{1}{v(i)}\bigg(\frac{\partial_+ g^{+}}{\partial s}(s,i)+\sum_{j\in\mathbf{E}_{+}}\mathsf{\Lambda}_{s}(i,j)g^{+}(s,j)+\sum_{j\in\mathbf{E}_{-}}\mathsf{\Lambda}_{s}(i,j)\big(J^{+}g^{+}\big)(s,j)\bigg),\quad (s,i)\in\overline{\sX_{+}}.
\end{align}
When $(s,i)=(\infty,\partial)$, \eqref{eq:LimitGPlus} is trivial since both sides of the equality are equal to zero. In what follows, fix  $g^{+}\in C_{0}(\overline{\sX_{+}})$ and $(s,i)\in\sX_{+}$.

Let $\wt{\gamma}_{1}$ be the first jump time of $\wt{Z}^{2}$. For any $\ell\in(0,\infty)$, by \eqref{eq:DefPellPlusTildeP} and \eqref{eq:Gamma1overh}, we have
\begin{align}
&\frac{1}{\ell}\big(\cP^{+}_{\ell}g^{+}(s,i)-g^{+}(s,i)\big)=\frac{1}{\ell}\Big(\wt{\bE}_{s,i,0}\Big(g^{+}\Big(Z^{1}_{\wt{\tau}_{\ell}^{+}},Z^{2}_{\wt{\tau}_{\ell}^{+}}\Big)\Big)-g^{+}(s,i)\Big)\nonumber\\
&\quad =\frac{1}{\ell}\bigg(\wt{\bE}_{s,i,0}\Big(g^{+}\Big(Z^{1}_{\wt{\tau}_{\ell}^{+}},Z^{2}_{\wt{\tau}_{\ell}^{+}}\Big)\1_{\{\wt{\gamma}_{1}>\ell/v(i)\}}\Big)+\wt{\bE}_{s,i,0}\Big(g^{+}\Big(Z^{1}_{\wt{\tau}_{\ell}^{+}},Z^{2}_{\wt{\tau}_{\ell}^{+}}\Big)\1_{\{\wt{\gamma}_{1}\leq\ell/v(i)\}}\Big)-g^{+}(s,i)\bigg)\nonumber\\
&\quad =\frac{1}{\ell}\bigg(\wt{\bE}_{s,i,0}\Big(g^{+}\Big(Z^{1}_{\wt{\tau}_{\ell}^{+}},Z^{2}_{\wt{\tau}_{\ell}^{+}}\Big)\1_{\{\wt{\gamma}_{1}>\ell/v(i),\wt{\tau}^{+}_{\ell}=\ell/v(i)\}}\Big)+\wt{\bE}_{s,i,0}\Big(g^{+}\Big(Z^{1}_{\wt{\tau}_{\ell}^{+}},Z^{2}_{\wt{\tau}_{\ell}^{+}}\Big)\1_{\{\wt{\gamma}_{1}\leq\ell/v(i)\}}\Big)-g^{+}(s,i)\bigg)\nonumber\\
&\quad =\frac{1}{\ell}\bigg(\wt{\bE}_{s,i,0}\Big(g^{+}\Big(Z^{1}_{\ell/v(i)},Z^{2}_{\ell/v(i)}\Big)\1_{\{\wt{\gamma}_{1}>\ell/v(i)\}}\Big)+\wt{\bE}_{s,i,0}\Big(g^{+}\Big(Z^{1}_{\wt{\tau}_{\ell}^{+}},Z^{2}_{\wt{\tau}_{\ell}^{+}}\Big)\1_{\{\wt{\gamma}_{1}\leq\ell/v(i)\}}\Big)-g^{+}(s,i)\bigg)\nonumber\\
&\quad =\frac{1}{\ell}\,\wt{\bP}_{s,i,0}\bigg(\wt{\gamma}_{1}>\frac{\ell}{v(i)}\bigg)\bigg(g^{+}\bigg(s+\frac{\ell}{v(i)},i\bigg)-g^{+}(s,i)\bigg)-\frac{1}{\ell}\,\wt{\bP}_{s,i,0}\bigg(\wt{\gamma}_{1}\leq\frac{\ell}{v(i)}\bigg)g^{+}(s,i)\nonumber\\
&\qquad +\frac{1}{\ell}\,\wt{\bE}_{s,i,0}\Big(g^{+}\Big(Z^{1}_{\wt{\tau}_{\ell}^{+}},Z^{2}_{\wt{\tau}_{\ell}^{+}}\Big)\1_{\{\wt{\gamma}_{1}\leq\ell/v(i)\}}\Big)\nonumber\\
\label{eq:DecompGPlus} &\quad =:\cI_{1}(\ell)-\cI_{2}(\ell)+\cI_{3}(\ell).
\end{align}
Clearly,
\begin{align}\label{eq:LimitGPlus1}
\lim_{\ell\rightarrow 0+}\cI_{1}(\ell)=\frac{1}{v(i)}\frac{\partial g^{+}}{\partial s+}(s,i)
\end{align}
if and only if $g^{+}(\cdot,i)$ is right-differentiable at $s$. As for $\cI_{2}(\ell)$, \eqref{eq:TailDistJumpTildeZ2} implies that
\begin{align}\label{eq:LimitGPlus2}
\lim_{\ell\rightarrow 0+}\cI_{2}(\ell)=\lim_{\ell\rightarrow 0}\frac{1}{\ell}\bigg(1-\exp\bigg(\int_{s}^{s+\ell/v(i)}\!\Lambda_{u}(i,i)\,du\bigg)\bigg)g^{+}(s,i)=-\frac{\mathsf{\Lambda}_{s}(i,i)}{v(i)}g^{+}(s,i).
\end{align}

It remains to analyze the limit of $\cI_{3}(\ell)$, as $\ell\rightarrow 0+$. By \eqref{eq:hoverGamma1} and \eqref{eq:StrongMarkovCondPlus},
\begin{align*}
\cI_{3}(\ell)&=\frac{1}{\ell}\,\wt{\bE}_{s,i,0}\Big(g^{+}\Big(Z^{1}_{\wt{\tau}_{\ell}^{+}},Z^{2}_{\wt{\tau}_{\ell}^{+}}\Big)\1_{\{\wt{\gamma}_{1}\leq\ell/v(i),\,\wt{\gamma}_{1}\leq\wt{\tau}^{+}_{\ell}\}}\Big)\\
&=\frac{1}{\ell}\,\wt{\bE}_{s,i,0}\Big(\1_{\{\wt{\gamma}_{1}\leq\ell/v(i),\,\wt{\gamma}_{1}\leq\wt{\tau}^{+}_{\ell}\}}\,\wt{\bE}_{s,i,0}\Big(g^{+}\Big(Z^{1}_{\wt{\tau}_{\ell}^{+}},Z^{2}_{\wt{\tau}_{\ell}^{+}}\Big)\Big|\wt{\sF}_{\wt{\gamma}_{1}}\Big)\Big)\\
&=\frac{1}{\ell}\,\wt{\bE}_{s,i,0}\Big(\1_{\{\wt{\gamma}_{1}\leq\ell/v(i),\,\wt{\gamma}_{1}\leq\wt{\tau}_{\ell}^{+}\}}\,\wt{\bE}_{Z_{\wt{\gamma}_{1}}^{1},Z_{\wt{\gamma}_{1}}^{2},Z_{\wt{\gamma}_{1}}^{3}}\!\Big(g^{+}\Big(Z^{1}_{\wt{\tau}_{\ell}^{+}},Z^{2}_{\wt{\tau}_{\ell}^{+}}\Big)\Big)\Big).
\end{align*}
Since $Z^{3}_{\wt{\gamma}_{1}}\leq\ell$ on $\{\wt{\gamma}_{1}\leq\wt{\tau}_{\ell}^{+}\}$, by \eqref{eq:ExpTildePShift} and \eqref{eq:FuntfPlus}, we have
\begin{align*}
&\1_{\{\wt{\gamma}_{1}\leq\ell/v(i),\,\wt{\gamma}_{1}\leq\wt{\tau}_{\ell}^{+}\}}\,\wt{\bE}_{Z_{\wt{\gamma}_{1}}^{1},Z_{\wt{\gamma}_{1}}^{2},Z_{\wt{\gamma}_{1}}^{3}}\!\Big(g^{+}\Big(Z^{1}_{\wt{\tau}_{\ell}^{+}},Z^{2}_{\wt{\tau}_{\ell}^{+}}\Big)\Big)\\
&\quad =\1_{\{\wt{\gamma}_{1}\leq\ell/v(i),\,\wt{\gamma}_{1}\leq\wt{\tau}_{\ell}^{+}\}}\,\wt{\bE}_{t,j,a}\Big(g^{+}\Big(Z^{1}_{\wt{\tau}_{\ell}^{+}},Z^{2}_{\wt{\tau}_{\ell}^{+}}\Big)\Big)\Big|_{(t,j,a)=\big(Z_{\wt{\gamma}_{1}}^{1},Z_{\wt{\gamma}_{1}}^{2},Z_{\wt{\gamma}_{1}}^{3}\big)}\\
&\quad =\1_{\{\wt{\gamma}_{1}\leq\ell/v(i),\,\wt{\gamma}_{1}\leq\wt{\tau}_{\ell}^{+}\}}\,\wt{\bE}_{t,j,0}\Big(g^{+}\Big(Z^{1}_{\wt{\tau}_{\ell-a}^{+}},Z^{2}_{\wt{\tau}_{\ell-a}^{+}}\Big)\Big)\Big|_{(t,j,a)=\big(Z_{\wt{\gamma}_{1}}^{1},Z_{\wt{\gamma}_{1}}^{2},Z_{\wt{\gamma}_{1}}^{3}\big)}\\
&\quad =\1_{\{\wt{\gamma}_{1}\leq\ell/v(i),\,\wt{\gamma}_{1}\leq\wt{\tau}_{\ell}^{+}\}}\,f_{+}\big(Z_{\wt{\gamma}_{1}}^{1},Z_{\wt{\gamma}_{1}}^{2},\ell-Z_{\wt{\gamma}_{1}}^{3}\big).
\end{align*}
Thus, we can further decompose $\cI_{3}(\ell)$ as
\begin{align}
\cI_{3}(\ell)&=\frac{1}{\ell}\,\wt{\bE}_{s,i,0}\Big(\1_{\{\wt{\gamma}_{1}\leq\ell/v(i),\,\wt{\gamma}_{1}\leq\wt{\tau}_{\ell}^{+}\}}\,f_{+}\big(Z_{\wt{\gamma}_{1}}^{1},Z_{\wt{\gamma}_{1}}^{2},\ell-Z_{\wt{\gamma}_{1}}^{3}\big)\Big)\nonumber\\
&=\frac{1}{\ell}\,\wt{\bE}_{s,i,0}\Big(\1_{\{\wt{\gamma}_{1}\leq\ell/v(i),\,\wt{\gamma}_{1}\leq\wt{\tau}_{\ell}^{+}\}}\Big(f_{+}\big(Z^{1}_{\wt{\gamma}_{1}},Z^{2}_{\wt{\gamma}_{1}},\ell-Z^{3}_{\wt{\gamma}_{1}}\big)-f_{+}\big(s,Z^{2}_{\wt{\gamma}_{1}},\ell\big)\Big)\Big)\nonumber\\
&\quad +\frac{1}{\ell}\,\wt{\bE}_{s,i,0}\Big(\1_{\{\wt{\gamma}_{1}\leq\ell/v(i)\}}\,f_{+}\big(s,Z^{2}_{\wt{\gamma}_{1}},\ell\big)\Big)\nonumber\\
\label{eq:DecompGPlus3} &=:\cI_{31}(\ell)+\cI_{32}(\ell).
\end{align}
For $\cI_{31}(\ell)$, by \eqref{eq:ProcZ}, Lemma \ref{lem:UnifContf}, and \eqref{eq:DistFirstJumpTildeZ2}, we have
\begin{align}
\lim_{\ell\rightarrow 0+}\big|\cI_{31}(\ell)\big|&=\lim_{\ell\rightarrow 0+}\frac{1}{\ell}\Big|\wt{\bE}_{s,i,0}\Big(\1_{\{\wt{\gamma}_{1}\leq\ell/v(i),\,\wt{\gamma}_{1}\leq\wt{\tau}_{\ell}^{+}\}}\Big(f_{+}\big(s+\wt{\gamma}_{1},Z^{2}_{\wt{\gamma}_{1}},\ell-Z^{3}_{\wt{\gamma}_{1}}\big)-f_{+}\big(s,Z^{2}_{\wt{\gamma}_{1}},\ell\big)\Big)\Big)\Big|\nonumber\\
&\leq\lim_{\ell\rightarrow 0+}\frac{1}{\ell}\wt{\bE}_{s,i,0}\bigg(\1_{\{\wt{\gamma}_{1}\leq\ell/v(i),\,\wt{\gamma}_{1}\leq\wt{\tau}_{\ell}^{+}\}}\sup_{j\in\mathbf{E}}\sup_{\ell'\in[0,\ell],\,|s'-s|\in[0,\ell/\underline{v}]}\big|f_{+}(s',j,\ell')-f_{+}(s,j,\ell)\big|\bigg)\nonumber\\
&\leq\lim_{\ell\rightarrow 0+}\sup_{j\in\mathbf{E}}\sup_{\ell'\in[0,\ell],\,|s'-s|\in[0,\ell/\underline{v}]}\big|f_{+}(s',j,\ell')-f_{+}(s,j,\ell)\big|\cdot\frac{1}{\ell}\,\wt{\bP}_{s,i,0}\bigg(\wt{\gamma}_{1}\leq\frac{\ell}{v(i)}\bigg)\nonumber\\
\label{eq:LimitGPlus31} &\leq\frac{K}{\underline{v}}\lim_{\ell\rightarrow 0+}\sup_{j\in\mathbf{E}}\sup_{\ell'\in[0,\ell],\,|s'-s|\in[0,\ell/\underline{v}]}\big|f_{+}(s',j,\ell')-f_{+}(s,j,\ell)\big|=0,
\end{align}
where we recall that $\underline{v}=\min_{i\in\mathbf{E}}|v(i)|$. To study the limit of $\cI_{32}(\ell)$ as $\ell\rightarrow 0+$, we first rewrite $\cI_{32}(\ell)$ as
\begin{align}\label{eq:DecompGPlus32}
\cI_{32}(\ell)=\frac{1}{\ell}\sum_{j\in\mathbf{E}\setminus\{i\}}\wt{\bP}_{s,i,0}\bigg(\wt{\gamma}_{1}\leq\frac{\ell}{v(i)},\,Z_{\wt{\gamma}_{1}}^{2}=j\bigg)f_{+}(s,j,\ell).
\end{align}
Note that, for any $j\in\mathbf{E}\setminus\{i\}$, the probability in \eqref{eq:DecompGPlus32} can be further decomposed as
\begin{align}
&\wt{\bP}_{s,i,0}\bigg(\wt{\gamma}_{1}\leq\frac{\ell}{v(i)},\,Z_{\wt{\gamma}_{1}}^{2}=j\bigg)\nonumber\\
&\quad =\wt{\bP}_{s,i,0}\bigg(\wt{\gamma}_{1}\leq\frac{\ell}{v(i)},\,Z_{\wt{\gamma}_{1}}^{2}=j,\,Z_{\ell/v(i)}^{2}=j\bigg)+\wt{\bP}_{s,i,0}\bigg(\wt{\gamma}_{1}\leq\frac{\ell}{v(i)},\,Z_{\wt{\gamma}_{1}}^{2}=j,\,Z_{\ell/v(i)}^{2}\neq j\bigg)\nonumber\\
&\quad =\wt{\bP}_{s,i,0}\bigg(\wt{\gamma}_{1}\leq\frac{\ell}{v(i)},\,Z_{\ell/v(i)}^{2}=j\bigg)-\wt{\bP}_{s,i,0}\bigg(\wt{\gamma}_{1}\leq\frac{\ell}{v(i)},\,Z_{\wt{\gamma}_{1}}^{2}\neq j,\,Z_{\ell/v(i)}^{2}=j\bigg)\nonumber\\
\label{eq:DecompGPlusProb32} &\qquad +\wt{\bP}_{s,i,0}\bigg(\wt{\gamma}_{1}\leq\frac{\ell}{v(i)},\,Z_{\wt{\gamma}_{1}}^{2}=j,\,Z_{\ell/v(i)}^{2}\neq j\bigg).
\end{align}
By \eqref{eq:TranProbTildeX}, \eqref{eq:LawXXStar}, and \eqref{eq:DefGenXStar}, for $j\neq i$,
\begin{align*}
\lim_{\ell\rightarrow 0+}\frac{1}{\ell}\,\wt{\bP}_{s,i,0}\bigg(\wt{\gamma}_{1}\leq\frac{\ell}{v(i)},\,Z_{\ell/v(i)}^{2}=j\bigg)&=\lim_{\ell\rightarrow 0+}\frac{1}{\ell}\,\wt{\bP}_{s,i,0}\big(Z_{\ell/v(i)}^{2}=j\big)=\lim_{\ell\rightarrow 0+}\frac{1}{\ell}\,\bP_{s,(i,0)}\big(X_{s+\ell/v(i)}=j\big)\\
&=\lim_{\ell\rightarrow 0+}\frac{1}{\ell}\,\bP^{*}_{s,i}\big(X_{s+\ell/v(i)}^{*}=j\big)=\frac{\mathsf\Lambda_{s}(i,j)}{v(i)},
\end{align*}
which, together with Lemma \ref{lem:UnifContf}, gives
\begin{align}\label{eq:LimitGPlus321}
\lim_{\ell\rightarrow 0+}\frac{1}{\ell}\sum_{j\in\mathbf{E}\setminus\{i\}}\wt{\bP}_{s,i,0}\bigg(\wt{\gamma}_{1}\leq\frac{\ell}{v(i)},\,Z_{\ell/v(i)}^{2}=j\bigg)f_{+}(s,j,\ell)=\sum_{j\in\mathbf{E}\setminus\{i\}}\frac{\mathsf\Lambda_{s}(i,j)}{v(i)}f_{+}(s,j,0).
\end{align}
Moreover, denoting by $\wt{\gamma}_{2}$ the second jump time of $\wt{Z}^2$, then by \eqref{eq:FuntfPlus} and \eqref{eq:DistSecondJumpTildeZ2}, we have
\begin{align}
&\lim_{\ell\rightarrow 0+}\bigg|\frac{1}{\ell}\sum_{j\in\mathbf{E}\setminus\{i\}}\wt{\bP}_{s,i,0}\bigg(\wt{\gamma}_{1}\leq\frac{\ell}{v(i)},\,Z_{\wt{\gamma}_{1}}^{2}\neq j,\,Z_{\ell/v(i)}^{2}=j\bigg)f_{+}(s,j,\ell)\bigg|\nonumber\\
&\quad\leq\lim_{\ell\rightarrow 0+}\frac{1}{\ell}\sum_{j\in\mathbf{E}\setminus\{i\}}\wt{\bP}_{s,i,0}\bigg(\wt{\gamma}_{2}\leq\frac{\ell}{v(i)},\,Z_{\ell/v(i)}^{2}=j\bigg)\big\|g^{+}\big\|_{\infty}\nonumber\\
\label{eq:LimitGPlus322} &\quad\leq\lim_{\ell\rightarrow 0+}\frac{1}{\ell}\,\wt{\bP}_{s,i,0}\bigg(\wt{\gamma}_{2}\leq\frac{\ell}{v(i)}\bigg)\big\|g^{+}\big\|_{\infty}\leq\lim_{\ell\rightarrow 0+}\frac{2K^{2}\ell}{\underline{v}^{2}}\big\|g^{+}\big\|_{\infty}=0,
\end{align}
and similarly,
\begin{align}\label{eq:LimitGPlus323}
\lim_{\ell\rightarrow 0+}\frac{1}{\ell}\bigg|\sum_{j\in\mathbf{E}\setminus\{i\}}\wt{\bP}_{s,i,0}\bigg(\wt{\gamma}_{1}\leq\frac{\ell}{v(i)},\,Z_{\wt{\gamma}_{1}}^{2}=j,\,Z_{\ell/v(i)}^{2}\neq j\bigg)f_{+}(s,j,\ell)\bigg|=0.
\end{align}
Combining \eqref{eq:DecompGPlus32}$-$\eqref{eq:LimitGPlus323} leads to
\begin{align}\label{eq:LimitGPlus32}
\lim_{\ell\rightarrow 0+}\cI_{32}(\ell)=\!\!\sum_{j\in\mathbf{E}\setminus\{i\}}\!\!\frac{\mathsf\Lambda_{s}(i,j)}{v(i)}f_{+}(s,j,0)=\!\!\sum_{j\in\mathbf{E}_{+}\setminus\{i\}}\!\!\frac{\mathsf{\Lambda}_{s}(i,j)}{v(i)}\,g^{+}(s,j)+\!\sum_{j\in\mathbf{E}_{-}}\!\frac{\mathsf{\Lambda}_{s}(i,j)}{v(i)}\big(J^{+}g^{+}\big)(s,j),
\end{align}
where the last equality is due to \eqref{eq:Trivfplus} and \eqref{eq:JPlusfPlus}.

Therefore, from \eqref{eq:DecompGPlus3}, \eqref{eq:LimitGPlus31}, and \eqref{eq:LimitGPlus32}, we have
\begin{align}\label{eq:LimitGPlus3}
\lim_{\ell\rightarrow 0+}\cI_{3}(\ell)=\sum_{j\in\mathbf{E}_{+}\setminus\{i\}}\frac{\mathsf{\Lambda}_{s}(i,j)}{v(i)}\,g^{+}(s,j)+\sum_{j\in\mathbf{E}_{-}}\frac{\mathsf{\Lambda}_{s}(i,j)}{v(i)}\left(J^{+}g^{+}\right)(s,j).
\end{align}
Combining \eqref{eq:DecompGPlus}$-$\eqref{eq:LimitGPlus2} and \eqref{eq:LimitGPlus3}, we conclude that the limit in \eqref{eq:DefGenGPlus} exists for every $(s,i)\in\overline{\sX_{+}}$ if and only if $g(\cdot,i)$ is right-differentiable on $\bR_{+}$ for each $i$, and that for such $g^{+}\in C_{0}(\overline{\sX_{+}})$, \eqref{eq:LimitGPlus} holds true for any $(s,i)\in\overline{\sX_{+}}$.

\medskip
\noindent
\textbf{(ii)} We now show that $\sD(G^{+})=C_{0}^{1}(\overline{\sX_{+}})$. Toward this end we define
\begin{align*}
\sL(G^{+}):=\big\{g^{+}\in C_{0}(\overline{\sX_{+}}):\,\text{the limit in \eqref{eq:DefGenGPlus} exists for all }(s,i)\in\sX_{+}\,\,\,\text{and}\,\,\,G^{+}g^{+}\in C_{0}(\overline{\sX_{+}})\big\}.
\end{align*}
Since $(\cP^{+}_{\ell})_{\ell\in\bR_{+}}$ is a Feller semigroup on $C_{0}(\overline{\sX_{+}})$ (cf. Step 2), it follows from \cite[Theorem 1.33]{BottcherSchillingWang2013} that $G^{+}$ is the strong generator of $(\cP^{+}_{\ell})_{\ell\in\bR_{+}}$ with $\sD(G^{+})=\sL(G^{+})$. Hence, we only need to show that $\sL(G^{+})=C_{0}^{1}(\overline{\sX_{+}})$.

We first show that $\sL(G^{+})\subset C_{0}^{1}(\overline{\sX_{+}})$. For any $g^{+}\in\sL(G^{+})$, it was shown in Step~3 (i) that
\begin{align}\label{eq:GPlusStep3}
\big(G^{+}\!g^{+}\big)(s,i)\!=\!\frac{1}{v(i)}\bigg(\!\frac{\partial_{+}g^{+}}{\partial s}(s,i)\!+\!\!\sum_{j\in\mathbf{E}_{+}}\!\!\mathsf{\Lambda}_{s}(i,j)g^{+}\!(s,j)\!+\!\!\sum_{j\in\mathbf{E}_{-}}\!\!\mathsf{\Lambda}_{s}(i,j)\big(J^{+}\!g^{+}\big)(s,j)\!\!\bigg),\,\,(s,i)\!\in\!\overline{\sX_{+}},
\end{align}
where the right-hand side, as a function of $(s,i)$, belongs to $C_{0}(\overline{\sX_{+}})$. By Lemma \ref{lem:UnifContf}, we have $J^{+}g^{+}\in C_{0}(\overline{\sX_{-}})$. This, together with Assumption \ref{assump:GenLambda} (ii), ensures that
\begin{align}\label{eq:GPlusStep3part}
\sum_{j\in\mathbf{E}_{+}}\mathsf{\Lambda}_{s}(i,j)g^{+}(s,j)+\sum_{j\in\mathbf{E}_{-}}\mathsf{\Lambda}_{s}(i,j)\big(J^{+}g^{+}\big)(s,j),
\end{align}
as a function of $(s,i)\in\overline{\sX_{+}}$, belongs to $C_{0}(\overline{\sX_{+}})$. Thus, we must have $\partial_{+}g^{+}/\partial s$ exists at any $(s,i)\in\sX_{+}$, $\partial_{+}g^{+}(\infty,\partial)/\partial s=0$, and $\partial_{+}g^{+}/\partial s\in C_{0}(\overline{\sX_{+}})$. Therefore, $\partial g^{+}/\partial s$ exists and belongs to $C_{0}(\overline{\sX_{+}})$, i.e., $g^{+}\in C_{0}^{1}(\overline{\sX_{+}})$.

To show $C_{0}^{1}(\overline{\sX_{+}})\subset\sL(G^{+})$, we first note that for $g^{+}\in C_{0}^{1}(\overline{\sX_{+}})$, Step 3 (i) shows that the limit in \eqref{eq:DefGenGPlus} exists for every $(s,i)\in\overline{\sX_{+}}$, and that \eqref{eq:GPlusStep3} holds true. Since $g^{+}\in C_{0}(\overline{\sX_{+}})$, the same argument as above implies that \eqref{eq:GPlusStep3part}, as a function of $(s,i)$, belongs to $C_{0}(\overline{\sX_{+}})$. Hence, $G^{+}g^{+}\in C_{0}(\overline{\sX_{+}})$. The proof in Step 3 is now complete.

\medskip
\noindent
\textbf{Step 4.} In this step we will show that $J^{+}$ satisfies the condition $(a^{+})$(ii), that is for any $g^{+}\in C_{0}^{1}(\overline{\sX_{+}})$ we have $J^{+}g^{+}\in C_{0}^{1}(\overline{\sX_{-}})$. We will also show that
    \begin{align}\label{eq:WHplusD}
    \frac{\partial}{\partial s}\big(J^{+}g^{+}\big)=\big(-\wt{\mC}-\wt{\mD}J^{+}+\mV^{-}J^{+}G^{+}\big)\,g^{+},\quad g^{+}\in C_{0}^{1}(\overline{\sX_{+}}).
    \end{align}
We fix $g^{+}\in C_{0}^{1}(\overline{\sX_{+}})$ for the rest of the proof.

To begin with, we claim that in order to prove $J^{+}g^{+}\in C_{0}^{1}(\overline{\sX_{-}})$ and \eqref{eq:WHplusD}, it is sufficient to show that
\begin{align}\label{eq:WHplusDRightDev}
\frac{\partial_{+}}{\partial s}J^{+}g^{+}=\big(-\wt{\mC}-\wt{\mD}J^{+}+\mV^{-}J^{+}G^{+}\big)\,g^{+}.
\end{align}
In fact, since $g^{+}\in C_{0}^{1}(\overline{\sX_{+}})$, Step 3 shows that $G^{+}g^{+}\in C_{0}(\overline{\sX_{+}})$. Hence, by Lemma \ref{lem:UnifContf}, we have $J^{+}g^{+}\in C_{0}(\overline{\sX_{-}})$ and $J^{+}G^{+}g^{+}\in C_{0}(\overline{\sX_{-}})$. Given the definition of $\wt{\mC}$ and $\wt{\mD}$ and invoking  Assumption \ref{assump:GenLambda}, we conclude that $(-\wt{\mC}-\wt{\mD}J^{+}+\mV^{-}J^{+}G^{+})g^{+}\in C_{0}(\overline{\sX_{-}})$. Thus, indeed, \eqref{eq:WHplusDRightDev} implies that $J^{+}g^{+}\in C_{0}^{1}(\overline{\sX_{-}})$ and that \eqref{eq:WHplusD} holds.

To prove \eqref{eq:WHplusDRightDev}, it is sufficient to consider $(s,i)\in\sX_{-}$ only, since both sides of \eqref{eq:WHplusDRightDev} {are} equal to zero for $(s,i)=(\infty,\partial)$. In view of \eqref{eq:DefJPlusTildeP}, we will evaluate
\begin{align*}
\lim_{r\rightarrow 0+}\frac{1}{r}\bigg(\wt{\bE}_{s+r,i,0}\Big(g^{+}\Big(Z^{1}_{\wt{\tau}_{0}^{+}},Z^{2}_{\wt{\tau}_{0}^{+}}\Big)\Big)-\wt{\bE}_{s,i,0}\Big(g^{+}\Big(Z^{1}_{\wt{\tau}_{0}^{+}},Z^{2}_{\wt{\tau}_{0}^{+}}\Big)\Big)\bigg),\quad (s,i)\in\sX_{-}.
\end{align*}
For any $r>0$, by \eqref{eq:ProcZ} and \eqref{eq:FuntfPlus},
\begin{align}
&\wt{\bE}_{s+r,i,0}\Big(g^{+}\Big(Z^{1}_{\wt{\tau}_{0}^{+}},Z^{2}_{\wt{\tau}_{0}^{+}}\Big)\Big)-\wt{\bE}_{s,i,0}\Big(g^{+}\Big(Z^{1}_{\wt{\tau}_{0}^{+}},Z^{2}_{\wt{\tau}_{0}^{+}}\Big)\Big)\nonumber\\
&\quad =\wt{\bE}_{s,i,0}\bigg(\Big(\wt{\bE}_{s+r,i,0}\Big(g^{+}\Big(Z^{1}_{\wt{\tau}_{0}^{+}},Z^{2}_{\wt{\tau}_{0}^{+}}\Big)\Big)-f_{+}\big(Z^{1}_{r},Z^{2}_{r},0\big)\Big)\bigg)+\wt{\bE}_{s,i,0}\Big(f_{+}\big(Z^{1}_{r},Z^{2}_{r},0\big)-g^{+}\Big(Z^{1}_{\wt{\tau}_{0}^{+}},Z^{2}_{\wt{\tau}_{0}^{+}}\Big)\Big)\nonumber\\
&\quad =\wt{\bE}_{s,i,0}\Big(f_{+}(s+r,i,0\big)-f_{+}\big(s+r,Z^{2}_{r},0\big)\Big)+\wt{\bE}_{s,i,0}\Big(f_{+}\big(s+r,Z^{2}_{r},0\big)-g^{+}\Big(Z^{1}_{\wt{\tau}_{0}^{+}},Z^{2}_{\wt{\tau}_{0}^{+}}\Big)\Big)\nonumber\\
&\quad =\wt{\bE}_{s,i,0}\Big(f_{+}(s\!+\!r,i,0\big)-f_{+}\big(s\!+\!r,Z^{2}_{r},0\big)\Big)+\wt{\bE}_{s,i,0}\Big(\1_{\{\wt{\tau}_{0}^{+}\leq r\}}\Big(f_{+}\big(s\!+\!r,Z^{2}_{r},0\big)-g^{+}\Big(s\!+\!\wt{\tau}_{0}^{+},Z^{2}_{\wt{\tau}_{0}^{+}}\Big)\Big)\Big)\nonumber\\
\label{eq:PreDecompPartialJPlus} &\qquad +\wt{\bE}_{s,i,0}\Big(\1_{\{\wt{\tau}_{0}^{+}>r\}}\Big(f_{+}\big(s+r,Z^{2}_{r},0\big)-g^{+}\Big(Z^{1}_{\wt{\tau}_{0}^{+}},Z^{2}_{\wt{\tau}_{0}^{+}}\Big)\Big)\Big).
\end{align}
Clearly, by \eqref{eq:RangeXtauPlus} and \eqref{eq:Trivfplus}, $g^{+}(s+\wt{\tau}_{0}^{+},Z^{2}_{\wt{\tau}_{0}^{+}})=f_{+}(s+\wt{\tau}_{0}^{+},Z^{2}_{\wt{\tau}_{0}^{+}},0)$. Hence, the second term in \eqref{eq:PreDecompPartialJPlus} can be decomposed as
\begin{align}
&\wt{\bE}_{s,i,0}\Big(\1_{\{\wt{\tau}_{0}^{+}\leq r\}}\Big(f_{+}\big(s+r,Z^{2}_{r},0\big)-g^{+}\Big(s+\wt{\tau}_{0}^{+},Z^{2}_{\wt{\tau}_{0}^{+}}\Big)\Big)\Big)\nonumber\\
&\quad =\wt{\bE}_{s,i,0}\Big(\1_{\{\wt{\tau}_{0}^{+}\leq r\}}\Big(f_{+}\big(s+r,Z^{2}_{r},0\big)-f_{+}\Big(s+\wt{\tau}_{0}^{+},Z^{2}_{\wt{\tau}_{0}^{+}},0\Big)\Big)\Big)\nonumber\\
&\quad =\wt{\bE}_{s,i,0}\Big(\1_{\{\wt{\tau}_{0}^{+}\leq r\}}\Big(f_{+}\big(s+r,Z^{2}_{r},0\big)-f_{+}\Big(s+r,Z^{2}_{\wt{\tau}_{0}^{+}},0\Big)\Big)\Big)\nonumber\\
\label{eq:PreDecompPartialJPlus2} &\qquad +\wt{\bE}_{s,i,0}\Big(\1_{\{\wt{\tau}_{0}^{+}\leq r\}}\Big(f_{+}\Big(s+r,Z^{2}_{\wt{\tau}_{0}^{+}},0\Big)-f_{+}\Big(s+\wt{\tau}_{0}^{+},Z^{2}_{\wt{\tau}_{0}^{+}},0\Big)\Big)\Big).
\end{align}
Moreover, by \eqref{eq:StrongMarkovCondPlus}, \eqref{eq:ExpTildePShift}, \eqref{eq:FuntfPlus}, and \eqref{eq:ProcZ}, we can decompose the second term in \eqref{eq:PreDecompPartialJPlus} as
\begin{align}
&\wt{\bE}_{s,i,0}\Big(\1_{\{\wt{\tau}_{0}^{+}>r\}}\Big(f_{+}\big(s+r,Z^{2}_{r},0\big)-g^{+}\Big(Z^{1}_{\wt{\tau}_{0}^{+}},Z^{2}_{\wt{\tau}_{0}^{+}}\Big)\Big)\Big)\nonumber\\
&\quad =\wt{\bE}_{s,i,0}\bigg(\1_{\{\wt{\tau}_{0}^{+}>r\}}\Big(f_{+}\big(s+r,Z^{2}_{r},0\big)-\wt{\bE}_{s,i,0}\Big(g^{+}\Big(Z^{1}_{\wt{\tau}_{0}^{+}},Z^{2}_{\wt{\tau}_{0}^{+}}\Big)\Big|\sF_{r}\Big)\Big)\bigg)\nonumber\\
&\quad =\wt{\bE}_{s,i,0}\left(\1_{\{\wt{\tau}_{0}^{+}>r\}}\bigg(f_{+}\big(s+r,Z^{2}_{r},0\big)-\wt{\bE}_{t,j,a}\Big(g^{+}\Big(Z^{1}_{\wt{\tau}_{0}^{+}},Z^{2}_{\wt{\tau}_{0}^{+}}\Big)\Big)\Big|_{(t,j,a)=\big(Z^{1}_{r},Z^{2}_{r},Z^{3}_{r}\big)}\bigg)\right)\nonumber\\
&\quad =\wt{\bE}_{s,i,0}\left(\1_{\{\wt{\tau}_{0}^{+}>r\}}\bigg(f_{+}\big(s+r,Z^{2}_{r},0\big)-\wt{\bE}_{t,j,0}\Big(g^{+}\Big(Z^{1}_{\wt{\tau}_{-a}^{+}},Z^{2}_{\wt{\tau}_{-a}^{+}}\Big)\Big)\Big|_{(t,j,a)=\big(Z^{1}_{r},Z^{2}_{r},Z^{3}_{r}\big)}\bigg)\right)\nonumber\\
&\quad =\wt{\bE}_{s,i,0}\Big(\1_{\{\wt{\tau}_{0}^{+}>r\}}\big(f_{+}(s+r,Z^{2}_{r},0)-f_{+}\big(Z^{1}_{r},Z^{2}_{r},-Z^{3}_{r}\big)\big)\Big)\nonumber\\
\label{eq:PreDecompPartialJPlus3} &\quad =\wt{\bE}_{s,i,0}\Big(\1_{\{\wt{\tau}_{0}^{+}>r\}}\big(f_{+}(s+r,Z^{2}_{r},0)-f_{+}\big(s+r,Z^{2}_{r},-Z^{3}_{r}\big)\big)\Big).
\end{align}
Hence, by combining \eqref{eq:PreDecompPartialJPlus}$-$\eqref{eq:PreDecompPartialJPlus3}, we obtain that
\begin{align}
&\frac{1}{r}\bigg(\wt{\bE}_{s+r,i,0}\Big(g^{+}\Big(Z^{1}_{\wt{\tau}_{0}^{+}},Z^{2}_{\wt{\tau}_{0}^{+}}\Big)\Big)-\wt{\bE}_{s,i,0}\Big(g^{+}\Big(Z^{1}_{\wt{\tau}_{0}^{+}},Z^{2}_{\wt{\tau}_{0}^{+}}\Big)\Big)\bigg)\nonumber\\
&\quad =\frac{1}{r}\,\wt{\bE}_{s,i,0}\Big(f_{+}(s+r,i,0)-f_{+}\big(s+r,Z^{2}_{r},0\big)\Big)\nonumber\\
&\qquad +\frac{1}{r}\,\wt{\bE}_{s,i,0}\Big(\1_{\{\wt{\tau}_{0}^{+}\leq r\}}\big(f_{+}\big(s+r,Z^{2}_{r},0\big)-f_{+}\big(s+r,Z^{2}_{\wt{\tau}_{0}^{+}},0\big)\big)\Big)\nonumber\\
&\qquad +\frac{1}{r}\,\wt{\bE}_{s,i,0}\Big(\1_{\{\wt{\tau}_{0}^{+}\leq r\}}\Big(f_{+}\Big(s+r,Z^{2}_{\wt{\tau}_{0}^{+}},0\Big)-f_{+}\Big(s+\wt{\tau}_{0}^{+},Z^{2}_{\wt{\tau}_{0}^{+}},0\Big)\Big)\Big)\nonumber\\
&\qquad +\frac{1}{r}\,\wt{\bE}_{s,i,0}\Big(\1_{\{\wt{\tau}_{0}^{+}>r\}}\big(f_{+}\big(s+r,Z^{2}_{r},0)-f_{+}\big(s+r,Z^{2}_{r},-Z^{3}_{r}\big)\big)\Big)\nonumber\\
\label{eq:DecompPartialJPlus} &\quad =:\cJ_{1}(r)+\cJ_{2}(r)+\cJ_{3}(r)+\cJ_{4}(r).
\end{align}
Next, we will analyze the limit of $\cJ_{k}(r)$, $k=1,2,3,4$, as $r\rightarrow 0+$.

We begin with evaluating the limit of $\cJ_{1}(r)$ as $r\rightarrow 0+$. By \eqref{eq:Probz}, \eqref{eq:ProcZ}, and \eqref{eq:LawXXStar}, and using the evolution system $\mU^{*}=(\mU^{*}_{s,t})_{0\leq s\leq t<\infty}$ defined as in \eqref{eq:DefEvolSytXStar}, we have
\begin{align}
\cJ_{1}(r)&=\frac{1}{r}\,\bE_{s,(i,a)}\big(f_{+}(s+r,i,0)-f_{+}\big(s+r,X_{s+r},0\big)\big)\nonumber\\
&=\frac{1}{r}\,\bE^{*}_{s,i}\big(f_{+}(s+r,i,0)-f_{+}\big(s+r,X_{s+r}^{*},0\big)\big)=-\frac{1}{r}\big(\big(\mU_{s,s+r}^{*}-\mI\big)f_{+}(s+r,\cdot,0)\big)(i)\nonumber\\
\label{eq:DecompJ1r} &= -\frac{1}{r}\big(\big(\mU_{s,s+r}^{*}-\mI\big)f_{+}(s,\cdot,0)\big)(i)+\frac{1}{r}\big(\big(\mU_{s,s+r}^{*}-\mI\big)\big(f_{+}(s,\cdot,0)-f_{+}(s+r,\cdot,0)\big)\big)(i).
\end{align}
It follows immediately from \eqref{eq:DefGenXStar} that
\begin{align}\label{eq:LimitJ1r1}
\lim_{r\rightarrow 0+}\frac{1}{r}\big(\big(\mU_{s,s+r}^{*}-\mI\big)f_{+}(s,\cdot,0)\big)(i)=\sum_{j\in\mathbf{E}}\mathsf{\Lambda}_{s}(i,j)f_{+}(s,j,0).
\end{align}
Moreover, by \eqref{eq:EvolDE}, Assumption \ref{assump:GenLambda} (so that $\|\mathsf{\Lambda}^{*}_{t}\|_{\infty}\leq 2K$ for all $t\in\overline{\bR}_{+}$), Lemma \ref{lem:UnifContf}, and the fact that $\mU^{*}_{s,t}$ is a contraction map, we have
\begin{align}
&\lim_{r\rightarrow 0+}\frac{1}{r}\big|\big(\big(\mU_{s,s+r}^{*}-\mI\big)\big(f_{+}(s,\cdot,0)-f_{+}(s+r,\cdot,0)\big)\big)(i)\big|\nonumber\\
&\quad\leq\lim_{r\rightarrow 0+}\frac{1}{r}\big\|\mU_{s,s+r}^{*}-\mI\big\|_{\infty}\sup_{(s,j)\in\sX}\big|f_{+}(s,j,0)-f_{+}(s+r,j,0)\big|\nonumber\\
&\quad\leq\lim_{r\rightarrow 0+}\frac{1}{r}\int_{s}^{s+r}\big\|\mU_{s,t}^{*}\big\|_{\infty}\big\|\mathsf{\Lambda}^{*}_{t}\big\|_{\infty}dt\cdot\sup_{(s,j)\in\sX}\big|f_{+}(s,j,0)-f_{+}(s+r,j,0)\big|\nonumber\\
\label{eq:LimitJ1r2} &\quad\leq 2K\lim_{r\rightarrow 0+}\sup_{(s,j)\in\sX}\big|f_{+}(s,j,0)-f_{+}(s+r,j,0)\big|=0.
\end{align}
Combining \eqref{eq:DecompJ1r}$-$\eqref{eq:LimitJ1r2} leads to
\begin{align}\label{eq:LimitJ1r}
\lim_{r\rightarrow 0+}\cJ_{1}(r)=-\sum_{j\in\mathbf{E}}\mathsf{\Lambda}_{s}(i,j)\,f_{+}(s,j,0).
\end{align}

Next, we will study the limits of $\cJ_{2}(r)$ and $\cJ_{3}(r)$ as $r\rightarrow 0+$. Since $i\in\mathbf{E}_{-}$, $Z^{2}$ must have at least one jump to $\mathbf{E}_{+}$ before $Z^{3}$ (which coincides with $\int_{0}^{\cdot}v(Z^{2}_{u})du$ in view of \eqref{eq:DistTildeZ3IntTildeZ2}) can upcross the level $0$, i.e., $\wt{\bP}_{s,i,0}(\wt{\gamma}_{1}\leq\wt{\tau}_{0}^{+})=1$, where we recall that $\wt{\gamma}_{1}$ denotes the first jump time of $Z^{2}$. Hence, by \eqref{eq:DistFirstJumpTildeZ2} and Lemma \ref{lem:UnifContf},
\begin{align}
\lim_{r\rightarrow 0+}\big|\cJ_{3}(r)\big|&=\lim_{r\rightarrow 0+}\frac{1}{r}\Big|\wt{\bE}_{s,i,0}\Big(\1_{\{\wt{\gamma}_{1}\leq\wt{\tau}_{0}^{+}\leq r\}}\Big(f_{+}\Big(s+r,Z^{2}_{\wt{\tau}_{0}^{+}},0\Big)-f_{+}\Big(s+\wt{\tau}_{0}^{+},Z^{2}_{\wt{\tau}_{0}^{+}},0\Big)\Big)\Big)\Big|\nonumber\\
&\leq\lim_{r\rightarrow 0+}\frac{1}{r}\,\wt{\bP}_{s,i,0}\big(\wt{\gamma}_{1}\leq r\big)\,\sup_{(r',j)\in[0,r]\times\mathbf{E}}\big|f_{+}(s+r',j,0)-f_{+}(s,j,0)\big|\nonumber\\
\label{eq:LimitJ3r} &\leq K\,\lim_{r\rightarrow 0+}\sup_{(r',j)\in[0,r]\times\mathbf{E}}\big|f_{+}(s+r',j,0)-f_{+}(s,j,0)\big|=0.
\end{align}
Moreover, note that $\1_{\{\wt{\tau}_{0}^{+}\leq r\}}(f_{+}(s+r,Z^{2}_{r},0)-f_{+}(s+r,Z^{2}_{\wt{\tau}_{0}^{+}},0))$ does not vanish only if $Z^{2}_{r}\neq Z^{2}_{\wt{\tau}_{0}^{+}}$, so $Z^{2}$ must jump at least twice before time $r$. Hence, by \eqref{eq:DistSecondJumpTildeZ2} and \eqref{eq:FuntfPlus}, we have
\begin{align}
\lim_{r\rightarrow 0+}\big|\cJ_{2}(r)\big|&=\lim_{r\rightarrow 0+}\frac{1}{r}\Big|\wt{\bE}_{s,i,0}\Big(\1_{\{\wt{\tau}_{0}^{+}\leq r,\,\wt{\gamma}_{2}\leq r\}}\big(f_{+}\big(s+r,Z^{2}_{r},0\big)-f_{+}\big(s+r,Z^{2}_{\wt{\tau}_{0}^{+}},0\big)\big)\Big)\Big|\nonumber\\
\label{eq:LimitJ2r} &\leq 2\big\|g^{+}\big\|_{\infty}\cdot\lim_{r\rightarrow 0+}\frac{1}{r}\,\wt{\bP}_{s,i,0}\big(\wt{\gamma}_{2}\leq r\big)\leq 2K^{2}\big\|g^{+}\big\|_{\infty}\cdot\lim_{r\rightarrow 0+}r=0,
\end{align}
where we recall that $\wt{\gamma}_{2}$ denotes the second jump time of $Z^{2}$.

Finally, we study the limit of $\cJ_{4}(r)$, as $r\rightarrow 0+$, by further decomposing $\cJ_{4}(r)$ as
\begin{align}
\cJ_{4}(r)&=\frac{1}{r}\,\wt{\bE}_{s,i,0}\Big(\1_{\{\wt{\tau}_{0}^{+}>r\}}\big(f_{+}\big(s+r,Z^{2}_{r},0\big)-f_{+}\big(s+r,Z^{2}_{r},-v(i)r\big)\big)\Big)\nonumber\\
&\quad +\frac{1}{r}\,\wt{\bE}_{s,i,0}\Big(\1_{\{\wt{\tau}_{0}^{+}>r\}}\big(f_{+}\big(s+r,Z^{2}_{r},-v(i)r\big)-f_{+}\big(s+r,Z^{2}_{r},-Z^{3}_{r}\big)\big)\Big)\nonumber\\
&=\frac{1}{r}\,\wt{\bE}_{s,i,0}\big(f_{+}\big(s+r,Z^{2}_{r},0\big)-f_{+}\big(s+r,Z^{2}_{r},-v(i)r\big)\big)\nonumber\\
&\quad -\frac{1}{r}\,\wt{\bE}_{s,i,0}\Big(\1_{\{\wt{\tau}_{0}^{+}\leq r\}}\big(f_{+}\big(s+r,Z^{2}_{r},0\big)-f_{+}\big(s+r,Z^{2}_{r},-v(i)r\big)\big)\Big)\nonumber\\
&\quad +\frac{1}{r}\,\wt{\bE}_{s,i,0}\Big(\1_{\{\wt{\tau}_{0}^{+}>r\}}\big(f_{+}\big(s+r,Z^{2}_{r},-v(i)r\big)-f_{+}\big(s+r,Z^{2}_{r},-Z^{3}_{r}\big)\big)\Big)\nonumber\\
\label{eq:DecompJ4r} &=:\cJ_{41}(r)+\cJ_{42}(r)+\cJ_{43}(r).
\end{align}
For $\cJ_{41}(r)$, by \eqref{eq:Probz}, \eqref{eq:ProcZ}, \eqref{eq:LawXXStar}, and \eqref{eq:DefEvolSytXStar}, we have
\begin{align*}
\cJ_{41}(r)&=\frac{1}{r}\,\bE_{s,(i,0)}\big(f_{+}\big(s+r,X_{s+r},0\big)-f_{+}\big(s+r,X_{s+r},-v(i)r\big)\big)\\
&=\frac{1}{r}\,\bE_{s,i}^{*}\big(f_{+}\big(s+r,X_{s+r}^{*},0\big)-f_{+}\big(s+r,X_{s+r}^{*},-v(i)r\big)\big)\\
&=\frac{1}{r}\,\mU_{s,s+r}^{*}\big(f_{+}(s+r,\cdot,0)-f_{+}(s+r,\cdot,-v(i)r)\big)(i)\\
&=\frac{1}{r}\big(\mU_{s,s+r}^{*}\!-\mI\big)\big(f_{+}(s\!+\!r,\cdot,0)\!-\!f_{+}(s\!+\!r,\cdot,-v(i)r)\big)(i)\!+\!\frac{1}{r}\big(f_{+}(s\!+\!r,i,0)\!-\!f_{+}(s\!+\!r,i,-v(i)r)\big).
\end{align*}
By Assumption \ref{assump:GenLambda}, \eqref{eq:EvolDE}, and Lemma \ref{lem:UnifContf}, a similar argument leading to \eqref{eq:LimitJ1r2} shows that
\begin{align*}
\lim_{r\rightarrow 0+}\frac{1}{r}\big|\big(\mU_{s,s+r}^{*}-\mI\big)\big(f_{+}(s+r,\cdot,0)-f_{+}(s+r,\cdot,-v(i)r)\big)(i)\big|=0.
\end{align*}
Hence, noting that $g^{+}\in C_{0}^{1}(\overline{\sX_{+}})=\sD(G^{+})$, by \eqref{eq:FuntfPlus} and Corollary \ref{cor:StrongMarkovExpTildeTau}, we have
\begin{align}
\lim_{r\rightarrow 0+}\cJ_{41}(r)&= -\lim_{r\rightarrow 0+}\frac{1}{r}\big(f_{+}(s+r,i,-v(i)r)-f_{+}(s+r,i,0)\big)\nonumber\\
&= -\lim_{r\rightarrow 0+}\frac{1}{r}\bigg(\wt{\bE}_{s,i,0}\Big(g^{+}\Big(Z^{1}_{\wt{\tau}_{-v(i)r}^{+}},Z^{2}_{\wt{\tau}_{-v(i)r}^{+}}\Big)\Big)-\wt{\bE}_{s,i,0}\Big(g^{+}\Big(Z^{1}_{\wt{\tau}_{0}^{+}},Z^{2}_{\wt{\tau}_{0}^{+}}\Big)\Big)\bigg)\nonumber\\
&= -\lim_{r\rightarrow 0+}\frac{1}{r}\,\wt{\bE}_{s,i,0}\Big(\big(\cP_{-v(i)r}^{+}\,g^{+}\big)\Big(Z^{1}_{\wt{\tau}_{0}^{+}},Z^{2}_{\wt{\tau}_{0}^{+}}\Big)-g^{+}\Big(Z^{1}_{\wt{\tau}_{0}^{+}},Z^{2}_{\wt{\tau}_{0}^{+}}\Big)\Big)\nonumber\\
\label{eq:LimitJ41r} &=v(i)\,\wt{\bE}_{s,i,0}\Big(\big(G^{+}g^{+}\big)\Big(Z^{1}_{\wt{\tau}_{0}^{+}},Z^{2}_{\wt{\tau}_{0}^{+}}\Big)\Big).
\end{align}
Next, since $\wt{\bP}_{s,i,0}(\wt{\gamma}_{1}\leq\wt{\tau}_{0}^{+})=1$ for $i\in\mathbf{E}_{-}$, by Lemma \ref{lem:UnifContf} and \eqref{eq:DistFirstJumpTildeZ2},
\begin{align}
\lim_{r\rightarrow 0+}\big|\cJ_{42}(r)\big|&\leq\lim_{r\rightarrow 0+}\frac{1}{r}\,\wt{\bE}_{s,i,0}\Big(\1_{\{\wt{\gamma}_{1}\leq r\}}\big|f_{+}\big(s+r,Z^{2}_{r},0\big)-f_{+}\big(s+r,Z^{2}_{r},-v(i)r\big)\big|\Big)\nonumber\\
&\leq\lim_{r\rightarrow 0+}\frac{1}{r}\,\wt{\bP}_{s,i,0}\big(\wt{\gamma}_{1}\leq r\big)\,\,\sup_{(j,\ell)\in\mathbf{E}\times[0,-v(i)r]}\big|f_{+}(s+r,j,0)-f_{+}(s+r,j,\ell)\big|\nonumber\\
\label{eq:LimitJ42r} &\leq K\,\lim_{r\rightarrow 0+}\sup_{(j,\ell)\in\mathbf{E}\times[0,-v(i)r]}\big|f_{+}(s+r,j,0)-f_{+}(s+r,j,\ell)\big|=0.
\end{align}
As for $\cJ_{43}(r)$, since $Z^{2}_{u}=Z^{2}_{0}$ for all $u\in[0,r]$ on $\{\wt{\gamma}_{1}>r\}$, it follows from \eqref{eq:DistTildeZ3IntTildeZ2} that $Z^{3}_{r}=\int_{0}^{r}v(Z^{2}_{u})du=v(i)r$, $\wt{\bP}_{s,i,0}-$a.s. on $\{\wt{\gamma}_{1}>r\}$, and thus
\begin{align*}
\1_{\{\wt{\gamma}_{1}>r\}}\big(f_{+}\big(s+r,Z^{2}_{r},-v(i)r\big)-f_{+}\big(s+r,Z^{2}_{r},-Z^{3}_{r}\big)\big)=0,\quad\wt{\bP}_{s,i,a}-\text{a.}\,\text{s.}.
\end{align*}
Hence, by Lemma \ref{lem:UnifContf} and \eqref{eq:DistFirstJumpTildeZ2}, we have
\begin{align}
\lim_{r\rightarrow 0+}\big|\cJ_{43}(r)\big|&=\lim_{r\rightarrow 0+}\frac{1}{r}\Big|\wt{\bE}_{s,i,0}\Big(\1_{\{\wt{\tau}_{0}^{+}>r\geq\wt{\gamma}_{1}\}}\big(f_{+}\big(s+r,Z^{2}_{r},-v(i)r\big)-f_{+}\big(s+r,Z^{2}_{r},-Z^{3}_{r}\big)\big)\Big)\Big|\nonumber\\
&\leq\lim_{r\rightarrow 0+}\frac{1}{r}\,\wt{\bE}_{s,i,0}\Big(\1_{\{\wt{\tau}_{0}^{+}>r\geq\wt{\gamma}_{1}\}}\sup_{j\in\mathbf{E},\,\ell_{1},\ell_{2}\in[0,\overline{v}r]}\big|f_{+}(s+r,j,\ell_{1})-f_{+}(s+r,j,\ell_{2})\big|\Big)\nonumber\\
&\leq\lim_{r\rightarrow 0+}\frac{1}{r}\,\wt{\bP}_{s,i,0}\big(\wt{\gamma}_{1}\leq r\big)\sup_{j\in\mathbf{E},\,\ell_{1},\ell_{2}\in[0,\overline{v}r]}\big|f_{+}(s+r,j,\ell_{1})-f_{+}(s+r,j,\ell_{2})\big|\nonumber\\
\label{eq:LimitJ43r} &\leq K\,\lim_{r\rightarrow 0+}\sup_{j\in\mathbf{E},\,\ell_{1},\ell_{2}\in[0,\overline{v}r]}\big|f_{+}(s+r,j,\ell_{1})-f_{+}(s+r,j,\ell_{2})\big|=0.
\end{align}
Combining \eqref{eq:DecompJ4r}$-$\eqref{eq:LimitJ43r}, we obtain that
\begin{align}\label{eq:LimitJ4r}
\lim_{r\rightarrow 0+}\cJ_{4}(r)=v(i)\,\wt{\bE}_{s,i,0}\Big(\big(G^{+}g^{+}\big)\Big(Z^{1}_{\wt{\tau}_{0}^{+}},Z^{2}_{\wt{\tau}_{0}^{+}}\Big)\Big).
\end{align}

Finally, in view of \eqref{eq:DefJPlusTildeP}, \eqref{eq:DecompPartialJPlus}, \eqref{eq:LimitJ1r}, \eqref{eq:LimitJ3r}, \eqref{eq:LimitJ2r}, and \eqref{eq:LimitJ4r}, for any $g^{+}\in C_{0}^{1}(\overline{\sX_{+}})$ and $(s,i)\in\sX_{-}$, we get
\begin{align}
\frac{\partial_{+}}{\partial s}\big(J^{+}g^{+}\big)(s,i)&=\lim_{r\rightarrow 0+}\frac{1}{r}\bigg(\wt{\bE}_{s+r,i,0}\Big(g^{+}\Big(Z^{1}_{\wt{\tau}_{0}^{+}},Z^{2}_{\wt{\tau}_{0}^{+}}\Big)\Big)-\wt{\bE}_{s,i,0}\Big(g^{+}\Big(Z^{1}_{\wt{\tau}_{0}^{+}},Z^{2}_{\wt{\tau}_{0}^{+}}\Big)\Big)\bigg)\nonumber\\
\label{eq:WHPlusDPoint} &=v(i)\,\wt{\bE}_{s,i,0}\Big(\big(G^{+}g^{+}\big)\Big(Z^{1}_{\wt{\tau}_{0}^{+}},Z^{2}_{\wt{\tau}_{0}^{+}}\Big)\Big)-\sum_{j\in\mathbf{E}}\mathsf{\Lambda}_{s}(i,j)\,\wt{\bE}_{s,j,0}\Big(g^{+}\Big(Z^{1}_{\wt{\tau}_{0}^{+}},Z^{2}_{\wt{\tau}_{0}^{+}}\Big)\Big).
\end{align}
Moreover, by \eqref{eq:Trivfplus}, \eqref{eq:JPlusfPlus}, and the definitions of $\wt{\mC}$ and $\wt{\mD}$ (cf. the end of Section \ref{subsec:Notations})
\begin{align}\label{eq:WHPlusDPoint2}
&v(i)\,\wt{\bE}_{s,i,0}\Big(\big(G^{+}g^{+}\big)\Big(Z^{1}_{\wt{\tau}_{0}^{+}},Z^{2}_{\wt{\tau}_{0}^{+}}\Big)\Big)-\sum_{j\in\mathbf{E}}\mathsf{\Lambda}_{s}(i,j)\,\wt{\bE}_{s,j,0}\Big(g^{+}\Big(Z^{1}_{\wt{\tau}_{0}^{+}},Z^{2}_{\wt{\tau}_{0}^{+}}\Big)\Big)\nonumber\\
&\quad =v(i)\big(J^{+}G^{+}g^{+}\big)(s,i)-\sum_{j\in\mathbf{E}_{+}}\mathsf{\Lambda}_{s}(i,j)g^{+}(s,j)-\sum_{j\in\mathbf{E}_{-}}\mathsf{\Lambda}_{s}(i,j)\big(J^{+}g^{+}\big)(s,j)\nonumber\\
&\quad =v(i)\big(J^{+}G^{+}g^{+}\big)(s,i)-\big(\wt{\mC}\,g^{+}\big)(s,i)-\big(\wt{\mD}J^{+}g^{+}\big)(s,i),
\end{align}
Putting together \eqref{eq:WHPlusDPoint} and \eqref{eq:WHPlusDPoint2}, we deduce \eqref{eq:WHplusDRightDev}, which completes the proof.

\subsection{Uniqueness of the Wiener-Hopf factorization}\label{sec:UniqProof}
In this section we prove the ``+" part of Theorem \ref{thm:WHProbInterpr}. Specifically, we will show that, if $(S^{+},H^{+})$ solves \eqref{eq:WHPlus} subject to $(a^{+})$ and $(b^{+})$, then, for any $g^{+}\in C_{0}(\overline{\sX_{+}})$, $S^{+}g^{+}=J^{+}g^{+}$ and $\cQ_{\ell}^{+}g^{+}=\cP_{\ell}^{+}g^{+}$, $\ell\in\bR_{+}$. This also guarantees the uniqueness of $G^{+}$, since two strongly continuous contraction semigroup coincide if and only if their generators coincide (cf. \cite[Theorem 1.2]{Dynkin1965}). Throughout this subsection, we assume that $(S^{+},H^{+})$ satisfies \eqref{eq:WHPlus} (or equivalently, \eqref{eq:WHPlus1} and \eqref{eq:WHPlus2}) and the conditions $(a^{+})$ and $(b^{+})$.

To begin with, we will show a sufficient condition of what we would like to prove. For any $g^{+}\in C_{0}(\overline{\sX_{+}})$, $(s,i,a)\in\sZ$, and $\ell\in[a,\infty)$, we define
\begin{align}\label{eq:FhatPlus}
\wh{F}_{+}(s,i,a,\ell;g^{+})&:=\left(\begin{pmatrix} I^{+} \\ S^{+} \end{pmatrix}\cQ^{+}_{\ell-a}g^{+}\right)(s,i),\\
\label{eq:FPlus} F_{+}(s,i,a,\ell;g^{+})&:=\wt{\bE}_{s,i,0}\Big(g^{+}\Big(Z^{1}_{\wt{\tau}_{\ell-a}^{+}},Z^{2}_{\wt{\tau}_{\ell-a}^{+}}\Big)\Big).
\end{align}
When no confusion arises, we will omit $g^{+}$ in $\wh{F}_{+}(s,i,a,\ell;g^{+})$ and $F_{+}(s,i,a,\ell;g^{+})$.
\begin{proposition}\label{prop:SuffCondUniq}
Suppose that
\begin{align}\label{eq:FhatFPlus}
\wh{F}_{+}(s,i,0,\ell;g^{+})=F_{+}(s,i,0,\ell;g^{+}),\quad (s,i)\in\sX,\quad\ell\in\bR_{+},\quad g^{+}\in C_{c}^{1}(\overline{\sX_{+}}).
\end{align}
Then, for any $g^{+}\in C_{0}(\overline{\sX_{+}})$,
\begin{align*}
S^{+}g^{+}=J^{+}g^{+},\quad\cQ^{+}_{\ell}g^{+}=\cP^{+}_{\ell}g^{+},\quad\ell\in\bR_{+}.
\end{align*}
\end{proposition}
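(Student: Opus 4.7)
The plan is to decode the hypothesis \eqref{eq:FhatFPlus} by splitting it into its $\sX_+$ and $\sX_-$ components, thereby obtaining the two target equalities on the subclass $C_c^1(\overline{\sX_+})$, and then to extend them to all of $C_0(\overline{\sX_+})$ via density and continuity of the operators involved.

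First, I would separately read off what \eqref{eq:FhatFPlus} says on each half-space. By the block structure of the column vector $\bigl(\begin{smallmatrix} I^+ \\ S^+ \end{smallmatrix}\bigr)\cQ_\ell^+ g^+$, the left-hand side $\wh F_+(s,i,0,\ell)$ equals $(\cQ_\ell^+ g^+)(s,i)$ when $(s,i)\in\sX_+$ and $(S^+\cQ_\ell^+ g^+)(s,i)$ when $(s,i)\in\sX_-$. On the right-hand side, \eqref{eq:DefPellPlusTildeP} gives $F_+(s,i,0,\ell) = (\cP_\ell^+ g^+)(s,i)$ for $(s,i)\in\sX_+$, while Corollary \ref{cor:StrongMarkovExpTildeTau} (applied with $a=0$, $\ell=0$, $h=\ell$) combined with \eqref{eq:DefJPlusTildeP} gives $F_+(s,i,0,\ell) = (J^+\cP_\ell^+ g^+)(s,i)$ for $(s,i)\in\sX_-$ and $\ell>0$; for $\ell=0$ one obtains $F_+(s,i,0,0)=(J^+ g^+)(s,i)$ directly. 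Thus the hypothesis is equivalent to the pair of pointwise identities $\cQ_\ell^+ g^+=\cP_\ell^+ g^+$ on $\sX_+$ and $S^+\cQ_\ell^+ g^+=J^+\cP_\ell^+ g^+$ on $\sX_-$, valid for every $g^+\in C_c^1(\overline{\sX_+})$ and every $\ell\geq 0$.

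From the first of these identities, since both sides vanish at $(\infty,\partial)$, we obtain $\cQ_\ell^+ g^+=\cP_\ell^+ g^+$ on $\overline{\sX_+}$ for all $g^+\in C_c^1(\overline{\sX_+})$ and $\ell\geq 0$. Specialising the second identity to $\ell=0$ (and using $\cQ_0^+=\cP_0^+=I$, which is guaranteed by condition $(\mathrm{b}^+)$ and by Step~2 of Section~\ref{sec:ExistProof} respectively) yields $S^+ g^+=J^+ g^+$ on $\overline{\sX_-}$ for the same class of $g^+$. It remains to promote these equalities from $C_c^1(\overline{\sX_+})$ to $C_0(\overline{\sX_+})$. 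The space $C_c^1(\overline{\sX_+})$ is norm-dense in $C_0(\overline{\sX_+})$, since any element of $C_0(\overline{\sX_+})$ is coordinatewise an element of $C_0(\bR_+)$ indexed by $\mathbf{E}_+$, and each such coordinate can be uniformly approximated by a $C_c^1(\bR_+)$ function via truncation and mollification in $s$. Moreover, $S^+$ is bounded on $C_0(\overline{\sX_+})$ by $(\mathrm{a}^+)$, $J^+$ is a contraction from $L^\infty(\overline{\sX_+})$ into $L^\infty(\overline{\sX_-})$, and both $(\cQ_\ell^+)_{\ell\geq 0}$ and $(\cP_\ell^+)_{\ell\geq 0}$ are contraction semigroups on $C_0(\overline{\sX_+})$ (by $(\mathrm{b}^+)$ and by Step~2 of Section~\ref{sec:ExistProof}, respectively). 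Passing to the limit in the sup norm then delivers the desired equalities for all $g^+\in C_0(\overline{\sX_+})$.

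The argument is largely bookkeeping: the only step that requires genuine care is the correct decoding of $F_+(s,i,0,\ell)$ on $\sX_-$ via Corollary~\ref{cor:StrongMarkovExpTildeTau}, so that the hypothesis is seen to encode simultaneously the identification of the semigroups on $\sX_+$ and of the boundary operators $S^+$ and $J^+$ on $\sX_-$. Once this decoding is in hand, the remaining density/continuity argument is routine.
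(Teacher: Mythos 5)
Your proposal is correct and follows essentially the same route as the paper: you decode the hypothesis using Corollary~\ref{cor:StrongMarkovExpTildeTau} to identify $F_{+}(s,i,0,\ell)$ with $\big(\begin{smallmatrix} I^{+} \\ J^{+} \end{smallmatrix}\big)\cP_{\ell}^{+}g^{+}$, read off $\cQ_{\ell}^{+}g^{+}=\cP_{\ell}^{+}g^{+}$ and $S^{+}g^{+}=J^{+}g^{+}$ (the paper even notes that setting $\ell=0$, as you do, is a valid shortcut to the latter), and extend to $C_{0}(\overline{\sX_{+}})$ by density and boundedness, just as in the paper.
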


\begin{proof}
Let $g^{+}\in C^{1}_{c}(\overline{\sX_{+}})$. By Corollary \ref{cor:StrongMarkovExpTildeTau}, for any $(s,i)\in\sX$ and $\ell\in\bR_{+}$,
\begin{align*}
F_{+}(s,i,0,\ell;g^{+})&=\wt{\bE}_{s,i,0}\Big(\big(\cP^{+}_{\ell}g^{+}\big)\Big(Z^{1}_{\wt{\tau}^{+}_{0}},Z^{2}_{\wt{\tau}^{+}_{0}}\Big)\Big)=\left(\begin{pmatrix} I^{+} \\ J^{+} \end{pmatrix}\cP^{+}_{\ell}g^{+}\right)(s,i).
\end{align*}
This, together with \eqref{eq:FhatFPlus}, implies that, for any $\ell\in\bR_{+}$,
\begin{align}\label{eq:QPellPlusCc}
\cQ^{+}_{\ell}g^{+}=\cP^{+}_{\ell}g^{+}{,\quad S^{+}\cQ^{+}_{\ell}g^{+}=J^{+}\cP^{+}_{\ell}g^{+},}
\end{align}
and thus $S^{+}\cP^{+}_{\ell}g^{+}=J^{+}\cP^{+}_{\ell}g^{+}$.
Since $(\cP_{\ell}^{+})_{\ell\in\bR_{+}}$ is a strongly continuous semigroup, and $S^{+}$ and $J^{+}$ are bounded operators, we have
\begin{align}\label{eq:SJPlusCc}
{S^{+}g^{+}=\lim_{\ell\rightarrow 0+}S^{+}\cP^{+}_{\ell}g^{+}=\lim_{\ell\rightarrow 0+}J^{+}\cP^{+}_{\ell}g^{+}=J^{+}g^{+}}
\end{align}
so that $S^+g^+ = J^+g^+$.
Alternatively, this equality can be obtained by letting $\ell=0$ in \eqref{eq:FhatFPlus}.

Finally, since $C^{1}_{c}(\overline{\sX_{+}})$ is dense in $C_{0}(\overline{\sX_{+}})$, and $S^{+}$, $J^{+}$, and $\cP_{\ell}^{+}$ are bounded operators, both \eqref{eq:QPellPlusCc} and \eqref{eq:SJPlusCc} hold true for any $g^{+}\in C_{0}(\overline{\sX_{+}})$, which completes the proof of the proposition.
\end{proof}

Proposition \ref{prop:SuffCondUniq} states that if \eqref{eq:FhatFPlus} is satisfied then the  ``+" part of Theorem \ref{thm:WHProbInterpr} holds true.  Thus, to conclude the proof of the  ``+" part of Theorem \ref{thm:WHProbInterpr}, and therefore the proof of uniqueness of our Wiener-Hopf factorization, it remains to prove that \eqref{eq:FhatFPlus} holds. The rest of this section is devoted to this task.

We need the following {three} technical lemmas, whose proofs are deferred to Appendix~\ref{sec:ProofLemUniq}.

\begin{lemma}\label{lem:DiffQell}
For any $g^{+}\in C_{0}^{1}(\overline{\sX_{+}})$ and $(s,i)\in\sX_{-}$, $(S^{+}\cQ^{+}_{\cdot}g^{+})(s,i)$ is differentiable on $\bR_{+}$, and
\begin{align*}
\frac{\partial}{\partial\ell}\big(\big(S^{+}\cQ^{+}_{\ell}g^{+}\big)(s,i)\big)=\big(S^{+}H^{+}\cQ^{+}_{\ell}g^{+}\big)(s,i).
\end{align*}
\end{lemma}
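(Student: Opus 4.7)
The proof is essentially a one-line consequence of standard strongly continuous semigroup theory combined with the boundedness of $S^{+}$ guaranteed by condition $(a^{+})$. The plan is as follows.

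First, I would invoke the fact that $H^{+}$ is the strong generator of the strongly continuous contraction semigroup $(\cQ_{\ell}^{+})_{\ell\in\bR_{+}}$ on $C_{0}(\overline{\sX_{+}})$ with domain $\cD(H^{+})=C_{0}^{1}(\overline{\sX_{+}})$, as asserted by condition $(b^{+})$. Standard semigroup theory (see e.g. \cite[Chapter II]{BottcherSchillingWang2013} or the analogous statement in the Pazy/Ethier-Kurtz textbooks) then guarantees that for every $g^{+}\in C_{0}^{1}(\overline{\sX_{+}})$, the orbit $\cQ_{\ell}^{+}g^{+}$ stays in $\cD(H^{+})$ for all $\ell\geq 0$, and the map $\ell\mapsto\cQ_{\ell}^{+}g^{+}$ is (norm-)continuously differentiable in $C_{0}(\overline{\sX_{+}})$ with
\begin{align*}
\frac{\dif}{\dif\ell}\cQ_{\ell}^{+}g^{+}=H^{+}\cQ_{\ell}^{+}g^{+}=\cQ_{\ell}^{+}H^{+}g^{+}.
\end{align*}

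Second, I would use that $S^{+}:C_{0}(\overline{\sX_{+}})\to C_{0}(\overline{\sX_{-}})$ is a bounded linear operator (also part of $(a^{+})$), so it commutes with the norm limit defining the derivative above. Concretely, for $h\neq 0$,
\begin{align*}
\bigg\|\frac{S^{+}\cQ_{\ell+h}^{+}g^{+}-S^{+}\cQ_{\ell}^{+}g^{+}}{h}-S^{+}H^{+}\cQ_{\ell}^{+}g^{+}\bigg\|_{\infty}\leq\|S^{+}\|\cdot\bigg\|\frac{\cQ_{\ell+h}^{+}g^{+}-\cQ_{\ell}^{+}g^{+}}{h}-H^{+}\cQ_{\ell}^{+}g^{+}\bigg\|_{\infty},
\end{align*}
which tends to $0$ as $h\to 0$ by the previous step. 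Hence $\ell\mapsto S^{+}\cQ_{\ell}^{+}g^{+}$ is norm-differentiable in $C_{0}(\overline{\sX_{-}})$ with derivative $S^{+}H^{+}\cQ_{\ell}^{+}g^{+}$.

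Finally, since norm convergence in $C_{0}(\overline{\sX_{-}})$ (the sup norm) implies pointwise convergence at every $(s,i)\in\overline{\sX_{-}}$, the evaluation map at a fixed $(s,i)\in\sX_{-}$ yields
\begin{align*}
\frac{\partial}{\partial\ell}\bigl((S^{+}\cQ_{\ell}^{+}g^{+})(s,i)\bigr)=(S^{+}H^{+}\cQ_{\ell}^{+}g^{+})(s,i),
\end{align*}
which is the claim. There is no real obstacle here; the only point requiring slight care is the invariance of $\cD(H^{+})$ under the semigroup, which follows from the commutation of $H^{+}$ with $\cQ_{\ell}^{+}$ on the domain—a standard fact for strongly continuous semigroups. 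The restriction $(s,i)\in\sX_{-}$ (rather than $\overline{\sX_{-}}$) is immaterial for the argument, since the argument actually yields differentiability in the stronger norm sense on all of $\overline{\sX_{-}}$.
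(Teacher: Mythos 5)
Your proof is correct and takes essentially the same approach as the paper. The paper's proof of this lemma is just a one-line citation to Dynkin's textbook (\cite[1.2.B \& 1.3.C]{Dynkin1965}), which are precisely the standard semigroup facts you write out explicitly: invariance of $\cD(H^{+})$ under $\cQ^{+}_{\ell}$ together with strong differentiability of $\ell\mapsto\cQ^{+}_{\ell}g^{+}$, and the interchange of a bounded operator with the strong derivative.
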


Let $C_{0}(\overline{\sZ})$ be the space of real-valued $\cB(\overline{\sZ})$-measurable functions $h$ on $\sZ$ such that $h(\infty,\partial,\infty)=0$, and that $h(\cdot,i,\cdot)\in C_{0}(\bR_{+}\times\bR)$ for all $i\in\mathbf{E}$. Let $C_{0}^{1}(\overline{\sZ})$ be the space of functions $h\in C_{0}(\overline{\sZ})$ such that, for all $i\in\mathbf{E}$, $\partial h(\cdot,i,\cdot)/\partial s$ and $\partial h(\cdot,i,\cdot)/\partial a$ exist and belong to $C_{0}(\bR_{+}\times\bR)$.

\begin{lemma}\label{lem:GenTildeM}
Let $\cA$ be the strong generator of the Feller semigroup associated with the Markov family $\wt{\cM}$. Then, $C_{0}^{1}(\overline{\sZ})\subset\sD(\cA)$, and for any $h\in C_{0}^{1}(\overline{\sZ})$,
\begin{align*}
(\cA\,h)(s,i,a)=\frac{\partial h}{\partial s}(s,i,a) + \sum_{j\in\mathbf{E}}\mathsf{\Lambda}_{s}(i,j)h(s,j,a)+v(i)\frac{\partial h}{\partial a}(s,i,a),\quad (s,i,a)\in\sZ.
\end{align*}
\end{lemma}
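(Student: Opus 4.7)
The plan is to compute the strong limit $\lim_{t\to 0+} t^{-1}(\wt{\bE}_{s,i,a}[h(Z_t)] - h(s,i,a))$ directly for $h \in C_0^1(\overline{\sZ})$, identify it with the claimed expression for $\cA h$, and then upgrade the pointwise limit to sup-norm convergence together with membership $\cA h \in C_0(\overline{\sZ})$. The boundary case $(s,i,a) = (\infty,\partial,\infty)$ is trivial, so I would fix $(s,i,a) \in \sZ$ throughout.

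First, using \eqref{eq:Probz}, \eqref{eq:ProcZ}, \eqref{eq:LawXXStar}, and \eqref{eq:DistvarphiInt}, I would rewrite the semigroup increment in terms of $\cM^{*}$:
\begin{equation*}
\wt{\bE}_{s,i,a}[h(Z_t)] = \bE^{*}_{s,i}\!\left[h\!\left(s+t,\,X^{*}_{s+t},\,a + \int_{s}^{s+t}\!v(X^{*}_u)\,du\right)\right].
\end{equation*}
Let $\wt{\gamma}_{1}$ denote the first jump time of $Z^2$. I would split the increment as $T_1(t)+T_2(t)$ according to $\{\wt{\gamma}_{1} > t\}$ and $\{\wt{\gamma}_{1} \leq t\}$; on $\{\wt{\gamma}_{1} > t\}$ the coordinate $Z^2$ is constant equal to $i$, so $Z^3_t = a + v(i)t$ deterministically there, and hence
\begin{equation*}
T_1(t) = \wt{\bP}_{s,i,a}(\wt{\gamma}_{1} > t)\bigl(h(s+t,i,a+v(i)t)-h(s,i,a)\bigr).
\end{equation*}

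For $T_1(t)/t$, a first-order Taylor expansion in $(s,a)$ — valid because $h \in C_0^{1}(\overline{\sZ})$ — combined with $\wt{\bP}_{s,i,a}(\wt{\gamma}_{1} > t) = 1 + O(t)$ with an $O(t)$-bound uniform in $(s,i)$ by Assumption~\ref{assump:GenLambda}(i), yields
\begin{equation*}
\lim_{t\to 0+}T_1(t)/t = \partial_s h(s,i,a) + v(i)\,\partial_a h(s,i,a).
\end{equation*}
For $T_2(t)$, I would split the integrand as $(h(Z_t)-h(s,Z^2_t,a)) + (h(s,Z^2_t,a)-h(s,i,a))$. The first piece is $O(t)$ uniformly in the path (since $|Z^1_t-s|\leq t$, $|Z^3_t-a|\leq \overline{v}\,t$, and $\partial_s h, \partial_a h$ are bounded), and $\wt{\bP}(\wt{\gamma}_{1}\leq t) = O(t)$, so its $t^{-1}$-scaled expectation vanishes. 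For the second piece, using $\{Z^2_t\neq i\}\subset\{\wt{\gamma}_{1}\leq t\}$ and decomposing by the value of $Z^2_t$,
\begin{equation*}
T_2(t)/t \;\longrightarrow\; \sum_{j\neq i}\mathsf{\Lambda}_{s}(i,j)\bigl(h(s,j,a)-h(s,i,a)\bigr),
\end{equation*}
where the limit uses \eqref{eq:LawXXStar} and \eqref{eq:DefGenXStar} to identify $\wt{\bP}_{s,i,a}(Z^2_t=j)/t \to \mathsf{\Lambda}_{s}(i,j)$ for $j\neq i$. Adding the two limits and applying the row-sum-zero identity $\sum_{j}\mathsf{\Lambda}_{s}(i,j)=0$ collapses the jump contribution to $\sum_{j}\mathsf{\Lambda}_{s}(i,j)h(s,j,a)$, which is exactly the stated formula for $(\cA h)(s,i,a)$.

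The hard part will be upgrading this pointwise limit to a strong limit in $C_0(\overline{\sZ})$, i.e.\ controlling every remainder uniformly in $(s,i,a)$. For this I would combine: uniform continuity of $h, \partial_s h, \partial_a h$ on $\overline{\sZ}$ (automatic for $C_0$ functions) to control the Taylor remainder; Assumption~\ref{assump:GenLambda}(i) to bound $\wt{\bP}(\wt{\gamma}_{1}\leq t)/t$ by a constant independent of $(s,i)$; and Assumption~\ref{assump:GenLambda}(ii) together with \eqref{eq:EvolDE} to make $\wt{\bP}_{s,i,a}(Z^2_t=j)/t \to \mathsf{\Lambda}_s(i,j)$ uniform in $s$ on compacts, while the sup-norm smallness of $h, \partial_s h, \partial_a h$ outside a large compact handles the tails in $s$ and $a$. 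Finally, $\cA h \in C_0(\overline{\sZ})$ follows since $\partial_s h, \partial_a h \in C_0(\overline{\sZ})$ by hypothesis, $v$ is bounded, and $(s,i,a)\mapsto \sum_{j}\mathsf{\Lambda}_{s}(i,j)h(s,j,a)$ lies in $C_0(\overline{\sZ})$ by continuity and uniform boundedness of $\mathsf{\Lambda}_{s}$ and the vanishing of $h$ at infinity. This establishes $h\in\sD(\cA)$ with the claimed generator formula.
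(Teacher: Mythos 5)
Your proposal is correct, and it arrives at the generator formula by a route that differs from the paper's in two respects. First, the decomposition of the increment: the paper telescopes
\begin{align*}
h\big(Z^1_t,Z^2_t,Z^3_t\big)-h(s,i,a)=\big[h\big(s+t,Z^2_t,Z^3_t\big)-h\big(s+t,Z^2_t,a\big)\big]+\big[h\big(s+t,Z^2_t,a\big)-h(s+t,i,a)\big]+\big[h(s+t,i,a)-h(s,i,a)\big],
\end{align*}
handling the first bracket by dominated convergence (using $|Z^3_t-a|\leq\overline{v}t$ and the boundedness of $\partial h/\partial a$) and the second via the evolution system $\mU^{*}$ and \eqref{eq:DefGenXStar}; you instead condition on $\{\wt{\gamma}_{1}>t\}$ versus $\{\wt{\gamma}_{1}\leq t\}$, which is the same device the paper uses later when computing $G^{+}$ in Step~3 of Section~\ref{sec:ExistProof}, and your bookkeeping there (the inclusion $\{Z^2_t\neq i\}\subset\{\wt{\gamma}_1\leq t\}$, the $O(t^2)$ bound on the first piece of $T_2$, and the row-sum-zero collapse) is all sound. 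Second, and more substantively, the upgrade from pointwise to strong convergence: the paper simply invokes \cite[Theorem 1.33]{BottcherSchillingWang2013}, which for a Feller semigroup identifies the strong generator with the pointwise generator on the set of $h$ for which the pointwise limit exists and lies in $C_0$ --- so once you have verified (as you do, at the end) that the right-hand side belongs to $C_{0}(\overline{\sZ})$, no uniform estimate is needed at all. Your hands-on $\varepsilon/3$ argument (uniform continuity of $\partial_s h,\partial_a h$; the uniform bound $\wt{\bP}(\wt{\gamma}_1\leq t)\leq Kt$; uniform-on-compacts convergence of $t^{-1}\wt{\bP}_{s,i,a}(Z^2_t=j)$ from Assumption~\ref{assump:GenLambda}(ii) and \eqref{eq:EvolDE}, with the tails in $(s,a)$ absorbed by the decay of $h$) does go through, but it is precisely the labor the citation is designed to spare; you may want to state it as the fallback and lead with the soft argument. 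One small point of hygiene: the identity $Z^3_t=a+v(i)t$ on $\{\wt{\gamma}_1>t\}$ holds only $\wt{\bP}_{s,i,a}$-a.s.\ (via \eqref{eq:DistTildeZ3IntTildeZ2}), not deterministically, though this does not affect the expectations.
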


\begin{lemma}\label{lem:QellCompgPlus}
For any $g^{+}\in C_{c}(\overline{\sX_{+}})$ with $\supp g^{+}\subset[0,\eta_{g^{+}}]\times\mathbf{E}_{+}$ for some $\eta_{g^{+}}\in(0,\infty)$, we have $\supp\cQ^{+}_{\ell}g^{+}\subset[0,\eta_{g^{+}}]\times\mathbf{E}_{+}$, for any $\ell\in\bR_{+}$.
\end{lemma}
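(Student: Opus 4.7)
The plan is to prove the stronger statement that the closed subspace $V_{\eta} := \{h \in C_{0}(\overline{\sX_{+}}) : h(s,i) = 0 \text{ for all } s > \eta,\, i \in \mathbf{E}_{+}\}$ is $(\cQ^{+}_{\ell})_{\ell \geq 0}$-invariant; this immediately yields the lemma since $C_{c}(\overline{\sX_{+}}) \cap V_{\eta} \subset V_{\eta}$. First, I would extend condition $(a^{+})$(i) from $C_{c}(\overline{\sX_{+}}) \cap V_{\eta}$ to all of $V_{\eta}$ using the boundedness of $S^{+}$ on $C_{0}$, the density of $C_{c}(\overline{\sX_{+}}) \cap V_{\eta}$ in $V_{\eta}$ (via mollification), and the closedness of the analogous subspace $V_{\eta}^{-} \subset C_{0}(\overline{\sX_{-}})$, obtaining $S^{+} V_{\eta} \subset V_{\eta}^{-}$.

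Next, I decompose the generator as $H^{+} = A + B$, where $A := (\mV^{+})^{-1}(\partial/\partial s + \wt{\mA})$ with $\sD(A) = C_{0}^{1}(\overline{\sX_{+}}) = \sD(H^{+})$ and $B := (\mV^{+})^{-1}\wt{\mB} S^{+}$ is bounded on $C_{0}(\overline{\sX_{+}})$, with $\|B\| \leq K m_{-}\|S^{+}\|/\underline{v}$. The operator $A$ generates a strongly continuous semigroup $(e^{\ell A})_{\ell \geq 0}$ on $C_{0}(\overline{\sX_{+}})$: it is a bounded perturbation (by the multiplication operator $(\mV^{+})^{-1}\wt{\mA}$) of the shift operator $A_{0} := (\mV^{+})^{-1}\partial/\partial s$, whose semigroup is given explicitly by $(e^{\ell A_{0}} g^{+})(s,i) = g^{+}(s + \ell/v(i), i)$. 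The variation-of-constants representation along the characteristics $s(\ell) = s(0) + \ell/v(i)$ (which for $i \in \mathbf{E}_{+}$ move to the right in $s$),
\begin{align*}
(e^{\ell A} g^{+})(s, i) = g^{+}(s + \ell/v(i), i) + \int_{0}^{\ell} \frac{1}{v(i)} \sum_{j \in \mathbf{E}_{+}} \mA_{s+(\ell - r)/v(i)}(i, j)(e^{r A} g^{+})(s + (\ell-r)/v(i), j)\, dr,
\end{align*}
combined with a Grönwall estimate on $\phi(\ell) := \sup_{s > \eta,\, i \in \mathbf{E}_{+}}|(e^{\ell A} g^{+})(s, i)|$, shows that $e^{\ell A}$ preserves $V_{\eta}$: for $g^{+} \in V_{\eta}$ and $s > \eta$ the initial term vanishes, the integrand involves values at $s + (\ell-r)/v(i) > \eta$, and Grönwall forces $\phi \equiv 0$. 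In parallel, $B$ preserves $V_{\eta}$ because $\wt{\mB}$ and $(\mV^{+})^{-1}$ are pointwise multiplications (hence do not alter $s$-support) and $S^{+} V_{\eta} \subset V_{\eta}^{-}$ by Step~1; therefore $e^{\ell B} = \sum_{n \geq 0}(\ell B)^{n}/n!$ preserves $V_{\eta}$.

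Since $B$ is a bounded perturbation of $A$, the bounded perturbation theorem identifies $\cQ^{+}_{\ell} = e^{\ell H^{+}} = e^{\ell(A + B)}$, and Trotter's product formula applies:
\begin{align*}
\cQ^{+}_{\ell} g^{+} = \lim_{n \to \infty}\left(e^{(\ell/n) A} e^{(\ell/n) B}\right)^{n} g^{+} \quad \text{in } C_{0}(\overline{\sX_{+}}).
\end{align*}
Each finite iterate lies in $V_{\eta}$ by the invariance under $e^{\ell A}$ and $e^{\ell B}$ established above, and since $V_{\eta}$ is closed in $C_{0}(\overline{\sX_{+}})$, the limit $\cQ^{+}_{\ell} g^{+}$ also lies in $V_{\eta}$, proving the claim. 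The hard step is Step~2: verifying rigorously that $A$ generates a strongly continuous semigroup on $C_{0}(\overline{\sX_{+}})$ with the stated domain and establishing the characteristic representation with the accompanying Grönwall argument; once $A$ is identified as a generator, the perturbation/Trotter argument is standard.
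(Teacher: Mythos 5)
Your proposal is correct, but it takes a genuinely different route from the paper. The paper works at the resolvent level: it writes $R_{\lambda}g^{+}=(\lambda-H^{+})^{-1}g^{+}$, observes that this is the unique solution in $C_{0}^{1}(\overline{\sX_{+}})$ to $(\lambda-H^{+})\Phi=g^{+}$, and then invokes a separate technical lemma (Lemma~\ref{lem:ExistUniPhiIntEq}) which recasts this as a backwards Volterra integral equation and solves it by a contraction-mapping iteration anchored at $s=\eta_{g^{+}}$, using $(a^{+})(\mathrm{i})$ via a Riesz-representation localization of $S^{+}$ to show that the unique solution vanishes for $s\geq\eta_{g^{+}}$; it then transfers this support property from the resolvent to the semigroup via uniqueness of Laplace transforms and continuity of $\ell\mapsto(\cQ_{\ell}^{+}g^{+})(s,i)$. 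You instead argue directly on the semigroup, splitting $H^{+}=A+B$ with $A=(\mV^{+})^{-1}(\partial/\partial s+\wt\mA)$ the ``decoupled'' first-order part and $B=(\mV^{+})^{-1}\wt\mB S^{+}$ the bounded coupling, exhibiting finite propagation speed of $e^{\ell A}$ along rightward characteristics $s+\ell/v(i)$ via Duhamel plus Gr\"onwall, and gluing the two pieces with a Lie--Trotter product in the closed subspace $V_{\eta}$. Both arguments use $(a^{+})(\mathrm{i})$ to localize $S^{+}$; yours is arguably more transparent about why the support can only shrink (the transport for $i\in\mathbf{E}_{+}$ moves to the right and $B$ involves only $s$-local multiplications). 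The paper's route pays off elsewhere, as Lemma~\ref{lem:ExistUniPhiIntEq} is reused to prove Lemma~\ref{lem:QellVanish}.

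Two small remarks. First, your Step~1 is vacuous: any $h\in V_{\eta}$ is already compactly supported in $s$ (it lies in $C_{0}$ and vanishes for $s>\eta$), so $V_{\eta}\subset C_{c}(\overline{\sX_{+}})$ and condition $(a^{+})(\mathrm{i})$ applies directly; no mollification or density is needed. Second, you need not prove from scratch that $A$ generates a strongly continuous semigroup with domain $C_{0}^{1}(\overline{\sX_{+}})$: since $H^{+}$ is by hypothesis $(b^{+})$ a generator with domain $C_{0}^{1}(\overline{\sX_{+}})$, the relation $H^{+}=A+B$ on $C_{0}^{1}(\overline{\sX_{+}})$ (from the top component of the Wiener--Hopf equation) together with boundedness of $B$ gives that $A=H^{+}-B$ is automatically a generator on the same domain by bounded-perturbation theory; the Duhamel and Gr\"onwall step then only has to identify $e^{\ell A}$ and verify the support property, which is what you do. With those simplifications the proof goes through.
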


For any $a\in\bR$, let $C_{0}(\sX\times (-\infty,a])$ be the space of real-valued $\cB(\sX)\otimes\cB((-\infty,a])$-measurable functions $h$ on $\sX\times (-\infty,a])$ such that $h(\cdot,i,\cdot)\in C_{0}(\bR_{+}\times (-\infty,a])$ for all $i\in\mathbf{E}$. Let $C_{0}^{1}(\sX\times (-\infty,a])$ be the space of functions $h\in C_{0}(\sX\times (-\infty,a])$ such that, for all $i\in\mathbf{E}$, $\partial h(\cdot,i,\cdot)/\partial s$ and $\partial h(\cdot,i,\cdot)/\partial a$ exist and belong to $C_{0}(\bR_{+}\times (-\infty,a])$.

\begin{lemma}\label{lem:FhatPlusC01}
For any $\ell\in\bR$ and $g^{+}\in C_{c}^{1}(\overline{\sX_{+}})$, $\hat{F}_{+}(\cdot,\cdot,\cdot,\ell;g^{+})\in C_{0}^{1}(\sX\times (-\infty,\ell])$.
\end{lemma}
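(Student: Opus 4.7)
Fix $\ell\in\bR$ and $g^{+}\in C_{c}^{1}(\overline{\sX_{+}})$ with $\supp g^{+}\subset[0,\eta_{g^{+}}]\times\mathbf{E}_{+}$, and set $u:=\ell-a\ge 0$; then
\[
\wh F_{+}(s,i,a,\ell)=\begin{cases}(\cQ^{+}_{u}g^{+})(s,i), & (s,i)\in\sX_{+},\\ (S^{+}\cQ^{+}_{u}g^{+})(s,i), & (s,i)\in\sX_{-}.\end{cases}
\]
My plan is to verify, in order, four properties: uniform compact $s$-support; joint continuity on $\bR_{+}\times(-\infty,\ell]$; existence and joint continuity of $\partial_{s}\wh F_{+}$ and $\partial_{a}\wh F_{+}$; and vanishing as $a\to-\infty$. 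The first is immediate from Lemma~\ref{lem:QellCompgPlus}, which yields $\supp\cQ^{+}_{u}g^{+}\subset[0,\eta_{g^{+}}]\times\mathbf{E}_{+}$ uniformly in $u\ge 0$, combined with condition $(\mathrm{a}^{+})(\mathrm{i})$, which then gives $\supp S^{+}\cQ^{+}_{u}g^{+}\subset[0,\eta_{g^{+}}]\times\mathbf{E}_{-}$; hence $\wh F_{+}(s,\cdot,a,\ell)\equiv 0$ for $s>\eta_{g^{+}}$ uniformly in $a$.

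For joint continuity in $(s,a)$ with $i\in\mathbf{E}_{+}$, I use the triangle split
\[
|\wh F_{+}(s',i,a',\ell)-\wh F_{+}(s,i,a,\ell)|\le\|\cQ^{+}_{\ell-a'}g^{+}-\cQ^{+}_{\ell-a}g^{+}\|_{\infty}+|(\cQ^{+}_{\ell-a}g^{+})(s',i)-(\cQ^{+}_{\ell-a}g^{+})(s,i)|;
\]
the first term tends to $0$ as $a'\to a$ by strong continuity of $(\cQ^{+}_{u})_{u\ge 0}$, while the second tends to $0$ as $s'\to s$ since $\cQ^{+}_{\ell-a}g^{+}\in C_{0}(\overline{\sX_{+}})$. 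The case $i\in\mathbf{E}_{-}$ is identical after inserting the boundedness of $S^{+}\colon C_{0}(\overline{\sX_{+}})\to C_{0}(\overline{\sX_{-}})$. For the partial derivatives, since $g^{+}\in\sD(H^{+})=C_{0}^{1}(\overline{\sX_{+}})$, standard semigroup theory gives $\cQ^{+}_{u}g^{+}\in\sD(H^{+})$ with $\partial_{u}\cQ^{+}_{u}g^{+}=\cQ^{+}_{u}H^{+}g^{+}$ in $C_{0}(\overline{\sX_{+}})$-norm; together with Lemma~\ref{lem:DiffQell} this produces $\partial_{a}\wh F_{+}(\cdot,i,a,\ell)=-(\cQ^{+}_{u}H^{+}g^{+})(\cdot,i)$ for $i\in\mathbf{E}_{+}$ and $=-(S^{+}\cQ^{+}_{u}H^{+}g^{+})(\cdot,i)$ for $i\in\mathbf{E}_{-}$, jointly continuous in $(s,a)$ by the same triangle argument. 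For $\partial_{s}\wh F_{+}$, the inclusions $\cQ^{+}_{u}g^{+}\in C_{0}^{1}(\overline{\sX_{+}})$ and (via $(\mathrm{a}^{+})(\mathrm{ii})$) $S^{+}\cQ^{+}_{u}g^{+}\in C_{0}^{1}(\overline{\sX_{-}})$ ensure $\partial_{s}$ exists; solving \eqref{eq:WHPlus1} and \eqref{eq:WHPlus2} for $\partial_{s}$ writes it as a linear combination, with continuous bounded $s$-dependent coefficients, of $H^{+}\cQ^{+}_{u}g^{+}$, $\cQ^{+}_{u}g^{+}$, and $S^{+}\cQ^{+}_{u}g^{+}$, each continuous in $u$ in sup norm.

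The main obstacle is the vanishing as $a\to-\infty$, i.e., $\|\cQ^{+}_{u}g^{+}\|_{\infty}\to 0$ as $u\to\infty$ (and, upon replacing $g^{+}$ by $H^{+}g^{+}$, which still lies in $C_{c}(\overline{\sX_{+}})$ with $s$-support in $[0,\eta_{g^{+}}]$ by direct inspection of \eqref{eq:WHPlus1} combined with $(\mathrm{a}^{+})(\mathrm{i})$, the analogous statement for the partial derivatives). The key observation is that \eqref{eq:WHPlus1} exhibits $H^{+}$ as $A+B$, where $A:=(v(i))^{-1}\partial_{s}$ generates the pure translation semigroup $(T_{u}h)(s,i):=h(s+u/v(i),i)$ on $C_{0}(\overline{\sX_{+}})$ and $B:=(v(i))^{-1}(\wt\mA+\wt\mB S^{+})$ is bounded. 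Noting that $T_{u}h\equiv 0$ whenever $\supp h\subset[0,\eta_{h}]\times\mathbf{E}_{+}$ and $u>\overline v\,\eta_{h}$, I would run the Dyson--Phillips expansion $\cQ^{+}_{u}=\sum_{n\ge 0}V_{n}(u)$ with $V_{0}(u)=T_{u}$ and $V_{n+1}(u)=\int_{0}^{u}T_{u-r}BV_{n}(r)\,dr$, and prove by induction on $n$ that $\supp_{s}V_{n}(u)g^{+}\subset[0,\eta_{g^{+}}-u/\overline v]$; the induction uses that $B$ acts pointwise in $s$ modulo the support-preserving $S^{+}$ from $(\mathrm{a}^{+})(\mathrm{i})$. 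Each $V_{n}(u)g^{+}$, and hence $\cQ^{+}_{u}g^{+}$, then vanishes identically for $u>\overline v\,\eta_{g^{+}}$, and boundedness of $S^{+}$ transfers the decay to $S^{+}\cQ^{+}_{u}g^{+}$, completing the verification that $\wh F_{+}(\cdot,\cdot,\cdot,\ell;g^{+})\in C_{0}^{1}(\sX\times(-\infty,\ell])$.
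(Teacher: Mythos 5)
Your proposal is correct, and for three of its four components (the uniform compact $s$-support via Lemma \ref{lem:QellCompgPlus} and $(a^{+})(\textnormal{i})$, the joint continuity via the triangle split and strong continuity of $(\cQ^{+}_{u})_{u\geq 0}$, and the two partial derivatives via $\frac{d}{da}\cQ^{+}_{\ell-a}g^{+}=-\cQ^{+}_{\ell-a}H^{+}g^{+}$, Lemma \ref{lem:DiffQell}, and equations \eqref{eq:WHPlus1}--\eqref{eq:WHPlus2}) it coincides with the paper's argument. Where you genuinely diverge is the decay as $a\to-\infty$. The paper establishes only $\lim_{u\to\infty}\|\cQ^{+}_{u}g^{+}\|_{\infty}=0$ through a separate Lemma \ref{lem:QellVanish}, whose proof is indirect: it shows the resolvents $\Phi_{\lambda}=R_{\lambda}g^{+}$ converge to the solution $\Phi_{0}$ of the $\lambda=0$ integral equation, deduces finiteness of the potential $\int_{0}^{\infty}(\cQ^{+}_{u}g^{+})(s,i)\,du$, and then derives a contradiction from a putative non-vanishing subsequence using uniform continuity in $u$ and a sup-norm bound on $\partial_{s}\int\cQ^{+}_{u}g^{+}\,du$. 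Your Dyson--Phillips argument replaces all of this with a finite-propagation-speed statement, $\cQ^{+}_{u}g^{+}\equiv 0$ for $u>\overline{v}\,\eta_{g^{+}}$, which is stronger and matches the probabilistic picture ($\tau^{+,*}_{\ell}(s)\geq s+\ell/\overline{v}$), and it is sound: by $(b^{+})$ and \eqref{eq:WHPlus1}, $H^{+}$ equals $A+B$ on the common domain $C_{0}^{1}(\overline{\sX_{+}})$, where $A=(\mV^{+})^{-1}\partial_{s}$ generates the translation semigroup with that exact domain and $B$ is bounded, so uniqueness of the semigroup generated by a given operator identifies $\cQ^{+}$ with the perturbation series, and your support induction goes through because each $V_{n}(r)g^{+}$ is continuous with compact $s$-support, so $(a^{+})(\textnormal{i})$ applies to the $S^{+}$ factor inside $B$. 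The two points worth writing out explicitly if you formalize this are precisely those: (i) the identification $H^{+}=A+B$ as operators with equal domains (not merely on a core), which is what licenses replacing $\cQ^{+}$ by the Dyson--Phillips series; and (ii) the verification that $V_{n}(r)g^{+}\in C_{c}(\overline{\sX_{+}})$ at each stage, since condition $(a^{+})(\textnormal{i})$ is stated only for such functions. Your route buys a sharper conclusion and dispenses with Lemma \ref{lem:QellVanish} entirely; the paper's route avoids invoking the perturbation series but at the cost of a longer contradiction argument.
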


We are now in the position of proving \eqref{eq:FhatFPlus}. In what follows, we fix $g^{+}\in C_{c}^{1}(\overline{\sX_{+}})$ with $\supp g^{+}\subset[0,\eta_{g^{+}}]\times\mathbf{E}_{+}$ for some $\eta_{g^{+}}\in(0,\infty)$.

We first show that, for any $(s,i)\in\sX$, $\ell\in\bR_{+}$, and $T\in(0,\infty)$,
\begin{align}\label{eq:FhatPlusMartingale}
\wt{\bE}_{s,i,0}\Big(\wh{F}_{+}\Big(Z^{1}_{\wt{\tau}^{+}_{\ell}\wedge T},Z^{2}_{\wt{\tau}^{+}_{\ell}\wedge T},Z^{3}_{\wt{\tau}^{+}_{\ell}\wedge T},\ell\Big)\Big)=\wh{F}_{+}(s,i,0,\ell).
\end{align}
Let $\phi\in C^{1}(\bR)$ with $\phi(a)=1$ for $a\in(-\infty,\ell]$ and $\lim_{a\rightarrow\infty}\phi(a)=0$. We extend $\wh{F}_+(\cdot,\cdot,\cdot,\ell)$ to be a function on $\sZ$ by defining
\begin{align*}
\wh{F}_{+}(s,i,a,\ell):=\phi(a)\big(2\wh{F}_{+}(s,i,\ell,\ell)-\wh{F}_{+}(s,i,2\ell-a,\ell)\big),\quad (s,i)\in\sX,a\in(\ell,\infty).
\end{align*}
By Lemma \ref{lem:FhatPlusC01}, we now have $\wh{F}_{+}(\cdot,\cdot,\cdot,\ell)\in C_{0}^{1}(\overline{\sZ})$ (with the convention that $\wh{F}_{+}(\infty,\partial,\infty,\ell)=0$). It follows from Lemma \ref{lem:GenTildeM}, \eqref{eq:FhatPlus}, \eqref{eq:DevQella}, and Lemma \ref{lem:DiffQell} that, for any $(s,i)\in\sX$ and $a\in(-\infty,\ell)$,
\begin{align*}
\big(\cA\wh{F}_{+}\big)(s,i,a,\ell)&=\frac{\partial\wh{F}_{+}}{\partial s}(s,i,a,\ell)+\sum_{j\in\mathbf{E}}\mathsf{\Lambda}_{s}(i,j)\wh{F}_{+}(s,j,a,\ell)+v(i)\frac{\partial\wh{F}_{+}}{\partial a}(s,i,a,\ell)\\
&=\Bigg(\!\frac{\partial}{\partial s}\!\begin{pmatrix} I^{+} \\ S^{+} \end{pmatrix}\!\cQ^{+}_{\ell-a}g^{+}\!\!\Bigg)(s,i)+\!\Bigg(\!\wt{\mathsf{\Lambda}} \begin{pmatrix} I^{+} \\ S^{+} \end{pmatrix}\!\cQ^{+}_{\ell-a}g^{+}\!\!\Bigg)(s,i)+\!\Bigg(\!\mV\frac{\partial}{\partial a}\!\begin{pmatrix} I^{+} \\ S^{+} \end{pmatrix}\!\cQ^{+}_{\ell-a}g^{+}\!\!\Bigg)(s,i)\\
&=\left(\Bigg(\bigg(\frac{\partial}{\partial s}+\wt{\mathsf{\Lambda}}\bigg) \begin{pmatrix} I^{+} \\ S^{+} \end{pmatrix} -\mV \begin{pmatrix} I^{+} \\ S^{+} \end{pmatrix} H^{+}\Bigg)\cQ^{+}_{\ell-a}g^{+}\right)(s,i),
\end{align*}
where we note that $\cQ^{+}_{\ell-a}g^{+}\in\sD(H^{+})$ since $g^{+}\in C_{0}^{1}(\overline{\sX_{+}})=\sD(H^{+})$. Hence, since $(S^{+},H^{+})$ solves \eqref{eq:WHPlus}, we have
\begin{align*}
\big(\cA\wh{F}_{+}\big)(s,i,a,\ell)=0,\quad (s,i)\in\sX,\quad a\in(-\infty,\ell).
\end{align*}
Therefore, by Dynkin's formula (cf. \cite[III.10]{RogersWilliams1994}), we obtain that
\begin{align*}
\wt{\bE}_{s,i,0}\Big(\wh{F}_{+}\Big(Z^{1}_{\wt{\tau}^{+}_{\ell}\wedge T},Z^{2}_{\wt{\tau}^{+}_{\ell}\wedge T},Z^{3}_{\wt{\tau}^{+}_{\ell}\wedge T},\ell\Big)\Big)-\wh{F}_{+}(s,i,0,\ell)=\wt{\bE}_{s,i,0}\bigg(\int_{0}^{\wt{\tau}^{+}_{\ell}\wedge T}\!\!\big(\cA\wh{F}_{+}\big)\big(Z^{1}_{t},Z^{2}_{t},Z^{3}_{t},\ell\big)dt\bigg)=0,
\end{align*}
which completes the proof of \eqref{eq:FhatPlusMartingale}.

By \eqref{eq:FhatPlusMartingale}, we have
\begin{align*}
\wh{F}_{+}(s,i,0,\ell)=\wt{\bE}_{s,i,0}\Big(\wh{F}_{+}\Big(Z^{1}_{\wt{\tau}^{+}_{\ell}},Z^{2}_{\wt{\tau}^{+}_{\ell}},Z^{3}_{\wt{\tau}^{+}_{\ell}},\ell\Big)\1_{\{\wt{\tau}^{+}_{\ell}<T\}}\Big)+\wt{\bE}_{s,i,0}\Big(\wh{F}_{+}\big(Z^{1}_{T},Z^{2}_{T},Z^{3}_{T},\ell\big)\1_{\{\wt{\tau}^{+}_{\ell}\geq T\}}\Big).
\end{align*}
From the definition of $\wt{\tau}^{+}_{\ell}$ and the right-continuity of the sample paths of $Z$, we have $Z^{3}_{\wt{\tau}^{+}_{\ell}}=\ell$ on $\{\wt{\tau}^{+}_{\ell}<T\}$. Moreover, it is clear from the construction of $\wt{\cM}$ that $Z_{t}^{2}\in\mathbf{E}$ for $t\in\bR_{+}$, and, in view of \eqref{eq:RangeZ2TildeTauPlus}, we deduce that $Z^{2}_{\wt{\tau}^{+}_{\ell}}\in\mathbf{E}_{+}$ on $\{\wt{\tau}^{+}_{\ell}<T\}$. Together with \eqref{eq:FhatPlus}, \eqref{eq:FPlus}, and \eqref{eq:ProcZ}, we obtain that
\begin{align}
\wh{F}_{+}(s,i,0,\ell)&=\wt{\bE}_{s,i,0}\Big(\wh{F}_{+}\Big(Z^{1}_{\wt{\tau}^{+}_{\ell}},Z^{2}_{\wt{\tau}^{+}_{\ell}},\ell,\ell\Big)\1_{\{\wt{\tau}^{+}_{\ell}<T\}}\Big)+\wt{\bE}_{s,i,0}\Big(\wh{F}_{+}\big(Z^{1}_{T},Z^{2}_{T},Z^{3}_{T},\ell\big)\1_{\{\wt{\tau}^{+}_{\ell}\geq T\}}\Big)\nonumber\\
&=\wt{\bE}_{s,i,0}\Big(g^{+}\Big(Z^{1}_{\wt{\tau}^{+}_{\ell}},Z^{2}_{\wt{\tau}^{+}_{\ell}}\Big)\1_{\{\wt{\tau}^{+}_{\ell}<T\}}\Big)+\wt{\bE}_{s,i,0}\Big(\wh{F}_{+}\big(Z^{1}_{T},Z^{2}_{T},Z^{3}_{T},\ell\big)\1_{\{\wt{\tau}^{+}_{\ell}\geq T\}}\Big)\nonumber\\
&=F_{+}(s,i,0,\ell)-\wt{\bE}_{s,i,0}\Big(g^{+}\Big(Z^{1}_{\wt{\tau}^{+}_{\ell}},Z^{2}_{\wt{\tau}^{+}_{\ell}}\Big)\1_{\{\wt{\tau}^{+}_{\ell}\geq T\}}\Big)+\wt{\bE}_{s,i,0}\Big(\wh{F}_{+}\big(Z^{1}_{T},Z^{2}_{T},Z^{3}_{T},\ell\big)\1_{\{\wt{\tau}^{+}_{\ell}\geq T\}}\Big)\nonumber\\
&=F_{+}(s,i,0,\ell)-\wt{\bE}_{s,i,0}\Big(g^{+}\Big(s+\wt{\tau}^{+}_{\ell},Z^{2}_{\wt{\tau}^{+}_{\ell}}\Big)\1_{\{\wt{\tau}^{+}_{\ell}\geq T\}}\Big)\nonumber\\
\label{eq:FhatFPlusDiff} &\quad +\wt{\bE}_{s,i,0}\Big(\wh{F}_{+}\big(s+T,Z^{2}_{T},Z^{3}_{T},\ell\big)\1_{\{\wt{\tau}^{+}_{\ell}\geq T\}}\Big).
\end{align}
Therefore, in order to prove \eqref{eq:FhatFPlus}, it remains to show that the last two terms in \eqref{eq:FhatFPlusDiff} vanish. Since $g^{+}\in C_{c}(\overline{\sX_{+}})$ (and so $g^{+}(Z^{1}_{\infty},Z^{2}_{\infty})=g^{+}(\infty,\partial)=0$) with $\supp g^{+}\subset[0,\eta_{g^{+}}]\times\mathbf{E}_{+}$, and using the fact that $Z^{2}_{\wt{\tau}^{+}_{\ell}}\in\mathbf{E}_{+}$ on $\{\wt{\tau}^{+}_{\ell}<\infty\}$, we have, for $T\in[\eta_{g^{+}}-s,\infty)$,
\begin{align*}
g^{+}\Big(s+\wt{\tau}^{+}_{\ell},Z^{2}_{\wt{\tau}^{+}_{\ell}}\Big)\1_{\{\wt{\tau}^{+}_{\ell}\geq T\}}=g^{+}\Big(s+\wt{\tau}^{+}_{\ell},Z^{2}_{\wt{\tau}^{+}_{\ell}}\Big)\1_{\{\wt{\tau}^{+}_{\ell}\in[T,\infty)\}}=0.
\end{align*}
Hence, the second term in \eqref{eq:FhatFPlusDiff} vanishes when $T\in[\eta_{g^{+}}-s,\infty)$. As for the last term in \eqref{eq:FhatFPlusDiff}, since $\supp g^{+}\subset[0,\eta_{g^{+}}]\times\mathbf{E}_{+}$, Lemma \ref{lem:QellCompgPlus} and the condition $(a^{+})(\text{i})$ ensure that $\supp\wh{F}_{+}(\cdot,\cdot,\cdot,\ell)\subset[0,\eta_{g}^{+}]\times\mathbf{E}\times[0,\ell]$. Hence, when $T\in[\eta_{g^{+}}-s,\infty)$, $\wh{F}_{+}(s+T,Z^{2}_{T},Z^{3}_{T},\ell)=0$ so that the last term in \eqref{eq:FhatFPlusDiff} vanishes. Therefore, by choosing $T\in[\eta_{g^{+}}-s,\infty)$, we obtain \eqref{eq:FhatFPlus} from \eqref{eq:FhatFPlusDiff}.

The proof of the ``+" part of Theorem \ref{thm:WHProbInterpr} is complete. As mentioned earlier, the proof of the ``-" part of Theorem \ref{thm:WHProbInterpr} proceeds in direct analogy to ``+" part given above.

\appendix

\section{Construction of $\cM$}\label{sec:AppendixA}

In this appendix we provide construction of the standard time-inhomogeneous Markov family $$\cM=\big\{\big(\Omega,\sF,\bF_{s},(X_{t},\varphi_{t})_{t\in[s,\infty]},\bP_{s,(i,a)}\big),\,(s,i,a)\in\overline{\sZ}\big\}$$ introduced in Section \ref{subsec:TimeHomogen}.

\subsection{Construction via the transition function}\label{subsec:TranFunt}

Recall the function $P:\overline{\sZ}\times\overline{\bR}_{+}\times\cB(\overline{\sY})$ given as in \eqref{eq:DefTranProbXvarphi}, where $\sY=\mathbf{E}\times\bR$, $\overline{\sY}=\sY\cup\{(\partial,\infty)\}$, $\sZ=\bR_{+}\times\sY$, and $\overline{\sZ}=\sZ\cup\{(\infty,\partial,\infty)\}$.

\begin{lemma}\label{lem:TranProbXvarphi}
$P$ is a transition function.
\end{lemma}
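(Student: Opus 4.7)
The plan is to verify the four defining properties of a transition function on $(\overline{\sY},\cB(\overline{\sY}))$: (i) for fixed $(s,(i,a))$ and $t\geq s$, the map $A\mapsto P(s,(i,a),t,A)$ is a probability measure; (ii) for fixed $t$ and $A$, the map $(s,(i,a))\mapsto P(s,(i,a),t,A)$ is $\cB(\overline{\sZ})$-measurable; (iii) the initial condition $P(s,(i,a),s,\cdot)=\delta_{(i,a)}(\cdot)$ holds; and (iv) the Chapman--Kolmogorov identity
\[
P(s,(i,a),t,A)=\int_{\overline{\sY}}P(r,(j,b),t,A)\,P(s,(i,a),r,d(j,b)),\qquad 0\le s\le r\le t\le\infty,
\]
holds for every $A\in\cB(\overline{\sY})$.

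Properties (i) and (iii) follow directly from the definition \eqref{eq:DefTranProbXvarphi}, with the coffin case $(s,(i,a))=(\infty,\partial,\infty)$ handled by the convention $X^{*}_{\infty}=\partial$ and the stipulation that $v(\partial)=0$, so that the associated $P(\infty,(\partial,\infty),\infty,\cdot)=\delta_{(\partial,\infty)}(\cdot)$. For (ii), I would use that $\cM^{*}$ admits a standard (Feller) version, as established earlier in the paper, so that $(s,i)\mapsto\bE^{*}_{s,i}(F(X^{*}_{\cdot}))$ is jointly measurable for any bounded measurable functional $F$ on paths; composing with the affine-in-$a$ map $a+\int_{s}^{t}v(X^{*}_{u})\,du$ preserves measurability in $(s,i,a)$.

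The substantive step is (iv). The plan is to fix $0\le s\le r\le t$, write
\[
a+\int_{s}^{t}v(X^{*}_{u})\,du=\Big(a+\int_{s}^{r}v(X^{*}_{u})\,du\Big)+\int_{r}^{t}v(X^{*}_{u})\,du,
\]
and condition on $\sF^{s,*}_{r}$. The first summand is $\sF^{s,*}_{r}$-measurable, while the pair $\big(X^{*}_{t},\int_{r}^{t}v(X^{*}_{u})\,du\big)$ is, by the Markov property $\bP^{*}_{s,i}(\,\cdot\mid\sF^{s,*}_{r})=\bP^{*}_{r,X^{*}_{r}}(\,\cdot\,)$, distributed under the conditional law exactly as under $\bP^{*}_{r,X^{*}_{r}}$. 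Thus, writing $b=a+\int_{s}^{r}v(X^{*}_{u})\,du$ and $j=X^{*}_{r}$,
\[
\bP^{*}_{s,i}\Big(\big(X^{*}_{t},a+\textstyle\int_{s}^{t}v(X^{*}_{u})\,du\big)\in A\,\Big|\,\sF^{s,*}_{r}\Big)=P(r,(j,b),t,A).
\]
Taking expectation under $\bP^{*}_{s,i}$ yields (iv), provided the inner function $(j,b)\mapsto P(r,(j,b),t,A)$ is measurable, which is exactly property (ii) at time $r$.

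The main obstacle I anticipate is the careful handling of measurability for the conditional-expectation step: one must rigorously justify substituting the random pair $(X^{*}_{r},b)$ into the deterministic measurable function $P(r,\cdot,t,A)$, and then identify the resulting integral against the marginal law of $(X^{*}_{r},b)$ under $\bP^{*}_{s,i}$ with the kernel $P(s,(i,a),r,\cdot)$; a clean way to do this is to first verify (iv) for indicator rectangles $A=A_{1}\times A_{2}\in 2^{\mathbf{E}}\otimes\cB(\bR)$ (plus the singleton $\{(\partial,\infty)\}$) and then extend by a monotone-class argument, using that such rectangles generate $\cB(\overline{\sY})$. The boundary cases $r=s$, $r=t$, and $t=\infty$ are then immediate from (iii) and the convention $X^{*}_{\infty}=\partial$.
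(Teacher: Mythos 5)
Your proposal is correct in outline, and for the Chapman--Kolmogorov step it takes a genuinely different route from the paper. The paper never conditions on $\sF^{s,*}_{r}$ directly: instead it discretizes the intermediate level $b=a+\int_{s}^{r}v(X^{*}_{u})\,du$ into dyadic cells, replaces $P(r,(j,\cdot),t,A)$ by the step-function approximations $P_{n}$, applies the Markov property of $X^{*}$ to each resulting finite sum of ``past event times future event'' products, and passes to the limit by dominated convergence together with a sandwiching of events (exploiting that the accumulated integral is a.s.\ bounded by $\overline{v}(t-s)$); it then extends from rectangles $\{k\}\times(-\infty,c]$ to all of $\cB(\sY)$ by a $\pi$--$\lambda$ argument. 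Your ``condition, freeze the $\sF^{s,*}_{r}$-measurable level $b$, and substitute into the kernel'' argument is shorter and more conceptual, and your plan to verify it first on generating rectangles and extend by monotone class is exactly the closing move the paper also makes. Two caveats, both of which you partially anticipate: (1) the Markov property as stated in Section~\ref{subsubsec:Markov} is only for single-time marginals $\{X^{*}_{r}\in B\}$, whereas both your argument and the paper's need it for the path functional $\bigl(X^{*}_{t},\int_{r}^{t}v(X^{*}_{u})\,du\bigr)$; this extension (via finite-dimensional distributions plus monotone class, or Riemann-sum approximation of the integral) should be made explicit rather than absorbed into the displayed identity. (2) For the measurability property your Feller/standard-version argument is heavier than needed; since $\mathbf{E}$ is finite the only real issue is measurability in $a$, which the paper gets from left-continuity of $a\mapsto P(s,(i,a),t,\{j\}\times(-\infty,b])$ on generating sets --- your Fubini-type argument also works but should be stated at that level of concreteness. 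Neither point is a genuine gap; both are standard and repairable within your framework.
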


\begin{proof}
The proof is divided into the following two steps.

\medskip
\noindent
\textbf{Step 1.} We first show that for any $0\leq s\leq t\leq\infty$ and $A\in\cB(\overline{\sY})$,
\begin{align*}
P(s,\,\cdot\,,t,A):\,\,\big(\overline{\sY},\cB(\overline{\sY})\big)\rightarrow\big([0,1],\cB([0,1])\big)
\end{align*}
is measurable. Note that when $t=\infty$, for any $s\in[0,\infty]$, $(i,a)\in\overline{\sY}$, and $A\in\cB(\overline{\sY})$, \linebreak $P(s,(i,a),\infty,A)=\delta_{(\partial,\infty)}(A)$, where $\delta_{(\partial,\infty)}$ denotes the Dirac measure at $(\partial,\infty)$. Hence, $P(s,\,\cdot\,,\infty,A)$ is $\cB(\overline{\sY})$-measurable. When $t\in\bR_{+}$, $(X_{t}^{*},\phi_{t}^{*})$ takes values only in $\sY$, and thus $P(s,(i,a),t,\,\cdot\,)$ is supported {on} $\sY$. {Thus,} it is sufficient to discuss the measurability of $P(s,\,\cdot\,,t,A)$ when $0\leq s\leq t<\infty$ and $A\in\cB(\sY)=2^{\mathbf{E}}\otimes\cB(\bR)$.

Let $A=\{j\}\times (-\infty,b]$, for some $j\in\mathbf{E}$ and $b\in\bR$. By \eqref{eq:DefTranProbXvarphi},
\begin{align*}
P\big(s,(i,a),t,\{j\}\times (-\infty,b]\big)=\bP_{s,i}^{*}\bigg(X_{t}^{*}=j,\,\,\int_{s}^{t}v\big(X_{u}^{*}\big)\,du\in[0,b-a]\bigg).
\end{align*}
Hence, for any $i\in\mathbf{E}$, $P(s,(i,\cdot),t,\{j\}\times (-\infty,b])$ is left-continuous on $\bR$, and is thus $\cB(\bR)$-measurable. Therefore, $P(s,\,\cdot\,,t,\{j\}\times (-\infty,b])$ is $\cB(\sY)$-measurable since $\mathbf{E}$ is finite.

Next, let
\begin{align*}
\cH_{1}:=\big\{A\in\cB(\sY):\,P(s,\,\cdot\,,t,A)\text{ is }\,\cB(\sY)\text{-measurable, for any }0\leq s\leq t<\infty\big\}.
\end{align*}
The above arguments have shown that
\begin{align}\label{eq:DefH0}
\cH_{1}\supset\cH_{0}:=\{B\times (-\infty,b]:B\subset\mathbf{E},\,b\in\bR\},
\end{align}
since $\mathbf{E}$ is finite. Clearly, $\cH_{0}$ is a $\pi$-system on $\sY$. We will now show that $\cH_{1}$ is a $\lambda$-system on $\sY$. First, $P(s,(i,a),t,\sY)\equiv 1$ for all $(i,a)\in\sY$ and $0\leq s\leq t<\infty$, so that $P(s,\,\cdot\,,t,\sY)$ is $\cB(\sY)$-measurable for any $0\leq s\leq t<\infty$, which implies that $\sY\in\cH_{1}$. Moreover, if $A\in\cH_{1}$, then $P(s,\,\cdot\,,t,A)$ is $\cB(\sY)$-measurable for any $0\leq s\leq t<\infty$. Hence, $P(s,\,\cdot\,,t,A^{c})=1-P(s,\,\cdot\,,t,A)$ is $\cB(\sY)$-measurable for any $0\leq s\leq t<\infty$, which implies that $A^{c}\in\cH_{1}$. Finally, if $(A_{n})_{n\in\bN}$ is a sequence of disjoint subsets in $\cH_{1}$, then $P(s,\,\cdot\,,t,A_{n})$ is $\cB(\sY)$-measurable for any $0\leq s\leq t<\infty$ and $n\in\bN$. Since,
\begin{align*}
P\bigg(s,(i,a),t,\bigcup_{n=1}^{\infty}A_{n}\bigg)=\sum_{n=1}^{\infty}P\big(s,(i,a),t,A_{n}\big),\quad (i,a)\in\sY,
\end{align*}
$P(s,\,\cdot\,,t,\cup_{n\in\bN}A_{n})$ is also $\cB(\sY)$-measurable for any $0\leq s\leq t<\infty$, and thus $\cup_{n\in\bN}A_{n}\in\cH_{1}$. Hence, $\cH_{1}$ is a $\lambda$-system on $\sY$, and by the monotone class theorem, $\cH_{1}\supset\sigma(\cH_{0})=\cB(\sY)$. Therefore, $\cH_{1}=\cB(\sY)$, which completes the proof of Step 1.

\medskip
\noindent
\textbf{Step 2.} It is clear from \eqref{eq:DefTranProbXvarphi} that, for any $0\leq s\leq t\leq\infty$ and $(i,a)\in\overline{\sY}$, $P(s,(i,a),t,\,\cdot\,)$ is a probability measure on $(\overline{\sY},\cB(\overline{\sY}))$. In particular, $P(s,(i,a),s,\,\cdot\,)$ is the Dirac measure at $(i,a)$.

To show that $P$ is a transition function, it remains to show that $P$ satisfies the Chapman-Kolmogorov equation, namely, for any $0\leq s\leq r\leq t\leq\infty$, $(i,a)\in\overline{\sY}$, and $A\in\cB(\overline{\sY})$,
\begin{align}\label{eq:ChapmanKolmogorov}
P\big(s,(i,a),t,A\big)=\int_{\overline{\sY}}P\big(r,(j,b),t,A\big)\,dP\big(s,(i,a),r,(j,b)\big).
\end{align}
Note that when $t=\infty$, \eqref{eq:ChapmanKolmogorov} is satisfied since $P(s,(i,a),\infty,A)=P(r,(j,b),\infty,A)=\delta_{(\partial,\infty)}(A)$. Since $P(s,(i,a),t,\,\cdot\,)$ is supported on $\sY$, it is sufficient to show that
\begin{align}\label{eq:ChapmanKolmogorovFinite}
P\big(s,(i,a),t,A\big)=\int_{\sY}P\big(r,(j,b),t,A\big)\,dP\big(s,(i,a),r,(j,b)\big),
\end{align}
for any $0\leq s\leq r\leq t<\infty$, $(i,a)\in\sY$, and $A\in\cB(\sY)$.

Again, we start {with} the case when $A=\{k\}\times (-\infty,c]$, for some $k\in\mathbf{E}$ and $c\in\bR$. For each $n\in\bN$, and any $j\in\mathbf{E}$, let
\begin{align*}
P_{n}\big(r,(j,b),t,A\big)\!:=\!\left\{\begin{array}{ll} P\big(r,(j,-n),t,A\big), &b\in(-\infty,-n], \\ \\ \displaystyle{P\bigg(\!r,\!\bigg(\!j,-n\!+\!\frac{n(p\!-\!1)}{2^{n}}\!\bigg),t,A\!\bigg)}, &\!\!\displaystyle{b\!\in\!\bigg(\!\!-n\!+\!\frac{n(p\!-\!1)}{2^{n}},-n\!+\!\frac{np}{2^{n}}\bigg]},\,\,\,1\!\leq\!p\!\leq\!n2^{n+1}, \\ \\ P\big(r,(j,n),t,A\big), &b\in(n,\infty).\end{array}\right.
\end{align*}
The dominated convergence theorem implies that
\begin{align*}
\int_{\sY}P\big(r,(j,b),t,A\big)\,dP\big(s,(i,a),r,(j,b)\big)&=\int_{\sY}\Big(\lim_{n\rightarrow\infty}P_{n}\big(r,(j,b),t,A\big)\Big)\,dP\big(s,(i,a),r,(j,b)\big)\\
&=\lim_{n\rightarrow\infty}\int_{\sY}P_{n}\big(r,(j,b),t,A\big)\,dP\big(s,(i,a),r,(j,b)\big).
\end{align*}
Since $\mathbf{E}$ is a finite set, for any $0\leq s\leq t<\infty$ and $(i,a)\in\sY$, when $n\in\bN$ is large enough,
\begin{align*}
\bP^{*}_{s,i}\bigg(a+\int_{s}^{t}v\big(X^{*}_{u}\big)\,du\in (-n,n)\bigg)=1.
\end{align*}
Hence, for large $n\in\bN$, by the Markov property of $X^{*}$,
\begin{align*}
&\int_{\sY}P_{n}\big(r,(j,b),t,A\big)\,dP\big(s,(i,a),r,(j,b)\big)\\
&\quad =\sum_{j\in\mathbf{E}}\sum_{p=1}^{2^{n+1}}P\bigg(r,\bigg(j,-n+\frac{n(p-1)}{2^{n}}\bigg),t,A\bigg)P\,\bigg(s,(i,a),r,\{j\}\times\bigg(-n+\frac{n(p-1)}{2^{n}},-n+\frac{np}{2^{n}}\bigg]\bigg)\\
&\quad =\sum_{j\in\mathbf{E}}\sum_{p=1}^{2^{n+1}}\bP_{r,j}^{*}\bigg(-n+\frac{n(p-1)}{2^{n}}+\int_{r}^{t}v\big(X^{*}_{u}\big)\,du\in(-\infty,c],\,X^{*}_{t}=k\bigg)\\
&\qquad\qquad\quad\,\,\,\cdot\bP^{*}_{s,i}\bigg(a+\int_{s}^{r}v\big(X^{*}_{u}\big)\,du\in\bigg(-n+\frac{n(p-1)}{2^{n}},-n+\frac{np}{2^{n}}\bigg],\,X_{r}^{*}=j\bigg)\\
&\quad=\sum_{p=1}^{2^{n+1}}\bP^{*}_{s,i}\bigg(-n+\frac{n(p-1)}{2^{n}}+\int_{r}^{t}v\big(X^{*}_{u}\big)\,du\in(-\infty,c],\,X^{*}_{t}=k,\\
&\qquad\qquad\qquad\quad a+\int_{s}^{r}v\big(X^{*}_{u}\big)\,du\in\bigg(-n+\frac{n(p-1)}{2^{n}},-n+\frac{np}{2^{n}}\bigg]\bigg).
\end{align*}
Note that, for each $n\in\bN$,
\begin{align*}
&\bigg\{a+\int_{s}^{t}v\big(X^{*}_{u}\big)\,du\in(-\infty,c]\bigg\}\bigcap\bigg\{a+\int_{s}^{t}v\big(X^{*}_{u}\big)\,du\in (-n,n)\bigg\}\\
&\subset\bigcup_{p=1}^{2^{n+1}}\!\bigg(\!\bigg\{a\!+\!\!\int_{s}^{r}\!v\big(X^{*}_{u}\big)du\!\in\!\bigg(\!\!\!-\!n\!+\!\frac{n(p\!-\!1)}{2^{n}},-n\!+\!\frac{np}{2^{n}}\bigg]\bigg\}\bigcap\bigg\{\!\!-\!n\!+\!\frac{n(p\!-\!1)}{2^{n}}\!+\!\!\int_{r}^{t}\!v\big(X^{*}_{u}\big)du\!\in\!(-\infty,c]\!\bigg\}\!\bigg)\\
&\subset\bigg\{a+\int_{s}^{t}v\big(X^{*}_{u}\big)\,du\in\bigg(\!\!-\infty,c+\frac{1}{n}\bigg]\bigg\}.
\end{align*}
Hence, for $A=\{k\}\times (-\infty,c]$,
\begin{align*}
\int_{\sY}P\big(r,(j,b),t,A\big)\,dP\big(s,(i,a),r,(j,b)\big)&=\lim_{n\rightarrow\infty}\int_{\sY}P_{n}\big(r,(j,b),t,A\big)\,dP\big(s,(i,a),r,(j,b)\big)\\
&=\bP^{*}_{s,i}\bigg(a+\int_{s}^{t}v\big(X^{*}_{u}\big)\,du\in(-\infty,c],\,X^{*}_{t}=k\bigg)\\
&=P\big(s,(i,a),t,A\big).
\end{align*}

To prove the Chapman-Kolmogorov equation \eqref{eq:ChapmanKolmogorovFinite} for general $A\in\cB(\sY)$, we use again the monotone class theorem. Let
\begin{align*}
\cH_{2}:=\big\{A\in\cB(\sY):\,\text{the equation (\ref{eq:ChapmanKolmogorovFinite}) holds for any }\,0\leq s\leq r\leq t<\infty\,\,\,\text{and}\,\,\,(i,a)\in\sY\big\}.
\end{align*}
The above arguments have shown that $\cH_{2}$ contains the $\pi$-system $\cH_{0}$, defined as in \eqref{eq:DefH0}. Moreover, using arguments similar to those from the end of Step 1, we can show that $\sY\in\cH_{2}$, and that $\cH_{2}$ is closed under complements and countable disjoint unions. Hence, $\cH_{2}$ is a $\lambda$-system, and by the monotone class theorem, $\cH_{2}\supset\sigma(\cH_{0})=\cB(\sY)$. Therefore, $\cH_{2}=\cB(\sY)$. This completes the proof of Step 2, and thus concludes the proof of the lemma.
\end{proof}

Let $L^{\infty}(\sY)$ be the collection of all bounded, $\cB(\sY)$-measurable real-valued functions $f$ on $\sY$, and $C_{0}(\sY)$ be the collection of functions $f\in L^{\infty}(\sY)$ such that $f(i,\cdot)\in C_{0}(\bR)$ for all $i\in\mathbf{E}$. Let $\mT:=(\mT_{s,t})_{0\leq s\leq t<\infty}$ be the evolution system corresponding to the transition function $P$ defined by
\begin{align*}
\big(\mT_{s,t}f\big)(i,a):=\int_{\sY}f(j,b)\,dP\big(s,(i,a),(j,b)\big),\quad (i,a)\in\sY,
\end{align*}
where $f\in L^{\infty}(\sY)$. Hence, for any $0\leq s\leq r\leq t<\infty$ and $f\in L^{\infty}(\sY)$, we have
\begin{align}\label{eq:EvolProp}
\mT_{s,r}\circ\mT_{r,t}&=\mT_{s,t},\\
\label{eq:BoundTst} \big\|\mT_{s,t}f\big\|_{\infty}&\leq\|f\|_{\infty}.
\end{align}

\begin{lemma}\label{lem:FellerTranProb}
$(\mT_{s,t})_{0\leq s\leq t}$ is a Feller evolution system. That is,
\begin{itemize}
\item [(a)] $\mT_{s,t}(C_{0}(\sY))\subset C_{0}(\sY)$, for any $0\leq s\leq t<\infty$;
\item [(b)] for any $f\in C_{0}(\sY)$ and $s\in\bR_{+}$, $\mT_{s,t}f$ converges to $f$ uniformly on $\sY$, as $t\downarrow s$;
\item [(c)] for any $f\in C_{0}(\sY)$, the function $(\mT_{s,t}f)(i,a)$ is jointly continuous with respect to $(s,t,i,a)$ on $\{(s,t)\in\bR_{+}^{2}:s\leq t\}\times\sY$.
\end{itemize}
\end{lemma}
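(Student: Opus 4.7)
The plan is to work throughout with the probabilistic representation
\[
(\mT_{s,t}f)(i,a) = \bE^*_{s,i}\!\left[f\!\left(X^*_t,\,a + \int_s^t v(X^*_u)\,du\right)\right],
\]
which is immediate from the definition of $\mT_{s,t}$ via the transition function $P$ in \eqref{eq:DefTranProbXvarphi}. For item (a), fix $f \in C_0(\sY)$ and $i \in \mathbf{E}$. Continuity of $(\mT_{s,t}f)(i,\cdot)$ on $\bR$ follows by dominated convergence from the continuity of $f(j,\cdot)$ for every $j \in \mathbf{E}$ and the uniform bound $\|f\|_\infty$. Decay at infinity is also by dominated convergence: pathwise, $|a + \int_s^t v(X^*_u)\,du| \geq |a| - \overline{v}(t-s)$, so the second argument of $f$ tends to $\pm\infty$ uniformly on $\Omega^*$ as $|a| \to \infty$, and $f(X^*_t,\cdot)$ vanishes at infinity.

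For item (b), fix $f \in C_0(\sY)$ and $s \in \bR_+$. Finiteness of $\mathbf{E}$ renders $f$ uniformly continuous on $\sY$, so given $\varepsilon>0$ there exists $\delta>0$ such that $|f(j,b_1) - f(j,b_2)| < \varepsilon$ whenever $|b_1 - b_2| < \delta$, uniformly in $j$. On the no-jump event $N_t := \{X^*_r = i\text{ for all }r\in[s,t]\}$, one has $|\int_s^t v(X^*_u)\,du| = |v(i)|(t-s) \leq \overline{v}(t-s)$, so for $t-s < \delta/\overline{v}$ the contribution of $N_t$ to $(\mT_{s,t}f)(i,a)-f(i,a)$ is bounded by $\varepsilon$. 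On $N_t^c$, Assumption \ref{assump:GenLambda}(i) gives $\bP^*_{s,i}(N_t^c) \leq K(t-s)$, contributing at most $2\|f\|_\infty K(t-s)$. Both estimates are uniform in $(i,a) \in \sY$, which yields uniform convergence $\mT_{s,t}f \to f$ as $t \downarrow s$.

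For item (c), finiteness of $\mathbf{E}$ reduces the task to joint continuity of $(s,t,a) \mapsto (\mT_{s,t}f)(i,a)$ for each fixed $i \in \mathbf{E}$. Uniform continuity of $f$ together with the contraction bound \eqref{eq:BoundTst} gives continuity in $a$ uniformly on compact subsets of $(s,t)$, so it remains to prove joint continuity in $(s,t)$ with $s \leq t$. For $(s_n,t_n) \to (s,t)$ I would decompose the sample space according to whether $X^*$ has any jumps in the small intervals $[s_n \wedge s,\, s_n \vee s]$ and $[t_n \wedge t,\, t_n \vee t]$. By Assumption \ref{assump:GenLambda}(i), the complement has probability of order $|s_n - s| + |t_n - t|$ and contributes $O(\|f\|_\infty(|s_n-s|+|t_n-t|))$ to the difference. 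On the no-jump event one has $X^*_{t_n} = X^*_t$ and $|\int_{s_n}^{t_n} v(X^*_u)\,du - \int_s^t v(X^*_u)\,du| \leq \overline{v}(|s_n-s|+|t_n-t|)$, so uniform continuity of $f$ makes this contribution vanish.

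The main obstacle is that the measures $\bP^*_{s_n,i}$ and $\bP^*_{s,i}$ are distinct, so a direct pathwise comparison on a single $\Omega^*$ is not available. I would circumvent this by invoking the Markov property at the deterministic time $s \vee s_n$: conditioning on the no-jump event $\{X^*_r = i\text{ for all }r \in [s_n \wedge s,\, s_n \vee s]\}$ forces $X^*_{s \vee s_n} = i$, after which the law of the continuation under both $\bP^*_{s_n,i}$ and $\bP^*_{s,i}$ coincides with the law of $(X^*_u)_{u \geq s \vee s_n}$ under $\bP^*_{s \vee s_n,\, i}$. This coupling, combined with the estimates above and the strong continuity in $(s,t)$ of the evolution system $\mU^*$ already established in Section \ref{subsubsec:Markov}, delivers the required joint continuity.
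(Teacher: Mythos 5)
Your proposal is sound. Parts (a) and (b) follow essentially the same route as the paper's proof: in (a) the paper derives continuity in $a$ from the translation identity \eqref{eq:PTransInvar} and decay at infinity from the support bound \eqref{eq:TranProbFiniteBound}, which is your pathwise bound $|a+\int_s^t v(X^*_u)\,du|\geq|a|-\overline v(t-s)$ in transition-function form; in (b) the paper's estimate \eqref{eq:DecompTstf} is exactly your split into the no-jump event (modulus of continuity term) and its complement (small-probability term). For (c), the paper stays at the level of the evolution system: it writes a three-term telescope, kills the $t$-increment by Chapman--Kolmogorov $\mT_{s',t'}=\mT_{s',t}\circ\mT_{t,t'}$ together with the contraction property \eqref{eq:BoundTst} and part (b), kills the $a$-increment by the uniform modulus of continuity inherited from part (a), and for the $s$-increment inserts $\mT_{s,s'}$ via $\mT_{s,t}=\mT_{s,s'}\circ\mT_{s',t}$ and re-runs the decomposition from (b) with $f$ replaced by $\mT_{s',t}f$, using \eqref{eq:PTransInvar} and \eqref{eq:BoundTst} to control the residual terms. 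Your proof works directly on sample paths: conditioning on the no-jump event over $[s\wedge s_n,\,s\vee s_n]$ and invoking the Markov property at $s\vee s_n$ is the probabilistic incarnation of that same Chapman--Kolmogorov insertion, with the deterministic $\varphi$-increment $v(i)(s\vee s_n-s\wedge s_n)$ extracted explicitly on the conditioning event. The underlying probabilistic inputs (short-interval jump probability $\leq K\cdot\text{length}$, and the modulus of continuity of $f$) are identical; what differs is that the paper economizes by recycling parts (a) and (b) through the operator algebra, whereas you redo the pathwise estimates from scratch for (c), trading a little extra bookkeeping around the coupling for a self-contained argument. Both are correct.
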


\begin{proof}
\textbf{(a)} For any $0\leq s\leq t<\infty$, $f\in C_{0}(\sY)$, and $i\in\mathbf{E}$, we first show that $(\mT_{s,t}f)(i,\,\cdot\,)$ is continuous on $\bR$. By \eqref{eq:DefTranProbXvarphi}, for any $a,a',b\in\bR$ and $j\in\mathbf{E}$,
\begin{align}\label{eq:PTransInvar}
P\big(s,(i,a'),t,\{j\}\times(-\infty,b]\big)=P\big(s,(i,a),t,\{j\}\times(-\infty,b+(a-a')]\big).
\end{align}
Hence, we have
\begin{align*}
\big(\mT_{s,t}f\big)(i,a')=\int_{\sY}f(j,b)\,dP\big(s,(i,a'),t,(j,b)\big)=\int_{\sY}f\big(j,b+(a'-a)\big)\,dP\big(s,(i,a),t,(j,b)\big),
\end{align*}
so that
\begin{align*}
\big|\big(\mT_{s,t}f\big)(i,a')-\big(\mT_{s,t}f\big)(i,a)\big|\leq\bigg|\int_{\sY}\left|f\big(j,b+(a'-a)\big)-f(j,b)\right|dP\big(s,(i,a),t,(j,b)\big)\bigg|.
\end{align*}
The continuity of $(\mT_{s,t}f)(i,\cdot)$ follows from the uniform continuity of $f(j,\cdot)$, since $f(j,\cdot)\in C_{0}(\bR)$, for any $j\in\mathbf{E}$.

It remains to show that $|(\mathbf{T}_{s,t}f)(i,a)|\rightarrow 0$, as $|a|\rightarrow\infty$. By \eqref{eq:DefTranProbXvarphi}, for any $a\in\bR$,
\begin{align}\label{eq:TranProbFiniteBound}
P\big(s,(i,a),t,\mathbf{E}\times\big[a-\overline{v}(t-s),a+\overline{v}(t+s)\big]\big)=1,
\end{align}
where we recall that $\overline{v}:=\max_{i\in\mathbf{E}}|v(i)|$. Hence,
\begin{align*}
\big|\big(\mT_{s,t}f\big)(i,a)\big|\leq\int_{\mathbf{E}\times[a-\overline{v}(t-s),a+\overline{v}(t+s)]}\big|f(j,b)\big|\,dP\big(s,(i,a),t,(j,b)\big)\rightarrow 0,\quad |a|\rightarrow\infty,
\end{align*}
since $f(j,\cdot)\in C_{0}(\bR)$ for any $j\in\mathbf{E}$. This completes the proof of part (a).

\medskip
\noindent
\textbf{(b)} For any $0\leq s\leq t<\infty$ and $f\in C_{0}(\sY)$, by \eqref{eq:TranProbFiniteBound} and \eqref{eq:DefTranProbXvarphi},
\begin{align}
\sup_{(i,a)\in\sY}\!\!\big|\big(\mT_{s,t}f\big)(i,a)-f(i,a)\big|&\leq\sup_{(i,a)\in\sY}\int_{\sY}\big|f(j,b)-f(i,a)\big|\,dP\big(s,(i,a),t,(j,b)\big)\nonumber\\
&\leq\sup_{(i,a)\in\sY}\int_{\sY}\big|f(j,b)-f(i,b)\big|\,dP\big(s,(i,a),t,(j,b)\big)\nonumber\\
&\quad +\sup_{(i,a)\in\sY}\int_{\sY}\big|f(i,b)-f(i,a)\big|\,dP\big(s,(i,a),t,(j,b)\big)\nonumber\\
&\leq 2\|f\|_{\infty}\sup_{(i,a)\in\sY}P\big(s,(i,a),t,(\mathbf{E}\setminus\{i\})\times\bR\big)\nonumber\\
&\quad +\sup_{(i,a)\in\sY}\int_{\mathbf{E}\times[a-\overline{v}h,a+\overline{v}h]}\big|f(i,b)-f(i,a)\big|\,dP\big(s,(i,a),t,(j,b)\big)\nonumber\\
\label{eq:DecompTstf} &\leq 2\|f\|_{\infty}\sup_{i\in\mathbf{E}}\bP_{s,i}^{*}\big(X_{t}^{*}\neq i\big)+\!\sup_{i\in\mathbf{E},|b-a|\in[0,\overline{v}(t-s)]}\!\big|f(i,b)-f(i,a)\big|.
\end{align}
By the right-continuity of the sample paths of $X^{*}$ and by the dominated convergence theorem,
\begin{align}\label{eq:LimitRContXStar}
\lim_{t\downarrow s}\bP_{s,i}^{*}\big(X_{t}^{*}\neq i\big)=\bP_{s,i}^{*}\big(X_{s}^{*}\neq i\big)=0.
\end{align}
Together with the uniform continuity of $f(i,\cdot)$ on $\bR$, we obtain that
\begin{align*}
\lim_{t\downarrow s}\sup_{(i,a)\in\sY}\left|\big(\mT_{s,t}f\big)(i,a)-f(i,a)\right|=0,
\end{align*}
which completes the proof of part (b).

\medskip
\noindent
\textbf{(c)} For any $f\in C_{0}(\sY)$ and $i\in\mathbf{E}$, since $\mathbf{E}$ is finite, it is sufficient to establish the joint continuity of $(\mT_{\,\cdot\,,\,\cdot\,}f)(i,\,\cdot\,)$ at any $(s,t,a)\in\{(s,t)\in\bR^{2}_{+}:s\leq t\}\times\bR$. For any $(s',t',b)\in\{(s,t)\in\bR^{2}_{+}:s\leq t\}\times\bR$ (without loss of generality, assume that $s\leq s'\leq t\leq t'$, as the other cases can be proved similarly),
\begin{align}\label{eq:DecompDiffTst}
\big|\big(\mT_{s',t'}f\big)(i,b)-\big(\mT_{s,t}f\big)(i,a)\big|&\leq\big|\big(\mT_{s',t'}f\big)(i,b)-\big(\mT_{s',t}f\big)(i,b)\big|+\big|\big(\mT_{s',t}f\big)(i,b)-\big(\mT_{s,t}f\big)(i,b)\big|\nonumber\\
&\quad +\big|\big(\mT_{s,t}f\big)(i,b)-\big(\mT_{s,t}f\big)(i,a)\big|.
\end{align}
For the first term in \eqref{eq:DecompDiffTst}, by \eqref{eq:EvolProp}, \eqref{eq:BoundTst}, and part (b), for any $\varepsilon\in(0,\infty)$, there exists $\delta_{1}=\delta_{1}(\varepsilon,t)\in(0,\infty)$ such that, whenever $t'-t\in[0,\delta_{1})$,
\begin{align}\label{eq:EstDiffTst1}
\sup_{(i,b)\in\sY}\big|\big(\mT_{s',t'}f\big)(i,b)-\big(\mT_{s',t}f\big)(i,b)\big|=\big\|\mT_{s',t}\big(\mT_{t,t'}f-f\big)\big\|_{\infty}\leq\big\|\mT_{t,t'}f-f\big\|_{\infty}\leq\frac{\varepsilon}{3}.
\end{align}
As for the last term in \eqref{eq:DecompDiffTst}, it follows from part (a) that $(\mT_{s,t}f)(i,\cdot)$ is uniformly continuous on $\bR$. Hence, there exists $\delta_{2}=\delta_{2}(\varepsilon,s,t)\in(0,\infty)$ such that, whenever $|b-a|\in[0,\delta_{2})$,
\begin{align}\label{eq:EstDiffTst2}
\big|\big(\mT_{s,t}f\big)(i,b)-\big(\mT_{s,t}f\big)(i,a)\big|\leq\frac{\varepsilon}{3}.
\end{align}
It remains to analyze the second term in \eqref{eq:DecompDiffTst}. Using a similar argument leading to \eqref{eq:DecompTstf} (but with $f$ replaced with $\mT_{s',t}f$), we have
\begin{align*}
&\big|\big(\mT_{s',t}f\big)(i,b)-\big(\mT_{s,t}f\big)(i,b)\big|=\big|\big(\mT_{s',t}f\big)(i,b)-\big(\mT_{s,s'}\big(\mT_{s',t}f\big)\big)(i,b)\big|\\
&\quad\leq 2\|\mT_{s',t}f\|_{\infty}\sup_{i\in\mathbf{E}}\bP_{s,i}^{*}\big(X_{s'}^{*}\neq i\big)+\sup_{i\in\mathbf{E}}\sup_{|b-c|\in[0,\overline{v}(s'-s)]}\big|\big(\mT_{s',t}f\big)(i,b)-\big(\mT_{s',t}f\big)(i,c)\big|.
\end{align*}
By \eqref{eq:BoundTst} and \eqref{eq:LimitRContXStar}, there exists $\delta_{31}=\delta_{31}(\varepsilon,s)\in(0,\infty)$ such that, whenever $s'-s\in[0,\delta_{31})$,
\begin{align*}
\big\|\mT_{s',t}f\big\|_{\infty}\sup_{i\in\mathbf{E}}\bP_{s,i}^{*}\big(X_{s'}^{*}\neq i\big)\leq\|f\|_{\infty}\sup_{i\in\mathbf{E}}\bP_{s,i}^{*}\big(X_{s'}^{*}\neq i\big)\leq\frac{\varepsilon}{12}.
\end{align*}
Moreover, by \eqref{eq:PTransInvar} and the uniform continuity of $f(i,\cdot)$ on $\bR$, there exists $\delta_{32}=\delta_{32}(\varepsilon)\in(0,\infty)$ such that, whenever $s'-s\in[0,\delta_{32})$,
\begin{align*}
&\sup_{i\in\mathbf{E}}\sup_{|b-c|\in[0,\overline{v}(s'-s)]}\big|\big(\mT_{s',t}f\big)(i,b)-\big(\mT_{s',t}f\big)(i,c)\big|\\
&\quad =\sup_{i\in\mathbf{E}}\sup_{|b-c|\in[0,\overline{v}(s'-s)]}\bigg|\int_{\sY}f(j,d)\,dP\big(s',(i,b),t,(j,d)\big)-\int_{\sY}f(j,d')\,dP\big(s',(i,c),t,(j,d')\big)\bigg|\\
&\quad =\sup_{i\in\mathbf{E}}\sup_{|b-c|\in[0,\overline{v}(s'-s)]}\bigg|\int_{\sY}f(j,d)\,dP\big(s',(i,b),t,(j,d)\big)-\int_{\sY}f(j,d+c-b)\,dP\big(s',(i,b),t,(j,d)\big)\bigg|\\
&\quad\leq\sup_{i\in\mathbf{E}}\sup_{|b-c|\in[0,\overline{v}(s'-s)]}\int_{\sY}\big|f(j,d)-f(j,d+c-b)\big|\,dP\big(s',(i,b),t,(j,d)\big)\\
&\quad\leq\sup_{j\in\mathbf{E}}\sup_{|d-d'|\in[0,\overline{v}(s'-s)]}\big|f(j,d)-f(j,d')\big|\leq\frac{\varepsilon}{6}.
\end{align*}
Therefore, if $s'-s\in[0,\delta_{3})$, where $\delta_{3}:=\min(\delta_{31},\delta_{32})$, we have
\begin{align}\label{eq:EstDiffTst3}
\sup_{(i,b)\in\sY}\big|\big(\mT_{s',t}f\big)(i,b)-\big(\mT_{s,t}f\big)(i,b)\big|\leq\frac{\varepsilon}{3}.
\end{align}
Combining \eqref{eq:DecompDiffTst} - \eqref{eq:EstDiffTst3}, and letting $\delta=\delta(\varepsilon,s,t):=\min(\delta_{1},\delta_{2},\delta_{3})$, for any $(s',t',b)\in\{(s,t)\in\bR^{2}_{+}:s\leq t\}\times\bR$ such that $|s'-s|+|t'-t|+|b-a|\in[0,\delta]$, we obtain that
\begin{align*}
\big|\big(\mT_{s',t'}f\big)(i,b)-\big(\mT_{s,t}f\big)(i,a)\big|\leq\varepsilon,
\end{align*}
which completes the proof of part (c), and thus concludes the proof of the lemma.
\end{proof}

Let $\Omega$ be the collection of all c\`{a}dl\`{a}g functions $\omega$ on $\bR_{+}$ taking values in $\sY$, with extended value $\omega(\infty)=(\partial,\infty)$, on which we let $(X,\varphi):=(X_{t},\varphi_{t})_{t\in\overline{\bR}_{+}}$ be the coordinate mapping process. By Lemma \ref{lem:TranProbXvarphi}, Lemma \ref{lem:FellerTranProb}, and~\cite[Theorem I.6.3]{GikhmanSkorokhod2004}, there exists a standard time-inhomogeneous Markov family $\cM:=\{(\Omega,\sF,\bF_{s},(X_{t},\varphi_{t})_{t\in[s,\infty]},\bP_{s,(i,a)}),\,(s,i,a)\in\overline{\sZ}\}$, such that
\begin{align}\label{eq:MarginDistXvarphi}
\bP_{s,(i,a)}\big(\big(X_{t},\varphi_{t}\big)\in A\big)=P\big(s,(i,a),t,A\big)=\bP^{*}_{s,i}\bigg(\bigg(X^{*}_{t},\,a+\int_{s}^{t}v\big(X^{*}_{u}\big)\,du\bigg)\in A\bigg).
\end{align}

\subsection{Proof of properties (i) and (ii) in Section \ref{subsec:TimeHomogen}}

The property (i) follows immediately from \eqref{eq:MarginDistXvarphi}. So, it remains to prove property (ii). Towards this end, we first extend \eqref{eq:MarginDistXvarphi} to measurable sets in $\Omega$.

\begin{lemma}\label{lem:XvarphiPathst}
For any $0\leq s\leq t<\infty$, let $\Omega_{s,t}$ be the collection of all c\`{a}dl\`{a}g functions on $[s,t]$ taking values in $\sY$, i.e., $\Omega_{s,t}=\Omega|_{[s,t]}$. Let $\cG_{s,t}$ be the cylindrical $\sigma$-field on $\Omega_{s,t}$ generated by $(X_{u},\varphi_{u})_{u\in[s,t]}$. Then, for any $a\in\bR$ and $C\in\sG_{s,t}$,
\begin{align}\label{eq:XvarphiPathst}
\bP_{s,(i,a)}\big(\big(X_{r},\varphi_{r}\big)_{r\in[s,t]}\in C\big)=\bP^{*}_{s,i}\bigg(\bigg(X^{*}_{r},\,a+\int_{s}^{r}v\big(X^{*}_{u}\big)\,du\bigg)_{r\in[s,t]}\in C\bigg).
\end{align}
\end{lemma}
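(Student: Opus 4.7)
Since $\sG_{s,t}$ is generated by the $\pi$-system $\cH$ of finite-dimensional cylinder sets $\bigcap_{k=0}^{n}\{(X_{r_{k}},\varphi_{r_{k}})\in A_{k}\}$, with $n\geq 0$, $s=r_{0}\leq r_{1}\leq\cdots\leq r_{n}\leq t$, and $A_{k}\in\cB(\sY)$, the plan is first to verify \eqref{eq:XvarphiPathst} on $\cH$ and then to extend to all of $\sG_{s,t}$ via Dynkin's $\pi$-$\lambda$ theorem. The latter extension is immediate: the collection of $C\in\sG_{s,t}$ for which \eqref{eq:XvarphiPathst} holds contains $\Omega_{s,t}$ (both sides equal $1$) and is clearly stable under complements and countable disjoint unions, hence is a $\lambda$-system; once it contains $\cH$ it must equal $\sigma(\cH)=\sG_{s,t}$.

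The heart of the argument is therefore the equality of finite-dimensional distributions on cylinder sets in $\cH$, which I would prove by induction on $n$. The base case $n=0$ is precisely \eqref{eq:MarginDistXvarphi}. For the inductive step, applying the Markov property of $\cM$ under $\bP_{s,(i,a)}$ with respect to $\sF^{s}_{r_{n-1}}$ together with the transition function $P$ yields
\[
\bP_{s,(i,a)}\bigg(\bigcap_{k=0}^{n}\{(X_{r_{k}},\varphi_{r_{k}})\in A_{k}\}\bigg)=\bE_{s,(i,a)}\bigg(\prod_{k=0}^{n-1}\1_{A_{k}}\big(X_{r_{k}},\varphi_{r_{k}}\big)\,P\big(r_{n-1},(X_{r_{n-1}},\varphi_{r_{n-1}}),r_{n},A_{n}\big)\bigg).
\]
On the right-hand side of \eqref{eq:XvarphiPathst}, conditioning on $\sF^{s,*}_{r_{n-1}}$, exploiting the $\sF^{s,*}_{r_{n-1}}$-measurability of $a+\int_{s}^{r_{n-1}}v(X_{u}^{*})\,du$, and using the splitting $a+\int_{s}^{r_{n}}v(X_{u}^{*})\,du=\bigl(a+\int_{s}^{r_{n-1}}v(X_{u}^{*})\,du\bigr)+\int_{r_{n-1}}^{r_{n}}v(X_{u}^{*})\,du$, the Markov property of $\cM^{*}$ combined with \eqref{eq:DefTranProbXvarphi} produces the analogous expectation with $\bigl(X^{*}_{r_{k}},a+\int_{s}^{r_{k}}v(X_{u}^{*})\,du\bigr)$ in place of $(X_{r_{k}},\varphi_{r_{k}})$ throughout. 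The induction hypothesis at level $n-1$ asserts the equality of the joint laws on $\cB(\sY)^{\otimes n}$ of these two $\sY^{n}$-valued random vectors; extending from indicators on product sets to bounded measurable integrands by the standard monotone class theorem gives equality of the two expectations, which closes the induction.

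The main delicate point is the application of the Markov property of $\cM^{*}$ to the non-trivial functional $\int_{r_{n-1}}^{r_{n}}v(X_{u}^{*})\,du$: the version stated in Section~\ref{subsubsec:Markov} covers only events of the form $\{X^{*}_{r}\in B\}$, so one must first upgrade it to
\[
\bE^{*}_{s,i}\bigl(F\bigm|\sF^{s,*}_{r_{n-1}}\bigr)=\bE^{*}_{r_{n-1},X^{*}_{r_{n-1}}}(F)
\]
for any bounded $\sigma(X^{*}_{u},\,u\in[r_{n-1},r_{n}])$-measurable $F$. This upgrade is a routine $\pi$-$\lambda$ argument applied first to indicators of finite-dimensional cylinder events of $X^{*}$, and then transferred to the additive functional via Riemann-sum approximation and bounded convergence. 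Beyond this strengthening and the careful bookkeeping of simultaneous conditioning on both sides, the remainder of the proof is purely measure-theoretic.
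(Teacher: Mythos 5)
Your proposal is correct and follows essentially the same route as the paper: induction on the number of time points in a cylinder set, using the Markov property of both families together with \eqref{eq:MarginDistXvarphi} and \eqref{eq:DefTranProbXvarphi} plus a simple-function (monotone class) approximation for the inductive step, followed by a $\pi$-$\lambda$ extension to all of $\cG_{s,t}$. Your explicit remark that the Markov property of $\cM^{*}$ must first be upgraded from events of the form $\{X^{*}_{r}\in B\}$ to bounded functionals of the post-$r_{n-1}$ path (so as to handle the additive functional $\int_{r_{n-1}}^{r_{n}}v(X^{*}_{u})\,du$) is a point the paper's proof uses implicitly; flagging it is a welcome extra bit of care.
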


\begin{proof}
We first shows that, for any $n\in\bN$, $s\leq t_{1}\leq\cdots\leq t_{n}\leq t$, $j_{1},\ldots,j_{n}\in\mathbf{E}$, and $b_{1},\ldots,b_{n}\in\bR$,
\begin{align}
&\bP_{s,(i,a)}\big(X_{t_{j}}\!=x_{j},\,\varphi_{t_{j}}\!\in(-\infty,b_{j}],\,1\leq j\leq n\big)\nonumber\\
\label{eq:Xvarphicylst} &\quad =\bP^{*}_{s,i}\Big(X^{*}_{t_{j}}\!=x_{j},\,a+\!\int_{s}^{t_{j}}v\big(X^{*}_{u}\big)\,du\!\in(-\infty,b_{j}],\,1\leq j\leq n\Big).
\end{align}
The proof will proceed by induction in $n$. For $n=1$, \eqref{eq:Xvarphicylst} is just a special case of \eqref{eq:MarginDistXvarphi}. Assume that \eqref{eq:Xvarphicylst} holds for $n=p\in\bN$. For any $s\leq t_{1}\leq\cdots\leq t_{p+1}\leq t$, $j_{1},\ldots,j_{p+1}\in\mathbf{E}$, and $b_{1},\ldots,b_{p+1}\in\bR$, by \eqref{eq:MarginDistXvarphi} and the Markov property of $(X,\varphi)$,
\begin{align}
&\bP_{s,(i,a)}\big(X_{t_{j}}=x_{j},\,\varphi_{t_{j}}\in(-\infty,b_{j}],\,j=1,\ldots,p+1\big)\nonumber\\
&\quad =\bE_{s,(i,a)}\Big(\1_{\{X_{t_{j}}=x_{j},\,\varphi_{t_{j}}\in(-\infty,b_{j}],\,j=1,\ldots,p\}}\bP_{t_{p},(X_{t_{p}},\varphi_{t_{p}})}\big(X_{t_{p+1}}=x_{p+1},\,\varphi_{t_{p+1}}\in(-\infty,b_{p+1}]\big)\Big)\nonumber\\
&\quad =\bE_{s,(i,a)}\bigg(\1_{\{X_{t_{j}}=x_{j},\,\varphi_{t_{j}}\in(-\infty,b_{j}],\,j=1,\ldots,p\}}\nonumber\\
\label{eq:Xvarphim+1} &\qquad\qquad\quad\,\,\,\,\,\cdot\bP_{t_{p},k}^{*}\bigg(X_{t_{p+1}}^{*}\!\!=\!x_{p+1},\,c+\!\int_{t_{p}}^{t_{p+1}}v\big(X^{*}_{u}\big)\,du\in(-\infty,b_{p+1}]\bigg)\bigg|_{(k,c)=(X_{t_{p}},\varphi_{t_{p}})}\bigg).
\end{align}
By the induction hypothesis for $n=p$, the joint distribution of $(X_{t_{j}},\varphi_{t_{j}},j=1,\ldots,p)$ under $\bP_{s,(i,a)}$ coincides with the joint distribution of $(X_{t_{j}}^{*},a+\int_{s}^{t_{j}}v(X^{*}_{u})du,j=1,\ldots,p)$ under $\bP_{s,i}^{*}$. Applying the standard procedure of approximation by simple functions we conclude that for any bounded measurable function $f:(\mathbf{E}^{n}\times\bR^{p},2^{\mathbf{E}^{p}}\otimes\cB(\bR^{p}))\rightarrow(\bR,\cB(\bR))$,
\begin{align*}
\bE_{s,(i,a)}\Big(\!f\big(X_{t_{1}},\ldots,X_{t_{p}},\varphi_{t_{1}},\ldots,\varphi_{t_{p}}\big)\!\Big)\!=\!\bE_{s,i}^{*}\bigg(\!f\bigg(\!X_{t_{1}}^{*},\ldots,X_{t_{p}}^{*},a\!+\!\!\int_{s}^{t_{1}}\!\!v\big(X^{*}_{u}\big)du,\ldots,a\!+\!\!\int_{s}^{t_{p}}\!\!v\big(X^{*}_{u}\big)du\!\bigg)\!\bigg).
\end{align*}
Together with \eqref{eq:Xvarphim+1} and the Markov property of $X^{*}$, we obtain that
\begin{align*}
&\bP_{s,(i,a)}\big(X_{t_{j}}=x_{j},\,\varphi_{t_{j}}\in(-\infty,b_{j}],\,j=1,\ldots,p+1\big)\\
&\quad =\bE_{s,i}^{*}\bigg(\1_{\big\{X_{t_{j}}^{*}=x_{j},\,a+\int_{s}^{t_{j}}v(X_{u}^{*})du\in(-\infty,b_{j}],\,j=1,\ldots,p\big\}}\\
&\qquad\qquad\,\,\,\cdot\bP_{t_{p},k}^{*}\bigg(X_{t_{p+1}}^{*}=x_{p+1},\,c+\int_{t_{p}}^{t_{p+1}}v\big(X^{*}_{u}\big)\,du\in(-\infty,b_{p+1}]\bigg)\bigg|_{(k,c)=(X^{*}_{t_{p}},\int_{s}^{t_{p}}v(X^{*}_{u})du)}\bigg)\\
&\quad=\bP_{s,i}^{*}\bigg(X_{t_{j}}^{*}=x_{j},\,a+\int_{s}^{t_{j}}v\big(X_{u}^{*}\big)\,du\in(-\infty,b_{j}],\,j=1,\ldots,p+1\bigg),
\end{align*}
By induction, we complete the proof of \eqref{eq:Xvarphicylst}.

Next, let
\begin{align*}
\cH:=\big\{C\in\cG_{s,t}:\,\text{the equality (\ref{eq:XvarphiPathst}) holds}\big\}.
\end{align*}
The above arguments show that $\cH\supset\cH_{c}$,  where $\cH_{c}$ denotes the collection of all cylinder sets on $\Omega_{s,t}$ of the form
\begin{align*}
\big\{\omega_{s,t}=(\omega_{s,t}^{1},\omega_{s,t}^{2})\in\Omega_{s,t}:\,\omega_{s,t}^{1}(t_{j})=x_{j},\,\omega_{s,t}^{2}(t_{j})\in(-\infty,b_{j}],\,j=1,\ldots,n\big\},
\end{align*}
for some $x_{1},\ldots,x_{n}\in\mathbf{E}$, $b_{1},\ldots,b_{n}\in\bR$, $s\leq t_{1}\leq\cdots\leq t_{n}\leq t$, and $n\in\bN$. Clearly, $\cH_{c}$ is a $\pi$-system on $\Omega_{s,t}$, and one can check that $\cH$ is closed under complements and countable disjoint unions with $\Omega_{s,t}\in\cH$. Therefore, $\cH$ is a $\lambda$-system on $\Omega_{s,t}$, and by the monotone class theorem, $\cH\supset\sigma(\cH_{c})=\cG_{s,t}$, and thus $\cH=\cG_{s,t}$, which completes the proof of the lemma.
\end{proof}

\begin{proof}[Proof of property (ii), i.e. of \eqref{eq:DistvarphiInt}.]
For any $(s,i,a)\in\sZ$ and $t\in[s,\infty)$, let
\begin{align*}
C_{s,t}:=\bigg\{\omega_{s,t}=(\omega_{s,t}^{1},\omega_{s,t}^{2})\in\Omega_{s,t}:\,\omega_{s,t}^{2}(t)=a+\int_{s}^{t}v\big(\omega_{s,t}^{1}(u)\big)\,du\bigg\}\in\cG_{s,t}.
\end{align*}
By Lemma \ref{lem:XvarphiPathst},
\begin{align*}
\bP_{s,(i,a)}\bigg(\varphi_{t}=a+\int_{s}^{t}v\big(X_{u}\big)\,du\bigg)&=\bP_{s,(i,a)}\big(\big(X_{r},\varphi_{r}\big)_{r\in[s,t]}\in C_{s,t}\big)\\
&=\bP_{s,i}^{*}\bigg(\bigg(X_{r}^{*},\,a+\int_{s}^{r}v\big(X_{u}^{*}\big)\,du\bigg)\in C_{s,t}\bigg)=1.
\end{align*}
Since $\varphi$ has c\`{a}dl\`{a}g sample paths, and $a+\int_{s}^{t}v(X_{u})du$, $t\in[s,\infty)$, has continuous sample paths, \cite[Problem 1.1.5]{KaratzasShreve1998} implies that these two processes are indistinguishable, which implies~\eqref{eq:DistvarphiInt}.
\end{proof}

\section{Proof of Lemma \ref{lem:UnifContf}}\label{sec:AppendixB}

For any $\varepsilon>0$, $i\in\mathbf{E}$, and $(s_{1},\ell_{1}),(s_{2},\ell_{2})\in\bR_{+}^{2}$, without loss of generality, assume that $s_{2}\geq s_{1}$ and $\ell_{2}\geq\ell_{1}$. Then,
\begin{align}\label{eq:DecompfPlussell12}
\big|f_{+}(s_{2},i,\ell_{2})-f_{+}(s_{1},i,\ell_{1})\big|\leq\big|f_{+}(s_{2},i,\ell_{2})-f_{+}(s_{2},i,\ell_{1})\big|+\big|f_{+}(s_{2},i,\ell_{1})-f_{+}(s_{1},i,\ell_{1})\big|
\end{align}
The proof will be divided into three steps.

\medskip
\noindent
\textbf{Step 1.} We {begin by investigating}  the first term in \eqref{eq:DecompfPlussell12}. Noting that $(\cP_{\ell}^{+}g^{+})(\infty,\partial)=g^{+}(\infty,\partial)=0$, by \eqref{eq:FuntfPlus}, Corollary~\ref{cor:StrongMarkovExpTildeTau}, and \eqref{eq:DefPellPlusTildeP},
\begin{align}
&\big|f_{+}(s_{2},i,\ell_{2})-f_{+}(s_{2},i,\ell_{1})\big|=\bigg|\wt{\bE}_{s_{2},i,0}\Big(g^{+}\Big(Z_{\wt{\tau}_{\ell_{2}}^{+}}^{1},Z_{\wt{\tau}_{\ell_{2}}^{+}}^{2}\Big)\Big)-\wt{\bE}_{s_{2},i,0}\Big(g^{+}\Big(Z_{\wt{\tau}_{\ell_{1}}^{+}}^{1},Z_{\wt{\tau}_{\ell_{1}}^{+}}^{2}\Big)\Big)\bigg|\nonumber\\
&=\bigg|\wt{\bE}_{s_{2},i,0}\Big(\big(\cP_{\ell_{1}}^{+}g^{+}\big)\Big(Z^{1}_{\wt{\tau}_{\ell_{2}-\ell_{1}}^{+}},Z^{2}_{\wt{\tau}_{\ell_{2}-\ell_{1}}^{+}}\Big)\Big)-\wt{\bE}_{s_{2},i,0}\Big(g^{+}\Big(Z_{\wt{\tau}_{\ell_{1}}^{+}}^{1},Z_{\wt{\tau}_{\ell_{1}}^{+}}^{2}\Big)\Big)\bigg|\nonumber\\
&=\left|\wt{\bE}_{s_{2},i,0}\!\left(\1_{\{\wt{\tau}^{+}_{\ell_{1}}<\infty\}}\bigg(\wt{\bE}_{Z_{\wt{\tau}_{\ell_{1}}^{+}}^{1},Z_{\wt{\tau}_{\ell_{1}}^{+}}^{2},0}\Big(g^{+}\Big(Z_{\wt{\tau}_{\ell_{2}-\ell_{1}}^{+}}^{1},Z_{\wt{\tau}_{\ell_{2}-\ell_{1}}^{+}}^{2}\Big)\Big)-g^{+}\Big(Z_{\wt{\tau}_{\ell_{1}}^{+}}^{1},Z_{\wt{\tau}_{\ell_{1}}^{+}}^{2}\Big)\bigg)\right)\right|\nonumber\\
\label{eq:IniEstfPlusell121} &\leq\sup_{(t,j)\in\sX_{+}}\bigg|\wt{\bE}_{t,j,0}\Big(g^{+}\Big(Z_{\wt{\tau}_{\ell_{2}-\ell_{1}}^{+}}^{1},Z_{\wt{\tau}_{\ell_{2}-\ell_{1}}^{+}}^{2}\Big)\Big)-g^{+}(t,j)\bigg|.
\end{align}
Recall that $\wt{\gamma}_{1}$ is the first jump time of $Z^{2}$. For any $(t,j)\in\sX_{+}$, on the event $\{Z^{3}_{u}=\int_{0}^{u}v(Z^{2}_{r})dr,\forall u\geq 0\}$ (which has probability 1 under $\wt{\bP}_{t,j,0}$ in view of \eqref{eq:DistTildeZ3IntTildeZ2}), we have
\begin{align*}
\wt{\gamma}_{1}(\wt{\omega})>\frac{\ell_{2}-\ell_{1}}{v(j)},\,Z_{0}(\wt{\omega})=(t,j,0)&\iff Z^{3}_{\wt{\gamma}_{1}}(\wt{\omega})>\ell_{2}-\ell_{1},\,Z_{0}(\wt{\omega})=(t,j,0)\\
&\,\implies\wt{\tau}_{\ell_{2}-\ell_{1}}^{+}(\wt{\omega})=\frac{\ell_{2}-\ell_{1}}{v(j)},\,Z^{2}_{\wt{\tau}_{\ell_{2}-\ell_{1}}^{+}}(\wt{\omega})=j.
\end{align*}
Hence, by \eqref{eq:ProcZ} and \eqref{eq:DistFirstJumpTildeZ2}, for any $(t,j)\in\sX_{+}$,
\begin{align}
&\bigg|\wt{\bE}_{t,j,0}\Big(g^{+}\Big(Z_{\wt{\tau}_{\ell_{2}-\ell_{1}}^{+}}^{1},Z_{\wt{\tau}_{\ell_{2}-\ell_{1}}^{+}}^{2}\Big)\Big)-g^{+}(t,j)\bigg|\nonumber\\
&\quad\leq\wt{\bE}_{t,j,0}\bigg(\Big|g^{+}\Big(Z_{\wt{\tau}_{\ell_{2}-\ell_{1}}^{+}}^{1},Z_{\wt{\tau}_{\ell_{2}-\ell_{1}}^{+}}^{2}\Big)\Big|\1_{\{\wt{\gamma}_{1}\leq(\ell_{2}-\ell_{1})/v(j)\}}\bigg)\nonumber\\
&\qquad +\bigg|\wt{\bE}_{t,j,0}\Big(g^{+}\Big(Z_{\wt{\tau}_{\ell_{2}-\ell_{1}}^{+}}^{1},Z_{\wt{\tau}_{\ell_{2}-\ell_{1}}^{+}}^{2}\Big)\1_{\{\wt{\gamma}_{1}>(\ell_{2}-\ell_{1})/v(j)\}}\Big)-g^{+}(t,j)\bigg|\nonumber\\
&\quad\leq\big\|g^{+}\big\|_{\infty}\wt{\bP}_{t,j,0}\bigg(\wt{\gamma}_{1}\leq\frac{\ell_{2}-\ell_{1}}{v(j)}\bigg)+\bigg|\wt{\bE}_{t,j,0}\Big(g^{+}\Big(Z_{(\ell_{2}-\ell_{1})/v(j)}^{1},j\Big)\1_{\{\wt{\gamma}_{1}>(\ell_{2}-\ell_{1})/v(j)\}}\Big)-g^{+}(t,j)\bigg|\nonumber\\
&\quad\leq\frac{K\big\|g^{+}\big\|_{\infty}}{\underline{v}}(\ell_{2}-\ell_{1})+\left|g^{+}\bigg(t+\frac{\ell_{2}-\ell_{1}}{v(j)},j\bigg)\wt{\bP}_{t,j,0}\bigg(\wt{\gamma}_{1}>\frac{\ell_{2}-\ell_{1}}{v(j)}\bigg)-g^{+}(t,j)\right|\nonumber\\
&\quad\leq\frac{K\big\|g^{+}\big\|_{\infty}}{\underline{v}}(\ell_{2}\!-\!\ell_{1})+\left|g^{+}\!\bigg(t\!+\!\frac{\ell_{2}\!-\!\ell_{1}}{v(j)},j\bigg)\wt{\bP}_{t,j,0}\bigg(\wt{\gamma}_{1}\!\leq\!\frac{\ell_{2}-\ell_{1}}{v(j)}\bigg)\right|+\left|g^{+}\bigg(t\!+\!\frac{\ell_{2}\!-\!\ell_{1}}{v(j)},j\bigg)-g^{+}(t,j)\right|\nonumber\\ &\quad\leq\frac{2K\big\|g^{+}\big\|_{\infty}}{\underline{v}}(\ell_{2}-\ell_{1})+\left|g^{+}\bigg(t+\frac{\ell_{2}-\ell_{1}}{v(j)},j\bigg)-g^{+}(t,j)\right|\nonumber\\
\label{eq:EstgPlusell12} &\quad\leq\frac{2K\big\|g^{+}\big\|_{\infty}}{\underline{v}}(\ell_{2}-\ell_{1})+w_{g^{+}}\bigg(\frac{\ell_{2}-\ell_{1}}{\underline{v}}\bigg),
\end{align}
where we recall {that} $\underline{v}=\min_{i\in\mathbf{E}}|v(i)|$, and
\begin{align*}
w_{g^{+}}(\delta):=\sup_{j\in\mathbf{E}_{+}}\sup_{r,u\in\bR_{+}:\,|r-u|\in[0,\delta]}\big|g^{+}(r,j)-g^{+}(u,j)\big|
\end{align*}
is the modulus of continuity of $g^{+}$. Combining \eqref{eq:IniEstfPlusell121} and \eqref{eq:EstgPlusell12} leads to
\begin{align}\label{eq:EstfPlussell121}
\big|f_{+}(s_{2},i,\ell_{2})-f_{+}(s_{2},i,\ell_{1})\big|\leq\frac{2K\big\|g^{+}\big\|_{\infty}}{\underline{v}}(\ell_{2}-\ell_{1})+w_{g^{+}}\bigg(\frac{\ell_{2}-\ell_{1}}{\underline{v}}\bigg),
\end{align}
which completes the proof in Step 1.

\medskip
\noindent
\textbf{Step 2.} Next, we analyze the second term in \eqref{eq:DecompfPlussell12} by decomposing it as
\begin{align}
\big|f_{+}(s_{1},i,\ell_{1})-f_{+}(s_{2},i,\ell_{1})\big|&=\Big|\wt{\bE}_{s_{1},i,0}\Big(g^{+}\Big(Z_{\wt{\tau}_{\ell_{1}}^{+}}^{1},Z_{\wt{\tau}_{\ell_{1}}^{+}}^{2}\Big)\Big)-f_{+}(s_{2},i,\ell_{1})\Big|\nonumber\\
&\leq\wt{\bE}_{s_{1},i,0}\bigg(\Big|g^{+}\Big(Z_{\wt{\tau}_{\ell_{1}}^{+}}^{1},Z_{\wt{\tau}_{\ell_{1}}^{+}}^{2}\Big)-f_{+}(s_{2},i,\ell_{1})\Big|\1_{\{\wt{\tau}_{\ell_{1}}^{+}\leq s_{2}-s_{1}\}}\bigg)\nonumber\\
&\quad +\Big|\wt{\bE}_{s_{1},i,0}\Big(\Big(g^{+}\Big(Z_{\wt{\tau}_{\ell_{1}}^{+}}^{1},Z_{\wt{\tau}_{\ell_{1}}^{+}}^{2}\Big)-f_{+}(s_{2},i,\ell_{1})\Big)\1_{\{\wt{\tau}_{\ell}^{+}>s_{2}-s_{1}\}}\Big)\Big|\nonumber\\
\label{eq:DecompfPlussell122} &=:\cI_{1}+\cI_{2}.
\end{align}

To estimate $\cI_{1}$, we first note that when $i\in\mathbf{E}_{-}$ (so that $v(i)<0$), it follows from \eqref{eq:DistTildeZ3IntTildeZ2} that
\begin{align*}
\wt{\bP}_{s_{1},i,0}\big(\wt{\gamma}_{1}\!>\!s_{2}\!-\!s_{1},\wt{\tau}_{\ell_{1}}^{+}\!\leq\!s_{2}\!-\!s_{1}\big)&=\wt{\bP}_{s_{1},i,0}\big(\wt{\gamma}_{1}>s_{2}-s_{1},\,Z_{u}^{3}>\ell_{1}\,\,\text{for some }u\in[0,s_{2}-s_{1}]\big)\\
&=\wt{\bP}_{s_{1},i,0}\bigg(\wt{\gamma}_{1}>s_{2}\!-\!s_{1},\,\int_{0}^{u}v\big(Z^{2}_{r}\big)\,dr>\ell_{1}\,\,\text{for some }u\in[0,s_{2}\!-\!s_{1}]\bigg)\\
&=\wt{\bP}_{s_{1},i,0}\big(\wt{\gamma}_{1}>s_{2}-s_{1},\,v(i)u>\ell_{1}\,\,\text{for some }u\in[0,s_{2}-s_{1}]\big)=0.
\end{align*}
Hence, when $i\in\mathbf{E}_{-}$, by \eqref{eq:FuntfPlus} and \eqref{eq:DistFirstJumpTildeZ2} we have
\begin{align}
\cI_{1}&=\wt{\bE}_{s_{1},i,0}\bigg(\Big|g^{+}\Big(Z_{\wt{\tau}_{\ell_{1}}^{+}}^{1},Z_{\wt{\tau}_{\ell_{1}}^{+}}^{2}\Big)-\wt{\bE}_{s_{2},i,0}\Big(g^{+}\Big(Z_{\wt{\tau}_{\ell_{1}}^{+}}^{1},Z_{\wt{\tau}_{\ell_{1}}^{+}}^{2}\Big)\Big)\Big|\1_{\{\wt{\tau}_{\ell_{1}}^{+}\leq s_{2}-s_{1},\wt{\gamma}_{1}\leq s_{2}-s_{1}\}}\bigg)\nonumber\\
\label{eq:EstfPlussell1221Minus} &\leq 2\big\|g^{+}\big\|_{\infty}\wt{\bP}_{s_{1},i,0}\big(\wt{\gamma}_{1}\leq s_{2}-s_{1}\big)\leq 2K\,\big\|g^{+}\big\|_{\infty}(s_{2}-s_{1}).
\end{align}
In what follows, assume that $i\in\mathbf{E}_{+}$. We further decompose $\cI_{1}$ as
\begin{align}
\cI_{1}&\leq\wt{\bE}_{s_{1},i,0}\bigg(\Big|g^{+}\Big(Z_{\wt{\tau}_{\ell_{1}}^{+}}^{1},Z_{\wt{\tau}_{\ell_{1}}^{+}}^{2}\Big)-g^{+}\Big(s_{2},Z_{\wt{\tau}_{\ell_{1}}^{+}}^{2}\Big)\Big|\1_{\{\wt{\tau}_{\ell_{1}}^{+}\leq s_{2}-s_{1}\}}\bigg)\nonumber\\
&\quad +\wt{\bE}_{s_{1},i,0}\bigg(\Big|g^{+}\Big(s_{2},Z_{\wt{\tau}_{\ell_{1}}^{+}}^{2}\Big)-g^{+}(s_{2},i)\Big|\1_{\{\wt{\tau}_{\ell_{1}}^{+}\leq s_{2}-s_{1}\}}\bigg)\nonumber\\
&\quad +\wt{\bE}_{s_{1},i,0}\Big(\big|g^{+}(s_{2},i)-f_{+}(s_{2},i,\ell_{1})\big|\1_{\{\wt{\tau}_{\ell_{1}}^{+}\leq s_{2}-s_{1}\}}\Big)\nonumber\\
\label{eq:DecompfPlussell1221Plus} &=:\cI_{11}+\cI_{12}+\cI_{13}.
\end{align}
For $\cI_{11}$, by \eqref{eq:ProcZ}, we have
\begin{align}\label{eq:EstfPlussell1221Plus1}  \cI_{11}=\wt{\bE}_{s_{1},i,0}\Big(\Big|g^{+}\Big(s_{1}+\wt{\tau}_{\ell_{1}}^{+},Z_{\wt{\tau}_{\ell_{1}}^{+}}^{2}\Big)-g^{+}\Big(s_{2},Z_{\wt{\tau}_{\ell_{1}}^{+}}^{2}\Big)\Big|\1_{\{\wt{\tau}_{\ell_{1}}^{+}\leq s_{2}-s_{1}\}}\Big)\leq w_{g^{+}}(s_{2}-s_{1}).
\end{align}
As for $\cI_{12}$, note that $Z_{\wt{\tau}_{\ell_{1}}^{+}}^{2}=Z_{0}^{2}$ on $\{\wt{\gamma}_{1}>s_{2}-s_{1},\wt{\tau}_{\ell_{1}}^{+}\leq s_{2}-s_{1}\}$, and thus
\begin{align*}
\wt{\bE}_{s_{1},i,0}\bigg(\Big|g^{+}\Big(s_{2},Z_{\wt{\tau}_{\ell_{1}}^{+}}^{2}\Big)-g^{+}(s_{2},i)\Big|\1_{\{\wt{\gamma}_{1}>s_{2}-s_{1},\wt{\tau}_{\ell_{1}}^{+}\leq s_{2}-s_{1}\}}\bigg)=0.
\end{align*}
Hence, by \eqref{eq:DistFirstJumpTildeZ2}, we get
\begin{align}
\cI_{12}&=\wt{\bE}_{s_{1},i,0}\bigg(\Big|g^{+}\Big(s_{2},Z_{\wt{\tau}_{\ell_{1}}^{+}}^{2}\Big)-g^{+}(s_{2},i)\Big|\1_{\{\wt{\tau}_{\ell_{1}}^{+}\leq s_{2}-s_{1},\wt{\gamma}_{1}\leq s_{2}-s_{1}\}}\bigg)\nonumber\\
\label{eq:EstfPlussell1221Plus2} &\leq 2\,\big\|g^{+}\big\|_{\infty}\,\wt{\bP}_{s_{1},i,0}\big(\wt{\gamma}_{1}\leq s_{2}-s_{1}\big)\leq 2K\,\big\|g^{+}\big\|_{\infty}(s_{2}-s_{1}).
\end{align}
It remains to analyze $\cI_{13}$. Note that when $\ell_{1}>\overline{v}(t-s)$, by \eqref{eq:DistTildeZ3IntTildeZ2},
\begin{align*}
\wt{\bP}_{s_{1},i,0}\big(\wt{\tau}_{\ell_{1}}^{+}\leq s_{2}-s_{1}\big)&=\wt{\bP}_{s_{1},i,0}\Big(Z^{3}_{u}>\ell_{1}\,\,\,\text{for some } u\in[0,s_{2}-s_{1}]\Big)\\
&=\wt{\bP}_{s_{1},i,0}\bigg(\int_{0}^{u}v\big(Z^{2}_{r}\big)\,dr>\ell_{1}>\overline{v}(s_{2}-s_{1})\,\,\,\text{for some } u\in[0,s_{2}-s_{1}]\bigg)=0.
\end{align*}
It follows from \eqref{eq:Trivfplus} and \eqref{eq:EstfPlussell121} that
\begin{align}
\cI_{13}&=\big|g^{+}(s_{2},i)-f_{+}(s_{2},i,\ell_{1})\big|\,\wt{\bP}_{s_{1},i,0}\big(\wt{\tau}_{\ell_{1}}^{+}\leq s_{2}-s_{1}\big)\leq\big|f_{+}(s_{2},i,0)-f_{+}(s_{2},i,\ell_{1})\big|\1_{\{\ell_{1}\leq\overline{v}(s_{2}-s_{1})\}}\nonumber\\
\label{eq:EstfPlussell1221Plus3} &\leq\!\sup_{\substack{r_{1},r_{2}\in\bR_{+} \\ |r_{1}-r_{2}|\leq\overline{v}(s_{2}-s_{1})}}\!\!\!\big|f_{+}(s_{2},i,r_{1})\!-\!f_{+}(s_{2},i,r_{2})\big|\leq\frac{2\overline{v}}{\underline{v}}K\big\|g^{+}\big\|_{\infty}\!(s_{2}\!-\!s_{1})+w_{g^{+}}\!\bigg(\frac{\overline{v}}{\underline{v}}(s_{2}-s_{1})\bigg),
\end{align}
where we recall $\overline{v}=\max_{i\in\mathbf{E}}|v(i)|$. Combining \eqref{eq:DecompfPlussell1221Plus}$-$\eqref{eq:EstfPlussell1221Plus3}, we obtain that, for any $i\in\mathbf{E}_{+}$,
\begin{align}\label{eq:EstfPlussell1221Plus}
\cI_{1}\leq 2\bigg(1+\frac{\overline{v}}{\underline{v}}\bigg)K\,\big\|g^{+}\big\|_{\infty}(s_{2}-s_{1})+2w_{g^{+}}\bigg(\frac{\overline{v}}{\underline{v}}(s_{2}-s_{1})\bigg).
\end{align}
Comparing \eqref{eq:EstfPlussell1221Minus} with \eqref{eq:EstfPlussell1221Plus}, we see that \eqref{eq:EstfPlussell1221Plus} holds for any $i\in\mathbf{E}$, completing the study of $\cI_{1}$.

Next, we will investigate $\cI_{2}$. Note that $Z^{3}_{s_{2}-s_{1}}\leq\ell_{1}$ on the event $\{\wt{\tau}_{\ell_{1}}^{+}>s_{2}-s_{1}\}$. Hence, by Corollary \ref{cor:StrongMarkovExp} and \eqref{eq:FuntfPlus}, we further decompose $\cI_{2}$ as
\begin{align}
\cI_{2}&=\bigg|\wt{\bE}_{s_{1},i,0}\bigg(\Big(\wt{\bE}_{s_{2},j,a}\Big(g^{+}\Big(Z_{\wt{\tau}_{\ell_{1}}^{+}}^{1},Z_{\wt{\tau}_{\ell_{1}}^{+}}^{2}\Big)\Big)\Big|_{(j,a)=\big(Z_{s_{2}-s_{1}}^{2},Z_{s_{2}-s_{1}}^{3}\big)}-f_{+}(s_{2},i,\ell_{1})\Big)\1_{\{\wt{\tau}_{\ell_{1}}^{+}>s_{2}-s_{1}\}}\bigg)\bigg|\nonumber\\
&=\bigg|\wt{\bE}_{s_{1},i,0}\bigg(\Big(\wt{\bE}_{s_{2},j,0}\Big(g^{+}\Big(Z_{\wt{\tau}_{\ell_{1}-a}^{+}}^{1},Z_{\wt{\tau}_{\ell_{1}-a}^{+}}^{2}\Big)\Big)\Big|_{(j,a)=\big(Z_{s_{2}-s_{1}}^{2},Z_{s_{2}-s_{1}}^{3}\big)}-f_{+}(s_{2},i,\ell_{1})\Big)\1_{\{\wt{\tau}_{\ell_{1}}^{+}>s_{2}-s_{1}\}}\bigg)\bigg|\nonumber\\
&=\Big|\wt{\bE}_{s_{1},i,0}\Big(\Big(f_{+}\Big(s_{2},Z_{s_{2}-s_{1}}^{2},\ell_{1}-Z_{s_{2}-s_{1}}^{3}\Big)-f_{+}(s_{2},i,\ell_{1})\Big)\1_{\{\wt{\tau}_{\ell_{1}}^{+}>s_{2}-s_{1}\}}\Big)\Big|\nonumber\\
&\leq\wt{\bE}_{s_{1},i,0}\bigg(\Big|f_{+}\Big(s_{2},Z_{s_{2}-s_{1}}^{2},\ell_{1}-Z_{s_{2}-s_{1}}^{3}\Big)-f_{+}\Big(s_{2},i,\ell_{1}-Z_{s_{2}-s_{1}}^{3}\Big)\Big|\1_{\{\wt{\tau}_{\ell_{1}}^{+}>s_{2}-s_{1}\}}\bigg)\nonumber\\
&\quad +\wt{\bE}_{s_{1},i,0}\bigg(\Big|f_{+}\Big(s_{2},i,\ell_{1}-Z_{s_{2}-s_{1}}^{3}\Big)-f_{+}(s_{2},i,\ell_{1})\Big|\1_{\{\wt{\tau}_{\ell_{1}}^{+}>s_{2}-s_{1}\}}\bigg)\nonumber\\
\label{eq:DecompfPlussell1222} &=:\cI_{21}+\cI_{22}.
\end{align}
By \eqref{eq:FuntfPlus} and \eqref{eq:DistFirstJumpTildeZ2}, an argument similar to those leading to \eqref{eq:EstfPlussell1221Plus2} implies that
\begin{align}
\cI_{21}&=\wt{\bE}_{s_{1},i,0}\bigg(\Big|f_{+}\Big(s_{2},Z_{s_{2}-s_{1}}^{2},\ell_{1}-Z_{s_{2}-s_{1}}^{3}\Big)-f_{+}\Big(s_{2},i,\ell_{1}-Z_{s_{2}-s_{1}}^{3}\Big)\Big|\1_{\{\wt{\tau}_{\ell_{1}}^{+}>s_{2}-s_{1},\wt{\gamma}_{1}\leq s_{2}-s_{1}\}}\bigg)\nonumber\\
\label{eq:EstfPlussell12221} &\leq 2\,\big\|g^{+}\big\|_{\infty}\,\wt{\bP}_{s_{1},i,0}\big(\wt{\gamma}_{1}\leq s_{2}-s_{1}\big)\leq 2K\,\big\|g^{+}\big\|_{\infty}(s_{2}-s_{1}).
\end{align}
To estimate $\cI_{22}$, note that by \eqref{eq:DistTildeZ3IntTildeZ2},
\begin{align*}
\wt{\bP}_{s_{1},i,0}\Big(\big|Z_{s_{2}-s_{1}}^{3}\big|\leq\overline{v}(s_{2}-s_{1})\Big)=\wt{\bP}_{s_{1},i,0}\!\left(\bigg|\int_{0}^{s_{2}-s_{1}}v\big(Z^{2}_{u}\big)\,du\big|\leq\overline{v}(s_{2}-s_{1})\right)=1.
\end{align*}
Together with \eqref{eq:EstfPlussell121}, we have
\begin{align}
\cI_{22}&=\wt{\bE}_{s_{1},i,0}\bigg(\Big|f_{+}\Big(s_{2},i,\ell_{1}-Z_{s_{2}-s_{1}}^{3}\Big)-f_{+}(s_{2},i,\ell_{1})\Big|\1_{\{\wt{\tau}_{\ell_{1}}^{+}>s_{2}-s_{1},|Z_{s_{2}-s_{1}}^{3}|\leq\overline{v}(s_{2}-s_{1})\}}\bigg)\nonumber\\
\label{eq:EstfPlussell12222} &\leq\sup_{\substack{r_{1},r_{2}\in\bR_{+} \\ |r_{1}-r_{2}|\leq\overline{v}(s_{2}-s_{1})}}\!\!\!\big|f_{+}(s_{2},i,r_{1})\!-\!f_{+}(s_{2},i,r_{2})\big|\leq\frac{2\overline{v}}{\underline{v}}K\,\big\|g^{+}\big\|_{\infty}(s_{2}\!-\!s_{1})+w_{g^{+}}\!\bigg(\frac{\overline{v}}{\underline{v}}(s_{2}\!-\!s_{1})\!\bigg).
\end{align}
Combining \eqref{eq:DecompfPlussell1222}$-$\eqref{eq:EstfPlussell12222} leads to
\begin{align}\label{eq:EstfPlussell1222}
\cI_{2}\leq 2\bigg(1+\frac{\overline{v}}{\underline{v}}\bigg)K\,\big\|g^{+}\big\|_{\infty}(s_{2}-s_{1})+w_{g^{+}}\bigg(\frac{\overline{v}}{\underline{v}}(s_{2}-s_{1})\bigg).
\end{align}
Therefore, by \eqref{eq:DecompfPlussell122}, \eqref{eq:EstfPlussell1221Plus}, and \eqref{eq:EstfPlussell1222}, we obtain that
\begin{align}\label{eq:EstfPlussell122}
\big|f_{+}(s_{1},i,\ell_{1})-f_{+}(s_{2},i,\ell_{1})\big|\leq 4\,\bigg(1+\frac{\overline{v}}{\underline{v}}\bigg)K\,\big\|g^{+}\big\|_{\infty}(s_{2}-s_{1})+3\,w_{g^{+}}\bigg(\frac{\overline{v}}{\underline{v}}(s_{2}-s_{1})\bigg),
\end{align}
which completes the analysis in Step 2.

\medskip
\noindent
\textbf{Step 3.} By \eqref{eq:DecompfPlussell12}, \eqref{eq:EstfPlussell121}, and \eqref{eq:EstfPlussell122}, we have
\begin{align*}
\big|f_{+}(s_{1},i,\ell_{2})-f_{+}(s_{2},i,\ell_{1})\big|&\leq\frac{2}{\underline{v}}\,K\,\big\|g^{+}\big\|_{\infty}(\ell_{2}-\ell_{1})+4\,\bigg(1+\frac{\overline{v}}{\underline{v}}\bigg)K\,\big\|g^{+}\big\|_{\infty}(s_{2}-s_{1})\\
&\quad +w_{g^{+}}\bigg(\frac{\ell_{2}-\ell_{1}}{\underline{v}}\bigg)+3\,w_{g^{+}}\bigg(\frac{\overline{v}}{\underline{v}}(s_{2}-s_{1})\bigg).
\end{align*}
Therefore, the uniformly continuity of $f_{+}(\cdot,i,\cdot)$ on $\bR^{2}_{+}$ follows from the uniform the uniform continuity of $g^{+}(\cdot,i)$ on $\bR_{+}$, uniformly for all $i\in\mathbf{E}$.

It remains to show that for any $i\in\mathbf{E}$ and $\ell\in\bR_{+}$, $f(s,i,\ell)$ vanishes as $s\rightarrow\infty$. Since $g^{+}\in C_{0}(\sX_{+})$, by \eqref{eq:ProcZ} and \eqref{eq:FuntfPlus}, we have
\begin{align*}
\lim_{s\rightarrow\infty}\big|f(s,i,\ell)\big|=\lim_{s\rightarrow\infty}\Big|\wt{\bE}_{s,i,0}\Big(g^{+}\Big(s+\wt{\tau}_{\ell}^{+},Z^{2}_{\wt{\tau}_{\ell}^{+}}\Big)\Big)\Big|\leq\lim_{s\rightarrow\infty}\sup_{(t,j)\in[s,\infty)\times\mathbf{E}_{+}}\big|g^{+}(t,j)\big|=0.
\end{align*}
The last statement of Lemma \ref{lem:UnifContf} follows directly from \eqref{eq:JPlusfPlus} and \eqref{eq:PellPlusfPlus}.

\medskip
\noindent The proof of Lemma \ref{lem:UnifContf} is complete.

\section{Proofs of lemmas from Section \ref{sec:UniqProof}}\label{sec:ProofLemUniq}

\begin{proof}[Proof of Lemma \ref{lem:DiffQell}.]
This is a direct consequence of \cite[1.2.B \& 1.3.C]{Dynkin1965}.
\end{proof}

\begin{proof}[Proof of Lemma \ref{lem:GenTildeM}.]
Let $h\in C_{0}^{1}(\overline{\sZ})$. For any $(s,i,a)\in\sZ$ and $t\in\bR_{+}$, by \eqref{eq:Probz}, we have
\begin{align}
\frac{1}{t}\Big(\wt{\bE}_{s,i,a}\big(h\big(Z^{1}_{t},Z^{2}_{t},Z^{3}_{t}\big)\big)-h(s,i,a)\Big)&=\frac{1}{t}\,\wt{\bE}_{s,i,a}\Big(h\big(s+t,Z^{2}_{t},Z^{3}_{t}\big)-h\big(s+t,Z^{2}_{t},a\big)\Big)\nonumber\\
&\quad +\frac{1}{t}\Big(\wt{\bE}_{s,i,a}\big(h\big(s+t,Z^{2}_{t},a\big)\big)-h(s+t,i,a)\Big)\nonumber\\
\label{eq:DecompExphZ} &\quad +\frac{1}{t}\big(h(s+t,i,a)-h(s,i,a)\big).
\end{align}
For the first term in \eqref{eq:DecompExphZ}, by \eqref{eq:Probz} and \eqref{eq:DistTildeZ3IntTildeZ2},
\begin{align*}
\frac{1}{t}\,\wt{\bE}_{s,i,a}\Big(h\big(s\!+\!t,Z^{2}_{t},Z^{3}_{t}\big)\!-\!h\big(s\!+\!t,Z^{2}_{t},a\big)\Big)=\frac{1}{t}\,\wt{\bE}_{s,i,a}\bigg(h\bigg(\!s\!+\!t,Z^{2}_{t},a+\!\!\int_{0}^{t}\!v\big(Z^{2}_{r}\big)dr\!\bigg)\!-\!h\big(s\!+\!t,Z^{2}_{t},a\big)\!\bigg).
\end{align*}
Since
\begin{align*}
\frac{1}{t}\bigg|h\bigg(\!s\!+\!t,Z^{2}_{t},a\!+\!\!\!\int_{0}^{t}\!v\big(Z^{2}_{r}\big)dr\!\bigg)\!-\!h\big(s\!+\!t,Z^{2}_{t},a\big)\bigg|\!\leq\!\sup_{\substack{j\in\mathbf{E} \\ b\in[a-\overline{v}t,a+\overline{v}t]}}\!\!\frac{1}{t}\big|h(s\!+\!t,j,b)\!-\!h(s\!+\!t,j,a)\big|\!\leq\!\overline{v}\,\bigg\|\frac{\partial}{\partial a}h\bigg\|_{\infty},
\end{align*}
 then by the dominated convergence theorem we have
\begin{align}
&\lim_{t\rightarrow 0+}\frac{1}{t}\,\wt{\bE}_{s,i,a}\bigg(h\bigg(s+t,Z^{2}_{t},a+\!\int_{0}^{t}v\big(Z^{2}_{r}\big)\,dr\bigg)-h\big(s+t,Z^{2}_{t},a\big)\bigg)\nonumber\\
\label{eq:LimitExphZ1} &\quad =\bE_{s,i,a}\!\left(\lim_{t\rightarrow 0+}\frac{1}{t}\bigg(h\bigg(s+t,Z^{2}_{t},a+\!\int_{0}^{t}v\big(Z^{2}_{r}\big)\,dr\bigg)-h\big(s+t,Z^{2}_{t},a\big)\bigg)\right)=v(i)\frac{\partial h}{\partial a}(s,i,a).
\end{align}
Next, for the second term in \eqref{eq:DecompExphZ}, by \eqref{eq:DistTildeZ3IntTildeZ2}, \eqref{eq:LawXXStar}, \eqref{eq:DefEvolSytXStar}, and \eqref{eq:DefGenXStar}, we deduce that
\begin{align}
&\lim_{t\rightarrow 0+}\frac{1}{t}\Big(\wt{\bE}_{s,i,a}\big(h\big(s+t,Z^{2}_{t},a\big)\big)-h(s+t,i,a)\Big)=\lim_{t\rightarrow 0+}\frac{1}{t}\Big(\bE_{s,(i,a)}\big(h\big(s+t,X_{s+t},a\big)\big)-h(s+t,i,a)\Big)\nonumber\\
&\quad =\frac{1}{t}\Big(\bE^{*}_{s,i}\big(h\big(s+t,X^{*}_{s+t},a\big)\big)-h(s+t,i,a)\Big)=\lim_{t\rightarrow 0+}\frac{1}{t}\big(\big(\mU^{*}_{s,s+t}-\mI\big)h(s+t,\cdot,a)\big)(i)\nonumber\\
&\quad =\lim_{t\rightarrow 0+}\frac{1}{t}\big(\big(\mU^{*}_{s,s+t}-\mI\big)h(s,\cdot,a)\big)(i)-\lim_{t\rightarrow 0+}\big(\big(\mU^{*}_{s,s+t}-\mI\big)\big(h(s+t,\cdot,a)-h(s,\cdot,a)\big)\big)(i)\nonumber\\
\label{eq:LimitExphZ2} &\quad =\big(\mathsf{\Lambda}_{s}\,h(s,\cdot,a)\big)(i).
\end{align}
Combining \eqref{eq:DecompExphZ}$-$\eqref{eq:LimitExphZ2} leads to
\begin{align*}
&\lim_{t\rightarrow 0+}\frac{1}{t}\Big(\wt{\bE}_{s,i,a}\big(h\big(Z^{1}_{t},Z^{2}_{t},Z^{3}_{t}\big)\big)-h(s,i,a)\Big)=v(i)\frac{\partial}{\partial a}h(s,i,a)+\big(\mathsf{\Lambda}_{s}h(s,\cdot,a)\big)(i)+\frac{\partial h}{\partial s}(s,i,a).
\end{align*}
Since the semigroup induced by $\wt{\cM}$ is Feller, by \cite[Theorem 1.33]{BottcherSchillingWang2013}, the above pointwise limit is uniform for all $(s,i,a)\in\sZ$ and $h\in\sD(\cA)$, which completes the proof of the lemma.
\end{proof}

To proceed with the proof of Lemma \ref{lem:QellCompgPlus}, we first prove the following auxiliary result.

\begin{lemma}\label{lem:ExistUniPhiIntEq}
For any $\lambda\in\bR_{+}$ and $h^{+}\in C_{0}(\overline{\sX_{+}})$ with $\supp h^{+}\subset[0,\eta_{h^{+}}]\times\mathbf{E}_{+}$, for some $\eta_{h^{+}}\in(0,\infty)$, there exists a unique solution $\Phi\in C_{0}(\overline{\sX_{+}})$ to
\begin{align}\label{eq:PhiIntEq}
\left\{\begin{array}{l}
\displaystyle{\Phi(s,i)=\int_{s}^{\eta_{h^{+}}}\Big(\big(\big(\wt{\mA}-\lambda\mV^{+}+\wt{\mB}S^{+}\big)\Phi\big)(t,i)+\big(\mV^{+}h^{+}\big)(t,i)\Big)dt,\quad (s,i)\in[0,\eta_{h^{+}})\times\mathbf{E}_{+}}, \\
\Phi(s,i)=0,\quad (s,i)\in\big([\eta_{h^{+}},\infty)\times\mathbf{E}_{+}\big)\cup\{(\infty,\partial)\},
\end{array}
\right.
\end{align}
and furthermore, $\Phi\in C_{c}^{1}(\overline{\sX_{+}})$.

Moreover, for any $\lambda\in\bR_{+}$, $\Phi\in C_{0}^{1}(\overline{\sX_{+}})$ is a solution to \eqref{eq:PhiIntEq} if and only if $\Phi\in C_{0}^{1}(\overline{\sX_{+}})$ solves
\begin{align}\label{eq:LambdaHPlusEq}
\big(\lambda-H^{+}\big)\Phi=h^{+}
\end{align}
subject to $\Phi=0$ on $([\eta_{h^{+}},\infty)\times\mathbf{E}_{+})\cup\{(\infty,\partial)\}$. Consequently, there exists a unique solution $\Phi\in C_{0}^{1}(\overline{\sX_{+}})$ to \eqref{eq:LambdaHPlusEq} subject to $\Phi=0$ on $([\eta_{h^{+}},\infty)\times\mathbf{E}_{+})\cup\{(\infty,\partial)\}$.
\end{lemma}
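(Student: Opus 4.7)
The plan is to treat \eqref{eq:PhiIntEq} as a linear Volterra-type integral equation on the bounded interval $[0,\eta_{h^+}]$, solve it via a Picard-iteration fixed-point argument in a suitable closed subspace of $C_0(\overline{\sX_+})$, then bootstrap the resulting solution to $C_c^1$, and finally match the integral and differential formulations using the Wiener-Hopf identity \eqref{eq:WHPlus1}.

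Let $\cV$ denote the closed subspace of $C_0(\overline{\sX_+})$ consisting of functions vanishing on $([\eta_{h^+},\infty)\times\mathbf{E}_+)\cup\{(\infty,\partial)\}$. Each $\Phi\in\cV$ automatically satisfies $\supp\Phi\subset[0,\eta_{h^+}]\times\mathbf{E}_+$, so condition $(\textnormal{a}^+)(\textnormal{i})$ yields $\supp S^+\Phi\subset[0,\eta_{h^+}]\times\mathbf{E}_-$ and the integrand in \eqref{eq:PhiIntEq} is bounded and Borel-measurable. Define $T\colon\cV\to\cV$ by
\begin{align*}
(T\Phi)(s,i):=\int_s^{\eta_{h^+}}\!\!\big(((\wt{\mA}-\lambda\mV^{+}+\wt{\mB}S^+)\Phi)(t,i)+(\mV^{+}h^+)(t,i)\big)\,dt
\end{align*}
for $(s,i)\in[0,\eta_{h^+}]\times\mathbf{E}_+$ and $(T\Phi)(s,i):=0$ otherwise; absolute continuity of the integral and the compact-support structure show $T\Phi\in\cV$, and the $C_0$-solutions of \eqref{eq:PhiIntEq} are exactly the fixed points of $T$. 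Assumption~\ref{assump:GenLambda} together with boundedness of $S^+$ from $(\textnormal{a}^+)$ furnishes a constant $M=M(K,\overline{v},\lambda,\|S^+\|)$ with $\|(\wt{\mA}-\lambda\mV^{+}+\wt{\mB}S^+)(\Phi_1-\Phi_2)\|_\infty\leq M\|\Phi_1-\Phi_2\|_\infty$. Iterating yields the standard Picard bound
\begin{align*}
|T^n\Phi_1(s,i)-T^n\Phi_2(s,i)|\leq\frac{M^n(\eta_{h^+}-s)^n}{n!}\|\Phi_1-\Phi_2\|_\infty,
\end{align*}
so $T^n$ is a strict contraction for $n$ large enough, and hence $T$ admits a unique fixed point $\Phi^\ast\in\cV$.

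To upgrade $\Phi^\ast$ from $C_0$ to $C_c^1$, I will apply the fundamental theorem of calculus to $\Phi^\ast=T\Phi^\ast$: this gives $\Phi^\ast(\cdot,i)\in C^1([0,\eta_{h^+}))$ for each $i\in\mathbf{E}_+$, with
\begin{align*}
\frac{\partial\Phi^\ast}{\partial s}(s,i)=-\big((\wt{\mA}-\lambda\mV^{+}+\wt{\mB}S^+)\Phi^\ast\big)(s,i)-(\mV^{+}h^+)(s,i).
\end{align*}
To handle the corner $s=\eta_{h^+}$, I will combine continuity with each function's support bound: $\Phi^\ast(\eta_{h^+},\cdot)=0$ (from $\Phi^\ast\in C_0$ together with its support in $[0,\eta_{h^+}]\times\mathbf{E}_+$), $S^+\Phi^\ast(\eta_{h^+},\cdot)=0$ (from $S^+\Phi^\ast\in C_0(\overline{\sX_-})$ together with $(\textnormal{a}^+)(\textnormal{i})$), and $h^+(\eta_{h^+},\cdot)=0$ (from $h^+\in C_0$ together with its support). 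Hence the right-hand side of the derivative formula vanishes at $s=\eta_{h^+}$, matching the zero derivative on $(\eta_{h^+},\infty)$, so $\partial\Phi^\ast/\partial s\in C_0(\overline{\sX_+})$ and $\Phi^\ast\in C_c^1(\overline{\sX_+})$.

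For the equivalence with \eqref{eq:LambdaHPlusEq}, note that on $\sD(H^+)=C_0^1(\overline{\sX_+})$ the identity \eqref{eq:WHPlus1} (which $(S^+,H^+)$ satisfies by assumption) converts $(\lambda-H^+)\Phi=h^+$ into the first-order ODE $\partial_s\Phi=-(\wt{\mA}-\lambda\mV^{+}+\wt{\mB}S^+)\Phi-\mV^{+}h^+$ on $[0,\eta_{h^+})\times\mathbf{E}_+$; on the complementary region the equation is automatically satisfied because every term vanishes there under the boundary condition. Integrating the ODE from $s$ to $\eta_{h^+}$ with the boundary value $\Phi(\eta_{h^+},\cdot)=0$ produces \eqref{eq:PhiIntEq}, while differentiating \eqref{eq:PhiIntEq} recovers the ODE (with the boundary condition built in). Combined with the preceding step, this yields the asserted existence and uniqueness: $\Phi^\ast\in C_c^1\subset C_0^1$ solves \eqref{eq:LambdaHPlusEq}, and any $C_0^1$-solution of \eqref{eq:LambdaHPlusEq} with the stated boundary condition is a $C_0$-solution of \eqref{eq:PhiIntEq} and hence equals $\Phi^\ast$. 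The main obstacle will be the corner-regularity argument at $s=\eta_{h^+}$, which requires simultaneously exploiting continuity and compact-support vanishing for all three of $\Phi^\ast$, $S^+\Phi^\ast$ (via condition $(\textnormal{a}^+)$), and $h^+$.
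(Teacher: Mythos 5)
Your overall architecture (fixed point for the integral equation, bootstrap to $C_c^1$, then match with \eqref{eq:LambdaHPlusEq} via \eqref{eq:WHPlus1}) is the right one, and the corner-regularity argument at $s=\eta_{h^+}$ and the equivalence step are both sound. But there is a genuine gap in the contraction step. The claimed Picard bound
\begin{align*}
|T^n\Phi_1(s,i)-T^n\Phi_2(s,i)|\leq\frac{M^n(\eta_{h^+}-s)^n}{n!}\|\Phi_1-\Phi_2\|_\infty
\end{align*}
does \emph{not} follow from the global estimate $\|(\wt{\mA}-\lambda\mV^{+}+\wt{\mB}S^{+})(\Phi_1-\Phi_2)\|_\infty\leq M\|\Phi_1-\Phi_2\|_\infty$ alone. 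The inductive step requires a \emph{pointwise-in-time} estimate: the integrand at time $t$ must be controlled by $M\sup_{u\geq t,\,k}|D_{n-1}(u,k)|$, so that the decay $(\eta_{h^+}-u)^{n-1}$ of $D_{n-1}=T^{n-1}\Phi_1-T^{n-1}\Phi_2$ can be fed back into the integral. This is automatic for the multiplication operators $\wt{\mA}$ and $\mV^{+}$, but $S^{+}$ is nonlocal: a priori $(S^{+}D_{n-1})(t,j)$ could depend on the values of $D_{n-1}$ on all of $\bR_{+}$, in which case your iteration only yields $\|T^n\Phi_1-T^n\Phi_2\|_\infty\leq(M\eta_{h^+})^n\|\Phi_1-\Phi_2\|_\infty$ — no contraction unless $M\eta_{h^+}<1$.

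What is missing is the ``Volterra/causality'' property of $S^{+}$: that $(S^{+}g)(t,j)$ depends only on $g|_{[t,\infty)\times\mathbf{E}_{+}}$ and satisfies $|(S^{+}g)(t,j)|\leq\|S^{+}\|\sup_{u\geq t,\,k}|g(u,k)|$. This is true — it follows from condition $(a^{+})(\mathrm{i})$ (if $g_1-g_2$ vanishes on $[t,\infty)$ then $\supp(g_1-g_2)\subset[0,t]\times\mathbf{E}_{+}$, so $S^{+}(g_1-g_2)$ vanishes on $(t,\infty)$ and, by continuity, at $t$) — and it is precisely what the paper spends its Step 1 establishing, via a Riesz-representation argument showing that the representing measure $\mu_{s,i}$ of $g\mapsto(S^{+}g)(s,i)$ assigns no mass to $[0,s)\times\mathbf{E}_{+}$, which is then used to build the restricted operators $S_T^{+}$ with $\|S_T^{+}\|\leq\|S^{+}\|$. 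Once you add this ingredient, your single global Picard iteration is actually a legitimate and arguably cleaner alternative to the paper's construction, which instead partitions $[0,\eta_{h^+}]$ backwards into subintervals of length $\delta_0$ with $M\delta_0<1$ and runs a separate Banach fixed point on each piece, gluing via the consistency of the $S_T^{+}$. Either way, the locality of $S^{+}$ is the essential ingredient, and your write-up omits it entirely.
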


\begin{proof}
In view of \eqref{eq:WHPlus1}, we have that \eqref{eq:LambdaHPlusEq} is equivalent to the following equation
\begin{align}\label{eq:PhiDiffEq}
\bigg(\lambda-(\mV^{+})^{-1}\bigg(\frac{\partial}{\partial s}+\wt{\mA}+\wt{\mB}S^{+}\bigg)\bigg)\Phi=h^{+}.
\end{align}
We first show that $\Phi\in C_{0}^{1}(\overline{\sX_{+}})$ is a solution to \eqref{eq:PhiIntEq} if and only if $\Phi$ solves \eqref{eq:PhiDiffEq} with $\Phi=0$ on $([\eta_{h^{+}},\infty)\times\mathbf{E}_{+})\cup\{(\infty,\partial)\}$. On the one hand, if $\Phi\in C_{0}^{1}(\overline{\sX_{+}})$ is a solution to \eqref{eq:PhiIntEq}, by differentiating the first equality in \eqref{eq:PhiIntEq} and rearranging terms, we obtain \eqref{eq:PhiDiffEq} on $[0,\eta_{h^{+}})\times\mathbf{E}_{+}$, while \eqref{eq:PhiDiffEq} holds trivially on $([\eta_{h^{+}},\infty)\times\mathbf{E}_{+})\cup\{(\infty,\partial)\}$ since both sides are equal to 0. On the other hand, if $\Phi\in C_{0}^{1}(\overline{\sX_{+}})$ solves \eqref{eq:PhiDiffEq} with $\Phi=0$ on $([\eta_{h^{+}},\infty)\times\mathbf{E}_{+})\cup\{(\infty,\partial)\}$, the first equality in \eqref{eq:PhiIntEq} follows by rearranging terms in \eqref{eq:PhiDiffEq} and then  integrating  both sides. Therefore, we only need to show that \eqref{eq:PhiIntEq} has a unique solution $\Phi\in C_{0}(\overline{\sX_{+}})$ which, in particular, belongs to $C_{c}^{1}(\overline{\sX_{+}})$. The proof will be done in the following three steps.

\medskip
\noindent
\textbf{Step 1.} For any $T>0$, let $\sX_{T}^{\pm}:=[T,\infty)\times\mathbf{E}_{\pm}$ and $\overline{\sX_{T}^{\pm}}:=\sX_{T}^{\pm}\cup\{(\infty,\partial)\}$. We define the spaces of functions $C_{0}(\overline{\sX_{T}^{\pm}})$ (respectively, $C_{c}(\overline{\sX_{T}^{\pm}})$) in analogy to $C_{0}(\overline{\sX_{\pm}})$ (respectively, $C_{c}(\overline{\sX_{\pm}})$), with the domain of functions restricted to $\overline{\sX_{T}^{\pm}}$. Clearly, any function in $C_{0}(\overline{\sX_{T}^{\pm}})$ can be regarded as the restriction of some function in $C_{0}(\overline{\sX_{\pm}})$ on $\overline{\sX_{T}^{\pm}}$. The goal of this step is to seek for an operator $S_{T}^{+}:C_{0}(\overline{\sX_{T}^{+}})\rightarrow C_{0}(\overline{\sX_{T}^{-}})$, for any $T\in(0,\infty)$, such that for each $g^{+}\in C_{0}(\overline{\sX_{+}})$, $S^{+}g^{+}=S^{+}_{T}(g^{+}|_{\overline{\sX_{T}^{+}}})$ on $\overline{\sX_{T}^{-}}$.

Note that, for any $(s,i)\in\overline{\sX_{-}}$, we define the linear functional $\Theta_{s,i}:C_{0}(\overline{\sX_{+}})\rightarrow\bR$ by
\begin{align*}
\Theta_{s,i}\,g^{+}:=\big(S^{+}g^{+}\big)(s,i),\quad g^{+}\in C_{0}(\overline{\sX_{+}}).
\end{align*}
Since $S^{+}:C_{0}(\overline{\sX_{+}})\rightarrow C_{0}(\overline{\sX_{-}})$ is bounded, $\Theta_{s,i}$ is a bounded linear functional on $C_{0}(\overline{\sX_{+}})$. Hence, by Riesz representation theorem (cf. \cite[Theorem 6.19]{Rudin1987}), there is a unique signed measure $\mu_{s,i}$ on $(\overline{\sX_{+}},\cB(\overline{\sX_{+}}))$, such that
\begin{align}\label{eq:RieszSPlus}
\big(S^{+}g^{+}\big)(s,i)=\Theta_{s,i}\,g^{+}=\int_{\overline{\sX_{+}}}g^{+}(t,j)\,d\mu_{s,i}(t,j),\quad g^{+}\in C_{0}(\overline{\sX_{+}}).
\end{align}
We claim that, for any fixed $(s,i)\in\overline{\sX_{-}}$,
\begin{align}\label{eq:musiVanish}
|\mu_{s,i}|([0,s)\times\mathbf{E}_{+})=0.
\end{align}
Otherwise, there exists $[a,b]\subset [0,s)$ and $j_{0}\in\mathbf{E}_{+}$, such that $|\mu_{s,i}|([a,b]\times\{j_{0}\})>0$. Let $\mu_{s,i}^{+}$ and $\mu_{s,i}^{-}$ be the positive and {the} negative variations of $\mu_{s,i}$, and let $\overline{\sX_{+}}=D_{s,i}\cup D_{s,i}^{c}$ (where $D_{s,i}\in\cB(\overline{\sX_{+}})$) be the Hahn decomposition (cf. \cite[Theorem 6.14]{Rudin1987}) with respect to $\mu_{s,i}$, so that
\begin{align*}
\mu_{s,i}^{+}(A)=\mu_{s,i}\big(A\cap D_{s,i}\big),\quad\mu_{s,i}^{-}(A)=\mu_{s,i}\big(A\cap D_{s,i}^{c}\big),\quad\text{for any }\,A\in\cB(\overline{\sX_{+}}).
\end{align*}
Without loss of generality, we can assume that there exists $[a',b']\subset[a,b]$ such that $[a',b']\times\{j_{0}\}\subset([a,b]\times\{j_{0}\})\cap D_{s,i}$ and that $\mu_{s,i}^{+}([a',b']\times\{j_{0}\})>0$. Now we can construct $\rho\in C_{0}(\bR_{+})$ such that $\supp\rho=[a',b']$, and let $\tilde{g}^{+}(t,k)=\rho(t)\1_{\{j_{0}\}}(k)$, $(s,i)\in\overline{\sX_{+}}$. Clearly, $\tilde{g}^{+}\in C_{0}(\overline{\sX_{+}})$ with $\supp\tilde{g}^{+}\subset[a',b']\times\mathbf{E}_{+}\subset[0,s)\times\mathbf{E}_{+}$. Hence, by the condition $(a^{+})(\text{i})$, $(S^{+}\tilde{g}^{+})(s,i)=0$. However,
\begin{align*}
\int_{\overline{\sX_{+}}}g^{+}(t,j)\,d\mu_{s,i}(t,j)=\int_{[a',b']\times\{j_{0}\}}\rho(t)\,d\mu_{s,i}(t,j)>0,
\end{align*}
which contradicts \eqref{eq:RieszSPlus}. Therefore, \eqref{eq:musiVanish} holds true.

Next, for any $T\in(0,\infty)$, we define $S^{+}_{T}$ on $C_{0}(\overline{\sX_{T}^{+}})$ by
\begin{align*}
\big(S^{+}_{T}g^{+}_{T}\big)(s,i):=\int_{\overline{\sX_{T}^{+}}}\,g^{+}_{T}(t,j)\,d\mu_{s,i}(t,j),\quad (s,i)\in\overline{\sX_{T}^{-}},\quad g^{+}_{T}\in C_{0}(\overline{\sX_{T}^{+}}).
\end{align*}
By \eqref{eq:musiVanish}, for any $g^{+}_{T}\in C_{0}(\overline{\sX_{T}^{+}})$ and $g^{+}\in C_{0}(\overline{\sX_{+}})$ such that $g^{+}_{T}=g^{+}|_{\overline{\sX_{T}^{+}}}$,
\begin{align}
\big(S^{+}_{T}g^{+}_{T}\big)(s,i)&=\int_{\overline{\sX_{T}^{+}}}\,g^{+}_{T}(t,j)\,d\mu_{s,i}(t,j)=\int_{\overline{\sX_{T}^{+}}}\,g^{+}_{T}(t,j)\,d\mu_{s,i}(t,j)+\int_{[0,T)\times\mathbf{E}_{+}}\,g^{+}(t,j)\,d\mu_{s,i}(t,j)\nonumber\\
\label{eq:STPlusSPlus} &=\int_{\overline{\sX_{+}}}\,g^{+}(t,j)\,d\mu_{s,i}(t,j)=\big(S^{+}g^{+}\big)(s,i),\quad (s,i)\in\overline{\sX_{T}^{-}}.
\end{align}
In particular, since $S^{+}$ is a bounded operator on $C_{0}(\overline{\sX_{+}})$, we have
\begin{align}\label{eq:ContSTPlus}
S^{+}_{T}g^{+}_{T}=\big(S^{+}g^{+}\big)\big|_{\overline{\sX_{T}^{-}}}\in C_{0}\big(\overline{\sX_{T}^{-}}\big),
\end{align}
and
\begin{align}\label{eq:STPlusSPlusNorm}
\big\|S^{+}_{T}\big\|_{\infty}\!=\!\!\sup_{\|g^{+}_{T}\|_{\infty}=1}\!\big\|S^{+}_{T}g^{+}_{T}\big\|_{\infty}\!\leq\!\sup_{\|g^{+}\|_{\infty}=1}\!\sup_{(s,i)\in\overline{\sX_{T}^{-}}}\!\big|\big(S^{+}g^{+}\big)(s,i)\big|\!\leq\!\sup_{\|g^{+}\|_{\infty}=1}\!\big\|S^{+}g^{+}\big\|_{\infty}\!\!=\!\|S^{+}\|_{\infty}.
\end{align}
Moreover, for any $0<T_{1}\leq T_{2}<\infty$, $g^{+}\in C_{0}(\overline{\sX_{T}^{+}})$, $g^{+}_{T_{1}}\in C_{0}(\overline{\sX_{T_{1}}^{+}})$, and $g^{+}_{T_{2}}\in C_{0}(\overline{\sX_{T_{2}}^{+}})$, such that $g^{+}=g^{+}_{T_{1}}=g^{+}_{T_{2}}$ on $\overline{\sX_{T_{2}}^{+}}$, a similar argument using \eqref{eq:musiVanish} shows that
\begin{align}\label{eq:SPlusT1T2}
\big(S^{+}_{T_{2}}g^{+}_{T_{2}}\big)(u,i)=\big(S^{+}_{T_{1}}g^{+}_{T_{1}}\big)(u,i)=\big(S^{+}g^{+}\big)(u,i),\quad (u,i)\in\overline{\sX^{-}_{T_{2}}}.
\end{align}

\noindent
\textbf{Step 2.} We now verify the existence and uniqueness of the solution to \eqref{eq:PhiIntEq} by proceeding backwards starting from $\eta_{h^{+}}$. Fix any $\lambda\in\bR_{+}$ and $h^{+}\in C_{0}(\overline{\sX_{+}})$, and pick $\delta_{0}\in(0,\eta_{h^{+}})$ small enough so that
\begin{align}\label{eq:ContractionConst}
\big(\big\|\wt{\mA}\big\|_{\infty}+\lambda\|\mV^{+}\|_{\infty}+\big\|\wt{\mB}\big\|_{\infty}\|S^{+}\|_{\infty}\big)\delta_{0}<1.
\end{align}
Using \eqref{eq:ContSTPlus}, we define $\Gamma_{1}:C_{0}(\overline{\sX^{+}_{\eta_{1}}})\rightarrow C_{0}(\overline{\sX^{+}_{\eta_{1}}})$, where $\eta_{1}:=\eta_{h^{+}}-\delta_{0}$, by
\begin{align*}
(\Gamma_{1}\phi)(s,i):=\int_{s}^{\eta_{h^{+}}}\!\!\Big(\big(\big(\wt{\mA}-\lambda\mV^{+}+\wt{\mB}S^{+}_{\eta_{1}}\big)\phi\big)(t,i)+(\mV^{+}h^{+})(t,i)\Big)\,dt,\quad (s,i)\in\big[\eta_{1},\eta_{h^{+}}\big)\times\mathbf{E}_{+},
\end{align*}
and $(\Gamma_{1}\phi)(s,i)=0$ for $(s,i)\in\overline{\sX^{+}_{\eta_{h^{+}}}}$, where $\phi\in C_{0}(\overline{\sX^{+}_{\eta_{1}}})$. By Assumption \ref{assump:GenLambda} and \eqref{eq:ContSTPlus}, the integral on the right-hand side above is well defined since the integrand is continuous in $t$ and bounded. In this step, we will show that $\Gamma_{1}$ has a unique fixed point $\Phi^{(1)}\in C_{0}(\overline{\sX^{+}_{\eta_{1}}})$, so that
\begin{align}\label{eq:Phi1eta1}
\Phi^{(1)}(s,i)=\!\int_{s}^{\eta_{h^{+}}}\!\!\Big(\Big(\big(\wt{\mA}-\lambda\mV^{+}+\wt{\mB}S^{+}_{\eta_{1}}\big)\Phi^{(1)}\Big)(t,i)+(\mV^{+}h^{+})(t,i)\Big)\,dt,\,\,\,(s,i)\!\in\!\big[\eta_{1},\eta_{h^{+}}\big)\!\times\!\mathbf{E}_{+},
\end{align}
and
\begin{align}\label{eq:Phi1eta1BoundVal}
\Phi^{(1)}(s,i)=0,\quad (s,i)\in\overline{\sX_{\eta_{h^{+}}}^{+}}.
\end{align}
Moreover, we will show that $\Phi^{(1)}\in C_{0}^{1}(\overline{\sX^{+}_{\eta_{1}}})$. It is then clear from \eqref{eq:Phi1eta1BoundVal} that $\supp\Phi^{(1)}\subset[\eta_{1},\eta_{h^{+}}]\times\mathbf{E}_{+}$ so that $\Phi^{(1)}\in C_{c}^{1}(\overline{\sX^{+}_{\eta_{1}}})$. Therefore, the result in this step implies that \eqref{eq:PhiIntEq} has a unique solution $\Phi^{(1)}$ on the restricted domain $C_{c}(\overline{\sX^{+}_{\eta_{1}}})$, and $\Phi^{(1)}\in C_{c}^{1}(\overline{\sX^{+}_{\eta_{1}}})$.

We first show that $\Gamma_{1}$ has a unique fixed point in $C_{0}(\overline{\sX^{+}_{\eta_{1}}})$. For any $\phi_{1},\phi_{2}\in C_{0}(\overline{\sX^{+}_{\eta_{1}}})$, by Assumption \ref{assump:GenLambda} and \eqref{eq:STPlusSPlusNorm},
\begin{align*}
\big\|\Gamma_{1}\phi_{1}-\Gamma_{1}\phi_{2}\big\|_{\infty}&=\sup_{(s,i)\in[\eta_{1},\eta_{h^{+}})\times\mathbf{E}_{+}}\bigg|\int_{s}^{\eta_{h^{+}}}\Big(\big(\wt{\mA}-\lambda\mV^{+}+\wt{\mB}S^{+}_{\eta_{1}}\big)(\phi_{1}-\phi_{2})\Big)(t,i)\,dt\bigg|\\
&\leq\big(\big\|\wt{\mA}\big\|_{\infty}+\lambda\|\mV^{+}\|_{\infty}+\big\|\wt{\mB}\big\|_{\infty}\|S^{+}\|_{\infty}\big)\delta_{0}\,\|\phi_{1}-\phi_{2}\|_{\infty}.
\end{align*}
Hence, by \eqref{eq:ContractionConst}, $\Gamma_{1}$ is a contraction mapping on $C_{0}(\overline{\sX^{+}_{\eta_{1}}})$, and thus $\Gamma_{1}$ has a unique fixed point $\Phi^{(1)}\in C_{0}(\overline{\sX^{+}_{\eta_{1}}})$. In particular, \eqref{eq:Phi1eta1} and \eqref{eq:Phi1eta1BoundVal} hold true.

Next, we show that $\Phi^{(1)}\in C_{0}^{1}(\overline{\sX^{+}_{\eta_{1}}})$. By \eqref{eq:Phi1eta1BoundVal}, it is clear that $\Phi^{(1)}|_{\overline{\sX^{+}_{\eta_{h^{+}}}}}\in C_{0}^{1}(\overline{\sX^{+}_{\eta_{h^{+}}}})$. Also, since the integrand on the right-hand side of \eqref{eq:Phi1eta1} is continuous in $t$ and bounded, we obtain from \eqref{eq:Phi1eta1} that $\Phi^{(1)}(\cdot,i)$ is continuous on $[\eta_{1},\eta_{h^{+}})$, and that $\lim_{s\rightarrow\eta_{h^{+}}-}\Phi^{(1)}(s,i)=0=\Phi^{(1)}(\eta_{h^{+}},i)$, for any $i\in\mathbf{E}_{+}$. It remains to show that $\Phi^{(1)}(\cdot,i)$ is continuously differentiable at $s=\eta_{h^{+}}$ for any $i\in\mathbf{E}_{+}$, and, due to \eqref{eq:Phi1eta1BoundVal}, we only need to prove that $\partial_{-}\Phi^{(1)}(\eta_{h^{+}},i)/\partial s$ exists and equals to $0$, and that $\partial\Phi^{(1)}(\cdot,i)/\partial s$ is continuous at $s=\eta_{h^{+}}$.

We fix an $i\in\mathbf{E}_{+}$. Since $\supp\Phi^{(1)}\subset[\eta_{1},\eta_{h^{+}}]\times\mathbf{E}_{+}$, by \eqref{eq:SPlusT1T2} and the condition $(a^{+})(\text{i})$, for any $g^{+}\in C_{0}(\overline{\sX_{+}})$ such that $g^{+}|_{\overline{\sX^{+}_{\eta_{1}}}}=\Phi^{(1)}$, $(S^{+}_{\eta_{1}}\Phi^{(1)})(s,i)=(S^{+}g^{+})(s,j)=0$ for all $(s,j)\in(\eta_{h^{+}},\infty)\times\mathbf{E}_{-}$. It follows from the continuity of $S^{+}_{\eta_{1}}\Phi^{(1)}$ that
\begin{align*}
\big(S^{+}_{\eta_{1}}\Phi^{(1)}\big)(\eta_{h^{+}},j)=\lim_{s\rightarrow\eta_{h^{+}}-}\big(S^{+}_{\eta_{1}}\Phi^{(1)}\big)(s,j)=0,\quad j\in\mathbf{E}_{-}.
\end{align*}
This, together with \eqref{eq:Phi1eta1BoundVal}, implies that
\begin{align*}
&\Big(\big(\wt{\mA}-\lambda\mV^{+}+\wt{\mB}S^{+}_{\eta_{1}}\big)\Phi^{(1)}+\mV^{+}h^{+}\Big)(\eta_{h^{+}},i)\\
&\quad =\!\sum_{j\in\mathbf{E}_{+}}\!\!\big(A_{\eta_{h^{+}}}\!(i,j)-\lambda v(j)\big)\Phi^{(1)}(\eta_{h^{+}},j)+\!\!\sum_{j\in\mathbf{E}_{-}}\!\!B_{\eta_{h^{+}}}\!(i,j)\big(S^{+}_{\eta_{1}}\Phi^{(1)}\big)(\eta_{h^{+}},j)+\!\!\sum_{j\in\mathbf{E}_{+}}\!\!v(j)h^{+}(\eta_{h^{+}},j)=0.
\end{align*}
Therefore, by \eqref{eq:Phi1eta1}, \eqref{eq:Phi1eta1BoundVal}, and the fact that the integrand in \eqref{eq:Phi1eta1} is continuous in $t$, we have
\begin{align*}
\frac{\partial_{-}\Phi^{(1)}}{\partial s}(\eta_{h^{+}},i)&=\lim_{s\rightarrow\eta_{h^{+}}-}\frac{\Phi^{(1)}(s,i)-\Phi^{(1)}(\eta_{h^{+}},i)}{s-\eta_{h^{+}}}\\
&=\lim_{s\rightarrow\eta_{h^{+}}-}\frac{1}{s-\eta_{h^{+}}}\int_{s}^{\eta_{h^{+}}}\Big(\Big(\big(\wt{\mA}-\lambda\mV^{+}+\wt{\mB}S^{+}_{\eta_{1}}\big)\Phi^{(1)}\Big)(t,i)+(\mV^{+}h^{+})(t,i)\Big)dt\\
&=\Big(\big(\wt{\mA}-\lambda\mV^{+}+\wt{\mB}S^{+}_{\eta_{1}}\big)\Phi^{(1)}+\mV^{+}h^{+}\Big)(\eta_{h^{+}},i)=0=\frac{\partial_{+}\Phi^{(1)}}{\partial s}(\eta_{h^{+}},i).
\end{align*}
and
\begin{align*}
\lim_{s\rightarrow\eta_{h^{+}}-}\frac{\partial\Phi^{(1)}}{\partial s}(s,i)&=\lim_{s\rightarrow\eta_{h^{+}}-}\Big(\big(\wt{\mA}-\lambda\mV^{+}+\wt{\mB}S^{+}_{\eta_{1}}\big)\Phi^{(1)}+\mV^{+}h^{+}\Big)(s,i)\\
&=\Big(\big(\wt{\mA}-\lambda\mV^{+}+\wt{\mB}S^{+}_{\eta_{1}}\big)\Phi^{(1)}+\mV^{+}h^{+}\Big)(\eta_{h^{+}},i)=0=\lim_{s\rightarrow\eta_{h^{+}}+}\frac{\partial\Phi^{(1)}}{\partial s}(s,i),
\end{align*}
which completes the proof in step 2.

\medskip
\noindent
\textbf{Step 3.}  
In this step, we will extend the result in Step 2 and construct the unique solution to \eqref{eq:PhiIntEq} restricted to $\overline{\sX^{+}_{\eta_{2}}}$, where $\eta_{2}:=(\eta_{h^{+}}-2\delta_{0})\vee 0$. Without loss of generality, we take  $\delta_{0}\in(0,\eta_{h^{+}}/2)$.

In view of \eqref{eq:ContSTPlus}, we define $\Gamma_{2}:C_{0}(\overline{\sX^{+}_{\eta_{2}}})\rightarrow C_{0}(\overline{\sX^{+}_{\eta_{2}}})$ by
\begin{align*}
\big(\Gamma_{2}\phi)(s,i):=\Phi^{(1)}(\eta_{1},i)+\!\int_{s}^{\eta_{1}}\!\Big(\!\big(\big(\wt{\mA}\!-\!\lambda\mV^{+}\!+\!\wt{\mB}S^{+}_{\eta_{2}}\big)\phi\big)(t,i)\!+\!(\mV^{+}h^{+}\big)(t,i)\Big)dt,\,\,\,(s,i)\in [\eta_{2},\eta_{1})\times\mathbf{E}_{+},
\end{align*}
and $(\Gamma_{2}\phi)(s,i)=\Phi^{(1)}(s,i)$ for $(s,i)\in\overline{\sX_{\eta_{1}}^{+}}$. By Assumption \ref{assump:GenLambda} and \eqref{eq:ContSTPlus}, the integral on the right-hand side above is well defined since the integrand is continuous in $t$ and bounded. For any $\phi_{1},\phi_{2}\in C_{0}(\overline{\sX^{+}_{\eta_{2}}})$, by Assumption \ref{assump:GenLambda} and \eqref{eq:STPlusSPlusNorm},
\begin{align*}
\big\|\Gamma_{2}\phi_{1}-\Gamma_{2}\phi_{2}\big\|_{\infty}&=\sup_{(s,i)\in[\eta_{2},\eta_{1})\times\mathbf{E}_{+}}\bigg|\int_{s}^{\eta_{1}}\Big(\big(\wt{\mA}-\lambda\mV^{+}+\wt{\mB}S^{+}_{\eta_{2}}\big)(\phi_{1}-\phi_{2})\Big)(t,i)\,dt\bigg|\\
&\leq\big(\big\|\wt{\mA}\big\|_{\infty}+\lambda\|\mV^{+}\|_{\infty}+\big\|\wt{\mB}\big\|_{\infty}\|S^{+}\|_{\infty}\big)\delta_{0}\,\|\phi_{1}-\phi_{2}\|_{\infty}.
\end{align*}
Hence, by \eqref{eq:ContractionConst}, $\Gamma_{2}$ is contraction mapping on $C_{0}(\overline{\sX^{+}_{\eta_{2}}})$, and hence
$\Gamma_{2}$ has a unique fixed point $\Phi^{(2)}\in C_{0}(\overline{\sX^{+}_{\eta_{2}}})$. Therefore, for any $(s,i)\in [\eta_{2},\eta_{1})\times\mathbf{E}_{+}$,
\begin{align}\label{eq:Phi2eta2}
\Phi^{(2)}(s,i)=\Phi^{(1)}(\eta_{1},i)-\int_{s}^{\eta_{1}}\Big(\big(\big(\wt{\mA}-\lambda\mV^{+}+\wt{\mB}S^{+}_{\eta_{2}}\big)\Phi^{(2)}\big)(t,i)+(\mV^{+}h^{+})(t,i)\Big)\,dt,
\end{align}
and
\begin{align}\label{eq:Phi2eta2BoundVal}
\Phi^{(2)}(s,i)=\Phi^{(1)}(s,i),\quad (s,i)\in\overline{\sX_{\eta_{1}}^{+}}.
\end{align}

Moreover, we will show that $\Phi^{(2)}\in C_{0}^{1}(\overline{\sX^{+}_{\eta_{2}}})$. By Step 2 and \eqref{eq:Phi2eta2BoundVal}, it is clear that $\Phi^{(2)}|_{\overline{\sX_{\eta_{1}}^{+}}}\in C_{0}^{1}(\overline{\sX^{+}_{\eta_{1}}})$. Also, since the integrand on the right-hand side of \eqref{eq:Phi2eta2} is continuous in $t$ and bounded, we obtain from \eqref{eq:Phi2eta2} that $\Phi^{(2)}(\cdot,i)$ is continuous on $[\eta_{2},\eta_{1})$, and that $\lim_{s\rightarrow\eta_{1}-}\Phi^{(2)}(s,i)=\Phi^{(1)}(\eta_{1},i)=\Phi^{(2)}(\eta_{1},i)$, for any $i\in\mathbf{E}_{+}$. It remains to show that $\Phi^{(2)}(\cdot,i)$ is continuously differentiable at $s=\eta_{1}$ for any $i\in\mathbf{E}_{+}$, and, in light of \eqref{eq:Phi2eta2BoundVal}, we only need to show that $\partial_{-}\Phi^{(2)}(\eta_{1},i)/\partial s$ exists and equals to $\partial_{+}\Phi^{(1)}(\eta_{1},i)/\partial s$, and that $\partial\Phi^{(2)}(\cdot,i)/\partial s$ is continuous at $s=\eta_{1}$.

Let $i\in\mathbf{E}_{+}$. By \eqref{eq:SPlusT1T2} and \eqref{eq:Phi1eta1},
\begin{align*}
\Big(\big(\wt{\mA}-\lambda\mV^{+}\!+\!\wt{\mB}S^{+}_{\eta_{2}}\big)\Phi^{(2)}\!+\!\mV^{+}h^{+}\Big)(\eta_{1},i)=\Big(\big(\wt{\mA}-\lambda\mV^{+}\!+\!\wt{\mB}S^{+}_{\eta_{1}}\big)\Phi^{(1)}\!+\!\mV^{+}h^{+}\Big)(\eta_{1},i)=\frac{\partial_{+}\Phi^{(1)}}{\partial s}(\eta_{1},i).
\end{align*}
Therefore, by \eqref{eq:Phi2eta2}, \eqref{eq:Phi2eta2BoundVal}, and the fact that the integrand in \eqref{eq:Phi2eta2} is continuous in $t$, we have
\begin{align*}
\frac{\partial_{-}\Phi^{(2)}}{\partial s}(\eta_{1},i)&=\lim_{s\rightarrow\eta_{1}-}\frac{\Phi^{(2)}(s,i)-\Phi^{(2)}(\eta_{1},i)}{s-\eta_{1}}\\
&=\lim_{s\rightarrow\eta_{1}-}\frac{1}{s-\eta_{1}}\int_{s}^{\eta_{1}}\Big(\Big(\big(\wt{\mA}-\lambda\mV^{+}+\wt{\mB}S^{+}_{\eta_{2}}\big)\Phi^{(2)}\Big)(t,i)+(\mV^{+}h^{+})(t,i)\Big)dt\\
&=\Big(\big(\wt{\mA}-\lambda\mV^{+}+\wt{\mB}S^{+}_{\eta_{2}}\big)\Phi^{(2)}\Big)(\eta_{1},i)+(\mV^{+}h^{+})(\eta_{1},i)=\frac{\partial_{+}\Phi^{(1)}}{\partial s}(\eta_{1},i)=\frac{\partial_{+}\Phi^{(2)}}{\partial s}(\eta_{1},i),
\end{align*}
and
\begin{align*}
\lim_{s\rightarrow\eta_{1}-}\frac{\partial\Phi^{(2)}}{\partial s}(s,i)&= \lim_{s\rightarrow\eta_{1}-}\Big(\big(\wt{\mA}-\lambda\mV^{+}+\wt{\mB}S^{+}_{\eta_{2}}\big)\Phi^{(2)}+(\mV^{+}h^{+})\Big)(s,i)\\
&=\Big(\big(\wt{\mA}-\lambda\mV^{+}+\wt{\mB}S^{+}_{\eta_{2}}\big)\Phi^{(2)}+(\mV^{+}h^{+})\Big)(\eta_{1},i)\\
&=\frac{\partial_{+}\Phi^{(1)}}{\partial s}(\eta_{1},\cdot)=\lim_{s\rightarrow\eta_{1}+}\frac{\partial\Phi^{(1)}}{\partial s}(s,i)=\lim_{s\rightarrow\eta_{1}+}\frac{\partial\Phi^{(2)}}{\partial s}(s,i),
\end{align*}
where we have used the fact that $\Phi^{(1)}\in C_{0}^{1}(\overline{\sX^{+}_{\eta_{1}}})$ in the penultimate equality.

In conclusion, we have shown that there exists a unique solution $\Phi^{(2)}\in C_{c}(\overline{\sX^{+}_{\eta_{2}}})$ to \eqref{eq:PhiIntEq} when restricted to $\overline{\sX^{+}_{\eta_{2}}}$ and $\Phi^{(2)}\in C_{c}^{1}(\overline{\sX^{+}_{\eta_{2}}})$. By induction, we obtain a unique solution $\Phi\in C_{c}(\overline{\sX_{+}})$ to \eqref{eq:PhiIntEq} and $\Phi\in C_{c}^{1}(\overline{\sX_{+}})$, which completes the proof of the lemma.
\end{proof}

\begin{proof}[Proof of Lemma \ref{lem:QellCompgPlus}.]
Let $g^{+}\in C_{c}(\overline{\sX_+})$ with $\supp g^{+}\subset[0,\eta_{g^{+}}]\times\mathbf{E}_{+}$ for some $\eta_{g^{+}}\in(0,\infty)$. For any $\lambda\in\bR_{+}$, define $R_{\lambda}$ on $C_{0}(\overline{\sX_{+}})$ by
\begin{align*}
R_{\lambda}h^{+}:=\int_{0}^{\infty}e^{-\lambda\ell}\cQ^{+}_{\ell}h^{+}\,d\ell,\quad h^{+}\in C_{0}(\overline{\sX_{+}}).
\end{align*}
The integral on the right-hand side above is well defined since $\cQ_{\ell}^{+}$ is a contraction mapping, for any $\ell\in\bR_{+}$. In order to prove that $\supp\cQ_{\ell}g^{+}\subset[0,\eta_{g^{+}}]\times\mathbf{E}_{+}$, for any $\ell\in\bR_{+}$, it is sufficient to show that $\supp R_{\lambda}g^{+}\subset[0,\eta_{g^{+}}]\times\mathbf{E}_{+}$, for any $\lambda\in(0,\infty)$. Indeed, if the later is true, then for any $(s,i)\in[\eta_{g^{+}},\infty)\times\mathbf{E}_{+}$,
\begin{align*}
\int_{0}^{\infty}e^{-\lambda\ell}\big(\cQ^{+}_{\ell}g^{+}\big)(s,i)\,d\ell=0,\quad\text{for all }\,\lambda\in(0,\infty),
\end{align*}
which implies that (cf. \cite[Lemma 1.1]{Dynkin1965}), $(\cQ^{+}_{\ell}g^{+})(s,i)=0$ for almost every $\ell\in\bR_{+}$. Since $(\cQ^{+}_{\cdot}g^{+})(s,i)$ is continuous on $\bR_{+}$, we have $(\cQ^{+}_{\ell}g^{+})(s,i)=0$ for all $\ell\in\bR_{+}$.

By \cite[Proposition I.2.1]{EthierKurtz2005}), for any $\lambda\in(0,\infty)$, the operator $(\lambda-H^{+}):C_{0}^{1}(\overline{\sX_{+}})\rightarrow C_{0}(\overline{\sX_{+}})$ is invertible and $(\lambda-H^{+})^{-1}=R_{\lambda}$ (so that $R_{\lambda}$ is the resolvent at $\lambda$ of $H^{+}$). Hence, the equation \eqref{eq:LambdaHPlusEq} has a unique solution $\Phi_{\lambda}=R_{\lambda}g^{+}=(\lambda-H^{+})^{-1}g^{+}\in C_{0}^{1}(\overline{\sX_{+}})$. On the other hand, by Lemma \ref{lem:ExistUniPhiIntEq}, \eqref{eq:LambdaHPlusEq} (with $h^{+}$ replaced by $g^{+}$) has a unique solution in $C_{0}^{1}(\overline{\sX_{+}})$ which vanishes in $[\eta_{g^{+}},\infty)\times\mathbf{E}_{+}$. Therefore, $\supp R_{\lambda}g^{+}\subset[0,\eta_{g^{+}}]\times\mathbf{E}_{+}$, which completes the proof of the lemma.
\end{proof}

The proof of Lemma \ref{lem:FhatPlusC01} requires the following additional lemma.

\begin{lemma}\label{lem:QellVanish}
For any $g^{+}\in C_{c}(\overline{\sX_{+}})$ with $\supp g^{+}\subset[0,\eta_{g^{+}}]\times\mathbf{E}_{+}$ for some $\eta_{g^{+}}\in(0,\infty)$, $\lim_{\ell\rightarrow\infty}\|\cQ^{+}_{\ell}g^{+}\|_{\infty}=0$.
\end{lemma}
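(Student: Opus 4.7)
My approach is to combine the support-preservation of Lemma~\ref{lem:QellCompgPlus} with the uniqueness assertion of Lemma~\ref{lem:ExistUniPhiIntEq} at $\lambda=0$, via an Arzel\`a--Ascoli / Markov--Kakutani fixed-point argument. Since $(\cQ^+_\ell)$ is a positive contraction semigroup, $|\cQ^+_\ell g^+|\leq\cQ^+_\ell|g^+|$, and after approximating the non-negative part of $g^+$ uniformly by non-negative functions in $C_c^1(\overline{\sX_+})\cap\sD(H^+)$ with only slightly enlarged support (using the contraction $\|\cQ^+_\ell(g^+-g^+_n)\|_\infty\leq\|g^+-g^+_n\|_\infty$), it suffices to treat $g^+\in C_c^1(\overline{\sX_+})\cap\sD(H^+)$ with $g^+\geq 0$ and $\supp g^+\subset[0,\eta]\times\mathbf{E}_+$. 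The semigroup property and contraction then force $\ell\mapsto\rho(\ell):=\|\cQ^+_\ell g^+\|_\infty$ to be non-increasing, so $\rho_\infty:=\lim_{\ell\to\infty}\rho(\ell)$ exists; I will derive a contradiction from $\rho_\infty>0$.

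For $g^+\in\sD(H^+)$ one has $\cQ^+_\ell g^+\in\sD(H^+)$ with $H^+\cQ^+_\ell g^+=\cQ^+_\ell H^+g^+$, so $\|H^+\cQ^+_\ell g^+\|_\infty\leq\|H^+g^+\|_\infty$ uniformly in $\ell$. Rewriting \eqref{eq:WHPlus1} as $\partial_s\cQ^+_\ell g^+=v(i)\,H^+\cQ^+_\ell g^+-(\wt{\mA}+\wt{\mB}S^+)\cQ^+_\ell g^+$ yields a uniform-in-$\ell$ Lipschitz bound in $s$ on $\cQ^+_\ell g^+$. Combined with Lemma~\ref{lem:QellCompgPlus} (which confines every $\cQ^+_\ell g^+$ to the fixed compact set $[0,\eta]\times\mathbf{E}_+$), Arzel\`a--Ascoli renders $\{\cQ^+_\ell g^+\}_{\ell\geq 0}$ relatively compact in the closed subspace $\mathcal{C}_\eta:=\{f\in C_0(\overline{\sX_+}):\supp f\subset[0,\eta]\times\mathbf{E}_+\}$. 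Let $\mathcal{L}$ be its (non-empty) set of subsequential uniform limits: every $h\in\mathcal{L}$ satisfies $h\geq 0$ and $\|h\|_\infty=\rho_\infty$, and the strong continuity of $\cQ^+_t$ ensures $\cQ^+_t(\mathcal{L})\subset\mathcal{L}$ for every $t\geq 0$.

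The closed convex hull $\widetilde{\mathcal{L}}:=\overline{\mathrm{conv}}\,\mathcal{L}$ is a non-empty compact convex subset of $\mathcal{C}_\eta$ invariant under the commuting family $\{\cQ^+_t\}_{t\geq 0}$, so the Markov--Kakutani fixed-point theorem supplies some $h^*\in\widetilde{\mathcal{L}}$ with $\cQ^+_t h^*=h^*$ for every $t\geq 0$. Standard semigroup theory (via $\ell^{-1}\int_0^\ell\cQ^+_r h^*\,dr=h^*$) then gives $h^*\in\sD(H^+)\cap\mathcal{C}_\eta=C_c^1\cap\mathcal{C}_\eta$ and $H^+h^*=0$. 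Inspecting the contraction-mapping proof of Lemma~\ref{lem:ExistUniPhiIntEq}, it applies verbatim at $\lambda=0$, so taking $h^+\equiv 0$ and $\eta_{h^+}=\eta$ forces $h^*\equiv 0$. On the other hand, representing any element of $\widetilde{\mathcal{L}}$ as a Bochner integral $h^*=\int h\,d\mu(h)$ against a Borel probability measure $\mu$ on the compact metric set $\mathcal{L}$, pointwise non-negativity of the $h$'s together with separability of $[0,\eta]\times\mathbf{E}_+$ forces $\mu=\delta_0$ whenever $h^*\equiv 0$; this is impossible when $\rho_\infty>0$, since then $0\notin\mathcal{L}$. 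Hence $\rho_\infty=0$.

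The key technical step, and the main anticipated obstacle, is the uniform-in-$\ell$ Lipschitz bound on $\cQ^+_\ell g^+$ in $s$, which crucially requires $g^+\in\sD(H^+)$ rather than merely $g^+\in C_c$; this is exactly why the opening mollification reduction to smooth non-negative data is essential. The remainder is an assembly of standard compactness and fixed-point facts, the guiding idea being that any ``ergodic'' subsequential limit at infinity must satisfy $H^+h^*=0$ with compact support, which the resolvent analysis in Lemma~\ref{lem:ExistUniPhiIntEq} rules out.
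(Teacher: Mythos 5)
Your proof is essentially correct, but it follows a genuinely different route from the paper's; the comparison is instructive.

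Both arguments hinge on the $\lambda=0$ case of Lemma~\ref{lem:ExistUniPhiIntEq} and both reduce to $g^{+}\ge 0$, but they use these ingredients in quite different ways. The paper's proof is a ``hard'' estimate: it first shows via a Gronwall inequality that the resolvent $R_\lambda g^{+}$ converges to $\Phi_{0}$ as $\lambda\to0+$, concludes that the potential $\int_{0}^{\infty}(\cQ^{+}_{\ell}g^{+})(s,i)\,d\ell=|\Phi_{0}(s,i)|$ is finite, and then derives a contradiction from $\limsup_{\ell}\|\cQ^{+}_{\ell}g^{+}\|_{\infty}>0$ by exhibiting infinitely many disjoint $\ell$-intervals on each of which the integrand stays above a fixed threshold, near a common accumulation point $s_{0}$, which forces the potential to diverge. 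The paper also avoids any smoothness assumption on $g^{+}$ by working with the Dynkin average $\int_{0}^{\delta_{0}}\cQ^{+}_{\ell}g^{+}\,d\ell\in\sD(H^{+})$.

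Your proof is a ``soft'' compactness argument. You pay for the $\sD(H^{+})$ regularity needed for the uniform Lipschitz bound by mollifying $|g^{+}|$; this should be done with care at $s=0$ (extend $|g^{+}|$ to $\bR$ by reflection before convolving and then restrict to $\bR_{+}$), because otherwise uniform closeness fails near the boundary whenever $g^{+}(0,\cdot)\neq 0$. With this fix, the rest of the argument is sound: the uniform-in-$\ell$ Lipschitz bound from $H^{+}\cQ^{+}_{\ell}g^{+}=\cQ^{+}_{\ell}H^{+}g^{+}$ and \eqref{eq:WHPlus1}, the common compact support from Lemma~\ref{lem:QellCompgPlus}, Arzel\`a--Ascoli, the $\omega$-limit set $\mathcal{L}$ with $\|h\|_{\infty}=\rho_{\infty}$ and $h\ge 0$ for all $h\in\mathcal{L}$, Markov--Kakutani on $\overline{\mathrm{conv}}\,\mathcal{L}$, and the conclusion $h^{*}\in C_{c}^{1}(\overline{\sX_{+}})$ with $H^{+}h^{*}=0$, forced to vanish by uniqueness in Lemma~\ref{lem:ExistUniPhiIntEq} at $\lambda=0$, $h^{+}\equiv0$.

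Two remarks. First, the Bochner-integral/barycenter ending is heavier than it needs to be: a cleaner way to exclude $0\in\overline{\mathrm{conv}}\,\mathcal{L}$ under $\rho_{\infty}>0$ is to observe that the continuous linear functional $\phi(h):=\sum_{i\in\mathbf{E}_{+}}\int_{0}^{\eta}h(s,i)\,ds$ is strictly positive on $\mathcal{L}$ (each $h\in\mathcal{L}$ is a nonnegative continuous function, supported in $[0,\eta]\times\mathbf{E}_{+}$, of sup norm $\rho_{\infty}>0$), hence $\inf_{\mathcal{L}}\phi>0$ by compactness, hence $\inf_{\overline{\mathrm{conv}}\,\mathcal{L}}\phi>0$, contradicting $\phi(h^{*})=\phi(0)=0$. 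Second, your argument gives only the contradiction, not the quantitative information implicit in the paper's bound on the potential; on the other hand it is conceptually transparent: any $\omega$-limit of the orbit would be an $H^{+}$-harmonic element of $\mathcal{C}_{\eta}$, and Lemma~\ref{lem:ExistUniPhiIntEq} says there are none besides $0$.
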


\begin{proof}
By Lemma \ref{lem:ExistUniPhiIntEq}, when $\lambda=0$, \eqref{eq:LambdaHPlusEq} (or equivalently, \eqref{eq:PhiIntEq}) has a unique solution $\Phi_{0}\in C_{0}^{1}(\overline{\sX_{+}})$ subject to $\supp\Phi_{0}\subset[0,\eta_{g^{+}}]\times\mathbf{E}_{+}$. Note that this does NOT imply  the invertibility of $H^{+}$ (or equivalently, the existence of $0$-resolvent of $H^{+}$).

We first show that $\lim_{\lambda\rightarrow 0+}\|\Phi_{\lambda}-\Phi_{0}\|_{\infty}=0$. From the proof of Lemma \ref{lem:QellCompgPlus}, for any $\lambda\in(0,\infty)$, $\Phi_{\lambda}=R_{\lambda}g^{+}\in C_{0}^{1}(\overline{\sX_{+}})$ is the unique solution to \eqref{eq:LambdaHPlusEq} with $\supp\Phi_{\lambda}\subset[0,\eta_{g^{+}}]\times\mathbf{E}_{+}$. It follows from Lemma \ref{lem:ExistUniPhiIntEq} that $\Phi_{\lambda}$ is the unique solution to \eqref{eq:PhiIntEq}. Hence, for any $s\in[0,\eta_{g^{+}}]$, we have
\begin{align*}
\sup_{(t,i)\in\sX_{s}^{+}}\big|(\Phi_{\lambda}-\Phi_{0})(t,i)\big|=\sup_{(t,i)\in\sX_{s}^{+}}\bigg|\int_{t}^{\eta_{g^{+}}}\!\Big(\big(\wt{\mA}-\lambda\mV^{+}+\wt{\mB}S^{+}\big)(\Phi_{\lambda}-\Phi_{0})+\lambda\mV^{+}\Phi_{0}\Big)(r,i)\,dr\bigg|,
\end{align*}
where we recall $\sX_{s}^{+}=[s,\infty)\times\mathbf{E}_{+}$ and $\sX_{0}^{+}=\sX_{+}$. For any $(r,j)\in[0,\eta_{g^{+}}]\times\mathbf{E}_{-}$, let $\wt{\Phi}_{\lambda,0}^{(r)}$ be the restriction of $\Phi_{\lambda}-\Phi_{0}$ on $\overline{\sX_{r}^{+}}$. By \eqref{eq:STPlusSPlus} and \eqref{eq:STPlusSPlusNorm},
\begin{align*}
\big|\big(S^{+}(\Phi_{\lambda}-\Phi_{0})\big)(r,j)\big|=\Big|\Big(S_{r}^{+}\wt{\Phi}_{\lambda,0}^{(r)}\Big)(r,j)\Big|\leq\|S_{r}^{+}\|_{\infty}\Big\|\wt{\Phi}_{\lambda,0}^{(r)}\Big\|_{\infty}\leq\|S^{+}\|_{\infty}\sup_{(u,k)\in\sX^{+}_{r}}\big|(\Phi_{\lambda}-\Phi_{0})(u,k)\big|.
\end{align*}
Therefore, for any $s\in[0,\eta_{g^{+}}]$, we have
\begin{align*}
\sup_{(t,i)\in\sX_{s}^{+}}\big|(\Phi_{\lambda}-\Phi_{0})(t,i)\big|\leq\lambda\|\mV^{+}\|_{\infty}\|\Phi_{0}\|_{\infty}\big(\eta_{g^{+}}-s\big)+M_{\lambda}\int_{s}^{\eta_{g^{+}}}\sup_{(u,k)\in\sX_{r}^{+}}\big|(\Phi_{\lambda}-\Phi_{0})(u,k)\big|dr,
\end{align*}
where $M_{\lambda}:=\|\wt{\mA}\|_{\infty}+\lambda\|\mV^{+}\|_{\infty}+\|\wt{\mB}\|_{\infty}\|S^{+}\|_{\infty}$. By Gronwall inequality, we obtain that
\begin{align*}
\|\Phi_{\lambda}-\Phi_{0}\|_{\infty}=\sup_{(t,i)\in\sX_{0}^{+}}\big|(\Phi_{\lambda}-\Phi_{0})(t,i)\big|\leq\lambda\|\mV^{+}\|_{\infty}\|\Phi_{0}\|_{\infty}\eta_{g^{+}}e^{M_{\lambda}\eta_{g^{+}}}\rightarrow 0,\quad\text{as }\,\lambda\rightarrow 0+.
\end{align*}

Next, we will show that $\lim_{\ell\rightarrow\infty}\|\cQ^{+}_{\ell}g^{+}\|_{\infty}=0$. Without loss of generality, we assume that $g^{+}$ is nonnegative. Otherwise, we can prove the above statement for the positive and negative part of $g^{+}$, denoted by $g_{p}^{+}$ and $g_{n}^{+}$ respectively. Then, $\|\cQ^{+}_{\ell}g^{+}\|_{\infty}\leq\|\cQ^{+}_{\ell}g^{+}_{p}\|_{\infty}+\|\cQ^{+}_{\ell}g^{+}_{n}\|_{\infty}\rightarrow 0$, as $\ell\rightarrow\infty$. Note that when $g^{+}$ is nonnegative, since $\cQ_{\ell}^{+}$ is positive, we have $\cQ_{\ell}^{+}g^{+}\geq 0$ for any $\ell\in\bR_{+}$.

To begin with, since $\lim_{\lambda\rightarrow 0+}\|\Phi_{\lambda}-\Phi_{0}\|_{\infty}=0$ and $\|\Phi_{0}\|_{\infty}<\infty$. Hence, for any $(s,i)\in\sX_{+}$,
\begin{align}\label{eq:QellPlusPote}
\infty>|\Phi_{0}(s,i)|=\lim_{\lambda\rightarrow 0+}|\Phi_{\lambda}(s,i)|=\lim_{\lambda\rightarrow 0+}\int_{0}^{\infty}e^{-\lambda\ell}\big(\cQ^{+}_{\ell}g^{+}\big)(s,i)\,d\ell=\int_{0}^{\infty}\big(\cQ^{+}_{\ell}g^{+}\big)(s,i)\,d\ell,
\end{align}
where we have used the monotone convergence in the last equality.

Suppose that $\limsup_{\ell\rightarrow\infty}\|\cQ^{+}_{\ell}g^{+}\|_{\infty}>0$, then there exists $\varepsilon_{0}>0$ and $(s_{n},i_{n},\ell_{n})\in\sX_{+}\times\bR_{+}$, $n\in\bN$, with $\lim_{n\rightarrow\infty}\ell_{n}=\infty$, such that $(\cQ^{+}_{\ell_{n}}g^{+})(s_{n},i_{n})\geq\varepsilon_{0}$ for any $n\in\bN$. Without loss of generality, we can assume that $\ell_{n+1}-\ell_{n}>1$ and $i_{n}=i_{0}\in\mathbf{E}_{+}$ for all $n\in\bN$. Moreover, by part (i), $\supp\cQ_{\ell_{n}}^{+}g^{+}\subset[0,\eta_{g^{+}}]$, and so $(s_{n})_{n\in\bN}\subset[0,\eta_{g^{+}}]$, and hence    we may {also} assume that $\lim_{n\rightarrow\infty}s_{n}=s_{0}$ for some $s_{0}\in[0,\eta_{g^{+}}]$.

Since $(\cQ_{\ell}^{+})_{\ell\in\bR_{+}}$ is a strongly continuous contraction semigroup on $C_{0}(\overline{\sX_{+}})$, for any $b>0$,
\begin{align*}
\big\|\cQ^{+}_{\ell+b}g^{+}-\cQ^{+}_{\ell}g^{+}\big\|_{\infty}=\big\|\cQ^{+}_{\ell}\big(\cQ^{+}_{b}g^{+}-g^{+}\big)\big\|_{\infty}\leq\big\|\cQ^{+}_{b}g^{+}-g^{+}\big\|_{\infty}.
\end{align*}
In particular, $(\cQ^{+}_{\cdot}g^{+})(s,i)$ is uniformly continuous on $\bR_{+}$, uniformly for all $(s,i)\in\sX_{+}$. Thus, there exists a universal constant $\delta_{0}\in(0,1)$, such that for any $b\in[0,\delta_{0}]$, $(\cQ^{+}_{\ell_{n}+b}g^{+})(s_{n},i_{0})>\varepsilon_{0}/2$, for all $n\in\bN$, which implies that
\begin{align}\label{eq:IntQellgPlusn}
\int_{\ell_{n}}^{\ell_{n}+\delta_{0}}\big(\cQ^{+}_{\ell}g^{+}\big)(s_{n},i_{0})\,d\ell>\frac{\delta_{0}\varepsilon_{0}}{2}.
\end{align}
On the other hand, by \cite[Propositon I.1.5 (a)]{EthierKurtz2005}), $\int_{0}^{\delta_{0}}\cQ^{+}_{\ell}g^{+}d\ell\in\sD(H^{+})=C_{0}^{1}(\overline{\sX_{+}})$, so that by \cite[Propositon I.1.5 (b)]{EthierKurtz2005}) and \cite[1.2.B]{Dynkin1965},
\begin{align*}
\int_{\ell_{n}}^{\ell_{n}+\delta_{0}}\cQ^{+}_{\ell}g^{+}\,d\ell=\int_{0}^{\delta_{0}}\cQ_{\ell_{n}}^{+}\big(\cQ^{+}_{\ell}g^{+}\big)\,d\ell=\cQ_{\ell_{n}}^{+}\int_{0}^{\delta_{0}}\cQ^{+}_{\ell}g^{+}\,d\ell\in\sD(H^{+}).
\end{align*}
Hence, by \eqref{eq:WHPlus1} and \cite[Propositon I.1.5 (b)]{EthierKurtz2005}), and since $(\cQ_{\ell}^{+})_{\ell\in\bR_{+}}$ is a contraction semigroup,
\begin{align}
&\bigg\|\frac{\partial}{\partial s}\int_{\ell_{n}}^{\ell_{n}+\delta_{0}}\cQ^{+}_{\ell}g^{+}\,d\ell\,\bigg\|_{\infty}=\bigg\|\big(\mV^{+}H^{+}-\wt{\mA}-\wt{\mB}S^{+}\big)\int_{\ell_{n}}^{\ell_{n}+\delta_{0}}\cQ^{+}_{\ell}g^{+}\,d\ell\,\bigg\|_{\infty}\nonumber\\
&\quad\leq\|\mV^{+}\|_{\infty}\bigg\|H^{+}\cQ^{+}_{\ell_{n}}\!\int_{0}^{\delta_{0}}\cQ^{+}_{\ell}g^{+}d\ell\,\bigg\|_{\infty}+\big\|\wt{\mA}+\wt{\mB}S^{+}\big\|_{\infty}\bigg\|\cQ^{+}_{\ell_{n}}\!\int_{0}^{\delta_{0}}\cQ^{+}_{\ell}g^{+}d\ell\,\bigg\|_{\infty}\nonumber\\
&\quad\leq\|\mV^{+}\|_{\infty}\bigg\|\cQ^{+}_{\ell_{n}}H^{+}\!\int_{0}^{\delta_{0}}\cQ^{+}_{\ell}g^{+}d\ell\,\bigg\|_{\infty}+\big\|\wt{\mA}+\wt{\mB}S^{+}\big\|_{\infty}\bigg\|\int_{0}^{\delta_{0}}\cQ^{+}_{\ell}g^{+}d\ell\,\bigg\|_{\infty}\nonumber\\
\label{eq:NormDiffIntQellgPlus} &\quad\leq\|\mV^{+}\|_{\infty}\bigg\|H^{+}\!\int_{0}^{\delta_{0}}\cQ^{+}_{\ell}g^{+}d\ell\,\bigg\|_{\infty}+\big\|\wt{\mA}+\wt{\mB}S^{+}\big\|_{\infty}\bigg\|\int_{0}^{\delta_{0}}\cQ^{+}_{\ell}g^{+}d\ell\,\bigg\|_{\infty}=:M\in(0,\infty).
\end{align}
Combining \eqref{eq:IntQellgPlusn} and \eqref{eq:NormDiffIntQellgPlus}, for any $r\in(-\delta_{0}\varepsilon_{0}/(4M),\delta_{0}\varepsilon_{0}/(4M))$, we have
\begin{align}\label{eq:IntQellgPlusnr}
\int_{\ell_{n}}^{\ell_{n}+\delta_{0}}\big(\cQ^{+}_{\ell}g^{+}\big)(s_{n}+r,i_{0})\,d\ell>\frac{\delta_{0}\varepsilon_{0}}{4},\quad\text{for all }\,n\in\bN.
\end{align}
Let $N\in\bN$ be large enough so that $s_{0}\in(s_{n}-\delta_{0}\varepsilon_{0}/(4M),s_{n}+\delta_{0}\varepsilon_{0}/(4M))$ for all $n\geq N$. Since $\ell_{n+1}-\ell_{n}>0$ and $\delta_{0}\in(0,1)$, the intervals $(\ell_{n},\ell_{n}+\delta_{0})$, $n\in\bN$, are non-overlapping. Therefore, we obtain from \eqref{eq:IntQellgPlusnr} that
\begin{align*}
\int_{0}^{\infty}\big(\cQ^{+}_{\ell}g^{+}\big)(s_{0},i_{0})\,d\ell\geq\sum_{n=N}^{\infty}\int_{\ell_{n}}^{\ell_{n}+\delta_{0}}\big(\cQ^{+}_{\ell}g^{+}\big)(s_{0},i_{0})\,d\ell=\infty,
\end{align*}
which clearly contradicts \eqref{eq:QellPlusPote}. The proof of the lemma is now complete.
\end{proof}

\begin{proof}[Proof of Lemma \ref{lem:FhatPlusC01}.]
Let $g^{+}\in C_{c}^{1}(\overline{\sX_{+}})$ with $\supp g^{+}\subset[0,\eta_{g^{+}}]\times\mathbf{E}_{+}$ for some $\eta_{g^{+}}\in(0,\infty)$, and fix $\ell\in\bR$. Recall that $\wh{F}_{+}$ is defined as in \eqref{eq:FhatPlus}.

We first show that $\wh{F}_{+}(\cdot,\cdot,\cdot,\ell)\in C_{0}(\sX\times(-\infty,\ell])$. For any $i\in\mathbf{E}_{+}$, $s,s'\in\bR_{+}$, and $a,a'\in(-\infty,\ell]$, we have
\begin{align*}
&\Big|\big(\cQ^{+}_{\ell-a}g^{+}\big)(s,i)-\big(\cQ^{+}_{\ell-a'}g^{+}\big)(s',i)\Big|\\
&\quad\leq\Big|\big(\cQ^{+}_{\ell-a}g^{+}\big)(s,i)-\big(\cQ^{+}_{\ell-a}g^{+}\big)(s',i)\Big|+\Big|\big(\cQ^{+}_{\ell-a}g^{+}\big)(s',i)-\big(\cQ^{+}_{\ell-a'}g^{+}\big)(s',i)\Big|\\
&\quad\leq\Big|\big(\cQ^{+}_{\ell-a}g^{+}\big)(s,i)-\big(\cQ^{+}_{\ell-a}g^{+}\big)(s',i)\Big|+\big\|\cQ^{+}_{\ell-a}g^{+}-\cQ^{+}_{\ell-a'}g^{+}\big\|_{\infty}.
\end{align*}
Since $\cQ^{+}_{\ell-a}g^{+}\in C_{0}(\overline{\sX_{+}})$ and $\cQ^{+}_{\ell-\cdot}g^{+}$ is strongly continuous on $(-\infty,\ell]$, we see that $(\cQ^{+}_{\ell-\cdot}g^{+})(\cdot,i)$ is jointly continuous on $\bR_{+}\times(-\infty,\ell]$. Moreover, for any $i\in\mathbf{E}_{-}$,
\begin{align*}
&\Big|\big(S^{+}\cQ^{+}_{\ell-a}g^{+}\big)(s,i)-\big(S^{+}\cQ^{+}_{\ell-a'}g^{+}\big)(s',i)\Big|\\
&\quad\leq\Big|\big(S^{+}\cQ^{+}_{\ell-a}g^{+}\big)(s,i)-\big(S^{+}\cQ^{+}_{\ell-a}g^{+}\big)(s',i)\Big|+\Big|\big(S^{+}\cQ^{+}_{\ell-a}g^{+}\big)(s',i)-\big(S^{+}\cQ^{+}_{\ell-a'}g^{+}\big)(s',i)\Big|\\
&\quad\leq\Big|\big(S^{+}\cQ^{+}_{\ell-a}g^{+}\big)(s,i)-\big(S^{+}\cQ^{+}_{\ell-a}g^{+}\big)(s',i)\Big|+\big\|S^{+}\cQ^{+}_{\ell-a}g^{+}-S^{+}\cQ^{+}_{\ell-a'}g^{+}\big\|_{\infty}\\
&\quad\leq\Big|\big(S^{+}\cQ^{+}_{\ell-a}g^{+}\big)(s,i)-\big(S^{+}\cQ^{+}_{\ell-a}g^{+}\big)(s',i)\Big|+\|S^{+}\|_{\infty}\big\|\cQ^{+}_{\ell-a}g^{+}-\cQ^{+}_{\ell-a'}g^{+}\big\|_{\infty}.
\end{align*}
Since $\cQ^{+}_{\ell-a}g^{+}\in C_{0}(\overline{\sX_{+}})$, the condition $(a^{+})(i)$ implies that $S^{+}\cQ^{+}_{\ell-a}g^{+}\in C_{0}(\overline{\sX_{-}})$. Together with the strong continuity of $\cQ^{+}_{\ell-\cdot}g^{+}$ on $(-\infty,\ell]$ as well as the boundedness of $S^{+}$, we obtain that $(S^{+}\cQ^{+}_{\ell-\cdot}g^{+})(\cdot,i)$ is jointly continuous on $\bR_{+}\times(-\infty,\ell]$. In view of \eqref{eq:FhatPlus}, we obtain that $\wh{F}_{+}(\cdot,i,\cdot,\ell)$ is jointly continuous on $\bR_{+}\times(-\infty,\ell]$ for any $i\in\mathbf{E}$. It remains to show that $\wh{F}_{+}(\cdot,i,\cdot,\ell)$ vanishes at infinity for any $i\in\mathbf{E}$. By Lemma \ref{lem:QellCompgPlus}, $\supp\cQ^{+}_{\ell-a}g^{+}\subset[0,\eta_{g^{+}}]\times\mathbf{E}_{+}$, and the condition $(a^{+})(i)$ implies that $\supp S^{+}\cQ^{+}_{\ell-{a}}g^{+}\subset[0,\eta_{g^{+}}]\times\mathbf{E}_{-}$, so that $\supp\wh{F}_{+}(\cdot,i,a,\ell)\subset[0,\eta_{g^{+}}]$. Moreover, by Lemma \ref{lem:QellVanish}, $\lim_{a\rightarrow -\infty}\cQ^{+}_{\ell-a}g^{+}=0$ strongly, and since $\|S^{+}\|_{\infty}<\infty$, we also have $\lim_{a\rightarrow -\infty}S^{+}\cQ^{+}_{\ell-a}g^{+}=0$ strongly. Hence, $\wh{F}_{+}(\cdot,i,\cdot,\ell)$ vanishes at infinity for any $i\in\mathbf{E}$, and therefore, $\wh{F}_{+}(\cdot,\cdot,\cdot,\ell)\in C_{0}(\sX\times(-\infty,\ell])$.

Next, we will show that $\partial\wh{F}_{+}(\cdot,\cdot,\cdot,\ell)/\partial a$ exists and belongs to $C_{0}(\sX\times(-\infty,\ell])$. Since $g^{+}\in C_{0}^{1}(\overline{\sX_{+}})=\sD(H^{+})$, for any $a\in(-\infty,\ell]$, by \cite[Proposition I.1.5 (b)]{EthierKurtz2005}, we have $\cQ^{+}_{\ell-a}g^{+}\in\sD(H^{+})$, and
\begin{align}\label{eq:DevQella}
\frac{d}{da}\cQ^{+}_{\ell-a}g^{+}=-\cQ^{+}_{\ell-a}H^{+}g^{+}=-H^{+}\cQ^{+}_{\ell-a}g^{+}.
\end{align}
Together with Lemma \ref{lem:DiffQell}, we obtain that $\partial\wh{F}_{+}(\cdot,\cdot,a,\ell)/\partial a$ at any $a\in(-\infty,\ell]$, and
\begin{align*}
\frac{\partial}{\partial a}\wh{F}_{+}(\cdot,\cdot,a,\ell)= -\begin{pmatrix} \cQ^{+}_{\ell-a}H^{+}g^{+} \\ S^{+}\cQ^{+}_{\ell-a}H^{+}g^{+} \end{pmatrix}(\cdot,\cdot).
\end{align*}
Moreover, by \eqref{eq:WHPlus1} and the condition $(a^{+})(i)$, we have $H^{+}g^{+}\in C_{0}(\overline{\sX_{+}})$ with $\supp H^{+}g^{+}\subset[0,\eta_{g^{+}}]\times\mathbf{E}_{+}$. Using arguments similar to those leading to $\wh{F}_{+}(\cdot,\cdot,\cdot,\ell)\in C_{0}(\sX\times(-\infty,\ell])$ above, we conclude that $\partial\wh{F}_{+}(\cdot,\cdot,\cdot,\ell)/\partial a\in C_{0}(\sX\times(-\infty,\ell])$.

Finally, we will show that $\partial\wh{F}_{+}(\cdot,\cdot,\cdot,\ell)/\partial s$ exists and belongs to $C_{0}(\sX\times(-\infty,\ell])$. For any $a\in(-\infty,\ell]$, since $\cQ^{+}_{\ell-a}g^{+}\in C_{0}^{1}(\overline{\sX_{+}})=\sD(H^{+})$, by \eqref{eq:WHPlus}, we have
\begin{align*}
\mV^{-1}\bigg(\frac{\partial}{\partial s}+\wt{\mathsf{\Lambda}}\bigg) \begin{pmatrix} I^{+} \\ S^{+} \end{pmatrix} \cQ_{\ell-a}g^{+}&= \begin{pmatrix} I^{+} \\ S^{+} \end{pmatrix} H^{+}\cQ_{\ell-a}g^{+}.
\end{align*}
Consequently, in view of \eqref{eq:FhatPlus}, $\partial\wh{F}_{+}(s,i,a,\ell)/\partial s$ exists at any $(s,i,a)\in\sX\times(-\infty,\ell]$, and
\begin{align*}
\frac{\partial\wh{F}_{+}}{\partial s}(s,i,a,\ell)= \begin{pmatrix} \displaystyle{\frac{\partial}{\partial s}\big(\cQ^{+}_{\ell-a}g^{+}\big)} \\ \\ \displaystyle{\frac{\partial}{\partial s}\big(S^{+}\cQ^{+}_{\ell-a}g^{+}\big)} \end{pmatrix} (s,i)= \begin{pmatrix} \mV^{+}\cQ^{+}_{\ell-a}H^{+}g^{+}-\big(\wt{\mA}+\wt{\mB}S^{+}\big)\cQ^{+}_{\ell-a}g^{+} \\ \\ \mV^{-}S^{+}\cQ^{+}_{\ell-a}H^{+}g^{+}+\big(\wt{\mC}+\wt{\mD}S^{+}\big)\cQ^{+}_{\ell-a}g^{+} \end{pmatrix} (s,i).
\end{align*}
With similar technique as before, the right-hand sides above, as a function of $(s,i,a)$, belongs to $C_{0}(\sX\times(-\infty,\ell])$.
\end{proof}

\section{Two additional technical lemmas}

In this section, we establish two additional technical lemmas that are used in the proofs of our main theorems. We begin with a lemma regarding the distributions of the first and second jump time of $Z^{2}$ (see also \cite[Section 8.4.2]{RolskiSchmidliSchmidtTeugels1999}).

\begin{lemma}\label{lem:TailDistJumpTildeZ2}
Let $\wt{\gamma}_{1}$ and $\wt{\gamma}_{2}$ be the first and the second jump time of $Z^{2}$, respectively. Then, for any $(s,i,a)\in\sZ$ and $r\in\bR_{+}$,
\begin{align}\label{eq:TailDistJumpTildeZ2}
\wt{\bP}_{s,i,a}\big(\wt{\gamma}_{1}>r\big)=\exp\bigg(\int_{s}^{s+r}\mathsf{\Lambda}_{u}(i,i)\,du\bigg).
\end{align}
In particular,
\begin{align}\label{eq:DistFirstJumpTildeZ2}
\wt{\bP}_{s,i,a}\big(\wt{\gamma}_{1}\leq r\big)&\leq Kr,\\
\label{eq:DistSecondJumpTildeZ2} \wt{\bP}_{s,i,a}\big(\wt{\gamma}_{2}\leq r\big)&\leq K^{2}r^{2}.
\end{align}
\end{lemma}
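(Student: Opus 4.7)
My plan is to reduce to a computation for the underlying chain $X^{*}$, derive and integrate a first-order ODE for the tail of the first jump, and then bootstrap to the second jump via the strong Markov property. By \eqref{eq:Probz}, \eqref{eq:ProcZ} and \eqref{eq:LawXXStar}, under $\wt{\bP}_{s,i,a}$ the process $(Z^{2}_{t})_{t\ge 0}$ has the same law as $(X^{*}_{s+t})_{t\ge 0}$ under $\bP^{*}_{s,i}$. In particular, $\wt{\gamma}_{1}$ has the same distribution under $\wt{\bP}_{s,i,a}$ as $\inf\{u\ge 0:X^{*}_{s+u}\neq i\}$ under $\bP^{*}_{s,i}$, and by right-continuity of $X^{*}$ the event $\{\wt{\gamma}_{1}>r\}$ corresponds to $\{X^{*}_{s+u}=i\text{ for all }u\in[0,r]\}$.

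Set $\psi(r):=\wt{\bP}_{s,i,a}(\wt{\gamma}_{1}>r)$. The Markov property of $X^{*}$ applied at time $s+r$ (using that the no-jump event up to $s+r$ forces $X^{*}_{s+r}=i$) yields, for $h>0$,
\begin{align*}
\psi(r+h)=\psi(r)\,\rho(r,h),\qquad \rho(r,h):=\bP^{*}_{s+r,i}\big(X^{*}_{t}=i\text{ for all }t\in[s+r,s+r+h]\big).
\end{align*}
I would then show that $\rho(r,h)=1+h\mathsf{\Lambda}_{s+r}(i,i)+o(h)$ as $h\downarrow 0$, uniformly in $r$ on compact sets: the upper bound $\rho(r,h)\le\bP^{*}_{s+r,i}(X^{*}_{s+r+h}=i)$ combined with \eqref{eq:DefGenXStar} handles one direction, while the gap $\bP^{*}_{s+r,i}(X^{*}_{s+r+h}=i)-\rho(r,h)$ is bounded by the probability of $X^{*}$ making at least two jumps on $[s+r,s+r+h]$, which is $O(h^{2})$ by Assumption \ref{assump:GenLambda}-(i). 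Continuity of $\mathsf{\Lambda}_{\cdot}(i,i)$ from Assumption \ref{assump:GenLambda}-(ii) then upgrades $\psi$ to an element of $C^{1}(\bR_{+})$ satisfying
\begin{align*}
\psi'(r)=\psi(r)\mathsf{\Lambda}_{s+r}(i,i),\qquad\psi(0)=1,
\end{align*}
whose solution is precisely \eqref{eq:TailDistJumpTildeZ2}.

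Since $\mathsf{\Lambda}_{u}(i,i)\in[-K,0]$, combining \eqref{eq:TailDistJumpTildeZ2} with the elementary inequality $1-e^{-x}\le x$ for $x\ge 0$ immediately gives \eqref{eq:DistFirstJumpTildeZ2}. For \eqref{eq:DistSecondJumpTildeZ2} I would apply the strong Markov property of $\wt{\cM}$ at the stopping time $\wt{\gamma}_{1}$: since $\wt{\gamma}_{2}-\wt{\gamma}_{1}$ is the first jump time of the shifted process and $Z^{2}_{\wt{\gamma}_{1}}\in\mathbf{E}$ on $\{\wt{\gamma}_{1}<\infty\}$, applying \eqref{eq:DistFirstJumpTildeZ2} to the shifted process gives
\begin{align*}
\wt{\bP}_{s,i,a}\!\left(\wt{\gamma}_{2}\le r\,\big|\,\wt{\sF}_{\wt{\gamma}_{1}}\right)\1_{\{\wt{\gamma}_{1}\le r\}}\le K(r-\wt{\gamma}_{1})\1_{\{\wt{\gamma}_{1}\le r\}},
\end{align*}
and integrating, together with $r-\wt{\gamma}_{1}\le r$ and a second application of \eqref{eq:DistFirstJumpTildeZ2}, yields the $K^{2}r^{2}$ bound. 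The main obstacle is the passage from the generator identity \eqref{eq:DefGenXStar} (which only concerns the one-point marginal) to the no-jump probability $\rho(r,h)$; Assumption \ref{assump:GenLambda}-(i) is essential here to render the event of two quick jumps of probability $O(h^{2})$ and hence negligible.
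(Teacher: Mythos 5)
Your proof is correct, and for \eqref{eq:DistFirstJumpTildeZ2} and \eqref{eq:DistSecondJumpTildeZ2} it coincides with the paper's argument: the elementary bound $1-e^{-x}\le x$ together with $|\mathsf{\Lambda}_{u}(i,i)|\le K$, and then the strong Markov property of $\wt{\cM}$ at $\wt{\gamma}_{1}$ combined with the identity $\wt{\gamma}_{2}=\wt{\gamma}_{1}+\wt{\gamma}_{1}\circ\theta_{\wt{\gamma}_{1}}$ (the paper phrases the conditioning through the event $\{Z^{1}_{\wt{\gamma}_{1}}\in[s,s+r]\}$, but it is the same computation). Where you genuinely diverge is \eqref{eq:TailDistJumpTildeZ2}: after reducing $\wt{\gamma}_{1}$ to the first jump time of $X^{*}$ after $s$ via \eqref{eq:Probz}, \eqref{eq:ProcZ}, \eqref{eq:LawXXStar} — exactly as the paper does — the paper simply quotes the holding-time formula for a time-inhomogeneous chain, whereas you derive it from the generator identity \eqref{eq:DefGenXStar} by setting up and integrating the ODE $\psi'(r)=\psi(r)\mathsf{\Lambda}_{s+r}(i,i)$. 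That derivation is sound and arguably more self-contained, but its one soft spot is the claim that the gap between $\bP^{*}_{s+r,i}(X^{*}_{s+r+h}=i)$ and the no-jump probability $\rho(r,h)$ is $O(h^{2})$: stated as ``probability of at least two jumps,'' this is the very kind of estimate the lemma is establishing, so you should justify it independently (e.g., a uniform one-jump bound $\sup_{v,j}\bP^{*}_{v,j}(X^{*}_{v+\delta}\neq j)\le C\delta$ from $\|\mU^{*}_{v,v+\delta}-\mI\|_{\infty}\le C\delta$ via \eqref{eq:EvolDE}, a dyadic-grid argument to convert this into a bound on the probability of any jump in a short interval, and then the strong Markov property at the first jump) rather than attribute it to Assumption \ref{assump:GenLambda}-(i) alone. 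You also implicitly need that a continuous monotone function with continuous right derivative is $C^{1}$; that is standard but worth a word.
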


\begin{proof}
For any $(s,i,a)\in\sZ$ and $r\in\bR_{+}$, by \eqref{eq:Probz}, \eqref{eq:ProcZ}, and \eqref{eq:LawXXStar},
\begin{align*}
&\wt{\bP}_{s,i,a}\big(\wt{\gamma}_{1}>r\big)=\wt{\bP}_{s,i,a}\Big(\inf\big\{t\in\bR_{+}:Z^{2}_{t}\neq Z^{2}_{t-}\big\}>r\Big)\\
&\quad =\bP_{s,(i,a)}\Big(\inf\big\{t\in\bR_{+}:X_{t+s}\neq X_{(t+s)-}\big\}>r\Big)=\bP_{s,(i,a)}\Big(\inf\big\{t\in[s,\infty]:X_{t}\neq X_{t-}\big\}>r+s\Big)\\
&\quad =\bP^{*}_{s,i}\Big(\inf\big\{t\in[s,\infty]:X_{t}^{*}\neq X_{t-}^{*}\big\}>r+s\Big)=\exp\bigg(\int_{s}^{s+r}\mathsf{\Lambda}_{u}(i,i)\,du\bigg).
\end{align*}
Hence, by Assumption \ref{assump:GenLambda} (i),
\begin{align*}
\wt{\bP}_{s,i,a}\big(\wt{\gamma}_{1}\leq r\big)=1-\exp\bigg(\int_{s}^{s+r}\mathsf{\Lambda}_{u}(i,i)\,du\bigg)\leq 1-e^{-Kr}\leq Kr.
\end{align*}
Moreover, for any $\wt{\omega}\in\wt{\Omega}$,
\begin{align*}
\wt{\gamma}_{2}(\wt{\omega})&=\inf\!\Big\{t\!\in\![\wt{\gamma}_{1}(\wt{\omega}),\infty]\!:Z^{2}_{t}(\wt{\omega})\!\neq\!Z^{2}_{t-}(\wt{\omega})\Big\}=\wt{\gamma}_{1}(\wt{\omega})\!+\!\inf\!\Big\{t\in\overline{\bR}_{+}\!:Z^{2}_{(\wt{\gamma}_{1}(\wt{\omega})+t)-}(\wt{\omega})\!\neq\!Z^{2}_{\wt{\gamma}_{1}(\wt{\omega})+t}(\wt{\omega})\Big\}\\
&=\wt{\gamma}_{1}(\wt{\omega})+\inf\Big\{t\in\overline{\bR}_{+}:Z^{2}_{t-}\big(\theta_{\wt{\gamma}_{1}(\wt{\omega})}\wt{\omega}\big)\neq Z^{2}_{t}\big(\theta_{\wt{\gamma}_{1}(\wt{\omega})}\wt{\omega}\big)\Big\}=\wt{\gamma}_{1}(\wt{\omega})+\big(\wt{\gamma}_{1}\circ{\theta}_{\wt{\gamma}_{1}}\big)(\wt{\omega}),
\end{align*}
and thus
\begin{align*}
\big(Z^{1}_{\wt{\gamma}_{1}}\circ{\theta}_{\wt{\gamma}_{1}}\big)(\wt{\omega})=Z^{1}_{\wt{\gamma}_{1}\circ\,\theta_{\wt{\gamma}_{1}}(\wt{\omega})}\big(\theta_{\wt{\gamma}_{1}(\wt{\omega})}\wt{\omega}\big)=Z^{1}_{(\wt{\gamma}_{1}+\wt{\gamma}_{1}\circ\,\theta_{\wt{\gamma}_{1}})(\wt{\omega})}(\wt{\omega})=Z^{1}_{\wt{\gamma}_{2}}(\wt{\omega}).
\end{align*}
Therefore, by \eqref{eq:ProcZ}, the strong Markov property of $\wt{\cM}$ (cf. \cite[Theorem III.9.4]{RogersWilliams1994}), and \eqref{eq:DistFirstJumpTildeZ2},
\begin{align*}
\wt{\bP}_{s,i,a}\big(\wt{\gamma}_{2}\leq r\big)&=\wt{\bP}_{s,i,a}\Big(Z^{1}_{\wt{\gamma}_{2}}\in[s,s+r]\Big)=\wt{\bP}_{s,i,a}\Big(Z^{1}_{\wt{\gamma}_{1}}\in[s,s+r],\,Z^{1}_{\wt{\gamma}_{2}}\in[s,s+r]\Big)\\
&=\wt{\bP}_{s,i,a}\Big(Z^{1}_{\wt{\gamma}_{1}}\in[s,s+r],\,Z^{1}_{\wt{\gamma}_{1}}\circ\theta_{\wt{\gamma}_{1}}\in[s,s+r]\Big)\\
&=\wt{\bE}_{s,i,a}\Big(\1_{\big\{Z^{1}_{\wt{\gamma}_{1}}\in[s,s+r]\big\}}\,\wt{\bP}_{s,i,a}\Big(Z^{1}_{\wt{\gamma}_{1}}\circ\theta_{\wt{\gamma}_{1}}\in[s,s+r]\,\Big|\wt{\sF}_{\wt{\gamma}_{1}}\Big)\Big)\\
&=\wt{\bE}_{s,i,a}\bigg(\1_{\big\{Z^{1}_{\wt{\gamma}_{1}}\in[s,s+r]\big\}}\,\wt{\bP}_{Z^{1}_{\wt{\gamma}_{1}},Z^{2}_{\wt{\gamma}_{1}},Z^{3}_{\wt{\gamma}_{1}}}\Big(Z^{1}_{\wt{\gamma}_{1}}\in[s,s+r]\Big)\bigg)\\
&\leq Kr\,\wt{\bP}_{s,i,a}\Big(Z^{1}_{\wt{\gamma}_{1}}\in[s,s+r]\Big)=Kr\,\wt{\bP}_{s,i,a}\left(\widetilde{\gamma}_{1}\leq r\right)\leq K^{2}r^{2},
\end{align*}
which completes the proof of the lemma.
\end{proof}

The next lemma establishes some relationship between $\wt{\gamma}_{1}$ and $\wt{\tau}^{+}_{\ell}$.

\begin{lemma}\label{lem:GammaTailImpl}
For any $(s,i)\in\sX_{+}$, $a\in\bR$, and $h\in(0,\infty)$,
\begin{align}\label{eq:Gamma1overh}
\1_{\{\wt{\gamma}_{1}>h/v(i)\}}&=\1_{\{\wt{\gamma}_{1}>h/v(i)\}}\1_{\{\wt{\tau}^{+}_{a+h}=h/v(i)\}}\quad\wt{\bP}_{s,i,a}-a.s.\,,\\
\label{eq:hoverGamma1} \1_{\{\wt{\gamma}_{1}\leq h/v(i)\}}&=\1_{\{\wt{\gamma}_{1}\leq h/v(i)\}}\1_{\{\wt{\gamma}_{1}\leq\wt{\tau}^{+}_{a+h}\}}\quad\wt{\bP}_{s,i,a}-a.s.\,.
\end{align}
\end{lemma}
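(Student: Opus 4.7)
The plan is to reduce both identities to a deterministic statement on the probability-one event identified in \eqref{eq:DistTildeZ3IntTildeZ2}, namely $\{Z^{3}_{t}=a+\int_{0}^{t}v(Z^{2}_{u})\,du \text{ for all } t\in\bR_{+}\}$. Fix a sample point in this event together with $\{Z^{2}_{0}=i\}$. Before the first jump, $Z^{2}_{u}\equiv i$ on $[0,\wt{\gamma}_{1})$, so the integral representation collapses to the explicit formula $Z^{3}_{u}=a+v(i)u$ on that interval; by continuity of $t\mapsto\int_{0}^{t}v(Z^{2}_{u})\,du$ (and hence of $Z^{3}$), this same formula persists at $u=\wt{\gamma}_{1}$. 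Since $i\in\mathbf{E}_{+}$, we have $v(i)>0$, so $u\mapsto Z^{3}_{u}$ is strictly increasing on $[0,\wt{\gamma}_{1}]$ and equals $a+h$ precisely at $u=h/v(i)$.

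For \eqref{eq:Gamma1overh}, I would observe that on $\{\wt{\gamma}_{1}>h/v(i)\}$ the point $h/v(i)$ lies strictly inside the interval $[0,\wt{\gamma}_{1})$ where the explicit formula holds. The identity $Z^{3}_{h/v(i)}=a+h$ shows $h/v(i)$ itself is not in $\{t:Z^{3}_{t}>a+h\}$, while for every $u\in(h/v(i),\wt{\gamma}_{1})$ we have $Z^{3}_{u}=a+v(i)u>a+h$. Taking the infimum yields $\wt{\tau}^{+}_{a+h}=h/v(i)$ on this event, proving the first identity.

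For \eqref{eq:hoverGamma1}, the same explicit formula on $[0,\wt{\gamma}_{1}]$ together with $\wt{\gamma}_{1}\leq h/v(i)$ gives $Z^{3}_{u}=a+v(i)u\leq a+h$ for every $u\in[0,\wt{\gamma}_{1}]$, so no such $u$ belongs to $\{t:Z^{3}_{t}>a+h\}$. Hence $\wt{\tau}^{+}_{a+h}\geq\wt{\gamma}_{1}$ on this event, which is the desired inclusion.

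There is no real obstacle here once one isolates the full-measure event from \eqref{eq:DistTildeZ3IntTildeZ2}; the only subtle point is that $Z^{3}$ is continuous at $\wt{\gamma}_{1}$ even though $Z^{2}$ jumps there, so the explicit pre-jump formula can be evaluated at $u=\wt{\gamma}_{1}$ without change. Since the ``$\leq$'' event in \eqref{eq:hoverGamma1} requires this endpoint evaluation and the ``$>$'' event in \eqref{eq:Gamma1overh} does not, handling these two cases separately (rather than trying to treat them together) is what makes the argument clean.
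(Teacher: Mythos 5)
Your proof is correct and follows essentially the same route as the paper: both arguments restrict to the full-measure event from \eqref{eq:DistTildeZ3IntTildeZ2}, use that $Z^{3}_{t}=a+v(i)t$ on $[0,\wt{\gamma}_{1}]$ with $v(i)>0$, and read off the two conclusions from the definition of $\wt{\tau}^{+}_{a+h}$ as an infimum. Your explicit remark about continuity of $Z^{3}$ at the jump time $\wt{\gamma}_{1}$, which justifies evaluating the pre-jump formula at the endpoint, is a small but welcome clarification of a step the paper leaves implicit.
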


\begin{proof}
For any $(s,i)\in\sX_{+}$, $a\in\bR$, and $h\in(0,\infty)$,
\begin{align*}
&\bigg\{Z^{3}_{t}=a+\int_{0}^{t}v\big(Z^{2}_{u}\big)\,du,\,\text{for all }t\in\bR_{+}\bigg\}\bigcap\bigg\{\wt{\gamma}_{1}>\frac{h}{v(i)}\bigg\}\\
&\quad =\bigg\{Z^{3}_{t}=a\!+\!\!\int_{0}^{t}v\big(Z^{2}_{u}\big)\,du,\,\text{for all }t\in\bR_{+}\bigg\}\bigcap\bigg\{\wt{\gamma}_{1}\!>\!\frac{h}{v(i)}\bigg\}\bigcap\Big\{Z^{3}_{t}=a\!+\!v(i)t,\,\text{for all }t\in[0,\wt{\gamma}_{1}]\Big\}\\
&\quad\subset\bigg\{Z^{3}_{t}<a+h,\,\text{for all }\,t\in\bigg[0,\frac{h}{v(i)}\bigg);\,\,Z^{3}_{h/v(i)}=a+h;\,\,Z^{3}_{t}>a+h,\,\text{for all }\,t\in\bigg(\frac{h}{v(i)},\wt{\gamma}_{1}\bigg]\bigg\}\\
&\quad\subset\bigg\{\wt{\tau}^{+}_{a+h}=\frac{h}{v(i)}\bigg\},
\end{align*}
and
\begin{align*}
&\bigg\{Z^{3}_{t}=a+\int_{0}^{t}v\big(Z^{2}_{u}\big)\,du,\,\text{for all }t\in\bR_{+}\bigg\}\bigcap\bigg\{\wt{\gamma}_{1}\leq\frac{h}{v(i)}\bigg\}\\
&\quad =\bigg\{Z^{3}_{t}=a+\int_{0}^{t}v\big(Z^{2}_{u}\big)\,du,\,\text{for all }t\in\bR_{+}\bigg\}\bigcap\bigg\{\wt{\gamma}_{1}\leq\frac{h}{v(i)}\bigg\}\bigcap\Big\{Z^{3}_{t}\leq a+h,\,\text{for all }t\in[0,\wt{\gamma}_{1}]\Big\}\\
&\quad\subset\Big\{Z^{3}_{t}\leq a+h,\,\text{for all }t\in[0,\wt{\gamma}_{1}]\Big\}\subset\big\{\wt{\gamma}_{1}\leq\wt{\tau}^{+}_{a+h}\big\}.
\end{align*}
Then, \eqref{eq:Gamma1overh} and \eqref{eq:hoverGamma1} follow from \eqref{eq:DistTildeZ3IntTildeZ2}.
\end{proof}


\bibliographystyle{alpha}

\end{document}